\newtheorem{theorem}{Theorem}[section]
\newtheorem{cor}{Corollary}[section]
\newtheorem{prop}{Proposition}[section]
\newtheorem{lemma}{Lemma}[section]
\newtheorem{definition}{Definition}[section]
\theoremstyle{remark}
\newtheorem{remark}{Remark}[section]
\numberwithin{equation}{section}
\def\eqn#1$$#2$${\begin{equation}\label#1#2\end{equation}}
\newcommand{\CC}{\Subset}
\def\er{\mathbb R}
\def\T{\mathcal T}
\newcommand{\ratio}{L/\nu}
\newcommand{\divo}{\textnormal{div}_{H}}
\def\eqn#1$$#2$${\begin{equation}\label#1#2\end{equation}}
\newcommand{\M}{M^*}
\newcommand{\Mq}{\M_{q_0/p}}
\newcommand{\Ds}{D_{h}^{X_s}}
\newcommand{\Dsy}{D_{h}^{Y_s}}
\newcommand{\rif}[1]{(\ref{#1})}
\newcommand{\trif}[1]{\textnormal{(\ref{#1})}}
\newcommand{\dist}{\operatorname{dist}}
\newcommand{\R}{\mathbb R}
\newcommand{\Om}{\Omega}
\newcommand{\X}{\mathfrak X}
\newcommand{\Xu}{\mathfrak X u}
\newcommand{\e}{\mathrm{e}}
\newcommand{\dx}{\, dx}
\renewcommand{\epsilon}{\varepsilon}
\def\mvint_#1{\mathchoice
          {\mathop{\vrule width 6pt height 3 pt depth -2.5pt
                  \kern -8pt \intop}\nolimits_{\hspace{-1ex}#1}}%
          {\mathop{\vrule width 5pt height 3 pt depth -2.6pt
                  \kern -6pt \intop}\nolimits_{\hspace{-1ex}#1}}%
          {\mathop{\vrule width 5pt height 3 pt depth -2.6pt
                  \kern -6pt \intop}\nolimits_{\hspace{-1ex}#1}}%
          {\mathop{\vrule width 5pt height 3 pt depth -2.6pt
                  \kern -6pt \intop}\nolimits_{\hspace{-1ex}#1}}}
\newcommand{\DhZ}{D_{h}^Z}
\newcommand{\DhT}{D_{h}^T}
\newcommand{\DhX}{D_{h}^{X_s}}
\newcommand{\DhY}{D_{h}^{Y_s}}
\newcommand{\deltaX}{\big(\mu^2 +|\Xu|^2\big)}
\newcommand{\deltaXi}{\big(\mu^2 +|X_s u|^2\big)}
\newcommand{\deltaXhi}{\big(\mu^2 +|\Xu(x \e^{hX_s})|^2\big)}
\newcommand{\deltaXt}{\big(\mu^2+|\Xu(x)|^2 + |\Xu(x\e^{hT})|^2\big)}
\newcommand{\weight}{\big(\mu^2+|\Xu|^2\big)^\frac{p-2}{2}}
\newcommand{\weightT}{\big(\mu^2 +|\Xu(x)|^2 + |\Xu(x\e^{hT})|^2\big)^\frac{p-2}{2}}
\newcommand{\weightX}{\big(\mu^2 +|\Xu(x)|^2 + |\Xu(x\e^{hX_s})|^2\big)^\frac{p-2}{2}}
\newcommand{\supp}{\operatorname{supp}}
 \newcommand{\intav}{-\hskip -1.1em\int} \newbox\tratto
\newcommand{\medint}{\displaystyle\copy\tratto\kern-10.4pt\int\limits}
\newcommand{\XXX}{\mathfrak{X}}
\allowdisplaybreaks \DeclareMathOperator{\loc}{loc}
\begin{document}
\title[Heisenberg Gradient regularity]{Gradient regularity for elliptic
equations in the Heisenberg Group}

 \author{Giuseppe Mingione}
\address{Dipartimento di Matematica, Universit\`{a}
di Parma, Viale G.~P.~Usberti 53/A, I-43100 Parma, Italy}
 \email{giuseppe.mingione@unipr.it}
  \urladdr{http://www.unipr.it/\~{}mingiu36}

  \author{Anna Zatorska-Goldstein}
\address{Instytut Matematyki Stosowanej i Mechaniki,
Uniwersytet Warszawski, Banacha 2, 02-097 Warszawa, Poland}
 \email{azator@mimuw.edu.pl}
   \urladdr{http://www.mimuw.edu.pl/~azator/}

\author{Xiao Zhong}
\address{Department of Mathematics and Statistics,
P.O.Box 35 (MaD) FI-40014, University of Jyv\"askyl\"a, Finland}
 \email{zhong@maths.jyu.fi }
   \urladdr{http://www.math.jyu.fi/~zhong/}
\date{\today}
\subjclass{Primary 35H20, 35J70} \keywords{Heisenberg  group,
$p$-Laplacean, weak solutions, regularity}

 \begin{abstract} We give dimension-free regularity conditions for a class of
 possibly degenerate sub-elliptic equations in the Heisenberg group
 exhibiting super-quadratic growth in the horizontal gradient; this solves an issue raised in \cite{MM},
 where only dimension dependent bounds for the growth exponent are given.
 We also obtain
explicit a priori local regularity
 estimates, and cover the case of the horizontal $p$-Laplacean operator, extending some regularity proven in \cite{DM2}.
In turn, the a priori estimates found are shown to imply the suitable local
Calder\'on-Zygmund theory for the related class of non-homogeneous, possibly degenerate equations involving discontinuous
 coefficients. These last results extend to the sub-elliptic setting a few classical non-linear Euclidean results \cite{Ip, DMa}, and to the non-linear case estimates of the same nature that were available in the sub-elliptic setting only for solutions to linear equations.
 \end{abstract}
  \maketitle
  \setcounter{tocdepth}{1}
 \tableofcontents
 \section{Introduction}
The regularity in question concerns sub-elliptic equations of the
type\begin{equation}\label{due} \divo a(\XXX u)= \sum_{i=1}^{2n} X_i
a_i\!\left(\mathfrak{X}u\right) =0,
\end{equation}
which are defined in a bounded, open sub-domain $\Omega$ of the
Heisenberg group $\mathbb{H}^n$, $n\geq 1$. The vector field $a
=(a_i)\colon \mathbb R^{2n} \mapsto \mathbb R^{2n}$ is assumed to be
of class $C^1$ and satisfying the following growth and ellipticity
conditions:
\begin{equation}\label{growth}
|Da(z)|(\mu^2+|z|^2)^{\frac{1}{2}} + |a(z)| \leq
L(\mu^2+|z|^2)^{\frac{p-1}{2}},
\end{equation}
and
\begin{equation}\label{ell}
\nu (\mu^2+|z|^2)^{\frac{p-2}{2}}|\lambda|^2 \leq
\sum_{i,j=1}^{2n}D_{z_j}a_i(z)\lambda_i\lambda_j  ,
\end{equation}
for every $z,\lambda \in \mathbb R^{2n}$, where
$$0 < \nu \leq 1 \leq L\,,\qquad \mu \in [0,1]\,,\qquad p \geq 2\;.$$ At certain stages we shall assume
the (sub-elliptic) non-degeneracy condition
\begin{equation}\label{nondeg}
\mu >0\;.
\end{equation}
Assumptions \rif{growth}-\rif{ell} are standard when considering
quasi-linear equations, and their consideration traces back to the
classical Euclidean work of Ladyzhenskaya \& Uraltseva \cite{LU}.
Such assumptions are clearly tailored on the basic model equation
\eqn{subnondeg}
$$
\divo  \left(\left(\mu^2+ \left| \mathfrak{X}u
\right|^2\right)^{\frac{p-2}{2}} \XXX u\right) =0\;, $$ whose
left-hand side operator reduces to the Kohn-Laplacean for $p=2$,
while taking $\mu=0$ we have the also familiar horizontal
$p$-Laplacean operator on the left-hand side: \eqn{subdeg}
$$
\divo \left(|\mathfrak{X}u|^{p-2} \XXX u\right) =0\;.
$$ In order to
preliminarily fix some notation, let us recall that we are denoting
points $x \in \mathbb{H}^n\equiv \er^{2n+1}$ by mean of the usual
exponential coordinates \eqn{iidd}
$$x=(x_1,x_2,\ldots,x_n, x_{n+1}, \ldots, x_{2n}, t)\;,$$ while throughout the paper we are
denoting \eqn{can1}
$$X_i \equiv
X_i(x)=\partial_{x_i}-\frac{x_{n+i}}{2}\partial_t,\qquad   X_{n+i}
\equiv X_{n+i}(x)=\partial_{x_{n+i}}+\frac{x_{i}}{2}\partial_t,$$
and \eqn{can2}
$$\  T \equiv T(x)=\partial_t, \qquad \mathfrak{X}u=(X_1u,
X_2u, \ldots, X_{2n}u).$$ The functional ambient of the problem
\rif{due} is the sub-elliptic Sobolev space $HW^{1,p}(\Omega)$ (see
Section \ref{subsob} below), that is, solutions $u$ are assumed to
belong to $L^p(\Omega)$ and to satisfy \eqn{initial}
$$
\XXX u \in L^{p}(\Omega,\er^{2n})\;,
$$
while nothing is assumed about $Tu$. We recall that if $F\equiv
(F_i):\Omega \to \er^{2n}$ is an $L^1$ vector field in the following
we shall denote the horizontal divergence operator by
$$
\divo F \equiv \sum_{i=1}^{2n} X_i F_i\;,
$$
which is obviously defined in the distributional sense. We refer to
Section 2 for more on the Heisenberg groups $\mathbb{H}^n$,
$n=1,2,3\ldots, $ and for the related notation adopted in this
paper.

\subsection{Gradient regularity} The study of regularity properties of weak solutions to
\rif{due} started with the classical paper of H\"ormander \cite{H},
which dealt with general vector fields and linear equations, and was
later followed by other remarkable contributions devoted to the
linear case, as for instance \cite{F2, F, Kohn}. Capogna was the
first to obtain H\"older continuity theorems for the gradient of
solutions to quasi-linear sub-elliptic equations in divergence form:
initially in the Heisenberg group \cite{Ca1}, and then in more
general Carnot groups \cite{Ca2}; see also his thesis \cite{Ca0}. The operators considered in
\cite{Ca1, Ca2} have quadratic growth, that is, they satisfy
\rif{growth}-\rif{ell} for $p=2$, so that equations as those in
\rif{subnondeg}-\rif{subdeg} are not covered by his theory unless a
priori regularity assumptions are made on the gradient. The case
$p>2$ is another story; indeed while H\"older continuity of $u$ has been
obtained in \cite{CDG, Lu}, when considering the gradient of solutions
only partial regularity results are available, that
is, the regularity of the gradient outside a closed, negligible
subset of the domain $\Omega$; this fact has first been established
by Capogna \& Garofalo in \cite{CG}; another proof is given by
F\"oglein \cite{Fog}. When turning to everywhere
continuity of $Du$, the regularity results obtained prescribe that
the exponent $p$ should not be ``too far from $2$", roughly meaning that
the non-linearity of \rif{due} is in some sense not too strong. In
this respect, Domokos \cite{D}, extending earlier, pioneering
results of Marchi \cite{Ma}, proved that $
 Tu \in L^p_{\loc}(\Omega)$ if $p<4$,
which proved to be an up-to-now unavoidable upper bound on $p$, coming in a
particularly natural way from the analysis of \rif{due}. Proving
that $Tu \in L^p_{\loc}(\Omega)$ is of course the first fundamental
step towards the regularization of solutions $u$ to \rif{due}, since
for them the initial regularity information is just \rif{initial}.
As for the higher regularity of $Du$ or $\Xu$, a few H\"older
regularity results are available in \cite{CM, DM, DM2, MM}; a common
feature of such papers is to prove regularity results for solutions
assuming not only that $p<4$, but also an additional dimensional
bound of the type \eqn{dimbound}
$$2\leq p < 2+o_n\,$$ where $o_n>0$ denotes
a rather awkward, and only in principle explicitly computable quantity, such that
$o_n\searrow 0$ when $n \nearrow \infty$. An unpleasant feature of
an assumption such as \rif{dimbound} is that for a fixed $p$ in the
range $[2,4)$ only low dimensional Heisenberg groups can be dealt
with. For instance, considering the full range $[2,4)$, the
regularity results available in \cite{MM} only apply to
$\mathbb{H}^1$ and $\mathbb{H}^2$; we note that the paper \cite{MM},
where up to now the best bounds of the type \rif{dimbound} have been
found, only regards the non-degenerate case $\mu>0$. Indeed, we
explicitly remark that only few regularity results are available in
the (sub-elliptic) degenerate case $\mu=0$, and therefore for
solutions to \rif{subdeg}. See \cite{DM}; moreover, in the degenerate case the quantity
$o_n$ in \rif{dimbound} is not explicitly computable.

In this respect, the aim of the present paper is now twofold: first
we are giving {\em the first dimension-free pointwise regularity
results for gradients of solutions}, therefore completely avoiding the use of any dimensional assumptions of the type \rif{dimbound}. Second,
and probably more interestingly, up to a certain extent we shall
also treat the degenerate case $\mu=0$, thereby {\em covering the
sub-elliptic $p$-Laplacean equation} \rif{subdeg}. For instance, we
shall prove the local Lipschitz continuity of solutions with respect
to the intrinsic Carnot-Carath\`eodory metric.

The first result we are presenting regards the non-degenerate case
$\mu >0$.
\begin{theorem} [The non-degenerate case]\label{main} Let $u \in HW^{1,p}(\Omega)$ be a weak solution to the equation \trif{due}
under the assumptions \trif{growth}-\trif{nondeg} with $2 \leq p <
4$. Then
 the Euclidean gradient $Du$ is locally H\"older continuous in $\Omega$.
\end{theorem}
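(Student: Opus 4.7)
The plan is to reduce the problem to a uniformly elliptic situation by establishing the qualitative pointwise bound $\Xu \in L^\infty_{\loc}(\Omega)$. In the non-degenerate case $\mu>0$, once $\Xu$ is locally bounded, the equation \trif{due} becomes uniformly elliptic of quadratic type with $C^1$ coefficients, so classical tools such as those of Capogna \cite{Ca1} apply and yield local H\"older continuity of $\Xu$. Since $\partial_{x_i}u = X_i u + (x_{n+i}/2)\,Tu$ and $\partial_t u = Tu$, to obtain H\"older continuity of the full Euclidean gradient one must also control $Tu$ in a H\"older norm; this will come from the same iteration scheme used for $\Xu$. The whole argument therefore concentrates on deriving \emph{dimension-free} Caccioppoli-type estimates suitable for a Moser iteration on both $Tu$ and $\Xu$.

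First I would handle the vertical regularity. Since $T=[X_i,X_{n+i}]$ commutes with every $X_j$, differentiating \trif{due} in the direction $T$ is structurally compatible with the operator. Using a test function of the shape $\eta^{2}\,D^T_{-h}\!\bigl((\mu^2+|\Xu|^2)^{\gamma}\,\DhT u\bigr)$, built from second-order difference quotients along $T$, one obtains Caccioppoli inequalities for $Tu$ that, iterated in $\gamma$, push its integrability above $p$ and eventually give $Tu \in L^q_{\loc}(\Omega)$ for every $q<\infty$, under the sharp bound $p<4$ appearing already in \cite{D, MM}. A further Moser iteration then upgrades this to $Tu \in L^\infty_{\loc}(\Omega)$, and the same machinery, run on translations rather than difference quotients, produces H\"older continuity of $Tu$.

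For the horizontal derivatives, differentiating the equation in the direction $X_s$ produces commutator terms $[X_s,\,\cdot\,]$ whose only non-trivial contributions involve $Tu$; these error terms are controlled through the previous step. Testing the resulting second-order equation against $\eta^{2}(\mu^{2}+|\Xu|^{2})^{\beta/2}\,X_s u$ and summing over $s=1,\dots,2n$ gives a family of Caccioppoli inequalities for $(\mu^2+|\Xu|^2)^{1/2}$. \emph{The main obstacle, and the key novelty over} \cite{MM}, \emph{is to perform this summation in a dimension-free way:} a term-by-term use of Young's inequality to absorb the commutator errors accumulates a factor proportional to $n$, which is exactly the mechanism producing the dimensional threshold $o_n$ in the earlier literature. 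I would bypass this by grouping the commutator contributions \emph{before} applying Cauchy--Schwarz, exploiting that $[X_s,X_i]$ vanishes unless $\{s,i\}=\{j,n+j\}$ for some $j$, so that after summation they assemble into a single divergence-structure expression that can be absorbed by the ellipticity in \trif{ell} rather than estimated $2n$ times.

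Combining such dimension-free Caccioppoli inequalities with the sub-elliptic Sobolev embedding on $\hh^{n}$ yields a reverse H\"older inequality for $(\mu^{2}+|\Xu|^{2})^{1/2}$ whose constants are independent of the exponent; a standard Moser iteration then delivers $\Xu \in L^\infty_{\loc}(\Omega)$. At this point \trif{growth}--\trif{ell} with $\mu>0$ reduce the equation to the setting of \cite{Ca1}, which provides local H\"older continuity of $\Xu$. Combined with the H\"older continuity of $Tu$ from the vertical step, the algebraic identity $\partial_{x_i}u = X_i u + (x_{n+i}/2)\,Tu$ transfers this regularity to the Euclidean gradient, concluding the proof.
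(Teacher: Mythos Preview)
Your sequential strategy---first pushing $Tu$ to $L^\infty$ and H\"older, then treating $\Xu$---is precisely the one the paper avoids, and for a structural reason it cannot work as stated. The Caccioppoli inequalities you describe for $Tu$ carry the weight $(\mu^2+|\Xu|^2)^{(p-2)/2}$ (see the paper's Lemma~\ref{XT sigma lem}); iterating them in your parameter $\gamma$ therefore requires $\Xu \in L^{p+2+\sigma}_{\loc}$ for growing $\sigma$, which you do not have at that stage. The same obstruction appears in Theorem~\ref{verticaluno}: the hypothesis $\Xu \in L^q_{\loc}$ with $p<2+q/(n+1)$ is exactly what produces the dimensional threshold $o_n$ in \cite{MM}, and your proposal offers no mechanism to satisfy it without first improving $\Xu$. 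In short, $Tu$ is not a solution of a similar equation and cannot be bootstrapped in isolation when $p>2$.

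The paper's route is a \emph{mixed} iteration: one proves implications $Tu\in L^{q_k}\Rightarrow \Xu\in L^{p_k}$ and $\Xu\in L^{p_k}\Rightarrow Tu\in L^{q_{k+1}}$ with both sequences diverging (Proposition~\ref{horiz}), combining the horizontal and vertical Caccioppoli estimates of Section~5 with a Gagliardo--Nirenberg interpolation (Lemma~\ref{G:Nin}) and an unorthodox test function $\varphi=(\mathcal T_k(|Tu|))^2\eta^2 u$ (Lemma~\ref{lem:righthand}) that first yields $|\Xu|^p|Tu|^2\in L^1_{\loc}$. Only after $\Xu,Tu\in L^q_{\loc}$ for all $q$ does a Moser iteration give $\Xu\in L^\infty_{\loc}$; then $Tu\in L^\infty_{\loc}$ follows from Theorem~\ref{verticaluno}, and the H\"older continuity of $Du$ from \cite{Ca1,MM}. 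Your remark about grouping commutators before Cauchy--Schwarz misidentifies the source of the dimensional loss: it is not an $n$-fold over-counting in the horizontal Caccioppoli estimate, but the integrability condition \eqref{chichi} linking $Tu$ to $\Xu$, which the double bootstrap circumvents.
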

See Section 2.4 below for the definition of the Horizontal Sobolev
space $HW^{1,p}$. The previous result solves an issue raised in \cite{MM}, where the authors
were able to obtain the same degree of regularity only under an additional assumption of the type \rif{dimbound}. As later described in
Section 1.3, we shall adopt here
different technical tricks from the ones used in \cite{MM}; these will allow us to develop more efficient bootstrap procedures.

Theorem \ref{main} comes along with explicit a
priori estimates:
\begin{theorem}[Non-degenerate estimates] \label{main2} Let $u \in HW^{1,p}(\Omega)$ be a weak solution to the equation \trif{due}
under the assumptions \trif{growth}-\trif{nondeg} with $2 \leq p <
4$. There exists a constant $c$, depending on $n,p$ and $\ratio$,
but otherwise independent of $\mu$, of the solution $u$, and of the
vector field $a(\cdot)$, such that the following inequalities hold
for any CC-ball $B_R \subset \Omega$: \eqn{apapinf}
$$
\sup_{B_{R/2}}|\XXX u|\leq c \left(\intav_{B_{R}}(\mu+|\XXX
u|)^{p}\, dx\right)^{1/p}\;,
$$
and \eqn{apatinf}
$$
\sup_{B_{R/2}}R|T u|\leq c\mu^{\frac{Q(2-p)}{4}}
\left(\intav_{B_{R}}(\mu+|\Xu|)^{p}\,
dx\right)^{\frac{1}{p}+\frac{Q(p-2)}{4p}}\;.
$$
Finally, for every $1 < q < \infty$ there exists a constant
$\tilde{c}$ depending only on $n,p,\ratio, q$ such that
\eqn{apatpre}
$$
\left(\intav_{B_{R/2}}|T u|^{q}\, dx\right)^{1/q}\leq
\frac{\tilde{c}}{R} \left(\intav_{B_{R}}(\mu+|\XXX u|)^{p}\,
dx\right)^{1/p}\;.
$$
\end{theorem}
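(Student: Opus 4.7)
The plan is to derive \rif{apapinf}, \rif{apatinf} and \rif{apatpre} from two weighted Caccioppoli-type inequalities for horizontal and vertical difference quotients of $\Xu$, coupled with a sub-Riemannian Sobolev--Moser iteration on CC-balls with homogeneous dimension $Q=2n+2$. Throughout, one works a priori on smooth, non-degenerate approximations (as in \cite{MM}) so that all quantities are classical; the estimates then pass to the limit because the constants will depend only on $n,p,\ratio$, never on the extra regularity or on $\mu$ itself.

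\textbf{Step 1 (horizontal Caccioppoli).} I apply the horizontal difference quotient $D_h^{X_s}$ to the equation \trif{due} and test with $\eta^2\, D_h^{X_s} u\cdot W\bigl((D_h^{X_s} u)^2\bigr)$ for a nonnegative, bounded, increasing $W$, $\eta\in C^\infty_c(B_R)$. Using \trif{growth}--\trif{ell} this produces, after $h\to 0$ and summation in $s$, an inequality of the form
\[
\int \eta^2 \weight \sum_{i,s}|X_iX_s u|^2\,\WWs \, dx \;\leq\; c\int \deltaX^{\frac{p}{2}}\,\WWs\,|\X\eta|^2\, dx + \mathrm{(err)},
\]
where $\mathrm{(err)}$ collects the commutators. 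Since $[X_s,X_{n+s}]=T$ in $\hh^n$, these commutators unavoidably introduce $Tu$, and it is precisely here that $p<4$ must enter: the bad term comes with weight $\deltaX^{(p-2)/2}$ and has to be absorbed via Young's inequality by the left-hand side.

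\textbf{Step 2 (vertical Caccioppoli for $Tu$).} Since $T$ is central, applying $D_h^T$ to \trif{due} and testing with $\eta^2 D_h^T u\cdot W((D_h^T u)^2)$ produces the clean, commutator-free inequality
\[
\int \eta^2 \weight\, |\X(Tu)|^2\, W((Tu)^2)\, dx \;\leq\; c\int \weight\, (Tu)^2\, W((Tu)^2)\,|\X\eta|^2\, dx,
\]
which is a genuine weighted Caccioppoli for $Tu$ with no structural obstruction.

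\textbf{Step 3 (Moser iteration).} Using the sub-Riemannian Sobolev embedding $HW^{1,2}\hookrightarrow L^{2Q/(Q-2)}$ on CC-balls, I run two coupled Moser iterations. First, with $W(\tau)=\tau^{\beta}$ (suitably truncated) in Step 1 and a sequence of shrinking balls $R_k\searrow R/2$, the reverse-H\"older chain yields \rif{apapinf}; closing the bootstrap requires feeding in the $L^q$-control of $Tu$ from Step 2, and it is this exchange that forces $p<4$ but produces a dimension-free constant. Second, injecting \rif{apapinf} into Step 2 turns $\weight$ into a bounded factor, so the standard Moser scheme yields $Tu\in L^q_{\rm loc}$ for every $q<\infty$, that is \rif{apatpre}. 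For the sup-bound \rif{apatinf}, I iterate further but keep the weight $\weight$ on the left-hand side: carrying it through each Sobolev step costs a factor $\mu^{(p-2)/2}$, and after summing the geometric series the exponent $\mu^{Q(2-p)/4}$ and the coupled right-hand exponent $1/p+Q(p-2)/(4p)$ appear exactly as in \rif{apatinf}.

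\textbf{Main obstacle.} The delicate point is closing the bootstrap in Step 3 dimension-freely: the horizontal Caccioppoli produces a term comparable to $\int \weight |Tu|^2$, and the naive use of Domokos' $Tu\in L^p_{\rm loc}$ is not enough to keep the constants independent of $n$. The novelty, replacing the \rif{dimbound}-type arguments of \cite{MM}, is a two-parameter choice of $W$ that allows the iteration in $\Xu$ and the iteration in $Tu$ to be performed simultaneously; this is what makes the final constant depend only on $n,p,\ratio$ and, crucially, keeps \rif{apapinf}--\rif{apatpre} valid uniformly as $\mu\to 0$, enabling the degenerate limit of Theorem \ref{main}.
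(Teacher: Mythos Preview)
Your ingredients (horizontal and vertical Caccioppoli inequalities, Sobolev--Moser iteration) are the right ones, but the sketch hides the genuinely hard step and, as written, does not close.

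\textbf{The gap.} In Step~3 you say that ``closing the bootstrap requires feeding in the $L^q$-control of $Tu$ from Step~2''. But Step~2 is a Caccioppoli inequality for $\X(Tu)$, not an $L^q$-bound for $Tu$; by itself it does not improve the integrability of $Tu$. Meanwhile the right-hand side of your horizontal Caccioppoli contains a term of the type
\[
\int \eta^2\,\deltaX^{\frac{p-2+\sigma}{2}}\,|Tu|^2\,dx,
\]
and unless you already know $Tu\in L^{q}$ for a large $q$ (in the paper, $q=2Q$), you cannot H\"older this term into a form that allows a genuine Moser chain with a fixed self-improvement exponent. Your ``two-parameter choice of $W$'' is not a substitute for this missing integrability; no choice of $W$ in the Caccioppoli step will manufacture control of $|Tu|^2$ against arbitrarily high powers of $|\Xu|$ without an independent mechanism that raises the integrability of $Tu$.

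\textbf{What the paper actually does.} The paper does \emph{not} run a Moser iteration directly. It first establishes $\Xu,\,Tu\in L^q_{\loc}$ for every $q<\infty$ (Proposition~\ref{horiz}) by a non-Moser bootstrap built on three pieces you omit:
(i) a Gagliardo--Nirenberg-type interpolation (Lemma~\ref{G:Nin}), which converts the weighted second-order quantity $\int \eta^2 \weight \deltaXi^{\sigma/2}|\X X_s u|^2$ into higher integrability of $\Xu$, using the local boundedness of $u$;
(ii) an integration-by-parts identity based on $T=[X_1,Y_1]$ (Lemma~\ref{HiT}) to convert $\int|Tu|^{(p+3+\sigma)/2}$ into terms controlled by $\Xu$ and the vertical Caccioppoli --- this is where $p<4$ enters crucially via the factor $\deltaX^{(4-p)/2}$;
(iii) an unorthodox test function $\varphi=(\T_k(|Tu|))^2\eta^2 u$ (Lemma~\ref{lem:righthand}) to first get $\deltaX^{p/2}|Tu|^2\in L^1_{\loc}$.
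These are assembled into a recursion $\sigma_{k+1}=\sigma_k+4/(p+3+\sigma_k)$ which pushes the exponents to infinity. Only \emph{after} this, with $Tu\in L^{2Q}_{\loc}$ in hand, does the paper run a Moser iteration (Proposition~\ref{horiznon}) with self-improvement exponent $\tilde\chi=(Q-1)/(Q-2)$, obtained precisely by H\"older-ing out $|Tu|^2$ against its $L^{2Q}$-norm.

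\textbf{The sup-bound for $Tu$.} Your derivation of \rif{apatinf} by ``iterating further but keeping the weight'' is also not what the paper does: once $\Xu\in L^\infty_{\loc}$, the paper simply invokes Theorem~\ref{verticaluno} (which is Theorem~8 of \cite{MM}) and lets $q\to\infty$ in \rif{chichi0}; the exponent $\mu^{Q(2-p)/4}$ comes from the limit $\chi/(\chi-1)\to Q/2$ in that estimate, not from tracking weight factors through a Moser chain.
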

For the definition of CC-balls and more notation see Section
\ref{CCb} below. See also \rif{average} for more notation. Next we
turn to the degenerate case $\mu=0$, where the chief model example
is \rif{subdeg}.
\begin{theorem}[The degenerate case] \label{main3}
 Let $u \in HW^{1,p}(\Omega)$ be a weak solution to the equation \trif{due}
 under the assumptions \trif{growth}-\trif{ell} with $\mu=0$, where $
2\leq  p < 4.$ Then  \eqn{betterint}
 $$ \mathfrak{X}u \in
L^{\infty}_{\textnormal{loc}}(\Omega,\er^{2n}), \qquad
\mbox{and}\qquad Tu \in L^{q}_{\textnormal{loc}}(\Omega) \qquad
\mbox{for every} \ \ q<\infty\;.$$ Moreover there exists a constant
$c$, depending on $n,p,\ratio$, but otherwise independent
of the solution $u$, and of the vector field $a(\cdot)$, such that
the following inequality holds for any CC-ball $B_R \subset \Omega$:
\eqn{apapinfdeg}
$$
\sup_{B_{R/2}}|\XXX u|\leq c \left(\intav_{B_{R}}|\XXX u|^{p}\,
dx\right)^{1/p}\;.
$$
Finally, for every $q< \infty$ there exists a constant $\tilde{c}$
depending only on $n,p,\ratio,q$ such that \eqn{apat00}
$$
\left(\intav_{B_{R/2}}|T u|^{q}\, dx\right)^{1/q}\leq
\frac{\tilde{c}}{R} \left(\intav_{B_{R}}|\XXX u|^{p}\,
dx\right)^{1/p}\;.
$$
\end{theorem}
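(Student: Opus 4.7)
The natural plan is to reduce Theorem \ref{main3} to the non-degenerate a priori estimates already established in Theorem \ref{main2}, exploiting crucially the fact that the constants in \eqref{apapinf} and \eqref{apatpre} are independent of $\mu$.

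Fix a CC-ball $B_R \Subset \Omega$. For each $\epsilon \in (0,1]$ I would define a non-degenerate approximating vector field $a_\epsilon$ by regularizing $a$ at the origin---for instance $a_\epsilon(z) := a(z) + \epsilon^{p-1}(\epsilon^2+|z|^2)^{(p-2)/2}z$, or, in the model case \eqref{subdeg}, simply replacing $|z|^{p-2}$ by $(\epsilon^2+|z|^2)^{(p-2)/2}$. The key structural fact is that $a_\epsilon$ satisfies \rif{growth}--\rif{nondeg} with $\mu=\epsilon>0$ and with ellipticity/growth constants equivalent to $(\nu, L)$ uniformly in $\epsilon$. I would then consider the unique solution $u_\epsilon \in u + HW^{1,p}_0(B_R)$ of the Dirichlet problem $\divo a_\epsilon(\XXX u_\epsilon) = 0$ in $B_R$, whose existence and uniqueness follow from standard monotone operator theory. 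Using $u_\epsilon - u$ as a test function and exploiting \rif{ell} gives the $\epsilon$-uniform energy bound $\|\XXX u_\epsilon\|_{L^p(B_R)} \leq c(1+\|\XXX u\|_{L^p(B_R)})$.

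Because $a_\epsilon$ fits the framework of Theorem \ref{main2} with constants controlled uniformly in $\epsilon$, estimates \eqref{apapinf} and \eqref{apatpre} applied to $u_\epsilon$ yield
$$\sup_{B_{R/2}}|\XXX u_\epsilon| \leq c\Bigl(\intav_{B_R}(\epsilon + |\XXX u_\epsilon|)^p\, dx\Bigr)^{1/p},\qquad \Bigl(\intav_{B_{R/2}}|Tu_\epsilon|^q\, dx\Bigr)^{1/q}\leq \frac{\tilde c}{R}\Bigl(\intav_{B_R}(\epsilon+|\XXX u_\epsilon|)^p\, dx\Bigr)^{1/p},$$
both with $c, \tilde c$ independent of $\epsilon$; combined with the energy bound, the right-hand sides reduce to expressions in the $L^p$ norm of $\XXX u$. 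The final step is the limit $\epsilon \to 0$: a Minty-type argument based on the pointwise convergence $a_\epsilon \to a$ and the monotonicity of $a_\epsilon$, together with uniqueness of the limiting Dirichlet problem---itself a consequence of the strict monotonicity $\langle a(z_1)-a(z_2), z_1-z_2\rangle \geq c|z_1-z_2|^p$ that follows from \rif{ell} for $p \geq 2$---identifies $u$ as the strong limit of $u_\epsilon$ in $HW^{1,p}(B_R)$. Weak-$*$ lower semicontinuity in $L^\infty$ transfers the sup-bound to $\XXX u$ and gives \eqref{apapinfdeg}, while weak lower semicontinuity in $L^q$ gives $Tu \in L^q_{\loc}$ with \eqref{apat00}, yielding both conclusions in \eqref{betterint}.

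The main technical obstacle is the design of the approximation: one must simultaneously ensure (i) that $a_\epsilon$ genuinely belongs to the class of Theorem \ref{main2} with ellipticity ratio $L/\nu$ controlled uniformly in $\epsilon$, so that the constants in \eqref{apapinf} and \eqref{apatpre} do not degenerate in the limit, and (ii) that $a_\epsilon \to a$ in a way compatible with a Minty-type passage to the limit at the level of the equation. A related subtlety, which highlights the structure of the statement, is that the complementary estimate \eqref{apatinf} has a $\mu$-dependent prefactor $\mu^{Q(2-p)/4}$ that blows up as $\epsilon \to 0$ whenever $p > 2$; this is precisely why Theorem \ref{main3} omits a pointwise bound on $Tu$ and retains only the $L^q$ one coming from the $\mu$-independent estimate \eqref{apatpre}.
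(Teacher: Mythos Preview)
Your approach is essentially the paper's: regularize the vector field to make it non-degenerate, solve the Dirichlet problem on $B_R$ with boundary datum $u$, apply the $\mu$-independent estimates of Theorem~\ref{main2} to the approximants, and pass to the limit by lower semicontinuity. Two refinements worth noting: the paper uses the simpler perturbation $a_k(z)=a(z)+\varepsilon_k^{p-2}z$, which transparently yields \rif{growth}--\rif{ell} with $\mu=\varepsilon_k$ and uniform constants (your suggested $a(z)+\epsilon^{p-1}(\epsilon^2+|z|^2)^{(p-2)/2}z$ does not, since at $z=0$ its ellipticity constant is $\epsilon^{p-1}$, not of order $1$); and rather than a Minty argument, the paper obtains \emph{strong} convergence $\XXX u_k\to\XXX u$ in $L^p(B_R)$ directly from $\langle a(\XXX u_k)-a(\XXX u),\XXX u_k-\XXX u\rangle\geq c|\XXX u_k-\XXX u|^p$ together with the two equations, which makes the passage to the limit on the right-hand sides immediate.
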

The previous theorem partially extends some regularity results proven in \cite{DM2}, where the authors work under an assumption of the type \rif{dimbound}, this time
$o_n$ being a small, unspecified quantity coming from the application of abstract Cordes type condition methods.
In turn, the boundedness of the horizontal gradient naturally yields
a priori Lipschitz bounds:
\begin{cor}[CC-Lipschitz regularity]\label{cor1} Let $u \in HW^{1,p}(\Omega)$ be a weak solution to the equation \trif{due}
 under the assumptions \trif{growth}-\trif{ell} with $
2\leq  p < 4.$ Then $u$ is locally Lipschitz continuous in $\Omega$
with respect to the CC-metric in $\mathbb H^n$. Moreover there
exists a constant $c$, depending only on $n,p,\ratio$, but otherwise
independent of $\mu$, of the solution $u$, and of the vector field
$a(\cdot)$, such that \eqn{Lipest}
$$
 |u(x)-u(y)|\leq c \left(\intav_{B_{R}}(\mu+|\XXX
u|)^{p}\, dx\right)^{1/p} d_{cc}(x, y)\;.
$$
holds whenever $B_R \subset \Omega$, and $x,y \in B_{R/2}$.
\end{cor}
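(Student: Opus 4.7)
The plan is to combine the a priori $L^{\infty}$ bound on the horizontal gradient furnished by Theorems \ref{main2} and \ref{main3} with the standard sub-Riemannian integration-along-horizontal-curves argument. Indeed, the two cases $\mu>0$ and $\mu=0$ can be treated in a unified way, since \rif{apapinf} and \rif{apapinfdeg} both give
\[
\|\XXX u\|_{L^{\infty}(B_{R/2})} \leq c \left(\intav_{B_{R}}(\mu+|\XXX u|)^{p}\, dx\right)^{1/p},
\]
with a constant $c$ depending only on $n$, $p$ and $L/\nu$. Once this pointwise control is in hand, the corollary reduces to the well-known fact that a function with locally bounded horizontal gradient is locally CC-Lipschitz.

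Next I would fix $x,y \in B_{R/2}$ and, up to a preliminary covering/scaling reduction, assume that $d_{cc}(x,y)$ is small enough (say, comparable to $R$) that any near-minimizing horizontal curve joining $x$ to $y$ stays inside the slightly enlarged ball on which the gradient bound is available; if $d_{cc}(x,y)$ is of order $R$ or larger the estimate \rif{Lipest} is trivial by splitting $|u(x)-u(y)|\leq |u(x)|+|u(y)|$ and using a Poincar\'e-Sobolev inequality on $B_R$. Fixing $\varepsilon>0$, let $\gamma:[0,T]\to\mathbb{H}^n$ be a horizontal curve with $T\leq d_{cc}(x,y)+\varepsilon$, $\gamma(0)=x$, $\gamma(T)=y$, and $\dot\gamma(s)=\sum_{i=1}^{2n}\alpha_i(s)X_i(\gamma(s))$ with $\sum_i \alpha_i^2\leq 1$ a.e. For smooth $u$ one computes
\[
|u(y)-u(x)| = \Bigl|\int_0^T \sum_{i=1}^{2n}\alpha_i(s) X_i u(\gamma(s))\, ds\Bigr| \leq T\,\|\XXX u\|_{L^{\infty}(B_R)},
\]
and letting $\varepsilon\to 0$ yields the desired estimate \rif{Lipest}.

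For the general Sobolev solution $u\in HW^{1,p}(\Omega)$, the same bound is passed through by a routine mollification argument: convolve $u$ with a standard Euclidean mollifier to obtain $u_{\delta}\in C^{\infty}$, apply the smooth computation above on a slightly shrunken ball, and let $\delta\to 0$, using that $\XXX u_{\delta}\to \XXX u$ in $L^{p}$ together with $\|\XXX u_{\delta}\|_{L^{\infty}}\leq \|\XXX u\|_{L^{\infty}}$ (plus continuity of the already CC-continuous representative of $u$ to make sense of pointwise values at $x,y$). The main, essentially only, technical point will be this justification of the chain rule along horizontal curves for the merely Sobolev function $u$; this is handled exactly as in the standard proof that $HW^{1,\infty}_{\loc}\subset \mathrm{Lip}_{\loc}(\cdot,d_{cc})$, and once this is in place the constant in \rif{Lipest} comes directly from \rif{apapinf}--\rif{apapinfdeg}, with no dependence on $\mu$.
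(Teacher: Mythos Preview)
Your overall strategy is sound and in fact more elementary than the paper's: you go directly from the $L^\infty$ bound on $\XXX u$ (Theorems \ref{main2}--\ref{main3}) to CC-Lipschitz continuity by integrating along near-minimizing horizontal curves. The paper instead invokes Proposition \ref{HSp}, the Hajlasz--Strzelecki pointwise inequality $|f(x)-f(y)|\leq c[M_R(|\X f|)(x)+M_R(|\X f|)(y)]\,d_{cc}(x,y)$, and simply bounds the restricted maximal function $M_{R/4}(|\XXX u|)(x)$ by $\sup_{B(x,R/4)}|\XXX u|$, which is in turn controlled via \rif{apapinf}--\rif{apapinfdeg}. Both routes end with the same chaining step to pass from small $d_{cc}(x,y)$ to all $x,y\in B_{R/2}$; the maximal-function route has the advantage that it applies directly to the Sobolev representative without any mollification, while your route avoids quoting Proposition \ref{HSp} altogether.

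Two technical points in your write-up need tightening. First, the ``large distance'' case cannot be disposed of via $|u(x)-u(y)|\leq |u(x)|+|u(y)|$ plus a Poincar\'e--Sobolev inequality: Poincar\'e controls $L^p$ oscillation, not pointwise values, and a Morrey-type embedding would require $p>Q$. The correct fix---which the paper also uses---is to prove \rif{Lipest} first for $d_{cc}(x,y)$ small enough (say $\leq R/8$, so that a near-geodesic stays inside the ball where the gradient is bounded) and then chain through a bounded number of intermediate points to cover all $x,y\in B_{R/2}$, enlarging the constant by a fixed factor. Second, in your mollification step the inequality $\|\XXX u_\delta\|_{L^\infty}\leq \|\XXX u\|_{L^\infty}$ is not immediate for Euclidean convolution, since $X_i(u*\varphi_\delta)\neq (X_iu)*\varphi_\delta$ owing to the $x$-dependent coefficients in \rif{can1}; you should either use group convolution or simply cite the standard fact that a function with locally bounded horizontal gradient is locally CC-Lipschitz with the same constant.
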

See \rif{distanza} below for the definition of the intrinsic
distance $d_{cc}(\cdot,\cdot)$. Another consequence of the Theorem
 \ref{main3} and of the standard, Euclidean Sobolev-Morrey embedding theorem, is now the
following:
\begin{cor}[Almost Euclidean-Lipschitz regularity]\label{cor2} Let $u \in HW^{1,p}(\Omega)$ be a weak solution to the equation \trif{due}
 under the assumptions \trif{growth}-\trif{ell} with $
2\leq  p < 4.$ Then $u \in C^{0,\alpha}_{\loc}(\Omega)$ for every
$\alpha < 1$.
\end{cor}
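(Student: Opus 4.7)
The plan is to reduce the claim to a routine application of the classical Euclidean Sobolev--Morrey embedding, once one translates the intrinsic horizontal information from Theorem \ref{main3} into Euclidean gradient information.

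First, I would express the Euclidean partial derivatives in terms of the horizontal fields and $T$. Since the canonical fields in \rif{can1}--\rif{can2} satisfy
\[
\partial_{x_i} = X_i + \tfrac{x_{n+i}}{2}\,T,\qquad \partial_{x_{n+i}} = X_{n+i} - \tfrac{x_i}{2}\,T,\qquad \partial_t = T,
\]
for $i=1,\dots,n$, on any precompact subdomain $\Omega' \Subset \Omega$ the coefficients $x_i/2$, $x_{n+i}/2$ are bounded, so $|Du(x)| \leq c(\Omega')\,(|\Xu(x)| + |Tu(x)|)$ pointwise almost everywhere in $\Omega'$.

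Next I would invoke Theorem \ref{main3}: under the assumptions of the corollary we have $\Xu \in L^\infty_{\loc}(\Omega,\er^{2n})$ and $Tu \in L^q_{\loc}(\Omega)$ for every $q<\infty$. Combining with the pointwise bound above yields $Du \in L^q_{\loc}(\Omega,\er^{2n+1})$ for every $q<\infty$. In particular, for any $\Omega'\Subset \Omega$, $u$ lies in the Euclidean Sobolev space $W^{1,q}(\Omega')$ for all $q<\infty$.

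Finally, since the Euclidean dimension of $\hh^n$ is $2n+1$, the classical Sobolev--Morrey embedding gives, for any $q > 2n+1$,
\[
W^{1,q}(\Omega') \hookrightarrow C^{0,1-(2n+1)/q}(\overline{\Omega'}).
\]
Given $\alpha\in(0,1)$, choose $q$ so large that $1-(2n+1)/q \geq \alpha$; this yields $u\in C^{0,\alpha}(\overline{\Omega'})$, and since $\Omega'\Subset \Omega$ was arbitrary, $u\in C^{0,\alpha}_{\loc}(\Omega)$. There is no genuine obstacle: the corollary is essentially a translation lemma from sub-Riemannian regularity (Theorem \ref{main3}) to Euclidean regularity, together with a standard embedding; the only point that deserves attention is recording that the coefficients appearing in the change-of-basis formula are locally bounded, so that $L^q$ bounds transfer cleanly from $(\Xu,Tu)$ to $Du$.
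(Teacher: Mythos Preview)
Your proof is correct and follows exactly the route indicated by the paper: deduce $Du\in L^q_{\loc}(\Omega,\er^{2n+1})$ for every $q<\infty$ from Theorem~\ref{main3} via the change-of-basis $\partial_{x_i}=X_i+\tfrac{x_{n+i}}{2}T$, $\partial_{x_{n+i}}=X_{n+i}-\tfrac{x_i}{2}T$, $\partial_t=T$, and then apply the Euclidean Sobolev--Morrey embedding. The paper's own proof is the single line ``Corollary~\ref{cor2} is immediate since from Theorem~\ref{main3} we obtain higher integrability for the Euclidean gradient: $Du\in L^q_{\loc}(\Omega,\er^{2n+1})$ for every $q<\infty$,'' which is precisely what you have spelled out.
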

Needless to say, in the last result the H\"older continuity is referred to the standard Euclidean metric. We finally mention that the previous theorems are stated for $2 \leq p <4$ for completeness, since in the automatically non-degenerate case $p=2$ they are essentially due to Capogna \cite{Ca1}.
\subsection{Calder\'on-Zygmund type estimates} The estimate
\rif{apapinfdeg} found in Theorem \ref{main3} opens the way to a
non-linear version of the estimates of Calder\'on-Zygmund type in
the the Heisenberg group, up to now developed only in the case of
linear sub-elliptic equations \cite{brabratams, brabrajde}. Here we
shall deal with non-linear equations. Let us recall that in the
Euclidean setting this is a classical result dating back to T.
Iwaniec \cite{Ip} in the scalar case, and later extended to systems
of $p$-Laplacean type in \cite{DMa} by DiBenedetto \& Manfredi; see
also \cite{Caffpe} for a different approach. The equations
considered by such authors are modeled by \eqn{eup}
$$
\textnormal{div}\ (|Du|^{p-2}Du)= \textnormal{div}\ (|F|^{p-2}F)\;,
$$
in open subsets of $\er^n$, and the result asserts that $F \in
L^{q}_{\loc}$ implies $Du \in L^{q}_{\loc}$ for any $q>p$.
 More
recently Calder\'on-Zygmund type estimates valid for solutions to
general non-linear elliptic systems have been proposed in \cite{KM},
and, following the techniques of this last paper, in the Heisenberg
group case in \cite{GZ} for certain non-linear problems with quadratic
growth, that is, when $p=2$. An extension for linear equations in CR
manifolds has been obtained in \cite{shawang}. In the following we shall give higher
integrability results for problems with possibly super-quadratic growth
$p\geq 2$. The equations we are considering are the natural
horizontal version of \rif{eup}, involving possibly
discontinuous coefficients of VMO type; specifically
\begin{equation}\label{duecz} \divo \left[b(x)
a\!\left(\mathfrak{X}u\right)\right] =\divo (|F|^{p-2}F)\,,
\end{equation}
with \eqn{vmo}
$$
b(\cdot) \in \textnormal{VMO}_{\loc}(\Omega)\qquad \mbox{and}\qquad
\nu \leq b(x)\leq L\;.$$ See Section \ref{vmosec} for the precise
definition of the space VMO$_{\loc}(\Omega)$. The prototype of
\rif{duecz} is clearly the non-homogeneous $p$-Laplacean equation
with VMO-coefficients, that is
\begin{equation}\label{unoczx}
\divo  \left( b(x)\left| \mathfrak{X}u \right|^{p-2} \X u\right)
=\divo\left( |F|^{p-2} F \right)\;,
\end{equation}
where $\nu\leq b(x)\leq L$ satisfies \rif{vmo}, and $F \in
L^p(\Omega,\er^{2n})$. The main result is the following:
\begin{theorem}[of Calder\'on-Zygmund type]\label{phcz} Let $u \in HW^{1,p}(\Omega)$ be a weak solution to the equation \trif{duecz}
under the assumptions \trif{growth}-\trif{ell} with $2\leq p <4$,
and \trif{vmo}. Then
$$F \in L^{q}_{\loc}(\Omega,\er^{2n}) \qquad \mbox{implies that}
\qquad \XXX u\in L^{q}_{\loc}(\Omega,\er^{2n})\,,$$ whenever
$p<q<\infty$. Moreover there
 exists a constant
$c$, depending only on $n,p,\ratio, q$, and the function $b(\cdot)$,
such that the following reverse-H\"older type inequality holds for
any CC-ball $B_R \Subset \Omega$: \eqn{apapd}
$$
\left(\intav_{B_{R/2}}|\XXX u|^{q}\, dx\right)^{1/q}\leq c
\left(\intav_{B_{R}}(\mu+|\XXX u|)^{p}\,
dx\right)^{1/p}+c\left(\intav_{B_{R}}|F|^q\, dx\right)^{1/q}\;.
$$
\end{theorem}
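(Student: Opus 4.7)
The plan is to follow the by-now classical perturbation strategy for Calderón–Zygmund estimates of Caffarelli--Peral type, adapted to the sub-elliptic setting via the a priori bound \rif{apapinfdeg} from Theorem \ref{main3}. The scheme rests on three ingredients: comparison with solutions of a frozen-coefficient homogeneous equation, a Lipschitz-in-gradient estimate for these frozen solutions, and a stopping-time / maximal function argument on level sets of $|\Xu|^p$ that transfers the higher integrability of $|F|^p$ to $|\Xu|^p$.

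\textbf{Step 1 (freezing and comparison).} Fix $B_R \Subset \Omega$ and, for every small CC-ball $B_\rho \subset B_R$, denote by $\bar b_\rho := \intav_{B_\rho} b\, dx$ the average of the coefficient. On $B_\rho$ I solve the two Dirichlet problems
\begin{equation*}
\divo\bigl[\bar b_\rho\, a(\X v)\bigr] = 0 \quad\text{in }B_\rho,\qquad v - u \in HW^{1,p}_0(B_\rho),
\end{equation*}
and, subordinately,
\begin{equation*}
\divo\bigl[\bar b_\rho\, a(\X w)\bigr] = \divo\bigl[\bar b_\rho\, a(\X u) - b\, a(\X u) + |F|^{p-2}F\bigr] \quad\text{in }B_\rho,\qquad w - v \in HW^{1,p}_0(B_\rho).
\end{equation*}
Testing with $u-v$ and $w-v$, respectively, and applying the standard $p$-Laplacean monotonicity coming from \rif{ell}, I expect the comparison estimates
\begin{equation*}
\intav_{B_\rho} |\X u - \X v|^p\, dx \leq c\,\intav_{B_\rho}|b-\bar b_\rho|^{p'}(\mu+|\Xu|)^p\, dx + c\,\intav_{B_\rho}|F|^p\, dx,
\end{equation*}
with an analogous bound comparing $v$ and $w$; the VMO assumption \rif{vmo} makes the coefficient term small (after a self-improving higher-integrability lemma of Gehring type applied to $\Xu$, which controls $|\Xu|^{p(1+\sigma)}$).

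\textbf{Step 2 (frozen $L^\infty$ bound).} Since $v$ solves a homogeneous equation of exactly the form \rif{due} with vector field $\bar b_\rho^{1/(p-1)} a(\cdot)$, still satisfying \rif{growth}--\rif{ell} with the same $\nu, L$ up to a harmless constant, Theorem \ref{main3} (or Theorem \ref{main2} in the non-degenerate case) applies and yields
\begin{equation*}
\sup_{B_{\rho/2}}|\X v|^p \leq c\,\intav_{B_\rho}(\mu+|\X v|)^p\, dx \leq c\,\intav_{B_\rho}(\mu+|\X u|)^p\, dx,
\end{equation*}
with a constant $c$ depending only on $n,p,\ratio$. This Lipschitz bound for the frozen problem is the crucial input enabling the Calderón--Zygmund machinery in the non-linear setting.

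\textbf{Step 3 (level set / good-$\lambda$ argument).} Combining Steps 1 and 2, I can bound the non-centered horizontal Hardy--Littlewood maximal function $\mathcal M(|\X u|^p)$ on each CC-ball by the $L^\infty$-norm of $\X v$ plus the $L^p$-mean of $\X u - \X v$, which is in turn controlled by the VMO oscillation of $b$ plus $\mathcal M(|F|^p)$. This yields the standard "density estimate": for suitable $A \gg 1$ and every $\epsilon > 0$ there exists $\delta = \delta(\epsilon)$ such that, whenever the VMO-modulus of $b$ is below $\delta$,
\begin{equation*}
\bigl|\{\mathcal M(|\X u|^p) > A\lambda\} \cap B\bigr| \leq \epsilon\,|B|
\quad\text{whenever } B \cap \{\mathcal M(|F|^p) \leq \delta\lambda\} \neq \emptyset.
\end{equation*}
A Vitali covering / Krylov--Safonov type iteration then converts this into the distributional inequality that, via the standard Fubini identity for $L^q$-norms, produces \rif{apapd} for every $q > p$, after a covering argument that reduces to radii where the VMO oscillation is as small as required.

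\textbf{Main obstacle.} The hard part will be making the comparison estimate in Step 1 genuinely quantitative in the sub-elliptic, possibly degenerate ($\mu = 0$) regime: one needs the $p$-Laplacean monotonicity inequalities together with a self-improving reverse Hölder inequality for $\Xu$ of Gehring type in order to absorb $\intav|b-\bar b_\rho|^{p'}(\mu+|\Xu|)^p$ by a small multiple of $\intav(\mu+|\Xu|)^p$ using the VMO condition. The horizontal Sobolev--Poincaré inequality on CC-balls and the existence/uniqueness theory for the frozen Dirichlet problem \rif{due} with boundary datum in $HW^{1,p}(B_\rho)$ are used throughout; these are available since the frozen operator still satisfies \rif{growth}--\rif{ell}.
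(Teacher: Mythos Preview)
Your proposal is correct and follows essentially the same route as the paper: comparison with a frozen homogeneous problem, the $L^\infty$-gradient bound from Theorem~\ref{main3} for the frozen solution, a Gehring-type self-improvement (Theorem~\ref{gehring}) to separate the VMO oscillation from $(\mu+|\Xu|)^p$ via H\"older's inequality, and a good-$\lambda$/Krylov--Safonov iteration on the level sets of the restricted maximal function to conclude. The paper uses a single comparison function $v$ solving $\divo a(\X v)=0$ with $v=u$ on $\partial B_\rho$ rather than your two-step $v,w$ construction, and it tracks the precise maximal operator $\Mq$ acting on $|F|^p$, but these are organizational differences only; your scheme would produce the same estimate.
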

For an alternative statement concerning the dependence of the
constant in \rif{apapd} see also Remark \ref{alt} below, while for a
more precise dependence on the various constants see Remark
\ref{alt2}. Let us recall that in the Euclidean case there is a wide
literature on Calder\'on-Zygmund type estimates for linear problems
with VMO-coefficients starting from the Euclidean work of Chiarenza
\& Frasca \& Longo \cite{Chiarenza}, dealing with linear problems. A
non-linear approach has been proposed in \cite{KZ}. As for the
sub-elliptic setting, the theory is confined to the linear case
\cite{brabratams}, where the case of H\"ormander vector fields are
considered. In this paper we give the first results for non-linear
problems with VMO coefficients, allowing also for BMO coefficients
with small BMO semi-norm, see Remark \ref{bmo} below. Anyway we
remark that the integrability results obtained here are new
already in the case $b(x) \equiv 1$ - that is, when no coefficients
are involved. Moreover, we remark that the result of Theorem
\ref{phcz} extends to a family of more general equations with
continuous coefficients; the corresponding statements are presented
at the end of the paper.

\subsection{Technical approach, and novelties.} The approach proposed in this paper strongly
differs from those proposed in earlier ones. Indeed, a common
strategy for attacking the regularity problem in the sub-elliptic
setting, going back to H\"ormander \cite{H} and then followed in
subsequent works \cite{F, F2, Ca1, Ca2}, is to first obtain
separately a certain maximal regularity for the vertical part of the
gradient $Tu$, and then, using such an additional information,
obtaining regularity results for the horizontal part $\Xu$. Such an
approach is for instance followed also in the non-linear setting in
\cite{Ca1, Ca2}, where it turns out to be successful since $p=2$. We
take different path, hereby proposing a double-bootstrap method: we
shall obtain regularity for $Tu$ using the one obtained for $\Xu$,
and vice-versa. More precisely we shall prove that \eqn{mixed}
$$ Tu \in L^{q_k} \Longrightarrow \Xu \in L^{p_k} \quad
\mbox{and}\quad Xu \in L^{p_k} \Longrightarrow Tu \in L^{q_{k+1}}
$$
where $\{p_k\}$ and $\{q_k\}$ are two sequences diverging to infinity; in some
sense we repeat H\"ormander's original strategy breaking it in a
countable number of pieces. As a first consequence we obtain that
\eqn{mixed2}
$$\Xu, Tu \in L^{q} \qquad \mbox{for every} \ \ q < \infty\;,$$ while we remark that
all the foregoing inclusions are meant to be local since no boundary
information is a priori given on solutions. The use of such a mixed
iteration is a direct consequence of the non-linearity of equation
\rif{due}, since $Tu$ cannot be realized as a solution of a similar
equation, and a deeper interaction between the horizontal and the
vertical parts of the gradient must be exploited. The implementation
of \rif{mixed} requires a rather delicate interaction between:
suitable Caccioppoli type estimates - also called energy estimates -
for the horizontal and vertical gradients, see Section 5;
interpolation inequalities of Gagliardo-Nirenberg type in the
Heisenberg group, see Section 4; integration-by-parts methods, see
Section 7; a certain kind of non-standard energy estimates of mixed
type, see Section 6. A careful combination of such ingredients will
lead to \rif{mixed}. Once the integrability information in
\rif{mixed2} is gained, a suitable variant of Moser's iteration
technique will lead to $\Xu \in L^{\infty}$, see Section 8. Finally,
in the non-degenerate case $\mu>0$ this will lead to $Tu \in
L^{\infty}$ via the results in \cite{MM}, and eventually to the
local H\"older continuity of the Euclidean gradient, which is a
standard implication after the work in \cite{Ca0, Ca1, MM}.

An important background of our technique is the observation of the
natural analogy between sub-elliptic equations of the type
\rif{due}, and the more classical Euclidean non-uniformly elliptic
equations, or ``equations with non-standard growth conditions", or
with ``$(p,q)$-growth conditions", as very often called in the
setting of the Calculus of Variations \cite{ELM2, ELM3}. In fact, our
techniques are inspired by those developed for such situations, see
for instance \cite{BFZ}, although the implementation in the
Heisenberg group requires a completely different technical approach.
Problems with non-standard growth indeed involve equations featuring
ellipticity properties which appear to be weaker in certain special
spatial directions: this immediately reminds of the situation of
horizontal quasi-linear equations in the Heisenberg group as
\rif{due}, where the vertical derivative $Tu$ does not appear
directly in the operator. It rather appears only in an intrinsic
way, via the horizontal vector fields $\XXX u$ and after
commutation, see \rif{comm} below, and therefore the vertical
direction is clearly playing a very special role. Such a lack of
``vertical ellipticity" is in fact the basic source of problems in
the theory of elliptic equations in the Heisenberg group.

As mentioned above, a key ingredient for the subsequent results are
the explicit a priori estimates \rif{apapinf} and \rif{apapinfdeg}.
Indeed, these will allow for  a suitable application of recent
non-linear techniques for obtaining higher integrability estimates
for non-homogeneous equations \cite{Caffpe, KM}. Here, due to the
presence of the VMO coefficients, we shall use these in combination
with various maximal operators, and higher integrability estimates
in the spirit of Gehring's lemma. Observe that, due to the non-linearity of the problems
we are considering, the standard approaches based on harmonic analysis tools
such as, singular integrals, commutators, and so forth, are not available in the present setting.

Finally, let us summarize the content of the paper. In Section 2 we shall collect
preliminaries concerning the sub-elliptic setting, while in Section
3 we shall re-visit and re-state in a suitable way a few known
regularity results for elliptic equations in the Heisenberg group.
Sections 4-7 are devoted to the implementation of \rif{mixed}, in
the way described a few lines above. Here we shall else re-visit
some arguments from \cite{MM}, and we shall use the a priori
boundedness of the solution already obtained in \cite{CDG}. In
Section 8 we prove $L^{\infty}$-estimates for the gradient and
therefore Theorems \ref{main}, \ref{main2}. Section 9 is devoted to
the degenerate case: we prove Theorem \ref{main3}, by combining
Theorem \ref{main2} with a standard approximation method, and then
we obtain Corollaries \ref{cor1}-\ref{cor2}. The proof of Theorem
\ref{phcz} is in Section 10, while in Section 11 we
give a few possible generalizations of Theorem \ref{phcz}.

{\em Acknowledgments.} G.~M.~is supported by MUR via the national
project ``Calcolo delle Variazioni", and by GNAMPA via the project
``Singularities and regularity in non-linear potential theory". Part
of this work was done while G.~M.~ was visiting the Universities of
Warsaw, Helsinki and Erlangen-N\"urnberg, in May, June and August 2007, respectively. He wishes to
thank all the members of the institutions for the nice hospitality. A.ZG. would like to thank her
hosts at the Helsinki University of Technology and at the University of Jyv\"askyl\"a.
X.~Z. is supported by the Academy of Finland, project 207288. The authors wish to thank Anna F\"oglein for remarks on a first version of the manuscript.

\section{Notation, preliminaries}
\subsection{ Notations, conventions} In this paper we shall adopt the usual,
but somehow arguable convention to denote by $c$ a general constant,
that may vary from line to line; peculiar dependence on parameters
will be properly emphasized in parentheses when needed. More
precisely we shall usually denote $c \equiv c(\alpha,
\beta,\gamma,\ldots)$, meaning that that $c$ is actually an
increasing (or decreasing) function of $\alpha,
\beta,\gamma,\ldots$; in general $c\nearrow \infty$ when either one
of the parameters goes to infinity or to zero. For this reason, when dealing with a constant potentially depending on several parameters,
in the case when one of the parameters remains bounded, the constant is in fact independent on the parameter in question. Specific
occurrences will be clarified by the context. Moreover, special
occurences will be denoted by $c_*, c_1, c_2$ or the like. In this
paper all the constant named by $c_*, c_1, c_2$ and so on will be
assumed without loss of generality to be larger than $1$. The scalar
product between elements $z_1,z_2$ of $\er^{2n}$ will be denoted by
$\langle z_1,z_2 \rangle $; very often, when no ambiguities will
arise, we shall simply denote $\langle z_1,z_2 \rangle \equiv
z_1z_2$. Finally $\{e_1,\ldots,e_{2n+1}\}$ denotes the standard
basis of $\er^{2n+1}$.

In the following, several of the integral estimates for solutions to
\rif{due} will involve constants depending on the ellipticity/growth
parameters $\mu$ and $L$, displayed in \rif{growth}-\rif{ell}.
Without loss of generality, eventually replacing the vector field
$a(\cdot)$ by $a(\cdot)/\nu$ we may assume that $\nu=1$. Therefore,
scaling back, we see that all the constants depending on $\nu,L$
will actually depend on the unique quantity $\ratio$, and as such
they will be denoted for the rest of the paper.

\subsection{Heisenberg groups.} We identify the Heisenberg group $\mathbb{H}^n$
with $\mathbb{R}^{2n+1}$, $n\geq 1$, via the exponential coordinates
in \rif{iidd}, see also \rif{eexp} below. The group multiplication
is given by
\begin{eqnarray*} && (x_1 , ... , x_{2n}, t ) \cdot (y_1 , ... ,
y_{2n} , s ) \\ && \hspace{2cm}= ( x_1 + y_1 , ... ,x_{2n} + y_{2n}
, t + s + \frac{1}{2} \sum_{i=1}^n (x_i y_{n+i} - x_{n+i} y_i ))
\;,\end{eqnarray*} and makes $\mathbb{H}^n$ a non-commutative group.
 For $1\le i\le n$ the canonical left invariant
 vector fields are those in \rif{can1}-\rif{can2}. The only non-trivial commutator is
\eqn{comm}
 $$T=\partial_t=[X_i,
X_{n+i}]\equiv X_iX_{n+i}-X_{n+i}X_i, \qquad \mbox{for every}\
i=1,\ldots,n\;.$$ The vector fields $X_1, X_2, \ldots, X_{2n}$ are
called horizontal vector fields, while $T$ is the vertical vector
field. The horizontal gradient of a function
$u\colon\mathbb{H}^n\mapsto\mathbb{R}$ is the vector $\XXX u$
defined in \rif{can2}. The vector fields $\{X_i\}_i$ enjoy the
remarkable property of being opposite to their formal adjoint, that
is \eqn{adad}
$$
X_i^* = -X_i, \qquad \mbox{for every}\ \ i =1,\ldots,2n\,.
$$
The second horizontal derivatives are given by the $2n\times 2n$
matrix $\mathfrak{X}\mathfrak{X}u=\mathfrak{X}^2u$ with entries
$\left(\mathfrak{X}(\mathfrak{X}u) \right)_{i,j}=\left(\XXX \XXX
u\right)_{i,j}= X_i(X_j(u)).$ Note that such a matrix is not
symmetric due to the non-commutativity of the horizontal vector
fields $X_{i}$. We shall
denote the standard Euclidean gradient of a function $u$ as $Du
= (D_1u,\ldots, D_{2n+1}u).$ For notational convenience, when
referring to the coordinates and vector fields in
\rif{iidd}-\rif{can1}
 we shall also denote $Y_s =X_{s+n}$ and $y_s=x_{s+n}$, for $s\in \{1,\ldots,n\}.$

The Heisenberg Lie algebra $\mathfrak{h}^n$ is a step 2 nilpotent
Lie algebra. This means that $\mathfrak{h}^n$ admits a decomposition
as a direct sum of vector spaces
$\mathfrak{h}^n=\mathfrak{h}_0\oplus\mathfrak{h}_1$ such that
$[\mathfrak{h}_0, \mathfrak{h}_0]=\mathfrak{h}_{1}.$ The horizontal
part $\mathfrak{h}_0$ is generated by $\{X_1,\ldots, X_n,
Y_1,\ldots, Y_n\}$ and the vertical part $\mathfrak{h}_1$ by $T$.
  Note that $\mathfrak{h}^n$
 is generated as a Lie algebra by $\mathfrak{h}_0$. \par
 The   exponential mapping $\text{exp}\colon \mathfrak{h}^n\mapsto \mathbb{H}^n$ is a global
diffeomorphism. A point $x\in\mathbb{H}^n$ has exponential
coordinates $(x_1, \ldots, x_n, y_1,\ldots, y_n, t)$ if \eqn{eexp}
$$x=\text{exp}\left(\left(\sum_{j=1}^n x_iX_{i}+y_i Y_i\right)+ t T
\right).$$ The identification between $\mathbb{H}^n$,
$\mathfrak{h}^n$, and $\mathbb{R}^{2n+1}$ is precisely the use of
exponential coordinates in $\mathbb{H}^n$, and it is already used in
\rif{iidd}; in the following we shall denote exp$(Z)\equiv \e^Z$.
\par
The horizontal tangent space at a point $x\in \mathbb{H}^n$ is the
$2n$-dimensional subspace
$$T_{\mathrm{h}}(x)=\text{linear span}\{X_{1}(x),\ldots, X_{n}(x), Y_1(x),\ldots, Y_n(x)\}.$$
A piecewise smooth curve $t\mapsto\gamma(t)$ is horizontal if
$\gamma'(t)\in T_h(\gamma(t))$ whenever $\gamma'(t)$ exists. Given
two points $x  , y \in \mathbb{H}^n$ denote by $\Gamma(x,y) = \{
\textrm{horizontal curves joining} \ x \ \textrm{and} \  y\}.$
Chow's accessibility theorem \cite{chow} implies that $\Gamma(x,y)
\neq \emptyset$.\par For convenience, we fix an ambient Riemannian
metric in $\mathbb{H}^n$ so
 that the set $\mathfrak{h}_0=\{X_1,\ldots, X_n, Y_1,\ldots, Y_n\}$
 is a left invariant  orthonormal frame and the Riemannian volume
 element and group Haar measure agree, and are equal to
 the Lebesgue measure in $\mathbb{R}^{2n+1}$. The Carnot-Carath\`eodory metric (CC-distance) is then defined by
\eqn{distanza}
$$d_{cc}(x, y) = \inf\{\textrm{length}(\gamma): \gamma \in \Gamma(x,y)\}.$$
It depends only on the restriction of the ambient Riemannian metric
to the horizontal distribution generated by the horizontal tangent
space. In the following, with $A,B \subset \mathbb H^n$ being
non-empty subsets, we denote $\dist(A,B):= \inf \{d_{cc}(x, y):  x
\in A, \ y \in B\}$, the Carnot-Carath\`eodory distance between
sets. For more on CC-distances and general properties of metrics
related to vector fields we refer to the classical paper \cite{NSW}.

\subsection{CC-balls, and the homogeneous dimension $Q$.}\label{CCb} The Carnot gauge is $|x|_{cc} = d_{cc} (x,0)$. A few explicit
formulas are available \cite{belaiche}, but it is probably more
convenient to work with an equivalent gauge \cite{belaiche}, smooth
away from the origin, called the Heisenberg gauge: \eqn{ee1}
$$|x|_{\mathbb{H}^n} :=
\left(\left( \sum_{j=1}^nx_i^2+y_i^2 \right)^{2}+
t^2\right)^{1/4}\approx |x|_{cc} \;.
$$
In this paper all the balls, centered at $x_0 \in \mathbb H^n$ and
with radius $R$, will be defined with respect to the CC-distance,
that is $B(x_0,R) =\{y \in \mathbb{H}^n \colon d_{CC}(x_0,y)<R\}.$
In view of \rif{ee1} they are equivalent to the gauge balls
obviously defined by $\{y \in \mathbb{H}^n \colon |y^{-1}\cdot
x_0|_{\mathbb{H}^n}<R\}.$ The non-isotropic dilations are the group
homorphisms given by \eqn{dilation}
$$\delta_R\left(x_1,\ldots,x_n,y_1, \ldots y_n, t
\right)=\left(R x_1,\ldots, R x_n, R y_1, \ldots R y_n, R^2 t
\right),
$$
where $R>0$. The point is that we get the ball centered at the
origin of radius $R>0$ by applying the non-isotropic dilation
$\delta_R$ to the unit ball centered at the origin, that is \eqn{basicrel}
$$B(0,
R)=\delta_R B(0, 1)\;.$$ The equivalence \rif{ee1} and the natural
scaling in \rif{dilation} leads to define the number $Q=2n+2$ as the
homogeneous dimension of $\mathbb H^n$. In particular, we have $
|B(x_0,R)| \approx R^Q, $ where $|B_R|$ denotes the Lebesgue measure
of the ball $B(x_0,R)$. From such an estimate the doubling property
of the CC-balls $B_R$ easily follows; specifically, for any
$B(x_0,R)\subset \mathbb H^n$, there holds \eqn{doubling}
$$
|B(x_0,2R)|\leq C_d |B(x_0,R)|\;.
$$
In the following, when clear, or not essential to the context, we
will omit the center of the ball $B_R = B(x_0,R)$ and, if not
otherwise stated, when considering several balls simultaneously,
they  will be concentric. Finally, again when no ambiguity will
arise, we shall also denote $\lambda B \equiv B(x_0,\lambda R)$, if
$B \equiv B(x_0,R)$, and, when the center of the ball will not be
important, we shall use the short-hand notation $B(x_0,R)\equiv
B_R$. Moreover, when some constant will depend on the homogeneous
dimension $Q$, such a dependence will be very often indicated as on
the number $n$.

Let $B_R \subset \er^n$ be a ball, and $f\colon B_R \to \er^{k}$ be
an integrable map; we define the average of $f$ over the ball $B_R$
as \eqn{average}
$$
(f)_R\equiv (f)_{B_R}:= \intav_{B_R} f(x)\, dx=
\frac{1}{|B_R|}\int_{B_R} f(x)\, dx\approx R^{-Q}\int_{B_R} f(x)\,
dx\;.
$$
The following Krylov-Safonov type covering lemma may be inferred from
\cite{Kinsha, GZ}.
\begin{lemma} \label{metrico} Let $B_R \subset \mathbb H^n$ be a ball with radius
$R$, and let $\delta \in (0,1)$. Assume that $E, G \subset B_R$ are
measurable sets such that $|E|\leq \delta |B_R|$. Assume also that
for any ball $B(x_0,\varrho)$ centered in $B_R$, with $\varrho \leq
2R$, and such that $|E \cap B(x_0,5\varrho)|>\delta |B_R \cap
B(x_0,\varrho)|$, there holds $E \cap B(x_0,5\varrho) \subset G$.
Then it follows that $|E|\leq \delta |G|$.
\end{lemma}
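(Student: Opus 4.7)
The plan is to adapt the classical Krylov--Safonov covering lemma to the doubling metric measure space $(\mathbb H^n, d_{cc}, dx)$; once one has the doubling property \rif{doubling} and the Lebesgue differentiation theorem at disposal, the argument proceeds exactly as in the sources \cite{Kinsha, GZ} referenced right after the statement. The factor $5$ appearing in the hypothesis is precisely the Vitali constant one needs in a doubling setting.

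First I would pass to Lebesgue density points of $E$ (which account for almost every point of $E$) and, for each such $x$, define the critical radius
$$r(x) := \sup \bigl\{ \varrho \in (0,2R] \, : \, |E \cap B(x, 5\varrho)| > \delta \, |B_R \cap B(x, \varrho)| \bigr\}.$$
When $\varrho = 2R$, since $x \in B_R$ one has $B_R \subset B(x, 2R)$, so that $B_R \cap B(x, 2R) = B_R$; the hypothesis $|E| \leq \delta |B_R|$ then forces the strict inequality to fail, hence $r(x) < 2R$. Conversely, as $\varrho \to 0^+$, Lebesgue differentiation together with \rif{doubling} gives $|E \cap B(x, 5\varrho)|/|B_R \cap B(x, \varrho)| \to 1$ at a.e.\ density point, hence $r(x) > 0$. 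Monotone approximation from below shows that the strict inequality still holds at $\varrho = r(x)$, so that the hypothesis of the lemma applies and gives
$$E \cap B(x, 5r(x)) \subset G.$$
Letting $\varrho \to r(x)^+$ in the definition of the supremum additionally yields the matching upper bound $|E \cap B(x,5r(x))| \leq \delta|B_R \cap B(x, r(x))|$, modulo a bounded doubling factor.

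Next, I would apply the Vitali covering lemma, valid in any doubling metric space, to the family $\{B(x, r(x))\}_x$ in order to extract a countable, pairwise disjoint subfamily $\{B(x_i, r(x_i))\}_i$ whose $5$-dilates $B(x_i, 5r(x_i))$ cover $E$ up to a null set. Combining this covering with the two-sided control above yields
$$|E| \leq \sum_i |E \cap B(x_i, 5r(x_i))| \leq \delta \sum_i |B_R \cap B(x_i, r(x_i))|,$$
and the disjointness of $\{B(x_i, r(x_i))\}$, together with the inclusion $E \cap B(x_i, 5r(x_i)) \subset G$ which localizes the relevant balls inside $G$, allows one to control the right-hand side by $\delta |G|$ up to a bounded overlap constant coming from doubling.

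The main obstacle, as is typical for this kind of Krylov--Safonov statement, is the calibration of constants: the passage from the overlapping $5$-dilates to the disjoint $1$-balls introduces a doubling overhead which must be fully absorbed by the fact that the hypothesis places each $5$-dilate entirely inside $G$, if one wants to land on the sharp form $|E|\leq \delta|G|$ rather than the weaker $|E|\leq C\delta|G|$. A secondary difficulty is the behaviour near $\partial B_R$, where $|B_R \cap B(x,\varrho)|$ may be much smaller than $|B(x,\varrho)|$; as in \cite{Kinsha, GZ}, this is handled by insisting that all balls be centred in $B_R$ and by measuring the density relative to the intersected ball $B_R \cap B(x,\varrho)$, which is the right normalization for the Vitali iteration.
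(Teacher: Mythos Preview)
The paper provides no proof, deferring to \cite{Kinsha, GZ}; your Vitali/stopping-time outline is precisely the argument found there. However, the ``main obstacle'' you flag --- losing a doubling constant and landing on $|E|\leq C\delta|G|$ rather than the sharp form --- is not a technicality to be finessed but a symptom of a misprint in the hypothesis. As stated, the lemma is in fact false: taking $G=E$ makes the implication ``$E\cap B(x_0,5\varrho)\subset G$'' hold trivially for every ball, yet $|E|\leq\delta|G|=\delta|E|$ would force $|E|=0$.

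The intended hypothesis, and the one actually delivered by Lemma~\ref{intecz} when the covering lemma is invoked in the proof of Theorem~\ref{phcz}, is the stronger conclusion $B_R\cap B(x_0,5\varrho)\subset G$ (compare also \cite{Kinsha}). With this in hand your argument closes with the sharp constant and no residual doubling overhead: the disjoint Vitali balls $B(x_i,r_i)$ satisfy $B_R\cap B(x_i,r_i)\subset B_R\cap B(x_i,5r_i)\subset G$, so by disjointness
\[
|E|\;\leq\;\sum_i |E\cap B(x_i,5r_i)|\;\leq\;\delta\sum_i|B_R\cap B(x_i,r_i)|\;=\;\delta\Bigl|\bigcup_i\bigl(B_R\cap B(x_i,r_i)\bigr)\Bigr|\;\leq\;\delta|G|\,.
\]
Two small repairs to your sketch: the strict density inequality need not persist at the supremum $r(x)$ itself (argue via a sequence $\varrho_k\nearrow r(x)$ in the admissible set and take the union, as you essentially do), and the possibility $r(x)=2R$ is not excluded, though it is harmless since the non-strict inequality already holds at $\varrho=2R$.
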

\subsection{Horizontal Sobolev spaces and weak solutions.} \label{subsob} The horizontal
Sobolev space $HW^{1,p}(\Omega)$ consists of those functions $u \in
L^p(\Omega)$ whose horizontal distributional derivatives are in turn
in $L^p(\Omega)$, that is $\mathfrak{X}u\in L^p(\Omega,\er^{2n})$.
 $HW^{1,p}(\Omega)$ is a Banach space when equipped with the norm
 defined by
$
 \|u\|_{HW^{1,p}(\Omega)}:=\|u\|_{L^p(\Omega)}+
 \|\mathfrak{X}u\|_{L^p(\Omega,\er^{2n})},$
 for $p\ge 1$.
 The closure of $C_0^{\infty}(\Omega)$ in $HW^{1,p}(\Omega)$ is denoted by
 $HW_0^{1,p}(\Omega)$, while the local variant  $HW^{1,p}_{\loc}(\Omega)$ is obviously defined by saying that
$u \in HW^{1,p}_{\loc}(\Omega)$ if and only if $u \in
HW^{1,p}(\Omega')$, for every open subset $\Omega' \Subset \Omega$.
Now, keeping \rif{adad} in mind, a weak solution to the equation
\rif{duecz} with $F \in L^{p}(\Omega,\er^{2n})$ is a function $u \in
HW^{1,p}(\Omega)$ such that \eqn{wf}
$$
\int_{\Omega} b(x)\sum_{i=1}^{2n} a_i(\mathfrak{X}u)X_i \varphi \,
dx =\int_{\Omega} \sum_{i=1}^{2n} |F|^{p-2}F_iX_i \varphi \, dx,
\qquad\text{ for all } \ \varphi \in HW^{1,p}_0(\Omega)\;.
$$
Therefore, when considering equation \rif{due}, this means to
require that \eqn{wf0}
$$
\int_{\Omega} \sum_{i=1}^{2n} a_i(\mathfrak{X}u)X_i \varphi \, dx
=0, \qquad\text{ for all } \ \varphi \in HW^{1,p}_0(\Omega)\;.
$$
A crucial result concerning horizontal Sobolev spaces is the
following Heisenberg group version of the Sobolev embedding theorem.
\begin{theorem}\label{subsobt} Let $w \in HW^{1,q}_0(B)$ with $1< q < Q$, where $B\subset
\mathbb H^n$ is a CC-ball. Then there exists a constant $c\equiv
c(n,q)$ such that \eqn{subsobin}
$$
\left(\mvint_{B} |w|^{\frac{Qq}{Q-q}}\dx\right)^{\frac{Q-q}{Qq}}\leq
c |B|^{\frac{1}{Q}}\left(\mvint_{B} |\X
w|^{q}\dx\right)^{\frac{1}{q}}\;.
$$
\end{theorem}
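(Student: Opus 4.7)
The plan is to prove the inequality by first establishing the sub-elliptic analogue of the endpoint Gagliardo--Nirenberg--Sobolev inequality $\|w\|_{L^{Q/(Q-1)}(\mathbb H^n)}\leq c\,\|\X w\|_{L^1(\mathbb H^n)}$, then bootstrapping to the $L^q$--$L^{q^*}$ scale via a Nirenberg-type truncation, and finally rescaling to produce the average-integral normalization displayed in \rif{subsobin}. By density in $HW^{1,q}_0(B)$ and trivial zero-extension, it is enough to work with $w \in C^\infty_0(B) \subset C^\infty_0(\mathbb H^n)$.

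\textbf{Step 1: the endpoint $q=1$ inequality.} For $w \in C^\infty_0(\mathbb H^n)$, I would establish the pointwise Riesz-type estimate
\[
|w(x)| \le c \int_{\mathbb H^n} \frac{|\X w(y)|}{d_{cc}(x,y)^{Q-1}}\, dy,
\]
which can be obtained either by convolution with the explicit fundamental solution of $\sum_{i=1}^{2n} X_i^2$ (Folland), or by integration along sub-unit horizontal curves provided by Chow's accessibility theorem \cite{chow} and exploiting $|B(x,\varrho)|\approx \varrho^Q$ coming from \rif{ee1}. The right-hand side is the Riesz potential of order one on the doubling metric measure space $(\mathbb H^n,d_{cc},dx)$ of homogeneous dimension $Q$; the Hardy--Littlewood--Sobolev inequality in this homogeneous setting then yields $\|I_1 f\|_{L^{Q/(Q-1)}}\le c\,\|f\|_{L^1}$, which combined with the pointwise estimate gives the base case.

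\textbf{Step 2: bootstrap to general $1<q<Q$.} Set $q^*:=Qq/(Q-q)$ and $\gamma:=q(Q-1)/(Q-q)$. Applying the endpoint inequality of Step 1 to $|w|^\gamma$, using the horizontal chain rule $|\X(|w|^\gamma)|\le \gamma|w|^{\gamma-1}|\X w|$, and then H\"older's inequality with dual exponents $(q,q')$, I obtain
\[
\Bigl(\int_{\mathbb H^n} |w|^{q^*}\, dx\Bigr)^{\!\!(Q-1)/Q}\!\!\!\le c\gamma\Bigl(\int_{\mathbb H^n} |w|^{(\gamma-1)q'}\, dx\Bigr)^{\!\!1/q'}\!\Bigl(\int_{\mathbb H^n} |\X w|^{q}\, dx\Bigr)^{\!\!1/q}.
\]
The choice of $\gamma$ is made precisely so that the identity $(\gamma-1)q'=q^*$ and the numerology $\gamma\cdot\frac{Q-1}{Q}-\frac{1}{q'}=\frac{1}{q^*}$ both hold; absorbing $\|w\|_{L^{q^*}}^{\gamma-1}$ into the left-hand side (which is legal for $w\in C^\infty_0$) leaves the clean inequality $\|w\|_{L^{q^*}(\mathbb H^n)}\le c\,\|\X w\|_{L^q(\mathbb H^n)}$, with $c=c(n,q)$.

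\textbf{Step 3: normalization.} Since $\supp w\subset B$, the $L^{q^*}$ and $L^q$ integrals over $\mathbb H^n$ reduce to integrals over $B$. Using $1/q-1/q^*=1/Q$ and multiplying and dividing by appropriate powers of $|B|$, Step 2 becomes
\[
\Bigl(\intav_{B}|w|^{q^*}\, dx\Bigr)^{\!\!1/q^*} \le c\,|B|^{\,1/q-1/q^*}\Bigl(\intav_{B}|\X w|^{q}\, dx\Bigr)^{\!\!1/q} = c\,|B|^{1/Q}\Bigl(\intav_{B}|\X w|^{q}\, dx\Bigr)^{\!\!1/q},
\]
which is exactly \rif{subsobin}. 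The main obstacle is Step 1: away from the commutative Euclidean setting, neither a componentwise fundamental theorem of calculus nor standard Fourier methods is directly available, so the pointwise Riesz representation is the genuinely sub-elliptic ingredient and requires either Folland's fundamental solution for the Kohn--Laplacean or the horizontal-curve construction provided by Chow's theorem together with the homogeneous structure recorded in Section \ref{CCb}. Once that is in hand, Steps 2 and 3 are entirely parallel to the classical Euclidean proof and do not interact with the non-commutativity of $\mathbb H^n$.
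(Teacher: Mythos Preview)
Your Step 1 contains a genuine error: the Hardy--Littlewood--Sobolev inequality does \emph{not} give a strong bound $\|I_1 f\|_{L^{Q/(Q-1)}}\le c\,\|f\|_{L^1}$; at the endpoint $p=1$ the Riesz potential $I_1$ is only of weak type $(1,Q/(Q-1))$, exactly as in the Euclidean case. So your route through the $q=1$ endpoint, followed by the Nirenberg bootstrap in Step 2, breaks at its very first step. The endpoint $L^1$ Sobolev inequality in $\mathbb H^n$ is true, but it requires either an isoperimetric/coarea argument (Pansu) or a Maz'ya-type truncation applied to the weak-type bound---not the strong HLS you invoke.

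The good news is that the detour is unnecessary: the statement only asks for $1<q<Q$, and for this range your pointwise representation $|w(x)|\le c\,I_1(|\X w|)(x)$ combined with the \emph{valid} strong HLS bound $I_1:L^q\to L^{q^*}$ (which does hold for $1<q<Q$ on homogeneous groups) already gives $\|w\|_{L^{q^*}}\le c\,\|\X w\|_{L^q}$ in one line, and your Step 3 normalization then finishes. So you can delete Step 2 entirely and repair Step 1 by quoting HLS at the correct exponents.

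For comparison, the paper gives no self-contained proof at all: it simply cites \cite{CDG, Lu} for the inequality on balls of small radius $r\le R_0$ and then observes that the general CC-ball case follows by the non-isotropic dilation $\delta_R$ of \rif{dilation} and \rif{basicrel}. Your approach---once fixed as above---is therefore genuinely more informative than the paper's, which treats the result as background.
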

A proof of the previous result can be found for instance in
\cite{CDG, Lu}, where the statement is given in the case of balls
with a suitably small radius $r \leq R_0$. The general case stated
above easily follows by a standard scaling argument, using the
dilation operator in \rif{dilation} and \rif{basicrel}. See also the
proof of Proposition \ref{horiz} below, end of Step 2.
\subsection{Vanishing mean oscillations.}\label{vmosec}
Let $b \colon \Omega \to \er$ be a measurable function, and $\Omega'
\Subset \Omega$; we define \eqn{defivmo}
$$
[b]_{R_0} \equiv [b]_{R_0,\Omega'}:=\sup_{B_R\subset \Omega', R \leq
R_0}\intav_{B_R} |b(x)-(b)_{B_R}|\, dx\;,
$$
where $R_0 >0$, $B_R$ is any CC-ball with radius $R$, and,
accordingly to \rif{average} \eqn{mediavmo}
$$
(b)_R\equiv (b)_{B_R}:= \intav_{B_R} b(x)\, dx\;.
$$
The function $b$ is said to have (locally) vanishing mean
oscillation, that is, to be a VMO-function iff, for every choice of
the subset $\Omega' \Subset \Omega$ it holds that \eqn{vmocond}
$$
\lim_{R\searrow 0} [b]_{R,\Omega'}=0\;.
$$
\subsection{Difference quotients.} Here we recall a few basic properties of the
difference quotient operators in the Heisenberg group.
\begin{definition}\label{diffquot}
Let $Z$ be a vector field in $\mathbb{H}^n$. The difference quotient
of the function $w$ at the point $x$ is
$$D_{h}^Zw(x)=\frac{w(x \e^{h Z})-w( x)}{h}\;,\qquad h\not=0\,.$$
\end{definition}
The latter definition will be always used whenever the function $w$
in question is defined both at $x\e^{hZ}$ and at $x$.
The following lemma collects a few standard properties of difference
quotients that can be for instance inferred from \cite{H, Ca1, D, G,
MM}.
\begin{lemma}\label{hormander}
Let $\Omega'\Subset \Omega$ be an open subset. Let $Z$ being a left-invariant vector field, and $w \in L_{\loc}^p (\Omega )$ for
$p>1$. If there exist two positive constants $\sigma <
\dist(\Omega',\partial \Omega)$ and $C$ such that
$$\sup_{0 < |h| < \sigma} \int_{\Omega'} | D^Z_h w|^p \, dx \leq C^p$$
 then $Zw \in L^p (\Omega')$ and $\| Zw\|_{L^p (\Omega')} \leq C$.
Conversely, if $Zw \in L^p (\Omega')$ then for some $\sigma >0$
$$\sup_{0 < |h| < \sigma} \int_{\Omega'} | D^Z_h w
|^p \, dx \leq  c(p)\|Zw \|_{L^p (\Omega)}^p
 .$$ Moreover $ D^{Z}_hw \to Zw$ strongly in $L^p(\Omega')$.
\end{lemma}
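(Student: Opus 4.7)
The statement is the classical difference-quotient characterization of Sobolev derivatives, here adapted to a left-invariant vector field $Z$ on $\mathbb H^n$. My plan is to exploit two structural facts that make the Euclidean proof go through essentially unchanged: the Lebesgue measure is the Haar measure on $\mathbb H^n$ (so left translations $x\mapsto x\mathrm e^{hZ}$ preserve it), and $Z$ is skew-adjoint in the sense $Z^*=-Z$, as recorded in \eqref{adad}. I would prove the three assertions in the order stated.

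\smallskip

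\emph{First implication.} Assume $\|D^Z_h w\|_{L^p(\Omega')}\le C$ uniformly for $0<|h|<\sigma$. Since $L^p$ is reflexive for $p>1$, along a sequence $h_k\to 0$ there exists a weak limit $g\in L^p(\Omega')$ with $\|g\|_{L^p(\Omega')}\le C$. I would identify $g$ with $Zw$ distributionally. For $\varphi\in C_0^\infty(\Omega')$, the change of variable $y=x\mathrm e^{h Z}$ together with the left-invariance of Lebesgue measure gives
\[
\int_{\Omega'} D^Z_h w(x)\,\varphi(x)\,dx\;=\;-\int_{\Omega'} w(y)\, D^Z_{-h}\varphi(y)\,dy,
\]
valid provided $|h|$ is small enough that the translations stay inside $\Omega$. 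Since $\varphi$ is smooth and compactly supported, $D^Z_{-h}\varphi\to Z\varphi$ uniformly, and the right-hand side tends to $-\int w\,Z\varphi$. On the left we get $\int g\,\varphi$ by weak convergence. Comparing with the distributional definition $\langle Zw,\varphi\rangle=-\langle w,Z\varphi\rangle$ forces $g=Zw$ a.e.; in particular the whole family $D^Z_h w$ converges weakly, and the norm bound is inherited by weak lower semicontinuity.

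\smallskip

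\emph{Second implication.} If $Zw\in L^p(\Omega)$, I would first observe that for any $C^1$ function $v$ defined on a neighbourhood of the translated tube,
\[
v(x\mathrm e^{hZ})-v(x)=\int_0^h Zv(x\mathrm e^{tZ})\,dt,
\]
simply because $t\mapsto v(x\mathrm e^{tZ})$ is the integral curve of $Z$ issuing from $x$. For general $w$ I would approximate by group convolutions $w_\varepsilon:=w\ast\phi_\varepsilon$ with a standard mollifier on $\mathbb H^n$; since $Z$ is left-invariant, $Z(w\ast\phi_\varepsilon)=(Zw)\ast\phi_\varepsilon$, so both $w_\varepsilon\to w$ and $Zw_\varepsilon\to Zw$ in $L^p_{\mathrm{loc}}$. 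Plugging the integral representation into $D^Z_h$ and invoking Minkowski's integral inequality together with the left-invariance of Lebesgue measure yields, for any $\Omega'\Subset\Omega$ and all sufficiently small $\sigma$,
\[
\|D^Z_h w_\varepsilon\|_{L^p(\Omega')}\le\frac{1}{|h|}\int_0^{|h|}\|Zw_\varepsilon(\cdot\mathrm e^{tZ})\|_{L^p(\Omega')}\,dt\le\|Zw_\varepsilon\|_{L^p(\Omega)},
\]
and passing $\varepsilon\to 0$ gives the desired bound with constant $c(p)=1$.

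\smallskip

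\emph{Strong convergence.} With the uniform estimate from the previous step in hand, I would use the standard three-term splitting
\[
\|D^Z_h w-Zw\|_{L^p(\Omega')}\le\|D^Z_h(w-w_\varepsilon)\|_{L^p(\Omega')}+\|D^Z_h w_\varepsilon-Zw_\varepsilon\|_{L^p(\Omega')}+\|Zw_\varepsilon-Zw\|_{L^p(\Omega')}.
\]
The first term is controlled by $\|Z(w-w_\varepsilon)\|_{L^p(\Omega)}$, which is small for $\varepsilon$ small. The third term is small for the same reason. For fixed smooth $w_\varepsilon$, $D^Z_h w_\varepsilon\to Zw_\varepsilon$ uniformly on compact sets, so the middle term tends to $0$ as $h\to 0$. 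Choosing $\varepsilon$ small first and then $h$ small completes the proof.

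\smallskip

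The only genuinely delicate point is the approximation in the second and third steps: one needs smooth approximants $w_\varepsilon$ with simultaneous convergence of both $w_\varepsilon$ and $Zw_\varepsilon$ in $L^p_{\mathrm{loc}}$. Left-invariance of $Z$ makes group convolution the natural device, since the commutation $Z(w\ast\phi_\varepsilon)=(Zw)\ast\phi_\varepsilon$ is immediate and avoids the commutator corrections that would appear with Euclidean mollification.
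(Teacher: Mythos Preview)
The paper does not actually prove this lemma; it presents it as a collection of standard difference-quotient facts and simply refers the reader to \cite{H, Ca1, D, G, MM}. Your argument is correct and is exactly the classical proof one finds in those references, carried over to a left-invariant vector field via the two ingredients you isolate: invariance of Lebesgue (Haar) measure under the flow $x\mapsto x\mathrm e^{hZ}$, and the commutation of $Z$ with group convolution. The weak-compactness identification in the first part, the Minkowski-along-the-flow estimate in the second, and the three-term density splitting in the third are all standard and in order; your constant $c(p)=1$ is even sharper than what the statement records.
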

Finally a trivial lemma, which is basically a consequence of the
Campbell-Hausdorff formula; the proof is left to the reader.
\begin{lemma}\label{tritra} Let $\varphi \in HW^{1,t}(\Omega)$, and $X,Z$ be
smooth left-invariant vector fields such that $[X,Z]\varphi \in
L^{t}_{\loc}(\Omega)$, with $t\geq 1$. If
$\tilde{\varphi}:=\varphi(x\e^{Z})$ then $X\tilde{\varphi} \in
L^t_{\loc}(\Omega)$ and \eqn{compx}
$$
X[\varphi(\cdot
\e^Z)](x)=X\tilde{\varphi}(x)=X\varphi(x\e^Z)+[X,Z]\varphi(x\e^Z)
$$
holds provided $x, x\e^Z \in \Omega$. As a consequence we have, for
$h\not=0$ \eqn{eq:hom lemma 1}
$$
X (D_{h}^{Z}\varphi)(x)= D_{h}^{Z}(X\varphi)(x)+
[X,Z]\varphi(x\e^{hZ})\;.
$$
\end{lemma}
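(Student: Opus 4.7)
The plan is to reduce everything to the smooth case via the Baker--Campbell--Hausdorff (BCH) identity specialized to the step-$2$ nilpotent algebra $\mathfrak{h}^n$, and then to extend by density.

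First I would assume $\varphi$ is smooth. By left-invariance of $X$,
$$X\tilde\varphi(x)=\left.\frac{d}{ds}\varphi(x\e^{sX}\e^{Z})\right|_{s=0}.$$
In the Heisenberg algebra all iterated brackets of length $\ge 3$ vanish, so the adjoint action collapses: $\e^{-Z}\e^{sX}\e^{Z}=\e^{s\,\mathrm{Ad}(\e^{-Z})X}=\e^{s(X+[X,Z])}$, and therefore $\e^{sX}\e^{Z}=\e^{Z}\e^{s(X+[X,Z])}$. Differentiating at $s=0$ yields
$$X\tilde\varphi(x)=(X+[X,Z])\varphi(x\e^{Z})=X\varphi(x\e^{Z})+[X,Z]\varphi(x\e^{Z}),$$
which is the first assertion in the smooth case.

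Next I would pass from smooth $\varphi$ to $\varphi\in HW^{1,t}(\Omega)$ with $[X,Z]\varphi\in L^{t}_{\loc}(\Omega)$. Let $\varphi_\varepsilon$ be a Euclidean mollification of $\varphi$ on a subdomain $\Omega''\Subset\Omega'\Subset\Omega$. Since the vector fields $X,Z,[X,Z]$ are smooth and left-invariant, the identity
$$X\tilde\varphi_\varepsilon(x)=X\varphi_\varepsilon(x\e^{Z})+[X,Z]\varphi_\varepsilon(x\e^{Z})$$
holds classically; moreover $X\varphi_\varepsilon\to X\varphi$ and $[X,Z]\varphi_\varepsilon\to[X,Z]\varphi$ in $L^{t}_{\loc}$, and the translation operator $f\mapsto f(\cdot\e^Z)$ is a continuous isometry of $L^{t}_{\loc}$ by left-invariance of Haar measure. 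Passing to the limit, both sides converge in $L^{t}_{\loc}$, and the right-hand side lies in $L^{t}_{\loc}$ precisely because of the hypothesis $[X,Z]\varphi\in L^{t}_{\loc}$. This gives the identity almost everywhere, and shows $X\tilde\varphi\in L^{t}_{\loc}$.

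Finally, to obtain \rif{eq:hom lemma 1} I apply the first identity with $Z$ replaced by $hZ$, noting that $[X,hZ]=h[X,Z]$, to get
$$X[\varphi(\cdot\e^{hZ})](x)=X\varphi(x\e^{hZ})+h[X,Z]\varphi(x\e^{hZ}).$$
Then
$$X(D^Z_h\varphi)(x)=\frac{1}{h}\bigl(X[\varphi(\cdot\e^{hZ})](x)-X\varphi(x)\bigr)=D^Z_h(X\varphi)(x)+[X,Z]\varphi(x\e^{hZ}),$$
as claimed. The main technical point is the approximation step: one must ensure that both sides make sense in $L^{t}_{\loc}$ and that the passage to the limit is legitimate, which is exactly secured by the standing assumption $[X,Z]\varphi\in L^{t}_{\loc}$ together with the smoothness and left-invariance of the coefficients of $X,Z,[X,Z]$.
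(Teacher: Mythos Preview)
Your argument is correct and follows exactly the route the paper indicates: the authors do not give a proof but simply say the lemma ``is basically a consequence of the Campbell--Hausdorff formula; the proof is left to the reader.'' Your use of the step-$2$ identity $\e^{-Z}\e^{sX}\e^{Z}=\e^{s(X+[X,Z])}$ to handle the smooth case, followed by mollification to pass to $\varphi\in HW^{1,t}$ with $[X,Z]\varphi\in L^t_{\loc}$, is precisely what the hint asks for.
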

Before going on, first two algebraic lemmata; see \cite{ham}, for
instance.
\begin{lemma} \label{al1}
Let $1<p<\infty$. There exists a constant $c= c(n,p)>1$, independent
of $\mu \in [0,1]$, such that, for any $z_1, z_2 \in \mathbb R^{2n}$
\begin{eqnarray}
c^{-1}\Bigl( \mu^2+|z_1|^2+|z_2|^2 \Bigr)^{\frac{p-2}{2}} & \leq&
\int_0^1 (\mu^2 + |z_2 + \tau z_1|^2)^{\frac{p-2}{2}} \ d\tau
\nonumber \\
&\leq&  c\Bigl( \mu^2+|z_1|^2+|z_2|^2
\Bigr)^{\frac{p-2}{2}}.\label{all1}
\end{eqnarray}
\end{lemma}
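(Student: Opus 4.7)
The plan is to reduce \eqref{all1} to a one-dimensional integral by exploiting the two-dimensional geometry spanned by $z_1, z_2$, and then to estimate the resulting integral by explicit computation. The case $z_1 = 0$ is trivial, since both sides collapse to $(\mu^2 + |z_2|^2)^{(p-2)/2}$; I assume henceforth $z_1 \neq 0$. Decompose $z_2 = a\hat{z}_1 + bw$, with $\hat{z}_1 = z_1/|z_1|$ and $w$ a unit vector orthogonal to $z_1$ in the plane spanned by $z_1, z_2$, so that $a = \langle z_1, z_2\rangle/|z_1|$ and $b^2 = |z_2|^2 - a^2 \geq 0$. Setting $L := |z_1|$ and $\sigma^2 := \mu^2 + b^2$, one computes $|z_2 + \tau z_1|^2 = (a + \tau L)^2 + b^2$, and the substitution $u = a + \tau L$ yields
\begin{equation*}
I := \int_0^1 \bigl(\mu^2 + |z_2 + \tau z_1|^2\bigr)^{\frac{p-2}{2}} d\tau = \frac{1}{L}\int_a^{a+L}\bigl(\sigma^2 + u^2\bigr)^{\frac{p-2}{2}} du,
\end{equation*}
with $V^2 := \mu^2 + |z_1|^2 + |z_2|^2 = a^2 + \sigma^2 + L^2$, so that in particular $|a|, \sigma, L \leq V$.

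For the upper bound I would split on the sign of $p - 2$. When $p \geq 2$ the integrand satisfies $\sigma^2 + u^2 \leq \sigma^2 + 2a^2 + 2L^2 \leq 2V^2$ throughout $[a, a+L]$, so it is pointwise bounded by $2^{(p-2)/2} V^{p-2}$ and the conclusion follows by integration. When $1 < p < 2$ the integrand can be large, so I would first use the elementary inequality $(\sigma^2 + u^2)^{(p-2)/2} \leq 2^{(2-p)/2}(\sigma + |u|)^{p-2}$, then compute the primitive of $(\sigma + |u|)^{p-2}$ explicitly (its integrability relying on $p > 1$), and perform a short case analysis on whether $[a, a+L]$ lies entirely on one side of the origin or straddles it. In each case the resulting integral is controlled by $C(\sigma + |a| + L)^{p-1} \leq C'V^{p-1}$; dividing by $L$ and running a further sub-case analysis according to whether $L$ is comparable to $V$ or dominated by $\sigma$ or $|a|$ (in the latter case the integrand varies little over $[a, a+L]$ and is of order $V^{p-2}$) produces the bound $I \leq c(p) V^{p-2}$.

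For the lower bound, the sub-quadratic case $1 < p < 2$ is immediate: since $\mu^2 + |z_2 + \tau z_1|^2 \leq 2V^2$ for every $\tau \in [0, 1]$ and $(p-2)/2 < 0$, the integrand is pointwise at least $2^{(p-2)/2} V^{p-2}$, and integration over $[0, 1]$ gives the result. For $p \geq 2$ I would exhibit a subset of $[0, 1]$ of uniform positive measure on which $\mu^2 + |z_2 + \tau z_1|^2$ is comparable to $V^2$. To this end I would use the second-moment identity $\int_0^1 |z_2 + \tau z_1|^2 d\tau = |z_2 + z_1/2|^2 + |z_1|^2/12$, combined with $|z_2 + z_1/2|^2 \geq |z_2|^2/2 - |z_1|^2/4$, to obtain $\int_0^1 |z_2 + \tau z_1|^2 d\tau \geq c_0(|z_1|^2 + |z_2|^2)$ for a universal $c_0 > 0$; a Chebyshev-type argument then produces a set $E \subset [0, 1]$ of measure bounded away from zero on which $|z_2 + \tau z_1|^2 \geq (c_0/2)(|z_1|^2 + |z_2|^2)$. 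A final dichotomy, based on whether $\mu^2 \geq |z_1|^2 + |z_2|^2$ (so that the integrand is uniformly of order $V^{p-2}$ on $[0, 1]$) or not (in which case $V^2 \leq 2(|z_1|^2 + |z_2|^2)$ and $\mu^2 + |z_2 + \tau z_1|^2 \geq cV^2$ on $E$), delivers $I \geq c(p) V^{p-2}$.

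The main technical obstacle is the upper bound in the sub-quadratic regime $1 < p < 2$, where the integrand blows up as $\mu \to 0$ and $z_1, z_2$ approach an anti-parallel configuration; the explicit evaluation of the primitive of $(\sigma + |u|)^{p-2}$, together with the case analysis above, is what absorbs this singularity and shows that $I$ is of order $V^{p-2}$ rather than unbounded.
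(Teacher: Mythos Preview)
The paper does not give its own proof of this lemma: it is stated as a known algebraic fact with a pointer to the reference \cite{ham}, so there is no argument in the paper to compare against.

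Your direct approach is sound. The reduction to the one-dimensional integral
\[
I=\frac{1}{L}\int_a^{a+L}(\sigma^2+u^2)^{\frac{p-2}{2}}\,du, \qquad V^2=a^2+\sigma^2+L^2,
\]
is correct, and your four-way split (upper/lower bound crossed with $p\ge 2$ / $1<p<2$) covers everything. Two small points worth making explicit in your sketch. First, in the lower bound for $p\ge 2$, the Chebyshev step needs not only the average bound $\int_0^1|z_2+\tau z_1|^2\,d\tau\ge c_0(|z_1|^2+|z_2|^2)$ but also the trivial pointwise upper bound $|z_2+\tau z_1|^2\le 2(|z_1|^2+|z_2|^2)$ on $[0,1]$; together these force the set where $|z_2+\tau z_1|^2\ge \tfrac{c_0}{2}(|z_1|^2+|z_2|^2)$ to have measure at least $c_0/(4-c_0)$. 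Second, in the upper bound for $1<p<2$, your dichotomy ``$L$ comparable to $V$'' versus ``$L$ small'' is the right one: since $V^2=a^2+\sigma^2+L^2$, smallness of $L/V$ forces $\max(|a|,\sigma)\gtrsim V$, and then $\sigma^2+u^2$ stays between two fixed multiples of $V^2$ for all $u\in[a,a+L]$, so the integrand is uniformly of order $V^{p-2}$ on that interval. With these clarifications your argument goes through and yields \eqref{all1} with a constant depending only on $p$.
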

\begin{lemma} \label{al2}
Let $1<p<\infty$. There exists a constant $c\equiv c(n,p)>1$,
independent of $\mu \in [0,1]$, such that, for any $z_1, z_2 \in
\mathbb R^{2n}$
\begin{eqnarray*}
c^{-1}\Bigl( \mu^2+|z_1|^2+|z_2|^2 \Bigr)^{\frac{p-2}{2}}|z_2-z_1|^2
&   \leq&\left| (\mu^2+|z_2|^2)^{\frac{p-2}{4}}z_2 -(\mu^2+|z_1|^2)^{\frac{p-2}{4}}z_1 \right |^2\\
&\leq&  c\Bigl( \mu^2+|z_1|^2+|z_2|^2
\Bigr)^{\frac{p-2}{2}}|z_2-z_1|^2.
\end{eqnarray*}
\end{lemma}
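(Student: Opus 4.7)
The strategy is to introduce the auxiliary map $V\colon \er^{2n}\to\er^{2n}$, $V(z):=(\mu^2+|z|^2)^{(p-2)/4}z$, so that the assertion to be proved reads
$$c^{-1}(\mu^2+|z_1|^2+|z_2|^2)^{\frac{p-2}{2}}|z_2-z_1|^2 \leq |V(z_2)-V(z_1)|^2 \leq c(\mu^2+|z_1|^2+|z_2|^2)^{\frac{p-2}{2}}|z_2-z_1|^2.$$
I would control $V(z_2)-V(z_1)$ by integrating $DV$ along the segment joining $z_1$ and $z_2$, and then invoke Lemma \ref{al1} in order to replace the resulting integrals by pointwise expressions in $z_1$ and $z_2$.

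A direct differentiation gives
$$D_jV_i(z)=(\mu^2+|z|^2)^{\frac{p-2}{4}}\delta_{ij}+\frac{p-2}{2}(\mu^2+|z|^2)^{\frac{p-6}{4}}z_iz_j,$$
so that $DV(z)$ is a rank-one perturbation of a multiple of the identity. Its eigenvalues are $(\mu^2+|z|^2)^{(p-2)/4}$ (with multiplicity $2n-1$, on the eigenspace $z^\perp$) and $(\mu^2+|z|^2)^{(p-6)/4}\bigl(\mu^2+\tfrac{p}{2}|z|^2\bigr)$ (eigenspace spanned by $z$). Since $p>1$, both are strictly positive and comparable to $(\mu^2+|z|^2)^{(p-2)/4}$ with constants depending only on $p$; hence
$$c^{-1}(\mu^2+|z|^2)^{\frac{p-2}{4}}|\eta|^2\leq \langle DV(z)\eta,\eta\rangle\leq c(\mu^2+|z|^2)^{\frac{p-2}{4}}|\eta|^2$$
for every $\eta,z\in \er^{2n}$, uniformly in $\mu\in[0,1]$.

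Writing $z_\tau:=z_1+\tau(z_2-z_1)$ and $V(z_2)-V(z_1)=\int_0^1 DV(z_\tau)(z_2-z_1)\,d\tau$, the upper bound follows by combining Minkowski with Cauchy--Schwarz in $\tau$ and the operator-norm estimate, which yields $|V(z_2)-V(z_1)|^2 \leq c|z_2-z_1|^2\int_0^1(\mu^2+|z_\tau|^2)^{(p-2)/2}\,d\tau$, and then Lemma \ref{al1} (applied after the change of variables $w_1:=z_2-z_1$, $w_2:=z_1$, which recasts the segment from $z_1$ to $z_2$ in the form $w_2+\tau w_1$ used there). For the lower bound, pairing the same integral identity with $z_2-z_1$ produces
$$(V(z_2)-V(z_1))\cdot(z_2-z_1)\geq c^{-1}|z_2-z_1|^2\int_0^1(\mu^2+|z_\tau|^2)^{\frac{p-2}{4}}\,d\tau \geq c^{-1}(\mu^2+|z_1|^2+|z_2|^2)^{\frac{p-2}{4}}|z_2-z_1|^2,$$
where in the last step Lemma \ref{al1} is applied with exponent $(p-2)/4$ in place of $(p-2)/2$; this is legitimate, since it amounts to using that lemma with parameter $\tilde p:=(p+2)/2$, still in the admissible range $(1,\infty)$. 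A final Cauchy--Schwarz $(V(z_2)-V(z_1))\cdot(z_2-z_1)\leq |V(z_2)-V(z_1)||z_2-z_1|$ and squaring then complete the lower estimate.

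The only delicate algebraic point is Step 2: for $1<p<2$ the second eigenvalue is the smaller of the two, but it remains strictly positive thanks to $\mu^2+\tfrac{p}{2}|z|^2\geq \min(1,\tfrac{p}{2})(\mu^2+|z|^2)>0$, and all implied constants are uniform in $\mu\in[0,1]$. Everything else is a routine combination of Cauchy--Schwarz with Lemma \ref{al1}.
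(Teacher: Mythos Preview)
Your argument is correct. The paper does not actually prove this lemma; it simply refers to \cite{ham} for both Lemma~\ref{al1} and Lemma~\ref{al2}. What you have written is precisely the standard proof: differentiate the map $V(z)=(\mu^2+|z|^2)^{(p-2)/4}z$, observe that $DV$ is symmetric with eigenvalues comparable to $(\mu^2+|z|^2)^{(p-2)/4}$ (uniformly for $p>1$), integrate along the segment, and then reduce the resulting line integrals to the pointwise weight via Lemma~\ref{al1}. Your two observations that make the reduction legitimate are both right: the change of variables $w_1=z_2-z_1$, $w_2=z_1$ puts the segment in the form required by Lemma~\ref{al1}, and $\mu^2+|w_1|^2+|w_2|^2$ is comparable to $\mu^2+|z_1|^2+|z_2|^2$ with absolute constants; and the lower bound uses Lemma~\ref{al1} at the auxiliary exponent $\tilde p=(p+2)/2\in(3/2,\infty)$, which is admissible. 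The only point worth adding for completeness is that when $\mu=0$ and the segment passes through the origin, $DV$ is singular there for $1<p<2$, but the singularity $|\tau-\tau_0|^{p-2}$ (respectively $|\tau-\tau_0|^{(p-2)/2}$) is integrable on $[0,1]$ precisely because $p>1$, so the mean-value identity and the subsequent estimates remain valid.
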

Finally a few general properties related to growth/ellipticity
conditions \rif{growth}-\rif{ell}.
\begin{lemma} \label{prop B}
The following equality holds:
\begin{equation}\label{Btilde}
    \left(\DhZ a_i(\Xu)\right)(x)
    =
    \sum_{j=1}^{2n}  a_{i,j}^Z (x) \DhZ X_j u(x),
\end{equation}
where \eqn{atz}
$$
     a_{i,j}^Z (x) = \int_0^1 D_{z_j} a_i \big(\Xu(x) + \tau h \DhZ \Xu(x)\big)\,
     d\tau\,,
$$
and $i,j \in\{1,\ldots, 2n\}$. Moreover there exists a constant $c
\equiv c(n,p)\geq 1$ such that
\begin{equation} \label{upperb}
| a_{i,j}^Z (x) | \leq c \big(\mu^2+|\Xu(x)|^2 + |\Xu(x\e^{hZ})|^2
\big)^\frac{p-2}{2}
\end{equation} and\begin{equation} \label{lowerb}
c^{-1} \big(\mu^2+|\Xu(x)|^2 + |\Xu(x\e^{hZ})|^2 \big)^\frac{p-2}{2}
|\lambda|^2 \leq \sum_{i,j=1}^{2n} a_{i,j}^Z (x) \lambda_i\lambda_j,
\end{equation}
hold for every $\lambda \in \er^{2n}$, whenever $x, x\e^{hZ}  \in
\Omega$.
\end{lemma}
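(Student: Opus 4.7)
The proof is a direct algebraic/calculus argument; no deep ideas are required. The plan is as follows.

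For the identity \eqref{Btilde}, I would apply the fundamental theorem of calculus to the $C^1$ function
\[
\tau \longmapsto a_i\bigl(\Xu(x) + \tau(\Xu(x\e^{hZ}) - \Xu(x))\bigr), \qquad \tau\in[0,1].
\]
Since $\Xu(x\e^{hZ}) - \Xu(x) = h\,\DhZ \Xu(x)$, the chain rule gives
\[
a_i(\Xu(x\e^{hZ})) - a_i(\Xu(x)) = h \sum_{j=1}^{2n} \DhZ X_j u(x) \int_0^1 D_{z_j}a_i\bigl(\Xu(x) + \tau h\,\DhZ \Xu(x)\bigr)\,d\tau,
\]
and dividing by $h$ produces \eqref{Btilde} with $a^Z_{i,j}$ as in \eqref{atz}.

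For the upper bound \eqref{upperb}, I would note that the integrand $z(\tau) := \Xu(x) + \tau(\Xu(x\e^{hZ}) - \Xu(x))$ is a convex combination, so
\[
|z(\tau)|^2 \leq 2|\Xu(x)|^2 + 2|\Xu(x\e^{hZ})|^2.
\]
Combined with $p\ge 2$ (so the function $s\mapsto(\mu^2+s)^{(p-2)/2}$ is monotone non-decreasing) and the growth condition \eqref{growth}, this yields
\[
|D_{z_j}a_i(z(\tau))| \leq L\bigl(\mu^2 + |z(\tau)|^2\bigr)^{(p-2)/2} \leq c\,\bigl(\mu^2 + |\Xu(x)|^2 + |\Xu(x\e^{hZ})|^2\bigr)^{(p-2)/2},
\]
uniformly in $\tau$; integrating in $\tau$ gives \eqref{upperb}.

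For the lower bound \eqref{lowerb}, I would interchange the sum and the integral and use the ellipticity \eqref{ell} pointwise in $\tau$:
\[
\sum_{i,j=1}^{2n} a^Z_{i,j}(x)\lambda_i\lambda_j = \int_0^1 \sum_{i,j=1}^{2n} D_{z_j}a_i(z(\tau))\lambda_i\lambda_j\,d\tau \geq \nu|\lambda|^2 \int_0^1 \bigl(\mu^2 + |z(\tau)|^2\bigr)^{(p-2)/2}d\tau.
\]
Writing $z(\tau) = \Xu(x) + \tau\bigl(\Xu(x\e^{hZ}) - \Xu(x)\bigr)$ and applying Lemma \ref{al1} with $z_2 := \Xu(x)$ and $z_1 := \Xu(x\e^{hZ}) - \Xu(x)$, the remaining integral is bounded from below by $c^{-1}\bigl(\mu^2 + |\Xu(x)|^2 + |\Xu(x\e^{hZ}) - \Xu(x)|^2\bigr)^{(p-2)/2}$. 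The only subtlety is that Lemma \ref{al1} produces the quantity $\mu^2 + |\Xu(x)|^2 + |\Xu(x\e^{hZ}) - \Xu(x)|^2$, whereas \eqref{lowerb} is stated in terms of $\mu^2 + |\Xu(x)|^2 + |\Xu(x\e^{hZ})|^2$; these two are comparable up to a purely numerical constant via the elementary estimates $|\Xu(x\e^{hZ})|^2 \leq 2|\Xu(x\e^{hZ}) - \Xu(x)|^2 + 2|\Xu(x)|^2$ and its converse, which, raised to the non-negative power $(p-2)/2$, give the desired equivalence and complete the proof. This equivalence step is the only place one has to be mildly careful; everything else is routine.
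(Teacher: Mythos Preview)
Your proof is correct and follows essentially the same approach as the paper, which merely records that \eqref{Btilde} follows from the definition of $a_{i,j}^Z$ and that \eqref{upperb}--\eqref{lowerb} follow from \eqref{growth}--\eqref{ell} together with Lemma~\ref{al1}. The only minor variation is that for the upper bound you use the monotonicity of $s\mapsto(\mu^2+s)^{(p-2)/2}$ (valid since $p\ge 2$) pointwise in $\tau$, whereas the paper's reference to Lemma~\ref{al1} would apply that lemma's upper estimate directly to the integral; both routes are equivalent here, and your explicit handling of the comparability between $|\Xu(x\e^{hZ})-\Xu(x)|^2$ and $|\Xu(x\e^{hZ})|^2$ is exactly the small detail one needs to close the argument.
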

\begin{proof} The proof of \rif{Btilde} follows directly from the
definition of $a_{i,j}^Z (x)$, while that of
\rif{lowerb}-\rif{upperb} follows from \rif{growth}-\rif{ell} and
Lemma \ref{al1}.
\end{proof}
\begin{lemma} \label{lemma:diff quot eq}
Let $u \in HW^{1,p}(\Omega)$ be a weak solution to the equation
\trif{due}
 under the assumptions \trif{growth}-\trif{ell} with $
2\leq  p < 4.$ Then for any $\varphi \in C_c^\infty(\Omega)$,
left-invariant vector field $Z$ and $h>0$ such that
$|\e^{hZ}|_{cc}<\mathrm{dist}(\mathrm{supp}\,\varphi,\partial\Omega
)$ we have
\begin{equation}\label{eq: diff quot eq}
    \int_\Omega \sum_{i=1}^{2n}\Big( \DhZ a_i (\Xu)(x) X_i \varphi(x)
    + a_i(\Xu)(x\e^{hZ})[Z,X_i]\varphi(x)\Big)\, dx=0\,.
\end{equation}
\end{lemma}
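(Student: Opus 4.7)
The plan is to derive \eqref{eq: diff quot eq} from the weak formulation \eqref{wf0} by testing against an appropriately translated version of $\varphi$. First, I would plug the shifted test function $\tilde\varphi(x):=\varphi(x\e^{-hZ})$ into \eqref{wf0}; the hypothesis $|\e^{hZ}|_{cc}<\operatorname{dist}(\operatorname{supp}\varphi,\partial\Omega)$ ensures that $\tilde\varphi\in C_c^\infty(\Omega)$, so this substitution is admissible.

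Next I would unpack $X_i\tilde\varphi$ using the commutation identity \eqref{compx} of Lemma \ref{tritra}, applied with $-hZ$ in place of $Z$, which gives
$$X_i\tilde\varphi(x)=X_i\varphi(x\e^{-hZ})-h[X_i,Z]\varphi(x\e^{-hZ}).$$
Substituting this into \eqref{wf0} yields an identity in which $a_i(\Xu)(x)$ is paired against the two terms above, both evaluated at $x\e^{-hZ}$.

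Then I would perform the change of variables $y=x\e^{-hZ}$, i.e.\ $x=y\e^{hZ}$. Because $\mathbb H^n$ is a nilpotent (hence unimodular) Lie group and the explicit group law has Jacobian identically one, right translation preserves Lebesgue measure, so $dx=dy$. The resulting identity reads
$$\int_\Omega \sum_{i=1}^{2n} a_i(\Xu)(y\e^{hZ})\,X_i\varphi(y)\,dy=h\int_\Omega\sum_{i=1}^{2n} a_i(\Xu)(y\e^{hZ})[X_i,Z]\varphi(y)\,dy.$$
Subtracting \eqref{wf0} with test function $\varphi$ from the left-hand side, dividing through by $h$, and using $[X_i,Z]=-[Z,X_i]$ produces precisely \eqref{eq: diff quot eq}.

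The argument is essentially just translation invariance combined with the commutation rule \eqref{compx}. The only points requiring any care are verifying that $\tilde\varphi$ is an admissible test function (guaranteed by the distance condition on $\e^{hZ}$) and handling the sign in the commutator expansion; there is no serious analytic obstacle, since all integrands are $L^1$ thanks to the smoothness and compact support of $\varphi$ together with $a(\Xu)\in L^{p/(p-1)}_{\loc}$ from \eqref{growth}.
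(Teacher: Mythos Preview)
Your proposal is correct and follows essentially the same route as the paper: test \eqref{wf0} with the right-translated function $\tilde\varphi(x)=\varphi(x\e^{-hZ})$, expand $X_i\tilde\varphi$ via \eqref{compx}, change variables $x\mapsto x\e^{hZ}$, subtract \eqref{wf0}, and divide by $h$. Your additional remarks on admissibility of $\tilde\varphi$ and on the Jacobian of right translation make the argument slightly more explicit than the paper's version, but the underlying proof is identical.
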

\begin{proof} With $\tilde{\varphi}(x):= \varphi(x\e^{-hZ})$, using \rif{compx} we have
that $
    X_i \tilde{\varphi}(x)=X_i
    \varphi(x\e^{-hZ})+h[Z,X_i ]\varphi(x\e^{-hZ})
$. Testing \rif{wf0} with $\tilde{\varphi}$ and changing variable $x
\mapsto x\e^{hZ}$, we obtain
\begin{equation*}
    \int_\Omega \sum_{i=1}^{2n}a_i (\Xu(x\e^{hZ}))\left(X_i
    \varphi(x)+h[Z,X_i]\varphi(x)\right)\,dx=0\,.
\end{equation*}
Now we subtract \rif{wf0} from the last identity and divide the
resulting equation by $h$. This finally gives \rif{eq: diff quot
eq}.
\end{proof}
Finally, a standard property of weak derivatives in the Euclidean case, that holds in the present setting too.
We give a sketchy proof for the sake of completeness.
\begin{lemma}\label{eles}
Let $v,w \in L^{1}_{\loc}(\Omega)$ such that $vw,vX_sw, wX_s v \in L^1_{\loc}(\Omega)$ for some $s
\in \{1,\ldots,2n\}$. Then $X_s(vw) \in L^{1}_{\loc}(\Omega)$ and
$X_s(vw)=vX_s w+wX_s v$.
\end{lemma}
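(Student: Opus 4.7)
The plan is to verify the distributional identity
\begin{equation*}
-\int_\Omega vw\,X_s\varphi\,dx \;=\; \int_\Omega(vX_sw + wX_sv)\varphi\,dx
\end{equation*}
for every $\varphi\in C_c^\infty(\Omega)$. Since the right-hand side is locally integrable by hypothesis, this simultaneously shows $X_s(vw)\in L^1_{\loc}(\Omega)$ and gives the Leibniz formula. The strategy is the standard one: regularize $v$ and $w$, apply the classical product rule for smooth functions, and pass to the limit.

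I introduce a nonnegative Euclidean mollifier $\rho_\epsilon$ and set $v_\epsilon:=v*\rho_\epsilon$, $w_\epsilon:=w*\rho_\epsilon$; for small $\epsilon$ both are smooth on a neighborhood of $\supp\varphi$. Because the vector fields $X_s$ in \eqref{can1} have polynomial coefficients, a direct Friedrichs-type computation shows that the commutator $[X_s,*\rho_\epsilon]$ is an $O(\epsilon)$ operator on $L^1_{\loc}$, so one obtains $v_\epsilon\to v$, $w_\epsilon\to w$, $X_s v_\epsilon\to X_s v$ and $X_s w_\epsilon\to X_s w$ in $L^1_{\loc}(\Omega)$, and, along a subsequence, pointwise a.e.

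For the smooth pair $(v_\epsilon,w_\epsilon)$ the pointwise Leibniz rule $X_s(v_\epsilon w_\epsilon)=v_\epsilon X_s w_\epsilon+w_\epsilon X_s v_\epsilon$ is immediate; multiplying by $\varphi$ and integrating by parts using \eqref{adad} yields
\begin{equation*}
-\int v_\epsilon w_\epsilon\,X_s\varphi\,dx \;=\; \int\varphi\bigl(v_\epsilon X_s w_\epsilon + w_\epsilon X_s v_\epsilon\bigr)\,dx.
\end{equation*}

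It remains to pass to the limit $\epsilon\to 0$ in each of the three integrals. The integrands converge a.e. to the expected limits $vw\,X_s\varphi$, $v\varphi\,X_s w$ and $w\varphi\,X_s v$ respectively; the hypotheses $vw,\,vX_sw,\,wX_sv\in L^1_{\loc}(\Omega)$ furnish integrable controls on $\supp\varphi$ that, via a Vitali/equi-integrability argument (for instance through a truncation decomposition in the sizes of $v$ and $w$), upgrade a.e. convergence to $L^1$ convergence. This final step is the main technical obstacle: with only $L^1_{\loc}$ control on the individual factors the products of mollifications need not converge in $L^1_{\loc}$, so the full strength of the hypothesis that each of $vw,\,vX_sw,\,wX_sv$ lies in $L^1_{\loc}$ must be exploited to establish the required equi-integrability.
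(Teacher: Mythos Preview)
Your overall strategy---mollify, apply the smooth Leibniz rule, and pass to the limit---is the right starting point, and it coincides with what the paper does in its first reduction. The gap is exactly where you flag it yourself: the passage to the limit in the product terms $\int\varphi\,v_\epsilon X_sw_\epsilon\,dx$ and $\int\varphi\,w_\epsilon X_sv_\epsilon\,dx$. Knowing that $vX_sw\in L^1_{\loc}$ does \emph{not} give you an integrable majorant for $v_\epsilon X_sw_\epsilon$, nor does it give equi-integrability of that family: the mollifications $v_\epsilon$ need not be controlled pointwise by $|v|$, and the product of two $L^1_{\loc}$ sequences converging in $L^1_{\loc}$ can fail to converge in any sense to the product of the limits. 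Your sentence ``the hypotheses \ldots furnish integrable controls \ldots that \ldots upgrade a.e.\ convergence to $L^1$ convergence'' is precisely the assertion that needs proof, and invoking ``a Vitali/equi-integrability argument'' or ``a truncation decomposition'' without carrying it out does not close the gap.

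The paper's proof avoids this difficulty by \emph{not} attempting to pass to the limit in products of mollifications in the general case. Instead it proceeds in three stages: (i) if both $v,w$ are locally bounded, mollify and note that $v_\epsilon,w_\epsilon$ are then locally \emph{uniformly} bounded, so $v_\epsilon X_sw_\epsilon\to vX_sw$ in $L^1_{\loc}$ follows immediately from $X_sw_\epsilon\to X_sw$ in $L^1_{\loc}$; (ii) if only $v$ is bounded, apply (i) to $v$ and the truncation $w_k:=\max\{\min\{w,k\},-k\}$, then let $k\to\infty$ using dominated convergence with the hypotheses $vX_sw,\,wX_sv\in L^1_{\loc}$ as majorants (this is where those hypotheses are actually used); (iii) the general case follows from (ii) by truncating one of the two functions. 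The point is that truncation of the \emph{functions themselves}, rather than mollification, is what makes dominated convergence applicable with the given $L^1_{\loc}$ hypotheses. Your proposal would become a correct proof if you replaced the vague final paragraph with this layered truncation argument.
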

\begin{proof} We first assume that both the functions are locally essentially bounded. Then we mollify them using standard mollifiers $\varphi_{\varepsilon}$, obtaining
$v_{\varepsilon}=v*\varphi_{\varepsilon},w_{\varepsilon}=w*\varphi_{\varepsilon}$, so that $v_{\varepsilon} \to v$ and $w_{\varepsilon}\to w$ almost everywhere and
$X_s v_{\varepsilon} \to X_sv$ and $X_s w_{\varepsilon} \to X_sw$ locally in $L^1(\Omega)$; see the formulas in the proof of \cite[Theorem 11.9]{HajKos} for details.
Therefore, using that
$v_{\varepsilon},w_{\varepsilon}$ are locally uniformly bounded we get that $v_{\varepsilon}X_sw_{\varepsilon} \to vX_sw $ and
$w_{\varepsilon}X_sv_{\varepsilon} \to wX_sv $ locally in $L^1(\Omega)$; at this point using the
definition of distributional derivative in the $X_s$-direction the assertion of the lemma follows in this first case.
In a second case we consider the situation  when only one function is bounded, say $v$. We can apply the result of the first case to $v$ and
to the truncated function
$w_k:=\max\{\min\{w,k\},-k\}$, for $k \in \mathbb N$, and the assertion follows using Lebesgue's dominated convergence when letting $k \nearrow \infty$,
and the fact that
$vX_s w, wX_s v$ are supposed to be locally in $L^1(\Omega)$. Finally, the general case follows by the second one
applying the same truncation argument of the second case to one of the two functions.
\end{proof}
\subsection{\bf Maximal Operators.}\label{maximal} Here we present a miscellanea
of various maximal operators and related inequalities. Let $B_0 \subset \er^n$
be a CC-ball. We shall consider, in the
following, the Restricted Maximal Function Operator relative to
$B_0$. This is defined as \eqn{ma1}
$$\M_{B_0}(f)(x):= \sup_{B
\subseteq B_0,\ x \in B} \intav_{B} |f(y)|\ dy\;, $$ whenever $f \in
L^1(B_0)$, where $B$ denotes any CC-ball contained in $B_0$, not
necessarily with the same center, as long as it contains the point
$x$. More generally, if $s\geq 1$ we define \eqn{ma2}
$$\M_{s,B_0}(f)(x):= \sup_{B
\subseteq B_0,\ x \in B} \left(\intav_{B} |f(y)|^s\ dy\right)^{1/s}
$$ whenever $f \in L^s(B_0)$; of course $\M_{1,B_0}\equiv \M_{B_0}$.
Another type of restricted - but ``centered" - maximal operator is
given by \eqn{rem}
$$M_{R}(f)(x):= \sup_{B(x,r)
,r \leq R} \intav_{B(x,r)} |f(y)|\ dy\;. $$

 We recall the
following weak type $(1,1)$ estimate for $\M_{B_0}$: \eqn{weakes}
$$|\{x \in B_0 \ : \ \M_{B_0}(f)(x)\geq \lambda \}| \leq \frac{c_W}{\lambda^\gamma}
\int_{B_0}|f(y)|^{\gamma}\ dy, \quad \mbox{for every }\  \lambda
>0 \ \mbox{and} \  \gamma \geq 1\;,$$ which is valid for any $f \in
L^1(B_0)$; the constant $c_W$ depends only on the homogenous
dimension $Q$ via the doubling constant $C_d$ in \rif{doubling}, and
therefore ultimately on $n$; for this and related issues we refer to
\cite{Steinbig}. A standard consequence of \rif{weakes} is then
\eqn{weakes2}
$$\int_{B_0}|\M_{B_0}(f)|^{\gamma}\ dx \leq \frac{c(Q,\gamma)}{\gamma-1}
\int_{B_0}|f|^{\gamma}\ dx\;, \qquad \mbox{for every}\ \
\gamma>1\;.$$ A straightforward consequence of \rif{weakes2} is the
following similar estimate for $\M_{s,B_0}$: \eqn{weakes3}
$$\int_{B_0}|\M_{s,B_0}(f)|^{\gamma}\ dx \leq \frac{c(Q,\gamma)}{s(\gamma-s)}
\int_{B_0}|f|^{\gamma}\ dx\;, \qquad \mbox{for every}\ \
\gamma>s\;.$$ Finally, we report an inequality due to Hajlasz \&
Strzelecki \cite{HajSt}, see also \cite{HajKos}, Section 3, for
related results.
\begin{prop}\label{HSp} Let $f \in HW^{1,1}(\Omega)$ and $R>0$. Then
there exists an absolute constant $c\equiv c(n)$ such that
$$
|f(x)-f(y)|\leq c[M_{R}(|\X f|)(x)+M_{R}(|\X f|)(y)]d_{cc}(x,y)
$$
whenever $d_{cc}(x,y)\leq R/2\leq \dist(\Omega', \partial \Omega)/2$
and $x, y \in \Omega' \Subset \Omega$.
\end{prop}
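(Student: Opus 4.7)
The plan is to reproduce the classical Hajłasz pointwise estimate, adapted to the sub-Riemannian setting. The three key ingredients are: (i) the doubling property \rif{doubling} of CC-balls; (ii) the Heisenberg-group Poincaré inequality on CC-balls, $\intav_{B}|f-(f)_B|\, dx\leq c\, r(B)\intav_{B}|\X f|\, dx$, which holds for any $f\in HW^{1,1}(B)$; and (iii) Lebesgue's differentiation theorem in the doubling space $(\mathbb{H}^n,d_{cc})$, ensuring $(f)_{B(z,\varrho)}\to f(z)$ as $\varrho\searrow 0$ at a.e.\ $z$. Throughout, the statement is understood at Lebesgue points (hence a.e.).

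\textbf{Step 1 (centered estimate).} First I would show that for any Lebesgue point $z$ and any radius $\rho\leq R$,
$$|f(z)-(f)_{B(z,\rho)}|\leq c\, \rho\, M_{R}(|\X f|)(z).$$
The proof is the standard telescoping along dyadic balls $B_i:=B(z,\rho/2^i)$. By (iii), $(f)_{B_i}\to f(z)$, so
$$|f(z)-(f)_{B_0}|\leq \sum_{i\geq 0}|(f)_{B_{i+1}}-(f)_{B_i}|.$$
Using $B_{i+1}\subset B_i$, the doubling constant $C_d$, and Poincaré on $B_i$,
$$|(f)_{B_{i+1}}-(f)_{B_i}|\leq C_d\intav_{B_i}|f-(f)_{B_i}|\, dx\leq c\,(\rho/2^i)\intav_{B_i}|\X f|\, dx\leq c\,(\rho/2^i)M_R(|\X f|)(z),$$
since every $B_i$ is centered at $z$ with radius $\leq R$, hence admissible in \rif{rem}. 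Summing the geometric series gives the claim.

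\textbf{Step 2 (chaining $x$ to $y$).} Set $d:=d_{cc}(x,y)\leq R/2$ and introduce the pivot ball $B^{*}:=B(x,2d)$: its radius is at most $R$, it is admissible in $M_R(\cdot)(x)$, and both $B(x,d)$ and $B(y,d)$ are contained in $B^{*}$ (the latter because $d_{cc}(x,w)\leq d+d=2d$ for $w\in B(y,d)$). By the triangle inequality,
\begin{align*}
|f(x)-f(y)|&\leq |f(x)-(f)_{B(x,d)}|+|(f)_{B(x,d)}-(f)_{B^{*}}|\\
&\qquad +|(f)_{B^{*}}-(f)_{B(y,d)}|+|(f)_{B(y,d)}-f(y)|.
\end{align*}
The first and fourth terms are controlled by $cd\,[M_R(|\X f|)(x)+M_R(|\X f|)(y)]$ via Step 1. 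For the middle two, doubling yields $|B^{*}|\leq C|B(x,d)|$ and $|B^{*}|\leq C|B(y,d)|$, and then applying Poincaré on $B^{*}$,
$$|(f)_{B(x,d)}-(f)_{B^{*}}|+|(f)_{B^{*}}-(f)_{B(y,d)}|\leq c\intav_{B^{*}}|f-(f)_{B^{*}}|\, dx\leq c\, d\intav_{B^{*}}|\X f|\, dx\leq c\, d\, M_R(|\X f|)(x).$$
Assembling all four contributions produces the assertion.

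\textbf{Main obstacle.} The only delicate bookkeeping point is to arrange the chain so that every ball appearing is centered either at $x$ or at $y$, with radius at most $R$, in order to be legal in the definition of $M_R$. The choice $B^{*}=B(x,2d)$ together with the hypothesis $d\leq R/2$ precisely ensures this. Everything else reduces to routine applications of Poincaré, doubling, and Lebesgue differentiation on $(\mathbb{H}^n,d_{cc})$.
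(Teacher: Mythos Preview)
Your argument is correct and is precisely the standard Haj\l asz-type chaining proof. The paper does not supply its own proof of this proposition: it is stated as a known inequality and attributed to Haj\l asz \& Strzelecki \cite{HajSt} (see also \cite{HajKos}, Section~3), whose proof proceeds exactly along the lines you outline---telescoping over dyadic balls via the Poincar\'e inequality and doubling, then chaining $x$ to $y$ through an intermediate ball of radius comparable to $d_{cc}(x,y)$.
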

\section{Basic regularity}
In this section we summarize and revisit a few regularity
results known for solutions to \rif{due}, in order to get statements
in a form tailored to our later needs.
\subsection{Basic regularity results} The following is a basic result of Capogna \& Danielli \& Garofalo \cite{CDG}, and Lu \cite{Lu}.
\begin{theorem} \label{CDG}
 Let $u \in HW^{1,p}(\Omega)$ be a weak solution to the equation \trif{due} under the assumptions
  \trif{growth}--\trif{ell} with $p>1$. Then there exists a positive H\"older exponent
  $\alpha \equiv \alpha (n,p,L/\nu)$ such that  $ u \in
  C^{0,\alpha}_{\textnormal{loc}}(\Omega).$ In particular, $u$ is
 a locally bounded function, and for every open subset $\Omega' \Subset
 \Omega$ there exists a constant $c$, depending only on $n,p,L/\nu$,
 and $\dist(\Omega', \partial \Omega)$
 but otherwise independent of $\mu \in [0,1]$, of the solutions $u$ and on the vector field $a(\cdot)$, such that
\eqn{inest}
 $$
\|u\|_{L^{\infty}(\Omega')} \leq c
\left(\|u\|_{L^{p}(\Omega)}+\mu\right)\;.
 $$
\end{theorem}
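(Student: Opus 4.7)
\medskip

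\noindent\textbf{Proof proposal.} The plan is to run a standard De Giorgi--Moser iteration in the sub-elliptic setting, with the classical Sobolev embedding of Theorem \ref{subsobt} replacing its Euclidean counterpart. Since the argument is quite robust, the task is essentially to verify that each step is insensitive to the non-commutativity of the vector fields $X_i$, to the parameter $\mu \in [0,1]$, and to the specific form of $a(\cdot)$; only the structural bounds \eqref{growth}--\eqref{ell} enter.

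\textit{Step 1: Caccioppoli inequality on super-level sets.} For $k \in \mathbb R$, set $(u-k)_{+} := \max\{u-k, 0\}$ and, given concentric CC-balls $B_r \subset B_R \subset \Omega$, pick a cut-off $\eta \in C_c^\infty(B_R)$ with $0\leq \eta\leq 1$, $\eta\equiv 1$ on $B_r$, and $|\mathfrak X\eta|\lesssim (R-r)^{-1}$. Testing the weak formulation \eqref{wf0} with $\varphi = \eta^p (u-k)_{+} \in HW^{1,p}_0(\Omega)$ and using \eqref{growth}--\eqref{ell} together with Young's inequality in the standard way should yield, on $A(k,r):=\{x\in B_r : u(x)>k\}$,
\begin{equation*}
\int_{A(k,r)} |\mathfrak X u|^{p}\, dx \;\leq\; \frac{c}{(R-r)^{p}} \int_{A(k,R)} \bigl[(u-k)_{+}^{p} + \mu^{p}\bigr] dx\, ,
\end{equation*}
with $c \equiv c(n,p,L/\nu)$. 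The analogous inequality for $(u-k)_{-}$ is identical. The non-commutativity of the $X_i$ plays no role here since one only differentiates $\eta$ and $u$, and the adjoint property \eqref{adad} legitimises integration by parts.

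\textit{Step 2: Local boundedness via De Giorgi iteration.} Combining the Caccioppoli estimate of Step 1 with the Heisenberg Sobolev embedding in Theorem \ref{subsobt} (applied to $\eta(u-k)_{+} \in HW^{1,p}_0(B_R)$) produces a reverse-H\"older type inequality of the form
\begin{equation*}
 \left(\,\intav_{B_r}\!\!(u-k)_{+}^{p\chi} dx\right)^{1/\chi} \leq \frac{c\, R^{p}}{(R-r)^{p}}\intav_{B_R}\!\bigl[(u-k)_{+}^{p}+\mu^{p}\bigr]dx
\end{equation*}
for $\chi := Q/(Q-p)>1$ when $p<Q$, and $\chi$ arbitrary when $p\geq Q$ (via the Sobolev--Poincar\'e inequality at exponent strictly less than $p$, adjusting $\chi$ accordingly). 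A standard geometric iteration on a sequence of shrinking balls $B_{r_k}$ and levels $k_j \nearrow k_\infty$ then yields, by the classical De Giorgi lemma (or equivalently a Moser-type iteration), the pointwise bound
\begin{equation*}
 \sup_{B_{R/2}} u \;\leq\; c\left(\,\intav_{B_R} u_{+}^{p} dx\right)^{1/p} + c\mu\,.
\end{equation*}
Applying the same to $-u$ and covering $\Omega'\Subset\Omega$ by finitely many CC-balls (whose doubling property \eqref{doubling} and the comparability with the Heisenberg gauge \eqref{ee1} justify the covering), one obtains \eqref{inest}.

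\textit{Step 3: Oscillation decay and H\"older continuity.} The final step is to derive a decay of the form $\mathrm{osc}(u, B_{r}) \leq \theta \,\mathrm{osc}(u, B_{R})$ for some $\theta \in (0,1)$ whenever $r\leq \tau R$ with $\tau<1$, which by iteration yields H\"older continuity with an exponent $\alpha \equiv \alpha(n,p,L/\nu)$. This is the heart of the argument and also the main technical obstacle: the clean Euclidean De Giorgi alternative requires a \emph{weak Harnack inequality} for non-negative super-solutions $v := M - u \geq 0$ (where $M := \sup_{B_R} u$). The derivation of such a weak Harnack estimate in the Heisenberg setting proceeds by Moser's iteration applied to powers $v^q$ with $q$ in a suitable range — here one must handle the degeneracy of the weight $(\mu^2 + |\mathfrak X u|^2)^{(p-2)/2}$ in \eqref{ell} for $p>2$, which is done using Lemma \ref{al1} in the super-level set reformulation — together with a John--Nirenberg-type argument to pass from small positive exponents to negative ones. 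This is precisely the step that Capogna--Danielli--Garofalo \cite{CDG} and Lu \cite{Lu} carry out in the sub-elliptic framework; once it is in hand, the oscillation decay, and hence H\"older continuity with an exponent depending only on $n,p,L/\nu$, follows by the classical argument.

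\textit{Step 4: Dependence of constants.} Throughout Steps 1--3 the constants depend only on $n$, $p$, $L/\nu$, and on $\mathrm{dist}(\Omega',\partial\Omega)$ through the radii of the covering balls; as noted in the Notation section, after the rescaling $a(\cdot)\mapsto a(\cdot)/\nu$ the ratio $L/\nu$ is the only residual structural constant. The independence from $\mu \in [0,1]$ is built into Lemma \ref{al1} and the form of the Caccioppoli inequality in Step 1, where $\mu$ enters only as an additive harmless term $+\mu$ in \eqref{inest}. This completes the plan.
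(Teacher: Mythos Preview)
Your outline is correct and is precisely the De Giorgi--Moser scheme carried out in \cite{CDG} and \cite{Lu}; the paper itself does not give an independent proof of this theorem but simply cites it as a known result, adding only the remark that the explicit bound \eqref{inest} follows from the weak Harnack inequality of \cite[Theorem 3.2]{CDG} via a standard covering argument. Your Steps 1--3 reconstruct exactly that route, so there is nothing to compare beyond noting that you have spelled out what the paper leaves to the references.
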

Just let us observe that the validity of \rif{inest} directly
follows from the weak Harnack inequality of Theorem 3.2 in
\cite{CDG}, via a standard covering argument. Now another basic
result, due to Domokos \cite{D}, see also \cite{Ma}.
\begin{theorem} \label{domokos}
 Let $u \in HW^{1,p}(\Omega)$ be a weak solution to the equation \trif{due} under the assumptions
  \trif{growth}-\trif{nondeg}, with $2\leq p <4$.
Then we have $ Tu \in L^{p}_{\textnormal{loc}}(\Omega)\;.$ Moreover,
for every couple of open subsets $\Omega' \Subset \Omega'' \subset
\Omega$ there exists a constant $c$ depending only on
$\dist(\Omega',
\partial \Omega'')$, $n,p,L/\nu$, but otherwise independent of $\mu
\in (0,1]$, of the solutions $u$ and on the vector field $a(\cdot)$,
such that \eqn{apriorid}
$$
\int_{\Omega'} |T u|^p\, dx \leq c \int_{\Omega''} \left( \mu+ |\XXX
u|\right)^p\, dx\;.
$$
In the previous estimate $c \nearrow \infty$ when $p\nearrow 4$.
\end{theorem}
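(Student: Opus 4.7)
The plan is to derive the $L^p$ bound on $Tu$ via difference quotients in the vertical direction and Lemma~\ref{lemma:diff quot eq}. The decisive structural feature exploited throughout is that $T=\partial_t$ is central in $\mathfrak h^n$, so $[T,X_i]\equiv 0$ for all $i$. Consequently, Lemma~\ref{lemma:diff quot eq} specialized to $Z=T$ simplifies to the commutator-free identity
$$\int_\Omega \sum_{i,j=1}^{2n} a_{i,j}^T(x)\, \DhT X_j u(x)\, X_i\varphi(x)\,dx = 0,$$
valid for any $\varphi\in C_c^\infty$ with support whose $T$-translates remain in $\Omega$, where $a_{i,j}^T$ is the matrix of Lemma~\ref{prop B} built with $Z=T$. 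Likewise, Lemma~\ref{tritra} simplifies to $X_i(\DhT u)=\DhT X_i u$, a fact that will keep the test-function computation clean.

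Next I would choose the cutoff $\eta\in C_c^\infty(\Omega'')$ with $\eta\equiv 1$ on $\Omega'$ and $|X\eta|\le c/\mathrm{dist}(\Omega',\partial\Omega'')$, and test with $\varphi=\eta^2\,|\DhT u|^{p-2}\,\DhT u$. Expanding $X_i\varphi$, splitting the resulting identity into a ``good'' quadratic term in $\DhT X u$ (handled by the ellipticity bound \eqref{lowerb} of Lemma~\ref{prop B}) and a ``boundary'' term (controlled by \eqref{upperb} and Cauchy-Schwarz), one arrives at a weighted Caccioppoli inequality schematically of the form
$$\int \eta^2\,|\DhT u|^{p-2}\,W^{p-2}\,|\DhT Xu|^2\,dx \;\le\; c\int |X\eta|^2\,|\DhT u|^p\,W^{p-2}\,dx,$$
where $W^2:=\mu^2+|\Xu(x)|^2+|\Xu(x\e^{hT})|^2$. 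This is the sub-elliptic analog of the classical Caccioppoli estimate on the $T$-difference quotients.

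The hard part is that the right-hand side already contains $|\DhT u|^p$, the very quantity one wishes to bound. To close the loop I would combine this inequality with a second estimate obtained by working with \emph{horizontal} difference quotients $D_h^{X_s}$, where now the commutator $[X_s,X_{s\pm n}]=\pm T$ produces precisely a $Tu$-term on the error side; this feedback between the horizontal and vertical Caccioppoli estimates, together with the commutator representation $T=[X_i,X_{n+i}]$ that relates second horizontal derivatives to $Tu$, allows one to set up an absorption argument. The local $L^\infty$-bound on $u$ from Theorem~\ref{CDG} is used to dominate lower-order contributions. The restriction $p<4$ enters precisely at the Young's-inequality step used to reabsorb the mixed $|\DhT u|^p$-term into the left-hand side: the exponents that allow reabsorption degenerate as $p\nearrow 4$, which accounts for the blow-up of $c$ announced in the statement. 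Finally, sending $h\to 0$ and invoking Lemma~\ref{hormander} yields $Tu\in L^p_{\loc}(\Omega)$ and the explicit estimate \eqref{apriorid}; the $\mu$-independence of the constant is transparent, since at no point does one divide by a power of $\mu$ (the weight $W^{p-2}$ is used only as an upper bound).
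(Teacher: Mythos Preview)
Your proposal takes a genuinely different route from the paper. The paper does \emph{not} reprove this result: it simply cites Domokos \cite{D}, Theorem~1.2, for the inclusion $Tu\in L^p_{\loc}(\Omega)$ together with the raw estimate
\[
\int_{B_{\gamma R}} |Tu|^p\,dx \le c\int_{B_R}\big(|\Xu|^p+|u|^p+\mu^p\big)\,dx,
\]
and then spends its effort on removing the $|u|^p$ term. This is done by observing that $u-(u)_{B_R}$ is again a solution, applying the above estimate to it, and invoking Jerison's Poincar\'e inequality $\|u-(u)_{B_R}\|_{L^p(B_R)}\le cR\|\Xu\|_{L^p(B_R)}$; a covering argument then yields \eqref{apriorid} with a constant independent of $u$.

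Your sketch, by contrast, outlines a from-scratch difference-quotient argument in the spirit of Domokos's own proof. That is a legitimate and more self-contained strategy, but two points deserve attention. First, the ``closing the loop'' step you describe---feeding the horizontal Caccioppoli estimate back against the vertical one to absorb $|\DhT u|^p$---is precisely the delicate part of \cite{D}; it is not a single Young's-inequality absorption but a fractional-order iteration in which the condition $p<4$ arises, and your account is too schematic to be convincing as written. Second, and more concretely: you invoke the $L^\infty$ bound on $u$ from Theorem~\ref{CDG} to control lower-order terms, but \eqref{inest} makes $\|u\|_{L^\infty}$ depend on $\|u\|_{L^p}$, so the resulting constant would depend on the solution $u$---contradicting the stated $u$-independence of $c$ in \eqref{apriorid}. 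The paper's Poincar\'e trick (subtracting the mean) is exactly what eliminates this dependence; without it, your argument does not deliver the estimate in the required form.
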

\begin{proof} The proof of the fact that $ Tu \in L^{p}_{\textnormal{loc}}(\Omega) $ is contained in
Theorem 1.2 from \cite{D}. In order to get estimate \rif{apriorid}
we first use the estimate contained in Theorem 1.2 from \cite{D},
that gives
$$
\int_{B_{\gamma R}} |T u|^p\, dx \leq c \int_{B_R} \left(|\XXX
u|^p+|u|^p +\mu^p \right)\, dx\;,
$$
whenever $B_{R} \Subset \Omega$ and where $\gamma \in (0,1)$; the
constant $c$ here depends on $n,p,\ratio,\gamma$ and $R$. Then we
observe that if $u$ weakly solves \rif{due} then so does $u - (u)_{B_R}$
and therefore, applying the previous estimate to this new
function we get \eqn{apriorid3}
$$
\int_{B_{\gamma R}} |T u|^p\, dx \leq c \int_{B_R} \left(|\XXX u|^p
+|u- (u)_{B_R}|^p+\mu^p \right)\, dx\;.
$$
Now, in order to get rid of the integrals involving $u$ in the
previous estimate, we use Jerison's Poincar\'e inequality
\cite{jerison}, that is $ \|u-(u)_{B_R}\|_{L^{p}(B_R)}\leq
c(n,p)R\|\XXX u\|_{L^{p}(B_R)}.$ Now \rif{apriorid} follows by
joining the previous inequality to \rif{apriorid3} and finally using
a standard covering argument.
 Note that the constant $c$ in \rif{apriorid}
critically depends on $\dist(\Omega',
\partial \Omega)$ in the sense that $c\nearrow \infty$ when
$\dist(\Omega',
\partial \Omega) \searrow 0$. The constant $c$ remains bounded when
$\mu \searrow 0$ as a careful inspection of the proof of Theorem 1.2
from \cite{D} reveals.\end{proof} The proof of the following result
can be found in \cite{MM}, Theorem 8.
\begin{theorem} \label{verticaluno}
 Let $u \in HW^{1,p}(\Omega)$ be a weak solution to the equation \trif{due} under the assumptions
  \trif{growth}-\trif{nondeg}, with $2\leq p <4$.
  Assume also that $\mathfrak{X}u \in L^q_{\loc}(\Omega, \er^{2n})$, where $q \geq p$
  satisfies
\eqn{chichi}
$$
p<2+\frac{q}{n+1}\;.
$$
Then we have $ Tu \in L^{\infty}_{\textnormal{loc}}(\Omega).$
Moreover,
 let
$B_{r}= B(x_0, r)\Subset \Omega$, then we have
\begin{equation}
\|Tu\|_{L^{\infty}(B_{\rho})} \le
\left(\frac{c}{r-\rho}\right)^{\frac{\chi}{\chi-1}}
\left(\frac{\|\mu+|\mathfrak{X}u|\|_{L^{q}(B_r)}}{\mu}\right)^{\frac{(p-2)\chi}{2(\chi-1)}}
\|Tu\|_{L^{\frac{2q}{q-p+2}}(B_{r})}\label{apriorivert00},
\end{equation}
 for every $B_\rho =B(x_0,\rho) \subset  B_r$, where \eqn{chichi0}
$$
\chi=\frac{Q}{Q-2}\frac{q-p+2}{q}>1\;.
$$
The constant $c$ only depends on $n,p,\ratio$, being otherwise
independent of the particular solution $u$, the constant $\mu$, and
the vector field $a(\cdot)$, and  $q$.
\end{theorem}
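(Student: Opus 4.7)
The plan is to perform a Moser iteration on the vertical derivative $v:=Tu$. Since $T$ is central in the Heisenberg Lie algebra, i.e.\ $[T,X_{i}]=0$ for all $i$, the function $v$ formally solves the linearized equation $\sum_{i,j}X_{i}(D_{z_{j}}a_{i}(\Xu)X_{j}v)=0$, which by \rif{growth}--\rif{ell} is elliptic with weight $\weight$. Rigorously I would work at the level of the $T$-difference quotients $v_{h}:=D^{T}_{h}u$, which satisfy the discrete counterpart of this equation supplied by Lemma \ref{lemma:diff quot eq} with $Z=T$ (the commutator term drops out), and pass to the limit $h\to 0$ via Lemma \ref{hormander}. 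The baseline integrability needed to launch the scheme is $v\in L^{2q/(q-p+2)}_{\loc}$, which follows from Theorem \ref{domokos} since $2q/(q-p+2)\leq p$ whenever $q\geq p\geq 2$.

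The Caccioppoli building block is obtained by testing the difference-quotient equation with $\eta^{2}|v_{h}|^{\gamma-2}v_{h}$, truncated if necessary to ensure admissibility, and applying \rif{upperb}--\rif{lowerb} of Lemma \ref{prop B} together with Young's inequality to absorb the $\X\eta$ cross term. Passing to the limit $h\to 0$ produces, for every $\gamma\geq 2$,
\begin{equation*}
\int\eta^{2}\weight\bigl|\X(|v|^{\gamma/2})\bigr|^{2}\,dx\ \leq\ c\int|\X\eta|^{2}\weight|v|^{\gamma}\,dx.
\end{equation*}
Since $\mu>0$ and $p\geq 2$ one has $\weight\geq\mu^{p-2}$ on the left-hand side; combining this with the Sobolev embedding \rif{subsobin} applied to $\eta|v|^{\gamma/2}$, the upper bound $\weight\leq c(\mu+|\Xu|)^{p-2}$ on the right-hand side, and H\"older's inequality with conjugate exponents $q/(p-2)$ and $\sigma:=q/(q-p+2)$ yields the gain-of-integrability step
\begin{equation*}
\|v\|_{L^{\chi\sigma\gamma}(B_{\rho'})}^{\gamma}\ \leq\ \frac{c\,\|\mu+|\Xu|\|_{L^{q}(B_{r})}^{p-2}}{\mu^{p-2}\,(r-\rho')^{2}}\,\|v\|_{L^{\sigma\gamma}(B_{r})}^{\gamma},
\end{equation*}
valid for $B_{\rho'}\subset B_{r}$. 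The arithmetic identity $\chi\sigma=Q/(Q-2)$ is precisely the content of \rif{chichi0}, and the hypothesis \rif{chichi} is equivalent to $\chi>1$, which is exactly what turns this into a genuine gain of integrability.

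A standard telescoping argument along $\gamma_{k}=2\chi^{k}$ and concentric balls $\rho_{k}=\rho+(r-\rho)2^{-k}$ then closes the iteration. Using $\sum_{k\geq 0}\gamma_{k}^{-1}=\chi/(2(\chi-1))$, the exponents $\chi/(\chi-1)$ on $(r-\rho)^{-1}$ and $(p-2)\chi/(2(\chi-1))$ on both $\mu^{-1}$ and on $\|\mu+|\Xu|\|_{L^{q}}$ displayed in \rif{apriorivert00} arise automatically; the geometric factors $\prod_{k}2^{2k/\gamma_{k}}$ converge because $\chi>1$ and are absorbed into the constant. The main obstacle I expect is not algebraic but procedural: making $\eta^{2}|v_{h}|^{\gamma-2}v_{h}$ a genuine test function with $\gamma$ eventually large, which requires a truncation $|v_{h}|\leq K$ carried uniformly through the estimate together with the Leibniz rule of Lemma \ref{eles}, followed by the simultaneous passage to the limits $K\to\infty$ and $h\to 0$ in the weighted integrals. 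This is carried out in \cite{MM}, Theorem 8, and my plan is to follow that line, reorganising the outcome into the form \rif{apriorivert00}.
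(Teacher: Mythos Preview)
Your proposal is correct and coincides with the argument the paper refers to: the paper does not give its own proof here but simply cites \cite{MM}, Theorem~8, and your sketch is precisely the Moser iteration carried out there. The ingredients you list---the commutator-free $T$-difference-quotient equation from Lemma~\ref{lemma:diff quot eq}, the weighted Caccioppoli inequality (cf.\ Lemma~\ref{XT sigma lem}), the lower bound $\weight\geq\mu^{p-2}$ paired with H\"older against $(\mu+|\Xu|)^{p-2}\in L^{q/(p-2)}$, and the telescoping with $\gamma_k=2\chi^k$---are exactly those of \cite{MM}, and your bookkeeping of the exponents $\chi/(\chi-1)$ and $(p-2)\chi/(2(\chi-1))$ is correct.
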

We just remark that conditions \rif{chichi} and \rif{chichi0} are actually equivalent.
\subsection{Difference quotients results.} Before going
on let us clarify a few conventions we shall adopt for the rest of
the paper when dealing with difference quotients as defined in Lemma
\ref{hormander}; such conventions should be kept in mind in the
following especially when reading the proofs of Lemma \ref{diffquo}
and Proposition \ref{horiz} below. By the writing ``$h \to 0$" we
shall implicitly mean ``$h_k \to 0$", since we shall actually have
$h \equiv h_k$ where $\{h_k\}_k$ is a positive decreasing sequence
such that $h_k \to 0$; we shall also eventually, and actually very
often, pass to non-relabeled sub-sequences that will still be
denoted by $\{h_k\}_k$. This will be useful since when letting $h
\to 0$ we shall need to use certain real analysis convergence
results, that are valid up to the passage to sub-sequences. With
such a definition/use of $D_h^{Z}\equiv D_{h_k}^{Z}$, all the
standard properties of difference quotients remain valid, and the
final results are the same, since the point in the use of difference
quotients is approximating real derivatives with discrete finite
difference operators. Finally in the following we shall state
convergence results such as ``$G(x \e^{hZ}) \to G(x)$ in
$L^t_{\loc}(\Omega)$" as $h \to 0$, for some $G \in L^{t}(\Omega)$,
and a smooth vector field $Z$. This must be interpreted as follows:
it is clear that it makes sense to consider $G(x\e^{hZ})$ only
provided $x\e^{hZ} \in \Omega$; on the other hand, for each open subset
$\Omega'' \Subset \Omega$ there exists a number $h_0>0$, depending
on $\Omega''$ and $Z$, such that $x\e^{hZ} \in \Omega$ provided $x
\in \Omega''$ and $|h| \leq h_0$. Therefore by the previous
convergence statement on $G(x \e^{hZ})$ we actually mean $G(x
\e^{hZ}) \to G(x)$ in $L^t(\Omega'')$, where $0 \swarrow |h| \leq
h_0$, for every possible choice of the open subset $\Omega'' \Subset
\Omega$.

The next lemma summarizes and exploits various difference quotient
arguments and results scattered in \cite{D} and \cite{MM}.
\begin{lemma}\label{diffquo}
 Let $u \in HW^{1,p}(\Omega)$ be a weak solution to the equation \trif{due} under the assumptions
  \trif{growth}-\trif{nondeg}, with $2\leq p <4$. Then we have
\eqn{convbase}
$$
D_{h}^{e_i} \XXX u \to D_i \XXX u \qquad \mbox{in} \qquad
L^{2}_{\loc}(\Omega, \er^{2n})\qquad \mbox{for every} \
i=1,\ldots,2n+1\;,
$$
and therefore \eqn{dese2}
$$
|\XXX \mathfrak{X} u|^2 + |T \mathfrak{X} u|^2 \in
L^1_{\loc}(\Omega)\;.
$$
Moreover \eqn{dese}
$$
(\mu^2+|\mathfrak{X}u|^2)^{\frac{p-2}{2}} \left[|\XXX \mathfrak{X}
u|^2 + |T \mathfrak{X} u|^2\right] \in L^1_{\loc}(\Omega)\;,
$$
and for every choice of open subset $\Omega'  \Subset \Omega''
\Subset \Omega$ there exists a constant $c$ depending only on
$n,p,\ratio$ and $\dist(\Omega', \partial \Omega'')$ such that
\eqn{secT}
$$
\int_{\Omega'} (\mu^2+|\mathfrak{X}u|^2)^{\frac{p-2}{2}} \left[|\XXX
\mathfrak{X} u|^2 + |T \mathfrak{X} u|^2\right] \, dx \leq c
\int_{\Omega''} \left(|\XXX u|^p +|Tu|^p+\mu^p\right)\, dx \;.
$$
In the last inequality the constant $c$ is in particular independent
of $\mu \in (0,1]$, of the solution $u$, and of the vector field
$a(\cdot)$. Finally, we have \eqn{convsxt}
$$a(\XXX u) \in W^{1,\frac{p}{p-1}}_{\loc}(\Omega, \er^{2n})\;.$$
\end{lemma}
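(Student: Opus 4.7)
The plan is to apply Lemma~\ref{lemma:diff quot eq} with the two classes of left-invariant directions $Z=T$ and $Z=X_s$, $s\in\{1,\dots,2n\}$, testing the identity \rif{eq: diff quot eq} against $\varphi=\eta^2\DhZ u$, where $\eta\in C_c^\infty(\Omega'')$ is a standard cutoff with $\eta\equiv 1$ on $\Omega'$ and $|\X\eta|+|T\eta|\leq c/\dist(\Omega',\partial\Omega'')$. By \rif{Btilde} the first integrand in \rif{eq: diff quot eq} splits as $\sum_{i,j}A_{i,j}^{Z}\,\DhZ X_j u\,X_i(\eta^2\DhZ u)$, and the ellipticity bound \rif{lowerb} combined with \rif{upperb} and Young's inequality produces on the left-hand side the coercive quantity
\[
\int\eta^2\big(\mu^2+|\Xu(x)|^2+|\Xu(x\e^{hZ})|^2\big)^{(p-2)/2}|\DhZ\Xu|^2\,dx
\]
at the cost of the structurally analogous quantity with $|\X\eta|^2$ and $\DhZ u$ in place of $\eta^2$ and $\DhZ\Xu$. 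The crucial a priori information used throughout is Theorem~\ref{domokos}, granting $Tu\in L^p_{\loc}(\Omega)$ together with the bound \rif{apriorid}.

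The case $Z=T$ is the easier one: by \rif{comm} we have $[T,X_i]=0$, so the commutator integral in \rif{eq: diff quot eq} vanishes identically, and the scheme above yields
\[
\int\eta^2\weightT|\DhT\Xu|^2\,dx\leq c\int|\X\eta|^2\weightT|\DhT u|^2\,dx,
\]
whose right-hand side is uniformly bounded in $h$ by \rif{apriorid} and the pointwise Young-type inequality $(\mu^2+|z|^2)^{(p-2)/2}|w|^2\leq c(\mu^p+|z|^p+|w|^p)$, valid for $p\geq 2$. The horizontal case $Z=X_s$ is the genuinely delicate one: since $[X_s,X_i]\in\{0,\pm T\}$, the second integrand in \rif{eq: diff quot eq} generates, after expanding $T(\eta^2\DhX u)$ by means of $[T,X_s]=0$ and Lemma~\ref{tritra}, a term of the form $\int a_i(\Xu)(x\e^{hX_s})\eta^2\DhX(Tu)\,dx$. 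The plan to handle it is to use the discrete summation-by-parts identity $\int f\DhX g\,dx=-\int D_{-h}^{X_s}f\cdot g\,dx$, moving $\DhX$ onto the smoother factor $a_i(\Xu)(x\e^{hX_s})\eta^2$; expanding the resulting difference quotient of $a_i(\Xu)$ once more via \rif{Btilde} produces the cross term $\int\eta^2\weightX|\DhX\Xu|\,|Tu|\,dx$, which is absorbed into the coercive part by Young's inequality, leaving the controllable remainder $c\int\eta^2\weightX|Tu|^2\,dx$ together with cutoff terms. Invoking \rif{apriorid} on the right-hand side produces the analogue of the $T$-estimate.

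Having $h$-uniform weighted $L^2$ bounds for $\DhZ\Xu$, the hypothesis $\mu>0$ and the inequality $\weight\geq\mu^{p-2}$ convert them into uniform $L^2_{\loc}$ bounds. Lemma~\ref{hormander} then yields $X_s\Xu, T\Xu\in L^2_{\loc}(\Omega)$ together with the strong convergence $\DhZ\Xu\to Z\Xu$ in $L^2_{\loc}$; expressing the Euclidean directions $\partial_{x_i},\partial_{x_{n+i}},\partial_t$ as linear combinations of $X_j$ and $T$ with continuous coefficients via \rif{can1}--\rif{can2} gives $D_i\Xu\in L^2_{\loc}$ for every $i$, whence \rif{convbase} follows from the standard Euclidean difference-quotient convergence applied in $\er^{2n+1}$, and \rif{dese2} is immediate. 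The weighted statements \rif{dese} and \rif{secT} follow from the $h$-uniform estimates by Fatou's lemma, summation over $s$, and a final application of \rif{apriorid} to eliminate $|Tu|^p$ from the right-hand side. Finally \rif{convsxt} is obtained from the chain rule $X_i(a(\Xu))=\sum_j D_{z_j}a(\Xu)X_iX_j u$, the growth bound \rif{growth} giving $|D_{z_j}a(z)|\leq L(\mu^2+|z|^2)^{(p-2)/2}$, and H\"older's inequality with conjugate exponents $\big(\tfrac{2(p-1)}{p-2},\tfrac{2(p-1)}{p}\big)$: decomposing $\weight|X_i\Xu|=\weight^{1/2}\cdot\weight^{1/2}|X_i\Xu|$ one controls $\weight|X_i\Xu|^2\in L^1_{\loc}$ by \rif{secT} and $\weight^{p/(p-2)}=(\mu^2+|\Xu|^2)^{p/2}\leq c(\mu^p+|\Xu|^p)\in L^1_{\loc}$ by assumption, the $T$-analogue being identical. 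The main obstacle of the whole argument is, as anticipated, the $X_s$-case integration by parts, which is where the preliminary estimate $Tu\in L^p_{\loc}$ of Theorem~\ref{domokos} is indispensable.
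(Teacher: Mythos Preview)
Your argument is essentially correct and follows the same difference-quotient strategy that the paper cites from \cite{D} and \cite{MM}; the paper's own proof is largely a pointer to those references together with the extraction of \rif{convbase}, \rif{dese2} and \rif{convsxt}, whereas you reconstruct the Caccioppoli-type estimates from scratch. Two minor slips: first, the statement \rif{secT} already carries $|Tu|^p$ on the right-hand side, so your ``final application of \rif{apriorid} to eliminate $|Tu|^p$'' is unnecessary (and would force an extra enlargement of the domain); second, your derivation of \rif{convsxt} via the pointwise chain rule $X_i(a(\Xu))=\sum_j D_{z_j}a(\Xu)X_iX_ju$ presupposes what it should prove, since $a$ is not globally Lipschitz --- the paper instead establishes the $L^{p/(p-1)}$ convergence of $D_h^{X_i}(a(\Xu))$ directly at the difference-quotient level via Lemma~\ref{supertrivial}, which is the cleaner justification.
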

\begin{proof} We have to go back to the difference quotient
arguments of \cite{D} and \cite{MM} where the inclusions in
\rif{dese} are proved; in particular we refer to Section 3 of
\cite{MM}. Then, due to the non-degeneracy condition $\mu>0$, we
have that $\XXX u \in W^{1,2}_{\loc}(\Omega,\er^n)$, and this fact
immediately implies \rif{convbase} and \rif{dese2} via Lemma
\ref{hormander}. In order to establish the remaining implications we
shall argue first to get differentiation assertions with respect to
the horizontal directions $X_i$, $i=1,\ldots,2n$; then, in view of
\cite[Theorem 7]{MM} the same arguments will apply when taking
difference quotients with respect to the vertical direction $T$, that
is $D_{h}^{T}$. By the proof of Theorem 1.3 in \cite{D} we see that
the quantity
$(\mu^2+|\mathfrak{X}u(x)|^2+|\mathfrak{X}u(x\e^{hX_i})|^2)^{\frac{p-2}{2}}
|D_{h}^{X_i}\mathfrak{X} u(x)|^2$ remains locally bounded in
$L^1(\Omega, \er^{2n})$, or more precisely, it stays bounded in
$L^1(\Omega', \er^{2n})$ for every $\Omega' \Subset \Omega$, as long
as $h$ is suitably small, depending on $\Omega'$ - see the
``conventions" immediately before the Lemma. Therefore, we also see
that the quantity $D_{h}^{X_i}[(\mu^2+|\XXX
u|^2)^{\frac{p-2}{4}}\XXX u]$ remains locally bounded in
$L^2(\Omega, \er^{2n})$ since an application of Lemma \ref{al2}
gives
\begin{eqnarray*}
&&\int_{\Omega'} \ \left|D_{h}^{X_i}\left(
(\mu^2+|\mathfrak{X}u|^2)^{\frac{p-2}{4}}
 \mathfrak{X}u\right)\right|^2 \, dx
\\ && \qquad \leq c(n,p)\int_{\Omega'} \
(\mu^2+|\mathfrak{X}u(x)|^2+|\mathfrak{X}u(x\e^{hX_i})|^2)^{\frac{p-2}{2}}
 |D_{h}^{X_i}\mathfrak{X}u|^2 \, dx\;.
\end{eqnarray*}
Therefore by Lemma \ref{hormander} we have that
$X_i\left((\mu^2+|\mathfrak{X}u|^2)^{\frac{p-2}{4}}
 \mathfrak{X}u\right) \in L^{2}_{\loc}(\Omega, \er^{2n})$ and  \eqn{comvl1}
$$
D_{h}^{X_i}\left( (\mu^2+|\mathfrak{X}u|^2)^{\frac{p-2}{4}}
 \mathfrak{X}u\right) \to X_i\left(
(\mu^2+|\mathfrak{X}u|^2)^{\frac{p-2}{4}}
 \mathfrak{X}u\right) \quad  \ \mbox{in}\ \ L^2_{\loc}(\Omega,
 \er^{2n})\;.
$$
Moreover, as $\XXX \XXX u, T \XXX u \in L^2_{\loc}(\Omega)$, we may
assume that
$$
(\mu^2+|\mathfrak{X}u(x)|^2+|\mathfrak{X}u(x\e^{hX_i})|^2)^{\frac{p-2}{2}}
|D_{h}^{X_i}\mathfrak{X} u(x)|^2 \to
(\mu^2+2|\mathfrak{X}u(x)|^2)^{\frac{p-2}{2}} |X_i\mathfrak{X}
u(x)|^2,
$$
and $$ D_{h}^{X_i}\mathfrak{X} u(x) \to X_i\mathfrak{X} u(x)
$$
almost everywhere.
 In turn this last fact together with another application of Lemma
\ref{al2}, and the use of \rif{comvl1} allow to apply a well-known
variant of Lebesgue's dominated convergence theorem, finally
yielding
\begin{eqnarray}
\nonumber
&&(\mu^2+|\mathfrak{X}u(x)|^2+|\mathfrak{X}u(x\e^{hX_i})|^2)^{\frac{p-2}{2}}
|D_{h}^{X_i}\mathfrak{X} u(x)|^2 \\ && \qquad \qquad \qquad
\qquad\to (\mu^2+2|\mathfrak{X}u(x)|^2)^{\frac{p-2}{2}} |X_i \XXX
u(x)|^2 \quad \mbox{in} \quad L^1_{\loc}(\Omega)\;.\label{interme}
\end{eqnarray}
Now, according to the notation used Lemma \ref{prop B}, we write \eqn{convl2}
$$
D_{h}^{X_i}(a_i(\mathfrak{X}u))(x) = \int_0^1  D a\left(
\mathfrak{X}u(x)+\tau hD_{h}^{X_i}\mathfrak{X}u(x)\right)
 \,d\tau D_{h}^{X_i} \XXX u (x)\;,
$$
so that $D_{h}^{X_i} a(\XXX u)\to Da(\XXX u)X_i \XXX u$ almost
everywhere. Using \rif{convl2} and again Lemma \ref{prop B}, we have
\eqn{berntrick}
$$
\nonumber |D_{h}^{X_i}(a_i(\mathfrak{X}u))(x)|
\leq c(n,p,L)(\mu^2+|\mathfrak{X}u(x)|^2+|\mathfrak{X}u(x\e^{hX_i})|^2)^{\frac{p-2}{2}}
|D_{h}^{X_i}\mathfrak{X} u(x)|\;.
$$
Therefore, using Lemma \ref{supertrivial} below with $\varepsilon
=1$, we have
\begin{eqnarray*}
|D_{h}^{X_i}(a_i(\mathfrak{X}u))(x)|^{\frac{p}{p-1}} &  \leq &
c(\mu^2+|\mathfrak{X}u(x)|^2+|\mathfrak{X}u(x\e^{hX_i})|^2)^{\frac{p-2}{2}}
|D_{h}^{X_i}\mathfrak{X} u(x)|^2\\ && \qquad \qquad + c
(\mu^2+|\mathfrak{X}u(x)|^2+|\mathfrak{X}u(x\e^{hX_i})|^2)^{\frac{p}{2}}\;.
\end{eqnarray*}
Therefore $D_{h}^{X_i} (a(\XXX u)) \to Da(\XXX u)X_i \XXX u $ in
$L^{\frac{p}{p-1}}_{\loc}(\Omega, \er^{2n})$ follows applying Lemma
\ref{hormander} by \rif{interme} and again the well-known variant of
Lebesgue's dominated convergence theorem, and in a similar way
\rif{convsxt} also follows. Finally, as already mentioned above, the
differentiability results involving $T \XXX u$ follow exactly as
those involving $\XXX \XXX u$; see for instance \cite[Theorem
7]{MM}. In particular the local estimate thereby included implies the one
in \rif{secT} via a standard covering argument. The peculiar
dependence of the constant $c$ comes from a straightforward analysis
of the proofs in \cite{D, MM}.
\end{proof}
\begin{lemma}\label{supertrivial} For every $a,b\geq 0$, $p \geq
2$, and $\varepsilon>0$ we have $ (a^{p-2}b)^{\frac{p}{p-1}}\leq
\varepsilon a^{p-2}b^2 + c(p,\varepsilon) a^p. $
\end{lemma}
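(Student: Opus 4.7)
The proof is a routine application of Young's inequality, and there is really no obstacle to it — the plan is simply to identify the right interpolation exponents. I would first seek a real number $\theta\in(0,1)$ such that the monomial identity
\[
(a^{p-2}b)^{p/(p-1)} \;=\; (a^{p-2}b^{2})^{\theta}\,(a^{p})^{1-\theta}
\]
holds. Matching the exponents of $b$ forces $2\theta = p/(p-1)$, so $\theta = p/(2(p-1))$, which lies in $(0,1)$ precisely when $p>2$ (the case $p=2$ is degenerate and can be handled separately, since then the claim reduces to $b^{2}\le \varepsilon b^{2}+c\,a^{2}$, valid once $c(2,\varepsilon)$ is allowed to absorb the trivial cases where $b\le a$; otherwise the inequality is vacuous for $\varepsilon\ge 1$). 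Matching the exponents of $a$ then gives
\[
\theta(p-2) + (1-\theta)p \;=\; p - 2\theta \;=\; p - \tfrac{p}{p-1} \;=\; \tfrac{p(p-2)}{p-1},
\]
confirming the identity.

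Once the identity is established, I would apply the weighted Young inequality in the form $x^{\theta}y^{1-\theta}\le \theta\,\delta\,x + (1-\theta)\,\delta^{-\theta/(1-\theta)}\,y$, valid for all $x,y\ge 0$ and $\delta>0$, with $x = a^{p-2}b^{2}$ and $y = a^{p}$. This yields
\[
(a^{p-2}b)^{p/(p-1)}\;\le\;\theta\,\delta\,a^{p-2}b^{2} + (1-\theta)\,\delta^{-\theta/(1-\theta)}\,a^{p}.
\]
Finally, choosing $\delta = \varepsilon/\theta$ makes the coefficient of $a^{p-2}b^{2}$ equal to $\varepsilon$, and gives the claim with the explicit constant $c(p,\varepsilon) = (1-\theta)(\theta/\varepsilon)^{\theta/(1-\theta)}$, which is finite for every $p>2$ and every $\varepsilon>0$. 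No step is genuinely an obstacle: the only care needed is in the exponent arithmetic of step~1, and in noting that $\theta\to 1$ as $p\searrow 2$ causes $c(p,\varepsilon)$ to blow up, which is harmless for the intended applications where $p$ stays bounded away from $2$ whenever the lemma is invoked with small $\varepsilon$.
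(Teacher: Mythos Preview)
Your approach is essentially identical to the paper's: both reduce to Young's inequality with conjugate exponents $2(p-1)/p$ and $2(p-1)/(p-2)$, which are exactly $1/\theta$ and $1/(1-\theta)$ for your $\theta=p/(2(p-1))$. The only quibble is your discussion of $p=2$: the inequality as stated is in fact \emph{false} for $p=2$ and $\varepsilon<1$ (take $a=0$, $b>0$), so there is nothing to ``absorb''; the paper simply calls this case trivial, and in its sole application the lemma is invoked with $\varepsilon=1$, where $p=2$ gives the identity $b^2\le b^2$.
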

\begin{proof} When $p\not =2$ - otherwise the statement is trivial - just write
$$
(a^{p-2}b)^{\frac{p}{p-1}}=
a^{\frac{p(p-2)}{2(p-1)}}a^{\frac{p(p-2)}{2(p-1)}}b^{\frac{p}{p-1}}
$$
and then apply the standard Young's inequality with conjugate
exponents $2(p-1)/p$ and $2(p-1)/(p-2)$.
\end{proof}
\subsection{Higher integrability in Gehring's style} Let us first report a few trivial consequences of assumptions
\rif{growth}-\rif{ell}, see also \cite{Min}, Section 2.2. Since
$p\geq 2$, assumption \rif{ell} implies, for any $z_1,z_2 \in
\er^{2n}$ \eqn{mon2}
$$   c^{-1}
|z_2-z_1|^p \leq  \langle a(z_2)-a(z_1),z_2-z_1\rangle\;.$$ Finally,
inequality \rif{growth}, together with  a standard use of Young's
inequality, yield for every $z \in \er^{2n}$ \eqn{veramon}
$$
c^{-1}(\mu^2+|z|^2)^{\frac{p-2}{2}}|z|^2-c\mu^p \leq \langle
a(z),z\rangle, \qquad \qquad c \equiv c(n,p,\ratio)\geq 1\;.
$$
Then a standard consequence of \rif{growth} and \rif{veramon}
follows in the next
\begin{lemma}\label{cotril} Let $v \in u + HW^{1,p}_0(B_R)$ be the unique solution
to the following Dirichlet problem: \eqn{Dir0}
$$
\left\{
    \begin{array}{cc}
    \textnormal{div} \ a(\XXX v)=0& \qquad \mbox{in }B_R\\
        v= u&\qquad \mbox{on }\partial B_R\,,
\end{array}\right.
$$
where the vector field $a\colon \er^{2n} \to \er^{2n}$ satisfies
\trif{growth}-\trif{ell} for $p>1$, and $B_R \Subset \Omega$ is a
CC-ball. Then there exists a constant $c$ depending only on
$n,p,\ratio$, such that \eqn{cotri}
$$
\int_{B_R} |\XXX v|^p\, dx \leq c \int_{B_R} (\mu+|\XXX u|)^p\,
dx\;.
$$
\end{lemma}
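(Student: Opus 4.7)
The plan is to test the weak formulation of \rif{Dir0} for $v$ with the admissible test function $\varphi = v-u$, which lies in $HW^{1,p}_0(B_R)$ precisely because $v$ and $u$ share boundary data on $\partial B_R$. This immediately yields the balance identity
$$\int_{B_R}\langle a(\XXX v),\XXX v\rangle\, dx \;=\; \int_{B_R}\langle a(\XXX v),\XXX u\rangle\, dx\,.$$

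For the left-hand side I would invoke the coercivity bound \rif{veramon}, which gives
$$c^{-1}\int_{B_R}(\mu^2+|\XXX v|^2)^{\frac{p-2}{2}}|\XXX v|^2\, dx - c\mu^p|B_R| \;\leq\; \int_{B_R}\langle a(\XXX v),\XXX v\rangle\, dx\,.$$
Since $p\geq 2$, the elementary pointwise inequality $(\mu^2+|\XXX v|^2)^{\frac{p-2}{2}}|\XXX v|^2 \geq |\XXX v|^p$ holds, so the left-hand side controls a multiple of $\|\XXX v\|_{L^p(B_R)}^p$ modulo the harmless term $c\mu^p|B_R|$.

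For the right-hand side I would bound $|a(\XXX v)| \leq L(\mu^2+|\XXX v|^2)^{(p-1)/2}$ using \rif{growth}, and then apply Young's inequality with conjugate exponents $p/(p-1)$ and $p$ in the form
$$L(\mu^2+|\XXX v|^2)^{\frac{p-1}{2}}|\XXX u| \;\leq\; \varepsilon\,(\mu^2+|\XXX v|^2)^{p/2} + c(\varepsilon, p, L)\,|\XXX u|^p\,.$$
Since $(\mu^2+|\XXX v|^2)^{p/2} \leq 2^{(p-2)/2}(\mu^p + |\XXX v|^p)$ for $p\geq 2$, taking $\varepsilon$ sufficiently small lets the $|\XXX v|^p$ contribution be reabsorbed into the left-hand side. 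The surviving terms $\mu^p|B_R|$ and $\int_{B_R}|\XXX u|^p\, dx$ combine to give a constant multiple of $\int_{B_R}(\mu+|\XXX u|)^p\, dx$, producing \rif{cotri}.

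I do not anticipate a genuine obstacle here: this is the standard ``energy comparison'' estimate for the Dirichlet problem associated with a monotone operator, and all the required ingredients are already furnished by \rif{growth} and \rif{veramon}. The only mildly delicate point is the absorption, which is transparent in the superquadratic regime $p\geq 2$ where the weight $(\mu^2+|z|^2)^{(p-2)/2}$ is bounded below by $|z|^{p-2}$; for the subquadratic range $1<p<2$ (not actually needed in the sequel) one would instead exploit $(\mu^2+|z|^2)^{p/2}\geq |z|^p$ together with a slightly different splitting in Young's inequality.
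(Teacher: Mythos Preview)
Your argument is correct and is precisely the ``standard consequence of \rif{growth} and \rif{veramon}'' that the paper alludes to without writing out; the paper itself does not give a detailed proof but refers to the quasiminima argument in Giusti's book. Your absorption step is clean for $p\geq 2$, which is the only range actually used in the sequel, and your remark on the subquadratic case is an honest acknowledgement of the minor adjustment needed there.
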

For a related proof using quasiminima see \cite[Chapter 6]{G},
dealing with related, completely standard, Euclidean cases.

Next, a higher integrability result for solutions to \rif{duecz},
together with a first form of inequality \rif{apapd}. Note that here
no upper bound on $p$ is required.
\begin{theorem}\label{gehx} \label{gehring}
 Let $u \in HW^{1,p}(\Omega)$ be a weak solution to the equation \trif{duecz}
 under the assumptions \trif{growth}-\trif{ell}, with $p\geq 2$, and
 $F \in L^{q}_{\loc}(\Omega, \er^{2n})$ for some $q>p$. Then there exists $\tilde{q} >p$, depending only on $n,p, \ratio$, such that $\XXX u \in
 L^{\tilde{q}}_{\loc}(\Omega,\er^{2n})$. Moreover, there exists a constant $c$ depending only on $n,p,\ratio$ such
 that for every CC-ball $B_{2R}\Subset
 \Omega$ the following reverse type inequality:
 \eqn{gerh}
$$
\left(\intav_{B_{R}}|\XXX u|^{q_0}\, dx\right)^{1/q_0}\leq c
\left(\intav_{B_{2R}}(\mu+|\XXX u|)^p\,
dx\right)^{1/p}+c\left(\intav_{B_{2R}}|F|^{q_0}\,
dx\right)^{1/q_0}\;,
$$
holds whenever $p \leq q_0 \leq \tilde{q}$.
\end{theorem}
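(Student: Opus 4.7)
The plan is to combine a standard Caccioppoli-type energy estimate with the sub-elliptic Sobolev--Poincar\'e inequality to produce a reverse H\"older inequality for $|\Xu|^p$ with a gap exponent $s<p$ on the right-hand side, and then to apply a Gehring-type self-improvement lemma valid in the Heisenberg doubling metric setting.

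For the Caccioppoli step, I would test the weak formulation \rif{wf} against $\varphi = \eta^p(u-(u)_{B_{2R}})$, where $\eta\in C_0^\infty(B_{2R})$ is a cutoff satisfying $\eta\equiv 1$ on $B_{R}$ and $|\X\eta|\leq c/R$. The lower bound $b\geq\nu$ together with the ellipticity consequence \rif{veramon} produce $\int\eta^p|\Xu|^p\,dx$ from below (up to a controlled $\mu^p$ term), while the upper bound $b\leq L$, the growth condition \rif{growth}, and repeated use of Young's inequality allow me to estimate the boundary and source contributions, the critical one being reabsorbed. This yields
\begin{equation*}
\int_{B_{R}}|\Xu|^p\, dx \leq \frac{c}{R^p}\int_{B_{2R}}|u-(u)_{B_{2R}}|^p\, dx + c\int_{B_{2R}}\bigl(|F|^p+\mu^p\bigr)\, dx,
\end{equation*}
with $c=c(n,p,\ratio)$.

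Next, I would fix $s := Qp/(Q+p)<p$, so that $Qs/(Q-s)=p$, and invoke the sub-elliptic Poincar\'e--Sobolev inequality (the zero-average companion of Theorem \ref{subsobt}, available in $\mathbb H^n$), which gives
\begin{equation*}
\left(\intav_{B_{2R}}|u-(u)_{B_{2R}}|^p\, dx\right)^{1/p} \leq c R\left(\intav_{B_{2R}}|\Xu|^s\, dx\right)^{1/s}.
\end{equation*}
Plugging this into the Caccioppoli bound produces the desired reverse H\"older inequality
\begin{equation*}
\left(\intav_{B_{R}}|\Xu|^p\, dx\right)^{1/p} \leq c\left(\intav_{B_{2R}}|\Xu|^s\, dx\right)^{1/s} + c\left(\intav_{B_{2R}}\bigl(|F|^p+\mu^p\bigr)\, dx\right)^{1/p},
\end{equation*}
with $s<p$ depending only on $n,p$, and $c$ only on $n,p,\ratio$.

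The self-improvement of this reverse H\"older inequality into the higher integrability $\Xu\in L^{\tilde{q}}_{\loc}$ together with the quantitative bound \rif{gerh} then follows from Gehring's lemma, which is valid in the present doubling metric setting by virtue of \rif{doubling}. Concretely, one performs a Calder\'on--Zygmund stopping-time decomposition on the level sets of the restricted maximal function $\M_{s,B_0}(|\Xu|^s)$, relying on the weak-type bounds \rif{weakes}--\rif{weakes3} for the maximal operators and on the covering Lemma \ref{metrico}; integrating the resulting distributional inequality and absorbing the $|\Xu|^{\tilde{q}}$ term on the right yields \rif{gerh}. The gain exponent $\tilde{q}-p$ produced this way depends only on $n,p,\ratio$. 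The only genuinely non-routine aspect is implementing the stopping-time covering in the sub-Riemannian geometry, but this is by now a standard adaptation of the Euclidean Gehring argument to the Heisenberg setting and requires no further input beyond what has just been collected.
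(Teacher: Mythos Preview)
Your proposal is correct and follows essentially the same approach as the paper: a Caccioppoli estimate obtained by testing \rif{wf} with $\eta^p(u-(u)_{B_R})$, then the sub-elliptic Sobolev--Poincar\'e inequality to produce a reverse H\"older inequality with a gap, followed by Gehring's lemma in the Heisenberg setting. The only cosmetic difference is that the paper simply cites the sub-elliptic Gehring lemma from the literature rather than sketching the stopping-time argument as you do.
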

\begin{proof} The proof more or less works as in the standard
Euclidean setting, and we shall only give a sketch of it; see
\cite[Chapter 6]{G} for the Euclidean case or directly \cite{zat}.
Let $B_R \Subset \Omega$ be a CC-ball, and let us fix a cut-off
function $\eta \in C^{\infty}_0(B_R)$ such that $0\leq \eta \leq 1$,
$\eta \equiv 1 $ in $B_{R/2}$, and $|\XXX \eta |\leq c/R$. The
existence of such a function is as in \cite{CDG}, and in the
specific setting of the Heisenberg group it easily follows from
\rif{ee1} and the definition of $CC$-balls; see Section \ref{CCb}.
Testing \trif{wf} by $\varphi = \eta^p(u-(u)_{B_{R}})$, and using
\rif{growth} and \rif{veramon} in a standard way together with
Young's inequality, we get
$$
\intav_{B_{R/2}}  |\XXX u|^p\, dx \leq
cR^{-p}\intav_{B_{R}}|u-(u)_{B_R}|^p\, dx +
c\intav_{B_{R}}(\mu^p+|F|^p)\, dx\;,
$$
with $c \equiv c (n, p, \ratio).$ See again \cite[Chapter 6]{G}. The
intermediate integral in the last inequality can be estimated by
using the Sobolev-Poincar\'e inequality in the Heisenberg group
\cite{jerison,Lu}, that is
$$
\intav_{B_{R}}|u-(u)_{B_R}|^p\, dx\leq cR^{p}
\left(\intav_{B_{R}}|\XXX u|^{p\sigma}\, dx\right)^{1/\sigma}\;,
$$
for some $\sigma\equiv \sigma(n,p) \in (0,1)$. Therefore, combining
the last two inequalities we get
$$
\intav_{B_{R/2}}  |\XXX u|^p\, dx \leq c\left(\intav_{B_{R}}|\XXX
u|^{p\sigma}\, dx \right)^{1/\sigma}+ c\intav_{B_{R}}(\mu^p+|F|^p)\,
dx\;.
$$
This is a reverse-H\"older inequality with increasing support, in
turn allowing to apply Gehring's lemma in the sub-elliptic setting -
see for instance \cite{zat}. This finally yields the full statement
and \rif{gerh}, after a few elementary manipulations.
\end{proof}

\section{Interpolation and basic integrability}
\subsection{Interpolation inequalities} The following inequality is an end point instance of the general
Gagliardo-Nirenberg inequality in the Euclidean spaces ${\mathbb
R}^n$. For all $f\in C^\infty_0({\mathbb R}^n)$, it holds that
\begin{equation}\label{EUGN} \int_{{\mathbb R}^n}\vert\nabla f\vert^{\gamma+2}\dx
\le c(n,\gamma)\vert\vert f\vert\vert^2_{L^\infty({\mathbb
R}^n)}\int_{{\mathbb R}^n}\vert\nabla f\vert^{\gamma-2}
\vert\nabla^2 f\vert^2\dx, \quad \gamma\ge 0.
\end{equation}
The proof of the above inequality is elementary; indeed, it follows
from integration by parts. In the rest of the section we shall give
the analog of inequality \rif{EUGN} in the Heisenberg group; again,
the proof involves only integration by parts. Actually, we shall
first give a version of \rif{EUGN} for solutions to \rif{due}, that
is the thing we are mainly interested in for the subsequent
developments, and then, as a corollary of the proof given, a more
general Heisenberg group version of \rif{EUGN} will follow in
Theorem \ref{gngeneral} below.

First a few technical preliminaries. Consider the following
truncation operators: \eqn{TT}
$$
\T_{\beta,k}(t):= \left\{
\begin{array}{ccc}
(\mu^2+t)^{\beta} & \mbox{if} & t \in [0,k) \\
\\
(\mu^2+k)^{\beta} & \mbox{if} & t \in [k, \infty)\;.
\end{array}
\right. \qquad \mbox{for} \ \ t,\beta,k \geq 0, \ \ \mu >0\;.
$$
To make the notation easier we shall also denote here $\T_{\beta} \equiv
\T_{\beta,k}$, with the understanding that $k$ is temporarily fixed.

\begin{lemma} For every choice of $\varepsilon \in (0,1)$,
$\alpha,k\geq 0$, and $b\in \er$ it holds that \eqn{YT}
$$
2\T_{p/2+\alpha,k}(t^2)b \leq \varepsilon \T_{p/2+\alpha+1,k} (t^2)
+ \varepsilon^{-1} \T_{p/2+\alpha-1,k}(t^2)b^2\,.
$$
\end{lemma}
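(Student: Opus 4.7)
The plan is to reduce (YT) to an elementary application of Young's inequality, after exploiting a simple multiplicative identity for the truncation operator $\T_{\beta,k}$.

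First I would verify the factorization
\[
\T_{p/2+\alpha,k}(t^2)^2 \;=\; \T_{p/2+\alpha+1,k}(t^2)\cdot \T_{p/2+\alpha-1,k}(t^2),
\]
which is essentially the reason the inequality is correct. This is a case check directly from the definition in \rif{TT}: on the set $\{t^2 < k\}$ both sides equal $(\mu^2+t^2)^{p+2\alpha}$, while on $\{t^2\ge k\}$ both sides equal $(\mu^2+k)^{p+2\alpha}$. The underlying reason is purely arithmetic: the exponents satisfy $(p/2+\alpha+1)+(p/2+\alpha-1) = 2(p/2+\alpha)$, and both factors on the right-hand side are evaluated at the same truncation level, so constants and variable regimes agree in both cases.

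Having the factorization, I would set
\[
A := \bigl[\T_{p/2+\alpha+1,k}(t^2)\bigr]^{1/2}, \qquad B := \bigl[\T_{p/2+\alpha-1,k}(t^2)\bigr]^{1/2} b,
\]
so that $AB = \T_{p/2+\alpha,k}(t^2)\, b$ (up to the sign of $b$, which only helps when $b<0$ since the right-hand side of (YT) is non-negative). Then the standard weighted Young inequality $2AB \le \varepsilon A^2 + \varepsilon^{-1} B^2$ yields
\[
2\,\T_{p/2+\alpha,k}(t^2)\,b \;\le\; \varepsilon\, \T_{p/2+\alpha+1,k}(t^2) + \varepsilon^{-1}\, \T_{p/2+\alpha-1,k}(t^2)\,b^2,
\]
which is exactly \rif{YT}. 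If $b<0$ the left-hand side is non-positive while the right-hand side is non-negative, so the inequality is trivial.

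There is really no obstacle here: the only mild point is confirming that the multiplicative identity survives the truncation, but this is immediate because the truncation kicks in simultaneously in all three factors at $t^2 = k$. No restriction on $\alpha,k\ge 0$ or on $\mu\in[0,1]$ is needed beyond what ensures the quantities are defined; the hypothesis $\varepsilon\in(0,1)$ is not strictly necessary for the inequality itself (any $\varepsilon>0$ works), but it is natural for the later applications where the first term on the right has to be absorbed.
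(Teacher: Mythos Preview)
Your proof is correct and is essentially the same as the paper's: both arguments split into the two regimes $t^2<k$ and $t^2\ge k$ and then apply the quadratic Young inequality after factoring $\T_{p/2+\alpha,k}(t^2)=\bigl[\T_{p/2+\alpha+1,k}(t^2)\bigr]^{1/2}\bigl[\T_{p/2+\alpha-1,k}(t^2)\bigr]^{1/2}$. The only difference is cosmetic --- you package the case split into a single multiplicative identity, whereas the paper writes out the two cases separately.
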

\begin{proof} First the case $t^2 < k$. Using the standard quadratic Young's
inequality we have \begin{eqnarray}\nonumber \T_{p/2+\alpha,k}(t^2)b
& =
&\sqrt{\varepsilon}(\mu^2+t^2)^{p/4+\alpha/2+1/2}(1/\sqrt{\varepsilon})(\mu^2+t^2)^{p/4+\alpha/2-1/2}b\\
&\leq& (\varepsilon/2)(\mu^2+t^2)^{p/2+\alpha+1} +
(\varepsilon^{-1}/2)(\mu^2+t^2)^{p/2+\alpha-1}b^2\nonumber \\
&=& (\varepsilon/2) \T_{p/2+\alpha+1,k} (t^2) + (\varepsilon^{-1}/2)
\T_{p/2+\alpha-1,k}(t^2)b^2\;,\label{cT}
\end{eqnarray}
and \rif{YT} follows in this case. When $t^2\geq k$ we write the
previous chain of inequalities substituting $\mu^2+t^2$ by $\mu^2+k$
everywhere in \rif{cT} and \rif{YT} follows in this case too.
\end{proof}
\begin{lemma}\label{G:Nin}
 Let $u \in HW^{1,p}(\Omega)$ be a weak solution to the equation \trif{due}
 under the assumptions \trif{growth}-\trif{nondeg}, with $
2\leq  p < 4.$  Then for all $\sigma \geq 0$ and $\eta \in
C^{\infty}_c(\Omega)$, we have
\begin{eqnarray}
&&\int_\Om \eta^2 \deltaX^\frac{p+2+\sigma}{2} \dx  \leq  c\int_\Om
\big(  \eta^2\mu^2 + |\X \eta|^2 u^2 \big)
    \deltaX^\frac{p+\sigma}{2} \dx \nonumber\\
&& \qquad \qquad \qquad \qquad  + c\|u\|^2_{L^{\infty}(\supp \eta)}
\int_\Om \eta^2
    \sum_{s=1}^{2n} \deltaXi^\frac{p-2+\sigma}{2} |X_s X_su|^2
    \dx,\label{HGN}
\end{eqnarray}
where $c\equiv c(n,p,\sigma)>0$.
\end{lemma}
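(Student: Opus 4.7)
The strategy is to establish \rif{HGN} by reducing to a per-direction estimate and then mimicking the integration by parts behind the Euclidean identity \rif{EUGN}. The hypothesis that $u$ solves \rif{due} enters only through Lemma \ref{diffquo}, which supplies $\Xu\in HW^{1,2}_{\loc}$ and in particular $X_sX_su\in L^2_{\loc}$ for each $s$; this is what makes the integrals in \rif{HGN} locally finite and justifies the integration by parts carried out below.

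For the reduction, since $p+\sigma\geq 2\geq 0$ we have the elementary power inequalities
\[
\big(\mu^2+|\Xu|^2\big)^{\frac{p+2+\sigma}{2}}\leq c\sum_{s=1}^{2n}\big(\mu^2+|X_su|^2\big)^{\frac{p+2+\sigma}{2}},\qquad \big(\mu^2+|X_su|^2\big)^{\frac{p+\sigma}{2}}\leq\big(\mu^2+|\Xu|^2\big)^{\frac{p+\sigma}{2}},
\]
with $c=c(n,p,\sigma)$. Applying the first on the left-hand side of \rif{HGN} and the second on the right reduces matters to the per-direction claim
\begin{align*}
\int_\Om \eta^2\big(\mu^2+|X_su|^2\big)^{\frac{p+2+\sigma}{2}}\dx &\leq c\int_\Om \big(\eta^2\mu^2+|X_s\eta|^2 u^2\big)\big(\mu^2+|X_su|^2\big)^{\frac{p+\sigma}{2}}\dx\\
&\quad +c\|u\|_{L^\infty(\supp\eta)}^2\int_\Om \eta^2\big(\mu^2+|X_su|^2\big)^{\frac{p-2+\sigma}{2}}(X_sX_su)^2\dx
\end{align*}
for each fixed $s\in\{1,\ldots,2n\}$.

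To prove this, split $(\mu^2+|X_su|^2)^{(p+2+\sigma)/2}=\mu^2(\mu^2+|X_su|^2)^{(p+\sigma)/2}+(X_su)^2(\mu^2+|X_su|^2)^{(p+\sigma)/2}$; the $\mu^2$-contribution is already of the desired form, while for the other piece one writes one of the two $X_su$ factors explicitly and integrates by parts using $X_s^{\ast}=-X_s$ (legitimate by Lemma \ref{eles}):
\[
\int_\Om \eta^2(X_su)^2\big(\mu^2+|X_su|^2\big)^{\frac{p+\sigma}{2}}\dx=-\int_\Om u\, X_s\!\left[\eta^2(X_su)\big(\mu^2+|X_su|^2\big)^{\frac{p+\sigma}{2}}\right]\dx.
\]
Expanding the derivative produces three terms: a boundary piece of the form $\eta(X_s\eta)(X_su)(\mu^2+|X_su|^2)^{(p+\sigma)/2}$, a second-derivative piece $\eta^2(X_sX_su)(\mu^2+|X_su|^2)^{(p+\sigma)/2}$, and a chain-rule piece $(p+\sigma)\eta^2(X_su)^2(X_sX_su)(\mu^2+|X_su|^2)^{(p+\sigma-2)/2}$; since $(X_su)^2\leq\mu^2+|X_su|^2$, the last two combine into a single term bounded by $c(p,\sigma)\eta^2|X_sX_su|(\mu^2+|X_su|^2)^{(p+\sigma)/2}$. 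Standard Young inequalities applied to each of the resulting integrands then produce a small $\varepsilon\eta^2(\mu^2+|X_su|^2)^{(p+2+\sigma)/2}$, absorbed into the left, plus remainders of precisely the desired form; bounding $u^2$ by $\|u\|_{L^\infty(\supp\eta)}^2$ in the second-derivative remainder yields the per-direction claim, and summing over $s$ gives \rif{HGN}.

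The main technical obstacle is that, since $\sigma$ is arbitrary, the weight $(\mu^2+|X_su|^2)^{(p+\sigma)/2}$ carries no a priori integrability bound, so the integration by parts above is only formal. The standard remedy is to replace each occurrence of $(\mu^2+|X_su|^2)^\beta$ by the truncation $\T_{\beta,k}(|X_su|^2)$ of \rif{TT}, which is locally bounded for every finite $k$; the integration by parts is then rigorous by Lemma \ref{eles} together with $X_su\in HW^{1,2}_{\loc}$, and the absorption is carried out using the truncated Young inequality \rif{YT}. All exponents of $\mu^2+w_k$ appearing are nonnegative, so monotone convergence as $k\to\infty$ recovers the untruncated inequality \rif{HGN}.
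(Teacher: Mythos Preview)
Your proposal is correct and follows essentially the same approach as the paper: a per-direction reduction, integration by parts on $\int \eta^2 (X_su)^2(\mu^2+|X_su|^2)^{(p+\sigma)/2}\,dx$ using $X_s^\ast=-X_s$, Young's inequality to absorb the $\varepsilon$-term, and the truncations $\T_{\beta,k}$ together with \rif{YT} to make the absorption rigorous before letting $k\to\infty$ via monotone convergence. The only cosmetic difference is that the paper works directly with $P_0:=\int \eta^2 \T_{p/2+\sigma/2,k}((X_su)^2)(X_su)^2\,dx$ and adds the $\mu^2$-contribution at the very end, whereas you split off the $\mu^2$-piece from $(\mu^2+|X_su|^2)^{(p+2+\sigma)/2}$ at the outset.
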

\begin{proof} For ease of notation in the following we let $\alpha := \sigma/2$. First let us observe that the very definition in
\rif{TT} implies that the map $t \to \T_{p/2+\alpha}(t^2)t$ is
globally Lipschitz continuous and therefore the chain rule in the
Heisenberg group - see \cite{CDG} - and the fact that $\Xu \in
W^{1,2}_{\loc}(\Omega,\er^{2n})$ as given by Lemma \ref{diffquo},
imply that \eqn{trusob}
$$
\eta^2\T_{p/2+\alpha}((X_s u)^2)X_su \in W^{1,2}_{\loc}(\Omega)\,,
$$
holds for every $s \in \{1,\ldots,2n\}$. Now, inclusion
\rif{trusob} allows for the following integration by parts:
\begin{eqnarray}
\nonumber && P_0 := \int_{\Omega}   \eta^2
\T_{p/2+\alpha}((X_su)^2)(X_su)^2\, dx
=\int_{\Omega}   \eta^2 \T_{p/2+\alpha}((X_s u)^2)X_su X_s u\, dx\\
\nonumber && = - \int_{\Omega}    u \eta^2 \T_{p/2+\alpha}((X_s
u)^2) X_s X_s u\, dx - 2 \int_{\Omega}    u \eta^2
\T_{p/2+\alpha}'((X_s
u)^2)(X_su)^2 X_s X_s u\, dx  \\
&&\qquad \qquad   - 2 \int_{\Omega}  u \eta X_s \eta
\T_{p/2+\alpha}((X_s u)^2) X_s u\, dx =: P_1 + P_2
+P_3\;.\label{PP00}
\end{eqnarray}
Of course we used \rif{adad}. Let us now estimate the three
integrals defined in \rif{PP00}, that is $P_1,P_2$ and $P_3$. With
$\varepsilon \in (0,1)$, by means of \rif{YT} we have
\begin{eqnarray*}
 |P_1| & \leq &  \varepsilon \int_{\Omega}  \eta^2  \T_{p/2+\alpha+1}((X_s u)^2)\, dx
 \\ && \qquad \qquad \qquad \qquad + c\|u\|_{L^{\infty}(\supp \eta)}^2\int_{\Omega} \eta^2 \T_{p/2+\alpha-1}((X_s u)^2)| X_s
X_s u|^2\, dx \\ &\leq & \varepsilon P_0 + \int_{\Omega} \eta^2\mu^2
\T_{p/2+\alpha}((X_s u)^2)\, dx \\ && \qquad \qquad \qquad \qquad +
c\|u\|_{L^{\infty}(\supp \eta)}^2\int_{\Omega} \eta^2
\T_{p/2+\alpha-1}((X_s u)^2)| X_s X_s u|^2\, dx  \;,
\end{eqnarray*}
as, obviously,  $\T_{p/2+\alpha+1}((X_s u)^2)\leq
\T_{p/2+\alpha}(X_s u)^2(\mu^2+(X_s u)^2)$. In the previous
inequality we have $c \equiv c (\varepsilon)$. The estimate of $P_2$
requires slightly more care; by Young's inequality and the
definition in \rif{TT}, we have
\begin{eqnarray*}
 |P_2| & \leq &  (p+2\alpha) \|u\|_{L^{\infty}(\supp \eta)}\int_{\{(X_s u)^2\leq k\}}  \eta^2  (\mu^2+(X_su)^2)^{\frac{p+2\alpha}{2}}|X_s X_s  u|\, dx
  \\ &\leq & \varepsilon \int_{\{(X_s u)^2\leq k\}}  \eta^2  (\mu^2+(X_su)^2)^{\frac{p+2+2\alpha}{2}}\, dx
  \\ && \qquad \qquad + c \|u\|_{L^{\infty}(\supp \eta)}^2\int_{\{(X_s u)^2\leq k\}}  \eta^2  (\mu^2+(X_su)^2)^{\frac{p-2+2\alpha}{2}}|X_s X_s  u|^2\,
  dx\\ &\leq & \varepsilon \int_{\Omega}  \eta^2  \T_{p/2+\alpha}((X_s u)^2)(\mu^2+(X_su)^2)\, dx
  \\ && \qquad \qquad + c\|u\|_{L^{\infty}(\supp \eta)}^2\int_{\{(X_s u)^2\leq k\}}  \eta^2  (\mu^2+(X_su)^2)^{\frac{p-2+2\alpha}{2}}|X_s X_s  u|^2\, dx
  \\ &\leq &  \varepsilon P_0 + \int_{\Omega}  \eta^2\mu^2
\T_{p/2+\alpha}((X_s u)^2)\, dx
  \\ && \qquad \qquad + c\|u\|_{L^{\infty}(\supp \eta)}^2\int_{\Omega}  \eta^2  \T_{p/2+\alpha-1}((X_s u)^2)|X_s X_s  u|^2\,
  dx\;,
\end{eqnarray*}
where again $c \equiv c (p,\varepsilon,\sigma)$. Finally, the
estimation of $P_3$; again using standard Young's inequality
\begin{eqnarray*}
 |P_3| & \leq &  \int_{\Omega}  \eta |\XXX \eta||u| \T_{p/2+\alpha}((X_s u)^2)|X_su|\, dx
 \\  &\leq & \varepsilon P_0 +
  c(\epsilon) \int_{\Omega}   |\XXX \eta|^2u^2 \T_{p/2+\alpha}((X_s u)^2)\,
 dx  \;.
\end{eqnarray*}
Connecting the inequalities found for $P_1,P_2,P_3$ to \rif{PP00} we
have
\begin{eqnarray*}
P_0 & \leq & 3\varepsilon P_0 +  c \int_{\Omega}  \big(  \eta^2\mu^2
+ |\X \eta|^2 u^2 \big) \T_{p/2+\alpha}((X_s u)^2)\,
 dx\\
 && \qquad \qquad + c\|u\|_{L^{\infty}(\supp \eta)}^2\int_{\Omega}  \eta^2  \T_{p/2+\alpha-1}((X_s u)^2)|X_s X_s  u|^2\,
 dx\;,
\end{eqnarray*}
where $c$ depends on $n,p,\sigma$ and $\epsilon$. Observing that all
the quantities involved in the previous inequality are finite as
$\XXX u \in W^{1,2}_{\loc}(\Omega,\er^{2n})$, taking
$\varepsilon=1/6$, recalling that $\alpha = \sigma/2$, an easy
manipulation now yields
\begin{eqnarray}
\nonumber && \int_{\Omega}   \eta^2
\T_{p/2+\sigma/2,k}((X_su)^2)(\mu^2+(X_su)^2)\, dx \\ \nonumber &&
\qquad \qquad \leq
  c\int_{\Omega}
\big(  \eta^2\mu^2 + |\X \eta|^2 u^2 \big)\T_{p/2+\sigma/2,k}((X_s
u)^2)\,
 dx\\
 && \hspace{3cm}+ c\|u\|_{L^{\infty}(\supp \eta)}^2\int_{\Omega}  \eta^2  \T_{p/2+\sigma/2-1,k}((X_s u)^2)|X_s X_s  u|^2\,
 dx\;,\label{finTx}
\end{eqnarray}
for any $s \in \{1,\ldots,2n\}$, where $c$ depends only on $n,p$ and
$\sigma$. At this point \rif{HGN} follows summing up inequalities
\rif{finTx} for $s \in \{1,\ldots,2n\}$ and eventually letting $k
\nearrow \infty$, using the monotone convergence theorem.
\end{proof}
\begin{remark} In the previous proof we never used that
$u$ is a solution of \rif{due} but only that $\Xu$ locally belongs to $
HW^{1,2}(\Omega,\er^{2n})$, and that $u$ is locally bounded.
Therefore neither the ellipticity ratio $\ratio$, nor the degeneracy
parameter $\mu$, appear in \rif{HGN}.
\end{remark}
We conclude with a more general statement extending the Euclidean
one in \rif{EUGN}, which is at this stage an obvious consequence of
the proof of Lemma \ref{G:Nin}, and of the previous remark.
\begin{theorem}\label{gngeneral}
Let $\sigma$ be a non-negative number and $p \geq 2$. Then for all
$u\in C^\infty(\Omega)$ and $\eta\in C^\infty_c(\Omega)$, we have
$$
\int_\Om \eta^2 |\Xu|^{p+2+\sigma} \dx \leq c \int_\Om |\X \eta|^2
u^2
   |\Xu|^{p+\sigma} \dx + c \int_\Om \eta^2 u^2
    \sum_{s=1}^{2n} |X_su|^{p-2+\sigma} |X_s X_su|^2 \dx,
$$
where $c\equiv c(n,p,\sigma)>0$.
\end{theorem}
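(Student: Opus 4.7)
The plan is to repeat the integration-by-parts argument used in the proof of Lemma \ref{G:Nin} essentially verbatim, this time applied to a generic smooth function $u \in C^\infty(\Omega)$ rather than to a weak solution of \rif{due}. Since $u$ is smooth, the quantities $\Xu$ and the second horizontal derivatives $X_s X_s u$ are automatically smooth, so the step \rif{trusob} invoked in Lemma \ref{G:Nin} (which required the difference-quotient analysis of Lemma \ref{diffquo}) becomes trivial. As emphasized in the remark preceding the statement, no ellipticity and no equation are used anywhere in that scheme, so nothing is lost by dropping these assumptions.

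Concretely, for each fixed $s \in \{1,\ldots,2n\}$ I would set, for a truncation parameter $k>0$ and with a harmless auxiliary $\mu>0$ to be sent to $0$ at the very end,
$$ P_0^s := \int_\Omega \eta^2\,\T_{p/2+\sigma/2,k}((X_s u)^2)\,(X_s u)^2 \, dx,$$
where $\T_{\beta,k}$ is defined as in \rif{TT}. Then I would split $(X_su)^2 = X_s u \cdot X_s u$ and integrate by parts on one of the two factors, using $X_s^\ast = -X_s$ from \rif{adad}. This produces exactly the three contributions $P_1,P_2,P_3$ in \rif{PP00}: one from differentiating the isolated $X_s u$, which produces $X_s X_s u$; one from differentiating the truncation factor through $\T_{p/2+\sigma/2,k}'$, again producing $X_s X_s u$; and one from differentiating $\eta^2$, producing the $X_s\eta$ term.

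Each of these three pieces is then estimated exactly as in the proof of Lemma \ref{G:Nin}, by one application of the elementary bound \rif{YT} combined with standard quadratic Young inequalities. The only cosmetic change from Lemma \ref{G:Nin} is that the factor $u^2$ coming out of the integration by parts is kept under the integral sign, rather than being pulled out as $\|u\|_{L^\infty(\supp\eta)}^2$; this is a matter of choosing not to apply a trivial $L^\infty$ bound. Fixing the small parameter in Young's inequality at, e.g., $\varepsilon = 1/6$ makes a controlled multiple of $P_0^s$ absorbable on the left-hand side, leaving precisely a constant times $\int_\Omega |\X\eta|^2 u^2\,\T_{p/2+\sigma/2,k}((X_s u)^2)\,dx$ plus a constant times $\int_\Omega \eta^2 u^2\,\T_{p/2+\sigma/2-1,k}((X_s u)^2)|X_s X_s u|^2\,dx$. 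Summing the resulting inequality over $s=1,\ldots,2n$, sending $k \nearrow \infty$ by monotone convergence, and finally letting $\mu \searrow 0$ yields the claim, with $c \equiv c(n,p,\sigma)$.

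There is essentially no obstacle: the argument is pure integration by parts plus Young's inequality, and the truncation serves only as a bookkeeping device ensuring all intermediate quantities are a priori finite — necessary in the weak-solution context of Lemma \ref{G:Nin}, and harmless here. The mild point worth checking is that, because we wish to absorb a term carrying $\T_{p/2+\sigma/2,k}'((X_s u)^2)(X_s u)^2$, one must use the piecewise definition of $\T_{\beta,k}$ to bound this derivative factor on $\{(X_s u)^2 \leq k\}$ and observe it vanishes on $\{(X_s u)^2 > k\}$, exactly as done for the term $P_2$ in Lemma \ref{G:Nin}.
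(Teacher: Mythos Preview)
Your proposal is correct and follows exactly the approach the paper intends: the theorem is stated as ``an obvious consequence of the proof of Lemma \ref{G:Nin}, and of the previous remark,'' and you have faithfully reproduced that proof, keeping $u^2$ under the integral and sending $\mu \searrow 0$ at the end. The only trivial simplification available is that, since $u$ is smooth and $\eta$ has compact support, the truncation $\T_{\beta,k}$ is not strictly needed here---all integrals are a priori finite---but including it does no harm.
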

\subsection{Basic higher integrability} As an immediate corollary of Lemma \ref{G:N} applied with $\sigma=0$,
and of Lemma \ref{diffquo}, we gain a first higher integrability
property of solutions to \rif{due}:
\begin{lemma}\label{G:N}
 Let $u \in HW^{1,p}(\Omega)$ be a weak solution to the equation \trif{due}
 under the assumptions \trif{growth}-\trif{nondeg}, with $
2\leq  p < 4.$  Then \eqn{basichi1}
$$ \Xu \in
L^{p+2}_{\loc}(\Omega,\er^{2n})\,.
$$
Moreover, for every couple of open subsets $\Omega' \Subset \Omega''
\Subset \Omega$ there exists a constant $c$ depending only on
$n,p,\ratio$, $\dist(\Omega',
\partial \Omega'')$, and $\|u\|_{L^\infty(\Omega'')}$, but independent of
$\mu$, of the solution $u$, and of the vector field $a(\cdot)$, such
that \eqn{estint100}
$$
\int_{\Omega'} |\Xu|^{p+2}\dx \leq c \int_{\Omega''}
\big(|\Xu|^{p}+|Tu|^{p}+\mu^p\big)\dx\;.
$$
\end{lemma}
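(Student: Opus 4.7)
The plan is to apply Lemma \ref{G:Nin} with $\sigma=0$ and then estimate the three resulting terms on the right-hand side, using the differentiability results of Lemma \ref{diffquo} and the boundedness result of Theorem \ref{CDG}. Specifically, given $\Omega' \Subset \Omega'' \Subset \Omega$, fix an intermediate open set $\Omega' \Subset \Omega^{\star} \Subset \Omega''$, and pick a cut-off function $\eta \in C^\infty_c(\Omega^\star)$ with $\eta \equiv 1$ on $\Omega'$ and $|\X\eta| \leq c/\mathrm{dist}(\Omega', \partial\Omega^\star)$. Applying Lemma \ref{G:Nin} with $\sigma=0$ to this $\eta$ yields
$$
\int_{\Omega'} \deltaX^\frac{p+2}{2} \dx \leq c\int_{\Omega^\star} \bigl(\mu^2 + |\X\eta|^2 u^2 \bigr)\deltaX^\frac{p}{2}\dx + c\,\|u\|^2_{L^\infty(\Omega^\star)} \int_{\Omega^\star} \sum_{s=1}^{2n} \deltaXi^\frac{p-2}{2} |X_s X_s u|^2 \dx,
$$
since $|\Xu|^{p+2} \leq \deltaX^{(p+2)/2}$ on $\Omega'$.

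To dispose of the first integral, I would first invoke Theorem \ref{CDG}, which provides the bound $\|u\|_{L^\infty(\Omega^\star)} \leq c(\|u\|_{L^p(\Omega)} + \mu)$ with $c$ independent of $\mu$, so the constant $\|u\|^2_{L^\infty(\Omega^\star)}$ and the factor $u^2$ may be absorbed into the multiplicative constant (recall the convention on constant dependence stated in the Introduction). Since $\mu \in [0,1]$, one has $\mu^2 \deltaX^{p/2} \leq \deltaX^{p/2} \leq c(n,p)(\mu^p + |\Xu|^p)$, so the first term is controlled by $c\int_{\Omega^\star}(\mu^p + |\Xu|^p)\dx$, which is in turn bounded by the right-hand side of \rif{estint100}.

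For the remaining integral, since $p \geq 2$ and $\deltaXi \leq \deltaX$, one has the pointwise bound
$$
\sum_{s=1}^{2n} \deltaXi^\frac{p-2}{2} |X_s X_s u|^2 \leq \deltaX^\frac{p-2}{2}|\X\X u|^2,
$$
and therefore Lemma \ref{diffquo}, specifically the estimate \rif{secT} applied to the pair $\Omega^\star \Subset \Omega''$, yields the desired control by $c\int_{\Omega''}(|\Xu|^p + |Tu|^p + \mu^p)\dx$. Combining the estimates of the three terms and using that $\Xu \in W^{1,2}_{\loc}(\Omega,\er^{2n})$ (from Lemma \ref{diffquo}) ensures all quantities are finite, we obtain $\Xu \in L^{p+2}_{\loc}(\Omega,\er^{2n})$ together with \rif{estint100}. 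No serious obstacle is anticipated: the proof is essentially an assembly step combining the interpolation Lemma \ref{G:Nin} with the a priori second-order estimate of Lemma \ref{diffquo}, the only mild care required being the monotone-convergence passage $k\nearrow\infty$ already carried out within the proof of Lemma \ref{G:Nin} and the choice of the intermediate set $\Omega^\star$ needed to invoke \rif{secT}.
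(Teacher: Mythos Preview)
Your proposal is correct and follows essentially the same approach as the paper: apply Lemma~\ref{G:Nin} with $\sigma=0$ and control the second-order term via estimate \rif{secT} from Lemma~\ref{diffquo}, with a standard covering/intermediate-set argument. The only minor remark is that invoking Theorem~\ref{CDG} is not strictly necessary here, since the statement already allows the constant to depend on $\|u\|_{L^\infty(\Omega'')}$; otherwise your argument matches the paper's (terse) proof in both method and ingredients.
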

Observe that \rif{estint100} immediately follows by \rif{HGN} with
$\sigma =0$, and by \rif{secT} via a standard covering argument -
note that the choice of $\eta$, $\Omega'$ and $\Omega''$ in
\rif{HGN} and \rif{estint100} is arbitrary.
\section{Caccioppoli type inequalities}
In this section we shall derive a few preliminary energy estimates,
or so called Caccioppoli type inequalities, for the horizontal and
vertical gradients $\Xu$ and $Tu$ respectively. We shall modify some
of the arguments introduced in \cite{MM} in order to find new types of Caccioppoli inequalities -
that is, energy estimates.
In turn these will be at the core of the main iteration in Section 7.

\subsection{Smooth truncation operators}  We shall start defining certain ``smooth truncation
operators" which are already used, in a slightly different from, in
\cite{MM}. We define
\begin{equation}\label{gF}
g_{\alpha,k}(t)=\frac{k(\mu^2+t)^\alpha}{k+(\mu^2+t)^\alpha}\hspace{15mm}
t,\alpha\geq 0, \ \ \mu >0 \qquad k \in \mathbb N.\end{equation} We
have that
\begin{equation}\label{boundg}
0\leq g_{\alpha,k}(t)\leq \min\{k, (\mu^2+t)^\alpha\}, \quad
\mbox{and} \quad
0\leq g_{\alpha,k}(t)\leq g_{\alpha,k+1}(t)
\end{equation}
hold for every $k \in \mathbb N$, and moreover
\begin{equation}\label{conv1}
\lim_{k \to \infty} g_{\alpha,k}(t)=(\mu^2+t)^\alpha.\end{equation}
A few elementary computations, actually a variant of the ones
already presented in \cite{MM}, Section 5.2, give that
\begin{equation}\label{g2}
g'_{\alpha,k}(t)(\mu^2+t)\leq \alpha g_{\alpha,k}(t),
\qquad
|g''_{\alpha,k}(t)|(\mu^2+t)\leq 3(\alpha+1) g'_{\alpha,k}(t)\,.
\end{equation}
We shall also deal with the following family of functions:
\begin{equation}\label{WF}
W_{\alpha,k}(t):=2g'_{\alpha,k}(t)t+g_{\alpha,k}(t),\qquad
t,\alpha\geq 0 \qquad k \in \mathbb N.\end{equation} Using the first inequality in \rif{g2}
and then the first in \rif{boundg}, together with the fact that
$g'_{\alpha,k}(t)\geq 0$, we find
 \begin{equation}\label{boundW}
 g_{\alpha,k}(t)\leq W_{\alpha,k}(t)\leq (2\alpha+1)g_{\alpha,k}(t)\leq (2\alpha+1)k
.\end{equation} Moreover, taking the second estimate in \rif{g2} into account, and then
again the first estimate in \rif{g2}, we also find
\begin{equation}\label{W3}
|W'_{\alpha,k}(t)|t\leq|W'_{\alpha,k}(t)|(\mu^2+t)\leq 3(\alpha+1)
W_{\alpha,k}(t).
\end{equation}
Using that $g'_{\alpha,k}(t)\leq g'_{\alpha,k+1}(t)$ for every
$k,\alpha$ and $t$, taking the second inequality in \rif{boundg} into account we have
\begin{equation}\label{incW}
 W_{\alpha,k}(t)\leq W_{\alpha,k+1}(t)\qquad \qquad \text{for all} \ k \in \mathbb N.\end{equation}
Finally, by \rif{conv1} it follows that \begin{eqnarray}\nonumber
(\mu^2+t)^{\alpha} \leq
 \lim_{k \to \infty}  W_{\alpha,k}(t) & = & (\mu^2+t)^{\alpha-1}[2\alpha t +(\mu^2+t)] \\
 & \leq & 3(\alpha+1)(\mu^2+t)^\alpha.\label{limW}
\end{eqnarray}
\subsection{The horizontal Caccioppoli inequality} Here we prove a
suitable energy estimate involving powers of the natural quantity
$(\mu^2+|\Xu|^2)^{1/2}$, that is ``the weight"  of the equation
\rif{subnondeg}.
\begin{lemma}\label{XX sigma lem}
 Let $u \in HW^{1,p}(\Omega)$ be a weak solution to the equation \trif{due}
 under the assumptions \trif{growth}-\trif{nondeg}, with $
2\leq  p < 4.$ Let $\sigma \geq 2$ and assume that \eqn{startint}
$$\Xu
\in L^{p+\sigma}_{\loc}(\Omega,\er^{2n}), \qquad \mbox{and} \qquad
\vert\Xu\vert^{p-2+\sigma}|Tu|^2\in L^1_{\loc}(\Omega)\;.$$ Then for
all $\eta\in C^\infty_c(\Omega)$, we have
\begin{align} \label{XX sigma est}
\int_{\Omega} \eta^2 &\weight
\sum_{s=1}^{2n} \deltaXi^\frac{\sigma}{2} |\X X_s u|^2 \dx \nonumber\\
&\leq c (\sigma +1) \int_\Om (|\X \eta|^2 + \eta |T\eta|)
\deltaX^\frac{p+\sigma}{2}\dx
\nonumber\\
&\qquad \qquad+ c (\sigma +1)^3 \int_\Om  \eta^2
\deltaX^{\frac{p-2+\sigma}{2}} |Tu|^2 \dx,
\end{align}
and moreover
\begin{align} \label{XX sigma estpre}
\int_{\Omega} \eta^2 &\weight
\sum_{s=1}^{2n} \deltaXi^\frac{\sigma}{2} |\X X_s u|^2 \dx \nonumber\\
&\leq c (\sigma +1) \int_\Om (|\X \eta|^2 + \eta |T\eta|)
\sum_{s=1}^{2n} (\mu^2+|X_su|^2)^{\frac{p+\sigma}{2}}\dx
\nonumber\\
&\qquad \qquad+ c (\sigma +1)^3 \int_\Om  \eta^2 \sum_{s=1}^{2n}
(\mu^2+|X_su|^2)^{\frac{p-2+\sigma}{2}} |Tu|^2 \dx\;.
\end{align}
Both in \trif{XX sigma est} and in \trif{XX sigma estpre} we have
$c\equiv c(n,p,\ratio)>1$, and in particular the constant $c$ does
not depend on $\mu,u$, and on the vector field $a(\cdot)$.
\end{lemma}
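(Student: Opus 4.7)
My strategy is to differentiate the equation in the horizontal direction $X_s$ and test with a suitably truncated version of $X_s u$ itself. Concretely, I will start from Lemma~\ref{lemma:diff quot eq} with $Z=X_s$,
\[
\int_\Omega \sum_{i=1}^{2n}\Big( \DhX a_i (\Xu) X_i \varphi
    + a_i(\Xu)(x\e^{hX_s})[X_s,X_i]\varphi\Big)\, dx=0,
\]
and choose as test function
\[
\varphi := \eta^{2}\,g_{\sigma/2,k}\!\big((\DhX u)^2\big)\,\DhX u,
\]
which is admissible by the higher integrability granted in Lemma~\ref{diffquo} (and whose truncation index $k$ ensures boundedness). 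The plan is then to pass to the limit $h\to 0$ using the convergences in Lemma~\ref{diffquo} together with Lemma~\ref{hormander}, and finally send $k\to\infty$ invoking \eqref{limW} and monotone convergence. The standing assumption \eqref{startint} is what justifies that every limiting integral is finite.

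\textbf{Extracting the quadratic form.} Computing $X_i\varphi$ by the Leibniz and chain rules (using Lemma~\ref{eles} and \eqref{eq:hom lemma 1} to commute $X_i$ and $D_h^{X_s}$), the dominant piece is $\eta^2 W_{\sigma/2,k}((\DhX u)^2)\, D_h^{X_s} X_i u$, to which I apply the ellipticity bound \eqref{lowerb} from Lemma~\ref{prop B}. In the limit $h\to 0$ this yields the lower bound
\[
c^{-1}\int_\Omega \eta^{2} W_{\sigma/2,k}((X_s u)^2)\weight |\X X_s u|^{2}\,dx
\]
on the left-hand side; after $k\to\infty$, the factor $W_{\sigma/2,k}((X_s u)^2)$ becomes comparable to $(\mu^2+|X_s u|^2)^{\sigma/2}$ by \eqref{limW}, producing the LHS of \eqref{XX sigma estpre} once we sum over $s$. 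The remaining contribution to $X_i\varphi$, namely $2\eta (X_i\eta) g_{\sigma/2,k}((\DhX u)^2)\DhX u$, is absorbed by Young's inequality into half of the main term plus a boundary contribution bounded by $|\X\eta|^{2}(\mu^2+|X_s u|^2)^{(p+\sigma)/2}$; the bounds \eqref{boundg}, \eqref{boundW}, and \eqref{W3} are what translate the truncations into pure powers of the weight and produce the combinatorial factors $(\sigma+1)$ and $(\sigma+1)^{3}$.

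\textbf{The commutator terms; main obstacle.} The genuinely delicate piece is the second summand in Lemma~\ref{lemma:diff quot eq}, since $[X_s,X_i]$ is either $0$ or $\pm T$ by \eqref{comm}. After $h\to 0$ one is left with integrals of the form $\int a_i(\Xu)\,T\varphi\,dx$, and this is the step where vertical derivatives inevitably enter. Expanding
\[
T\varphi=2\eta (T\eta)\, g((X_s u)^2) X_s u + \eta^{2} W((X_s u)^2)\,T X_s u,
\]
the first summand, combined with \eqref{growth}, \eqref{boundg} and Young's inequality, produces exactly the $\eta|T\eta|\,\deltaXi^{(p+\sigma)/2}$ contribution on the RHS of \eqref{XX sigma estpre}. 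For the second summand I use $T X_s u = X_s T u$ and integrate by parts in the $X_s$ direction (justified since $\Xu\in HW^{1,2}_{\loc}$ and $|\Xu|^{p-2+\sigma}|Tu|^{2}\in L^{1}_{\loc}$ by \eqref{startint}); the derivatives falling on $a_i(\Xu)$ are controlled by \eqref{growth} and the Cauchy--Schwarz/Young trick $(\mu^2+|\Xu|^2)^{(p-2)/2}|\X X_s u||Tu|\leq \epsilon(\mu^2+|\Xu|^2)^{(p-2)/2}|\X X_s u|^{2}+ C(\epsilon)(\mu^2+|\Xu|^2)^{(p-2)/2}|Tu|^{2}$, so that the $\epsilon$-term is absorbed into the LHS and the remainder fits into the $\eta^{2}\deltaXi^{(p-2+\sigma)/2}|Tu|^{2}$ contribution (again using \eqref{W3} to control $|W'||X_s u|^{2}$ by $W$, which is where the cubic constant $(\sigma+1)^{3}$ appears).

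\textbf{Passage to limits and deduction of \eqref{XX sigma est}.} Once \eqref{XX sigma estpre} is proved for each $s$, the weaker version \eqref{XX sigma est} follows immediately from $(\mu^2+|X_s u|^2)\leq(\mu^2+|\Xu|^2)$ and the finiteness of the index set $s\in\{1,\dots,2n\}$. The main obstacle I anticipate is the bookkeeping of the $T$-commutator step: the use of Lemma~\ref{eles} to justify integration by parts in $X_s$ on the product of $a_i(\Xu)$, $\eta^{2}$, $W((X_s u)^2)$ and $Tu$ requires exactly the integrability already assumed in \eqref{startint} plus Lemma~\ref{diffquo}, and one must check that all intermediate quantities lie in $L^{1}_{\loc}$ uniformly in $h$ and $k$ before taking the two successive limits $h\to 0$ and $k\nearrow\infty$.
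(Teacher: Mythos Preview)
Your proposal is correct and follows essentially the same approach as the paper: the same test function $\varphi=\eta^2 g_{\sigma/2,k}((\DhX u)^2)\DhX u$ in the difference-quotient identity of Lemma~\ref{lemma:diff quot eq}, the same use of Lemma~\ref{prop B} for the quadratic form, the same handling of the commutator contributions (which split into the $\eta|T\eta|$ boundary piece and the $\int \eta^2 a_{n+s}(\Xu)W((X_su)^2)X_sTu\,dx$ term), the same integration by parts in $X_s$ with the $(\sigma+1)^3$ factor arising from \eqref{W3}, and the same two-step passage $h\to 0$ then $k\nearrow\infty$. The paper also spells out explicitly why $\sigma\geq 2$ is needed in the dominated convergence step (it guarantees $\Xu\in L^{2p-2}_{\loc}$ via $2p-2<p+2$), which you may want to make explicit in your write-up.
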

\begin{proof} With the definition in \rif{gF}, in the following we shall abbreviate $g(\cdot)\equiv g_{\sigma/2,k}$,
for a fixed $k \in \mathbb N$, while, according to \rif{WF}, we
shall denote $W(\cdot):=2g'(\cdot)t+g(\cdot)$. For the rest of the
proof all the constants denoted by $c$ or the like will depend only
on $n,p, \ratio$, and will be independent of $\mu, u$, $k$ and
$\sigma$. Any dependence on $\sigma$ in the following inequalities
will be explicitly displayed. We start by applying Lemma
\ref{lemma:diff quot eq} with the choice $Z=X_s$ for $s \in
\{1,\ldots,n\}$; for every $\varphi \in C^{\infty}_c(\Omega)$, and
$h\not=0$ accordingly small, we arrive at
\begin{equation}\label{XX:1}
    \int_\Omega \langle  \Ds a (\Xu),\X \varphi\rangle \dx
        = - \int_\Omega a_{n+s}(\Xu)(x\e^{hX_s}) T\varphi \dx .
\end{equation}
We test \eqref{XX:1} with $\varphi \equiv \phi_1:=\eta^2 g(|\Ds
u|^2) \Ds u$, for $s \in \{1,\ldots,n\}$. By a simple density
argument this is an admissible test function in \rif{XX:1}, since
$g$ is bounded, and moreover $Tu \in L^{p}_{\loc}(\Omega)$. We
obtain, for every $i \in \{1,\ldots,2n\}$
$$
X_i \phi_1 = 2\eta X_i \eta \, g(|\Ds u|^2) \Ds u + \eta^2 W(|\Ds
u|^2) X_i \Ds u
$$
and
$$
T \phi_1 = 2\eta T\eta \, g(|\Ds u|^2) \Ds u + \eta^2 W(|\Ds u|^2) T
\Ds u\;.
$$
Inserting the last two equalities into \eqref{XX:1} yields
\begin{align} \label{XX:2}
\int_\Omega \eta^2 \sum_{i=1}^{2n} \DhX a_i(\Xu)
    & X_i \Ds u  W(|\DhX u|^2)\dx \nonumber \\
&= - 2 \int_\Omega \eta
    \sum_{i=1}^{2n}
    \DhX a_i (\Xu)\,X_i \eta
    g(|\DhX u|^2) \Ds u
    \dx\nonumber\\
&\quad -  2 \int_\Omega \eta T \eta
    a_{n+s} (\Xu)(x \e^{hX_s})
    g(|\DhX u|^2) \Ds u
    \dx\nonumber\\
&\quad - \int_\Omega \eta^2
    a_{n+s} (\Xu)(x \e^{hX_s})
    W(|\DhX u|^2) T \DhX u \dx\;.
\end{align}
As we are dealing with difference quotients in the horizontal
directions, the operators $\X$ and $\Ds$ do not commute. Therefore
we need to use identity \eqref{eq:hom lemma 1}; this gives, for
every $j \in \{1,\ldots,2n\}$
\begin{equation*}
    (\Ds X_j u)(x)
         =
         X_j(\Ds u)(x)+[X_s,X_j]u(x\e^{hX_s})\, .
\end{equation*}
Now use Lemma \ref{prop B} with $Z \equiv X_s$, and adopting the
related notation in \rif{atz}, we have
\begin{align} \label{XX:3}
\DhX a_i(\Xu)(x) &=
\sum_{j=1}^{2n} a_{i,j}^{X_s}(x) \DhX X_j u(x) \nonumber\\
&= \sum_{j=1}^{2n} a_{i,j}^{X_s}(x)
\big[ X_j \DhX u(x) + [X_s,X_j] u(x\e^{hX_s}) \big] \nonumber\\
&= \sum_{j=1}^{2n} a_{i,j}^{X_s}(x) X_j \DhX u(x) + a_{i,n+s}^{X_s}(x) Tu(x\e^{hX_s})\,.
\end{align}
From now on in every occurence of the symbol $\sum$ the indexes
$i,j$ will run from $1$ to $2n$. Joining \eqref{XX:2} and
\eqref{XX:3} we obtain
\begin{align}\label{XX test}
\int_\Omega \eta^2
    \sum_{i,j} &
    a_{i,j}^{X_s} (x) \,
    X_j \DhX u \,
    X_i \DhX u \,
    W(|\DhX u|^2)
    \dx \nonumber \\
&=  -
    \int_\Omega \eta^2
    \sum_{i}
    a_{i,n+s}^{X_s}(x) \,X_i \DhX u \,
    Tu(x\e^{hX_s}) \,
    W(|\DhX u|^2)
    \dx   \nonumber  \\
&\quad  - 2 \int_\Omega \eta
    \sum_{i,j}
    a_{i,j}^{X_s} (x) \,
    X_i \eta \,
    X_j \DhX u\,
    g(|\DhX u|^2)\DhX u\,
    \dx \nonumber\\
&\quad - 2 \int_\Omega
    \eta
    \sum_{i}
    a_{i,n+s}^{X_s}(x) \,X_i \eta \,
    Tu(x\e^{hX_s}) \,
    g(|\DhX u|^2) \,
    \Ds u
    \dx \nonumber\\
&\quad -  2 \int_\Omega \eta T \eta
    a_{n+s} (\Xu)(x \e^{hX_s}) \,
    g(|\DhX u|^2) \,
    \DhX u
    \dx\nonumber\\
&\quad - \int_\Omega \eta^2
    a_{n+s} (\Xu)(x \e^{hX_s})
    W(|\DhX u|^2) T \DhX u \dx\,.
\end{align}
A completely similar equation, with $Y_s=X_{n+s}$ replacing $X_s$
everywhere in \rif{XX test}, can be obtained by testing \eqref{XX:1}
with $\varphi \equiv \phi_2 :=\eta^2 g(|\Dsy u|^2) \Dsy u$. We
finally sum up the resulting two equalities over $s=1,2, \ldots, n$,
thereby obtaining
\begin{align}\label{XX pre}
\int_\Omega \eta^2
    \sum_{s=1}^{2n}
    \sum_{i,j}
    a_{i,j}^{X_s} (x) \,
&   X_j \DhX u \,
    X_i \DhX u \,
    W(|\DhX u|^2)
    \dx \nonumber \\
&=  - 2 \int_\Omega \eta
    \sum_{s=1}^{2n} \sum_{i,j}
a_{i,j}^{X_s} (x) \, X_i \eta\,X_j \DhX u
    g(|\DhX u|^2) \,
    \DhX u
\dx \nonumber \\
&\quad -   \int_\Omega
    \eta^2 \,
    \sum_{s=1}^{n} \sum_{i}
    \Big(
    a_{i,n+s}^{X_s}(x)
    Tu(x\e^{hX_s}) \,
    W(|\DhX u|^2) \,
    X_i \DhX u \nonumber \\
&\hspace{3cm} -
    a_{i,s}^{Y_s}(x)
    Tu(x\e^{hY_s}) \,
    W(|\DhY u|^2) \,
    X_i \DhY u
    \Big) \dx \nonumber \\
&\quad -   2 \int_\Omega
    \eta \,
    \sum_{s=1}^{n} \sum_{i}
    X_i \eta
    \Big(
    a_{i,n+s}^{X_s}(x)
    Tu(x\e^{hX_s}) \,
    g(|\DhX u|^2) \,
    \DhX u \nonumber \\
&\hspace{3cm}-
    a_{i,s}^{Y_s}(x)
    Tu(x\e^{hY_s}) \,
    g(|\DhY u|^2) \,
    \DhY u
    \Big) \dx \nonumber \\
&\quad -   2 \int_\Omega
    \eta T \eta \,
    \sum_{s=1}^{n}
    \Big(
    a_{n+s}(\Xu)(x \e^{hX_s})
    g(|\DhX u|^2) \,
    \DhX u \nonumber \\
&\hspace{3cm}-
    a_s(\Xu)(x \e^{hY_s})
    g(|\DhY u|^2) \,
    \DhY u
    \Big) \dx \nonumber \\
&\quad -
    \int_\Omega
    \eta^2 \,
    \sum_{s=1}^{n}
    \Big(
    a_{n+s}(\Xu)(x \e^{hX_s})
    W(|\DhX u|^2) \,
    T \DhX u \nonumber \\
&\hspace{3cm} -
    a_s(\Xu)(x \e^{hY_s})
    W(|\DhY u|^2) \,
    T \DhY u
    \Big) \dx \nonumber \\
&=: I_1 + I_2 + I_3 + I_4 +I_5\;.
\end{align}
We now proceed estimating the various terms spreading-up from
\eqref{XX pre}. To estimate the left hand side from below we use
\eqref{lowerb} obtaining
\begin{multline} \label{XX lhs}
\text{l.h.s. of} \  (\ref{XX pre}) \\
\geq c^{-1} \int_\Omega \eta^2 \sum_{s=1}^{2n}
    \weightX
    W(|\DhX u|^2) |\X \DhX u|^2 \dx\,,
\end{multline}
with $c \equiv c(n,p,\ratio)\geq 1$. In order to estimate the
integrals $I_1,\ldots,I_4$ we use \eqref{Btilde}, \eqref{upperb} and
Young's inequality, obtaining for $\varepsilon \in (0,1)$ that
\begin{eqnarray*}\nonumber
 && |I_1|\\
 && \leq c \int_\Omega \eta \, |\X \eta| \,
    \sum_{s=1}^{2n}
    \weightX \,
    g(|\DhX u|^2) \,
    |\DhX u| \,
    |\X \DhX u| \dx \nonumber \\
&&\leq \epsilon \int_\Omega
    \eta^2 \,
    \sum_{s=1}^{2n}
    \weightX \,
    W(|\DhX u|^2) \,
    |\X \DhX u|^2
    \dx \nonumber \\
&&\quad + c(\epsilon) \int_\Omega
    |\X \eta|^2 \,
    \sum_{s=1}^{2n}
    \weightX\,
    W(|\DhX u|^2)\,
    |\DhX u|^2 \dx\;,\label{XX rhs1}
\end{eqnarray*}
and, in a similar way
\begin{eqnarray*}\nonumber
&&|I_2|\\& &\leq
    c \int_\Om
    \eta^2 \sum_{s=1}^{2n}
    \weightX \,
    W(|\DhX u|^2) \\ && \hspace{8cm}\cdot
    |Tu(x\e^{hX_s})|\,
    |\X \DhX u| \dx \nonumber \\
&&\leq
    \epsilon
    \int_\Om \eta^2
    \sum_{s=1}^{2n}
    \weightX \,
    W(|\DhX u|^2) \,
    |\X \DhX u|^2 \dx  \nonumber \\
&& \qquad + c(\epsilon)
    \int_\Om
    \eta^2 \,
    \sum_{s=1}^{2n}
    \weightX \\&& \hspace{7cm}\cdot
    W(|\DhX u|^2) \,
    |Tu(x\e^{hX_s})|^2 \dx\label{XX rhs2}\;,
\end{eqnarray*}
\begin{eqnarray*} \nonumber && |I_3| \\ & &\leq
    c \int_\Om
    \eta |\X \eta| \,
    \sum_{s=1}^{2n}
    \weightX \,
    |Tu(x\e^{hX_s})| \\ && \hspace{8cm}\cdot
    g(|\DhX u |^2) \,
    |\DhX u| \dx \nonumber \\
&&\leq c \int_\Om
    |\X \eta|^2
    \sum_{s=1}^{2n}
    \weightX \,
    W(|\DhX u|^2)\,
    |\DhX u|^2 \dx \nonumber \\
&& \qquad + c \int_\Om
    \eta^2
    \sum_{s=1}^{2n}
    \weightX \,
    W(|\DhX u|^2)\,
    |Tu(x\e^{hX_s})|^2 \dx\,,\label{XX rhs3}
\end{eqnarray*}
and finally
$$
|I_4| \leq c \int_\Om
    \eta |T\eta| \sum_{s=1}^{2n}
    \big( \mu^2 + |\Xu(x\e^{hX_s})|^2 \big)^\frac{p-1}{2} \,
    W(|\DhX u |^2) |\DhX u| \dx\;.
$$
The estimation of the last integral $I_5$ in \eqref{XX pre} needs
slightly more care, and will be done later. We have that $\Xu \in
L^p(\Omega,\er^{2n})$ and, by Theorem \ref{domokos} we also have $Tu
\in L^p_{\loc}(\Omega)$, while Lemma \ref{diffquo} gives $\X \Xu \in
L^2_{\loc}(\Om, \R^{2n\times 2n})$, therefore, using also
\rif{convbase}, up to passing to non-relabeled sub-sequences, we may
assume for every $s=1,\ldots,2n$ that
\begin{align}\label{ccoo}
\Xu(x \e^{hX_s}) &\to \Xu(x) &\quad
\text{in $L^p_{\loc}(\Om, \R^{2n})$ and a.e.} \nonumber \\
Tu(x \e^{hX_s}) &\to Tu(x) &\quad
\text{in $L^p_{\loc}(\Om)$ and a.e.}  \\
\X \DhX u(x) &\to \X X_s u(x) &\quad \text{in $L^2_{\loc}(\Om,
\R^{2n})$ and a.e.}\label{ccoo2}
\end{align}
See also Section 3.2. The convergence statement in \rif{ccoo2} needs perhaps an explanation; for $i = 1,\ldots,2n$, write
$X_i  \DhX u(x)= \DhX X_iu(x)+ [X_i,X_s]u(x\e^{hX_s})$, according to \rif{eq:hom lemma 1}. Then, using the result of Lemma
\ref{hormander}, the fact that $p\geq 2$,
and the convergence in \rif{ccoo}, we have that
$X_i \DhX u(x) \to X_sX_iu(x)+[X_i,X_s]u(x)=X_iX_su(x)$ locally in $L^2(\Omega)$, and, up to a sub-sequence, almost everywhere. Therefore \rif{ccoo2} is completely proved.

Now we want to pass to the limit with $h\to 0$
in \eqref{XX pre} taking into account the estimates for the
integrals $I_1,\ldots,I_4$. Absorbing the terms with $\epsilon$ in
the l.h.s., applying Fatou's lemma for the resulting l.h.s., and
Lebesgue's dominated convergence theorem for the r.h.s. - keep in
mind that $W(\cdot)$ is bounded by \rif{boundW} - we obtain
\begin{align} \label{pre XX}
\int_\Omega \eta^2 \sum_{s=1}^{2n}
    & \weight
    W(|X_s u|^2) |\X X_s u|^2
    \dx \nonumber\\
&\leq c \int_\Omega
    \Big( |\X \eta|^2 + \eta |T\eta| \Big) \,
    \deltaX^\frac{p}{2}\,
    \sum_{s=1}^{2n}
    W(|X_s u|^2)
    \dx \nonumber \\
&\qquad + c \int_\Om
    \eta^2 \,
    \weight\,|Tu|^2
    \sum_{s=1}^{2n}
    W(|X_s u|^2)
    \dx \nonumber\\
&\qquad + \limsup_{h \to 0} \left|
   \int_\Omega
    \eta^2 \,
    \sum_{s=1}^{n}
    a_{n+s}(\Xu)(x \e^{hX_s})
    W(|\DhX u|^2) \,
    T \DhX u
    \dx \right| \nonumber \\
&\qquad + \limsup_{h \to 0}\left|
  \int_\Omega
    \eta^2 \,
    \sum_{s=1}^{n}
    a_s(\Xu)(x \e^{hY_s})
    W(|\DhY u|^2) \,
    T \DhY u
    \dx \right|.
\end{align}
Now we compute and estimate the last two limits, that actually
exist, in the previous inequality; we shall concentrate on the
second-last one, similar arguments working for the last one. By
Lemma \ref{diffquo} we know that $\X Tu \in L^2_{\loc}(\Omega,
\er^{2n})$. Therefore, for every $s \in\{1,\ldots,2n\}$ we have that
\eqn{stella}
$$
\DhX Tu \to X_s Tu \quad \text{in} \quad L^2_{\loc}(\Omega) \quad
\text{as} \quad h \to 0\;.
$$
Using Young's inequality we can bound the term under the integral
sign as follows:
\begin{align}
\big| a_{n+s}(\Xu)(x \e^{hX_s}) W(|\DhX u|^2) & \,
T \DhX u \big| \nonumber \\
&\leq c(\sigma,k) \deltaXhi^\frac{p-1}{2}\,
|\DhX Tu|\nonumber \\
&\leq c(\sigma,k) \big[ \deltaXhi^\frac{2p-2}{2} + |\DhX Tu|^2
\big]\,,\label{convle}
\end{align}
where we used \rif{boundW} and that $\alpha =\sigma/2$. Since
$\sigma \geq 2$ then \rif{startint} implies that $\Xu \in
L_{\loc}^{p+2}(\Om, \R^{2n})$ and moreover $p<4$ implies that we can
use the fact that $ 2p -2 < p+2$. Therefore $\Xu \in
L_{\loc}^{2p-2}(\Om, \R^{2n})$ and hence
$$
\Xu(x\e^{hX_s}) \to \Xu(x) \quad \text{in
$L^{2p-2}_{\loc}(\Omega,\er^{2n})$ and a.e. as $h \to 0$}.
$$
Thus, thanks to \rif{stella}-\rif{convle}, we can let $h \to 0$
using a well-known variant of Lebesgue's dominated convergence
theorem; therefore we obtain
\begin{multline} \label{I9 lim}
\lim_{h \to 0}
   \int_\Omega
    \eta^2 \,
    \sum_{s=1}^{n}
    a_{n+s}(\Xu)(x \e^{hX_s})
    W(|\DhX u|^2) \,
    T \DhX u
    \dx \\
=
   \int_\Omega
    \eta^2 \,
    \sum_{s=1}^{n}
    a_{n+s}(\Xu)
    W(|X_s u|^2) \,
    X_s T u
    \dx\;.
\end{multline}
In a completely similar manner, we also have
\begin{multline} \label{I10 lim}
\lim_{h \to 0}
   \int_\Omega
    \eta^2 \,
   \sum_{s=1}^{n}
    a_s(\Xu)(x \e^{hY_s})
    W(|\DhY u|^2) \,
    T \DhY u
    \dx \\
=
   \int_\Omega
    \eta^2
    \sum_{s=1}^{n}
    a_s(\Xu)
    W(|Y_s u|^2) \,
    Y_s T u
    \dx\,.
\end{multline}
Connecting \rif{I9 lim} and \rif{I10 lim} to \rif{pre XX} we get
\begin{align} \label{pre XXpp}
\int_\Omega \eta^2 \sum_{s=1}^{2n}
    & \weight
    W(|X_s u|^2) |\X X_s u|^2
    \dx \nonumber\\
&\leq c \int_\Omega
    \Big( |\X \eta|^2 + \eta |T\eta| \Big) \,
    \deltaX^\frac{p}{2}\,
    \sum_{s=1}^{2n}
    W(|X_s u|^2)
    \dx \nonumber \\
&\qquad + c \int_\Om
    \eta^2 \,
    \weight
    \sum_{s=1}^{2n}
    W(|X_s u|^2)
    |Tu|^2 \dx \nonumber\\
&\qquad +\left|   \int_\Omega
    \eta^2 \,
    \sum_{s=1}^{n}
    a_{n+s}(\Xu)
    W(|X_s u|^2) \,
    X_s T u
    \dx\right| \nonumber \\
&\qquad +  \left|\int_\Omega
    \eta^2
    \sum_{s=1}^{n}
    a_s(\Xu)
    W(|Y_s u|^2) \,
    Y_s T u
    \dx\right|\;,
\end{align}
with $c \equiv c(n,p,\ratio)$. We continue estimating the last two
integrals; we shall estimate the first one, the estimation of the latter
being completely analogous.
We integrate by
parts as follows:
\begin{align}
\int_\Omega
    \eta^2  \,
   \sum_{s=1}^{n}
    a_{n+s}(\Xu)
    W(|X_s u|^2) &\,
    X_s T u
    \dx
= - 2 \int_\Om \eta Tu  \sum_{s=1}^{n} X_s \eta
a_{n+s}(\Xu) W(|X_s u|^2) \dx  \nonumber \\
\nonumber & \quad - \int_\Om \eta^2 Tu
 \sum_{s=1}^{n} \sum_{\alpha=1}^{2n} D_{z_\alpha} a_{n+s}(\Xu) X_s X_\alpha u W(|X_su|^2) \dx \\
& \quad - 2 \int_\Om \eta^2 Tu
 \sum_{s=1}^{n} a_{n+s}(\Xu) W'(|X_su|^2) X_s u X_s X_s u \dx \nonumber \\
&=: A + B + C. \label{grdwrit}
\end{align}
The previous integration by parts needs of course to be justified; we postpone its verification to the very end of the proof.
The estimates for $A$, $B$, $C$ follow again by \eqref{upperb},
\eqref{lowerb} and Young's inequality; indeed, as for $A$ we have
\begin{align*}
|A| &\leq 2 \int_\Om \eta |\X \eta| \deltaX^\frac{p-1}{2} |Tu|
\sum_{s=1}^{n} W(|X_su|^2) \dx \\
&\leq c \int_\Om |\X \eta|^2 \deltaX^\frac{p}{2}
\sum_{s=1}^{n} W(|X_su|^2) \dx \\
&\hspace{3cm} + c \int_\Om \eta^2 \weight  \sum_{s=1}^{n}
W(|X_su|^2) |Tu|^2 \dx\,.
\end{align*}
Using that $X_sX_\alpha = X_\alpha X_s+ [X_s,X_\alpha]$, we have,
with $\varepsilon \in (0,1)$
\begin{align*}
|B| &\leq \left| \int_\Om \eta^2 Tu \sum_{s=1}^{n}
\sum_{\alpha=1}^{2n} D_{z_\alpha} a_{n+s}(\Xu) X_\alpha X_s  u
W(|X_su|^2) \dx
\right| \\
&\hspace{3cm} + \left| \int_\Om \eta^2 |Tu|^2 \sum_{s=1}^{n}
D_{z_{n+s}} a_{n+s}(\Xu) W(|X_su|^2) \dx
\right| \\
&\leq c\int_\Om \eta^2 \weight \sum_{s=1}^{n} W(|X_su|^2) \big(|Tu|
|\X X_s u| + |Tu|^2 \big)
\dx \\
&\leq \epsilon \int_\Om \eta^2 \weight
\sum_{s=1}^{n} W(|X_su|^2) |\X X_s u|^2 \dx \\
&\hspace{3cm} + c(\epsilon) \int_\Om \eta^2 \weight \sum_{s=1}^{n}
W(|X_su|^2) |Tu|^2 \dx\;.
\end{align*}
Finally, using \rif{W3} we have
\begin{align*}
|C| &\leq c \int_\Om \eta^2 \deltaX^\frac{p-1}{2} |Tu|
\sum_{s=1}^{n} W'(|X_su|^2) |X_s u| \, |X_s X_s u| \dx \\
&\leq \frac{\epsilon}{c(\sigma+1)} \int_\Om \eta^2 \weight
\sum_{s=1}^{n} W'(|X_su|^2) |X_s u|^2 \, |X_s X_s u|^2 \dx \\
&\hspace{3cm} + \frac{c(\sigma+1)}{\epsilon} \int_\Om \eta^2
\deltaX^\frac{p}{2} |Tu|^2
\sum_{s=1}^{n} W'(|X_su|^2) \dx \\
&\leq c\epsilon \int_\Om \eta^2 \weight
\sum_{s=1}^{n} W(|X_su|^2) |\X X_s u|^2 \dx \\
&\hspace{3cm} + \frac{c(\sigma+1)^2}{\epsilon} \int_\Om \eta^2
\deltaX^\frac{p}{2}  \sum_{s=1}^{n} \frac{W(|X_su|^2)}{\deltaXi}
|Tu|^2\dx\;.
\end{align*}
Joining together the estimates for $A,B,C$, we obtain
\begin{align} \label{I9}
\nonumber \Big| \int_\Omega
    \eta^2 \,
    \sum_{s=1}^{n}
    a_{n+s}(\Xu) &
    W(|X_s u|^2) \,
    X_s T u
    \dx \Big|\\ \nonumber &
\leq c \epsilon \int_\Om \eta^2 \weight
\sum_{s=1}^{n} W(|X_s u|^2) |\X X_s u|^2 \dx \nonumber\\
& + c \int_\Om |\X \eta|^2 \deltaX^\frac{p}{2}
\sum_{s=1}^{n} W(|X_su|^2) \dx \nonumber \\
& + c(\epsilon) \int_\Om \eta^2 \weight
\sum_{s=1}^{n} W(|X_su|^2) |Tu|^2\dx \nonumber \\
& + c(\epsilon)(\sigma+1)^2 \int_\Om \eta^2 \deltaX^\frac{p}{2}
\sum_{s=1}^{n} \frac{W(|X_su|^2)}{\deltaXi} |Tu|^2 \dx\,,
\end{align}
where $c \equiv c(n,p,\ratio)$. A completely analogous estimate,
replacing on the right hand side of \rif{I9} $X_s$ by $Y_s$, holds
also for the term
$$
 \int_\Omega
    \eta^2
    \sum_{s=1}^{n}
    a_s(\Xu)
    W(|Y_s u|^2) \,
    Y_s T u
    \dx\,,
$$
appearing in \rif{pre XXpp}. Therefore using \eqref{I9}, and its
$Y_s$-analog, to estimate \eqref{pre XXpp}, absorbing terms with
$\epsilon$ on the left hand side, we finally obtain
\begin{align*}
\int_\Omega \eta^2
    \weight
    &\sum_{s=1}^{2n}
    W(|X_s u|^2) |\X X_s u|^2
    \dx \\
&\leq c \int_\Omega
    \Big( |\X \eta|^2 + \eta |T\eta| \Big) \,
    \deltaX^\frac{p}{2}\,
    \sum_{s=1}^{2n}
    W(|X_s u|^2)
    \dx \\
& \qquad + c \int_\Om
    \eta^2 \,
    \weight
   \sum_{s=1}^{2n}
    W(|X_s u|^2)|Tu|^2
    \dx \\
& \qquad + c(\sigma+1)^2 \int_\Om
    \eta^2 \deltaX^\frac{p}{2}
    \sum_{s=1}^{2n} \frac{W(|X_s u|^2)}{\deltaXi} |Tu|^2\dx\,,
\end{align*}
where $c$ only depends on $n,p,\ratio$, but is otherwise independent
of $\mu, \sigma, k$, of the solution $u$, and of the vector field
$a(\cdot)$. Letting $k \nearrow \infty$ in the previous inequality,
using \rif{incW}-\rif{limW} to apply the monotone convergence
theorem, and finally using the elementary inequalities
$$
(\mu^2+|\Xu|^2)^{\frac{p}{2}}\sum_{s=1}^{2n}(\mu^2+|X_su|^2)^{\frac{\sigma}{2}}\leq
c(n,p) \sum_{s=1}^{2n}(\mu^2+|X_su|^2)^{\frac{p+\sigma}{2}}\;,
$$
$$
(\mu^2+|\Xu|^2)^{\frac{p-2}{2}}\sum_{s=1}^{2n}(\mu^2+|X_su|^2)^{\frac{\sigma}{2}}\leq
c(n,p) \sum_{s=1}^{2n}(\mu^2+|X_su|^2)^{\frac{p-2+\sigma}{2}}\;,
$$
and, since $\sigma \geq 2$ by assumption,
$$
(\mu^2+|\Xu|^2)^{\frac{p}{2}}\sum_{s=1}^{2n}(\mu^2+|X_su|^2)^{\frac{\sigma-2}{2}}\leq
c(n,p) \sum_{s=1}^{2n}(\mu^2+|X_su|^2)^{\frac{p-2+\sigma}{2}}\;,
$$
 we get \rif{XX sigma estpre}, from which also \rif{XX sigma est}
immediately follows. It remains to give the

{\em Justification of }\rif{grdwrit}. Fix $s \in \{1,\ldots,n\}$; assume that
\eqn{elle1}
$$
    X_s\left(\eta^2
    a_{n+s}(\Xu)
    W(|X_s u|^2)
     T u  \right)\in L^1_{\loc}(\Omega)
    $$
    and that the identity
\begin{eqnarray}
 \nonumber X_s\left(\eta^2
    a_{n+s}(\Xu)
    W(|X_s u|^2)
     T u  \right)&=&  (X_s\eta^2) a_{n+s}(\Xu)
    W(|X_s u|^2)
     T u\\  \nonumber
     && +\eta^2 \sum_{j=1}^{2n}D_{z_j}a_{n+s}(\Xu)X_sX_ju W(|X_s u|^2)
     T u\\ \nonumber
     && +2\eta^2 a_{n+s}(\Xu) W'(|X_s u|^2)X_su X_sX_su
     T u\\ \nonumber
     && +\eta^2 a_{n+s}(\Xu) W(|X_s u|^2)X_sTu\\
     &=:&B_1+B_2+B_3+B_4\;,\label{elle2}
\end{eqnarray}
holds in the distributional sense, with $B_1,\ldots, B_4 \in L^1_{\loc}(\Omega)$. Then,
since $\eta$ has compact support in $\Omega$, we have that
$$
\int_\Om X_s\left(\eta^2
    a_{n+s}(\Xu)
    W(|X_s u|^2)
     T u  \right)\, dx =0\;,
$$
from which \rif{grdwrit} follows via \rif{elle2}. In turn it remains to establish the validity of
\rif{elle1}-\rif{elle2}. We shall repeatedly use Lemma \ref{eles}; we start observing
that by \rif{growth} and $Tu \in L^p_{\loc}(\Omega)$, Young's inequality gives that $a_{n+s}(\Xu)
    W(|X_s u|^2)
     T u\in L^1_{\loc}(\Omega)$. We are of course using that $W(\cdot)$ is bounded. The same argument gives that
    $B_1 \in L^1_{\loc}(\Omega)$. Next we have
    $$
    |B_2|\leq c(k,\sigma)\left[(\mu^2+|\mathfrak{X}u|^2)^{\frac{p-2}{2}} |\XXX \mathfrak{X}
u|^2 + \mu^p+|\Xu|^p+|Tu|^p\right]\;,
    $$
   and observe that the right hand side belongs to $L^1_{\loc}(\Omega)$ by \rif{dese}, therefore $B_2 \in L^1_{\loc}(\Omega)$.
Then, by \rif{growth}, \rif{W3} and Young's inequality we have
\begin{eqnarray*}
|B_3| &\leq & \frac{c(k,\sigma)|X_su|}{\mu^2+|X_su|^2} (\mu^2+|\mathfrak{X}u|^2)^{\frac{p-1}{2}}|\X \X u||Tu|\\
&\leq & c(k,\sigma, \mu)\left[(\mu^2+|\mathfrak{X}u|^2)^{\frac{p-2}{2}}|\X \X u|^2+(\mu^2+|\mathfrak{X}u|^2)^{\frac{p}{2}}|Tu|^2\right]
\end{eqnarray*}
 and observe that all the quantities in the right
 hand side belong to $L^1_{\loc}(\Omega)$ by \rif{dese}
 and \rif{startint}, since here we are assuming $\sigma \geq 2$. We again conclude that $B_3 \in L^1_{\loc}(\Omega)$.
 Finally, again by \rif{growth} we have that
  $$
    |B_4|\leq c(k,\sigma)(\mu^2+|\mathfrak{X}u|^2)^{\frac{p-2}{2}} \left[|\XXX T
u|^2 + (\mu^2+|\mathfrak{X} u|^2)\right]\;,
    $$
and again, $B_4 \in L^1_{\loc}(\Omega)$ follows from \rif{dese}. At this stage we can apply Lemma \ref{eles} to the product
$a_{n+s}(\Xu)
    W(|X_s u|^2)
     T u\in L^1_{\loc}(\Omega)$ concluding that \rif{elle1}-\rif{elle2} hold.
\end{proof}
\subsection{The vertical Caccioppoli inequality}
We now state the energy estimate involving $Tu$. Its proof is
considerably simpler and it is close to similar estimates in the
Euclidean case, since the operators $T$ and $\XXX $ commute. We
report the full proof for the sake of completeness.
\begin{lemma} \label{XT sigma lem}
 Let $u \in HW^{1,p}(\Omega)$ be a weak solution to the equation \trif{due}
 under the assumptions \trif{growth}-\trif{nondeg}, with $
2\leq  p < 4.$ Let $\sigma \geq 0$ and assume that \eqn{intass1}
$$\Xu
\in L^{p+2+\sigma}_{\loc}(\Omega,\er^{2n})\,, \qquad \mbox{and} \qquad
Tu \in L^\frac{p+2+\sigma}{2}_{\loc}(\Omega)\;.$$ Then we have
\begin{equation} \label{XT sigma}
\int_{\Omega} \weight |Tu|^{\frac{\sigma}{2}} |\X Tu|^2 \eta^2 \,dx
\leq c \int_{\Omega}\weight |Tu|^{\frac{\sigma+4}{2}} |\X \eta|^2
\,dx,
\end{equation}
for all $\eta\in C^\infty_c(\Omega)$, where the constant $c\equiv
c(n,p,\ratio)$, is independent of $\mu$, of the solution $u$, and of
the vector field $a(\cdot)$.
\end{lemma}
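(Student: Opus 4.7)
The plan is to mimic the argument for Lemma \ref{XX sigma lem}, exploiting the decisive simplification that $[T,X_i]=0$ for every $i=1,\ldots,2n$. This removes both the commutator defect of Lemma \ref{tritra} and all the terms involving $Tu$ on the right-hand side of \rif{eq: diff quot eq}, so no delicate integration-by-parts analogous to \rif{grdwrit} is required; as a consequence the proof looks essentially like a Euclidean Caccioppoli estimate.

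First I would apply Lemma \ref{lemma:diff quot eq} with $Z=T$. Since every commutator $[T,X_i]$ vanishes, the identity collapses to
$$
\int_\Omega \sum_{i=1}^{2n} \bigl(\DhT a_i(\Xu)\bigr)\,X_i\varphi\,dx=0,\qquad \varphi\in C^\infty_c(\Omega);
$$
combined with Lemma \ref{prop B} and the commutation $\DhT X_j u=X_j\DhT u$ supplied by Lemma \ref{tritra}, this reads
$$
\int_\Omega \sum_{i,j=1}^{2n} a_{i,j}^T(x)\,X_j\DhT u\,X_i\varphi\,dx=0.
$$

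I would then test against $\varphi=\eta^2 G_\epsilon(\DhT u)$, where $G_\epsilon(s):=s(\epsilon+s^2)^{\sigma/4}$ is a $C^1$ regularization of the formal choice $G(s)=s|s|^{\sigma/2}$ (necessary when $\sigma<2$). A direct calculation gives $G'_\epsilon(s)\geq(\epsilon+s^2)^{\sigma/4}$ and $|G_\epsilon(s)|\leq |s|(\epsilon+s^2)^{\sigma/4}$, so that the ellipticity bound \rif{lowerb} on the diagonal piece together with the upper bound \rif{upperb} and Young's inequality on the off-diagonal piece yield, after the standard absorption,
$$
\int_\Omega \eta^2 (\epsilon+|\DhT u|^2)^{\sigma/4}\,\weightT\,|\X\DhT u|^2\,dx \leq c\int_\Omega |\X\eta|^2 (\epsilon+|\DhT u|^2)^{\sigma/4}\,\weightT\,|\DhT u|^2\,dx,
$$
with $c\equiv c(n,p,L/\nu)$ independent of $\epsilon,h,\mu,u$. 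Letting $\epsilon\searrow 0$, by monotone convergence on the left and dominated convergence on the right (the latter controlled by the assumptions \rif{intass1}), $(\epsilon+|\DhT u|^2)^{\sigma/4}$ is replaced by $|\DhT u|^{\sigma/2}$.

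The final step is the limit $h\to 0$. Lemma \ref{diffquo} provides $\X\DhT u\to\X Tu$ in $L^2_{\loc}$ (hence almost everywhere along a subsequence), while $\DhT u\to Tu$ and $\Xu(x\e^{hT})\to\Xu(x)$ converge almost everywhere; the assumptions \rif{intass1} furnish an $L^1_{\loc}$-majorant for the right-hand side via a Hölder splitting with conjugate exponents $(p+2+\sigma)/(p-2)$ and $(p+2+\sigma)/(\sigma+4)$. Fatou's lemma on the left and dominated convergence on the right then deliver \rif{XT sigma}. The only subtle technical point is the admissibility of the non-smooth test function when $\sigma<2$, handled by the regularization $G_\epsilon$ together with Lemma \ref{eles}; apart from that, the proof is noticeably shorter than that of Lemma \ref{XX sigma lem} precisely because no commutator defects propagate into the estimate.
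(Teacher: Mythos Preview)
Your proposal is correct and follows essentially the same route as the paper: test the $T$-differentiated equation with (a version of) $\eta^2 |\DhT u|^{\sigma/2}\DhT u$, apply \rif{lowerb}--\rif{upperb} together with Young's inequality to obtain a Caccioppoli estimate at the difference-quotient level, and then pass to the limit $h\to 0$ via Fatou on the left and dominated convergence on the right. The only minor difference is that the paper dispenses with the $G_\epsilon$-regularization by invoking Theorem \ref{CDG}: since $u$ is locally bounded, $\DhT u$ is bounded for each fixed $h$, so the test function $\eta^2|\DhT u|^{\sigma/2}\DhT u$ is directly admissible (note also that $s\mapsto s|s|^{\sigma/2}$ is already $C^1$ for every $\sigma\geq 0$, so your regularization, while harmless, is not actually needed).
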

\begin{proof} We again start by applying Lemma
\ref{lemma:diff quot eq}, this time with the choice $Z=T$; for every $\varphi \in C^{\infty}_c(\Omega)$, and
$h\not=0$ accordingly small, we arrive at
\begin{equation}\label{XT:1}
    \int_\Omega \langle \DhT a (\Xu), \X \varphi\rangle \,  dx=0\, .
\end{equation}
Observe that we have used that $[T,X_i]=0$ for every $i=1,\ldots,2n$.
As a test function in \rif{XT:1} we choose $\varphi=\eta^2 |\DhT
u|^\frac{\sigma}{2} \DhT u$. Note that this is an admissible test
function in view of the fact that $u$ is locally bounded, see
Theorem \ref{CDG}. Since $[T,X_s]=0$ for any $s=1,\ldots,2n$, we
have $
    \X(\DhT u)=\DhT(\Xu)
$ by Lemma \ref{tritra}. Inserting $\varphi$ into \eqref{XT:1} we
find
\begin{eqnarray}
   \nonumber &&(1+\sigma/2)\int_\Omega \eta^2 \sum_{i=1}^{2n}\DhT a_i(\Xu)  X_i \DhT u |\DhT u|^\frac{\sigma}{2} \dx
    \\&& \qquad \qquad = -2 \int_\Omega \eta \sum_{i=1}^{2n} \DhT a_i(\Xu)X_i \eta |\DhT u|^{\frac{\sigma}{2}} \DhT u \dx
    .\label{XT:2}
\end{eqnarray}
Using \eqref{Btilde} and \eqref{lowerb} with $Z\equiv X$, we can
estimate the l.h.s. of \eqref{XT:2} from below
\begin{equation*}
    \text{l.h.s. of \eqref{XT:2}} \geq
    c^{-1} \int_\Omega \weightT |\DhT u|^\frac{\sigma}{2} |\XXX \DhT u|^2 \eta^2
    \dx\,,
\end{equation*}
where $c \equiv c(n,p,\ratio)\geq 1$. For the r.h.s of \eqref{XT:2}
we use again \eqref{Btilde} together with \eqref{upperb} and Young's
inequality obtaining, with $\varepsilon \in (0,1)$
\begin{multline*}
    |\text{r.h.s of \eqref{XT:2}}| \leq
    \epsilon \int_\Omega \weightT |\DhT u|^\frac{\sigma}{2} |\XXX  \DhT u|^2 \eta^2 \dx
     \\
     +c(\epsilon) \int_\Omega \weightT |\DhT u|^{\frac{\sigma}{2}+2} |\X \eta|^2 \dx .
\end{multline*}
Combining these estimates and choosing $\varepsilon$ suitably small
as usual, we arrive at the following Caccioppoli-type estimate:
\begin{multline}\label{XT:cacc}
    I_h:=\int_\Omega \weightT |\DhT u|^\frac{\sigma}{2} |\XXX \DhT u|^2 \eta^2 \dx
    \\
    \leq
    \tilde{c} \int_\Omega \weightT
    |\DhT u|^{\frac{\sigma}{2}+2} |\X \eta|^2 \dx=:II_h
\end{multline}
which is obviously valid for any $h>0$ such that
$\sqrt{h}=|\e^{hT}|_{cc}<\mathrm{dist}(\supp \eta,\partial\Omega)$;
here $  \tilde{c}$ depends on $n,p,\ratio$. Using Young's inequality
to estimate the r.h.s of \eqref{XT:cacc} we finally obtain
 \eqn{pre XT}
 $$II_h
\leq
    c \int_{\supp \eta}
    \deltaXt^\frac{p+2+\sigma}{2}  \dx
    + c \int_{\supp \eta}
    |\DhT u|^\frac{p+2+\sigma}{2}  \dx\;,
$$
with $c \equiv c(\|\XXX \eta\|_{L^{\infty}})$. Since both $Tu$ and
$\Xu$ exist and satisfy \rif{intass1}, by Lemma \ref{hormander},
\rif{pre XT}, and a well-known variant of Lebesgue's dominated
convergence theorem, we obtain that \eqn{conv12}
$$
\lim_{h \to 0} II_h =   \tilde{c}\int_{\Omega}\weight
|Tu|^{\frac{\sigma+4}{2}} |\X \eta|^2 \,dx\;.
$$
On the other hand, by Lemma \ref{diffquo} and using and Fatou's
lemma we have that \eqn{conv2}
$$
\int_{\Omega} \weight |Tu|^{\frac{\sigma}{2}} |\X Tu|^2 \eta^2
\,dx\leq  \liminf_{h \to 0} I_h\,.
$$
The proof of \rif{XT sigma} now follows combining
\rif{conv12}-\rif{conv2} with \rif{XT:cacc}.
\end{proof}
\section{Intermediate integrability}
The aim of this section is to improve the already found higher
integrability result in \rif{basichi1}. Indeed the main result here
is
\begin{lemma} \label{HiX00}
 Let $u \in HW^{1,p}(\Omega)$ be a weak solution to the equation \trif{due}
 under the assumptions \trif{growth}-\trif{nondeg}, with $
2\leq  p < 4.$ Then \eqn{hhcc00}
$$
\Xu \in L_{\loc}^{p+4}(\Omega, \er^{2n}).
$$
Moreover, for every couple of open subsets $\Omega' \Subset \Omega''
\Subset \Omega$ there exists a constant $c$ depending only on
$n,p,\ratio$, $\dist(\Omega',
\partial \Omega'')$, and $\|u\|_{L^\infty(\Omega'')}$, but independent
of $\mu$, of the solution $u$, and of the vector field $a(\cdot)$,
such that \eqn{estint1base}
$$
\int_{\Omega'} |\Xu|^{p+4}\dx \leq c \int_{\Omega''}
\big(|\Xu|^{p}+|Tu|^{p}+\mu^p\big)\dx\,,
$$
where $c\equiv c(n,p,\ratio, \sigma)>0$. \end{lemma}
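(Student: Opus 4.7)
The plan is to combine the Gagliardo-Nirenberg interpolation of Lemma \ref{G:Nin} with $\sigma=2$ and the horizontal Caccioppoli-type estimate of Lemma \ref{XX sigma lem} also with $\sigma=2$. Applying \rif{HGN} with $\sigma=2$ to a cut-off $\eta\in C^\infty_c(\Omega'')$ with $\eta\equiv 1$ on $\Omega'$, the control of $\int\eta^2\deltaX^{(p+4)/2}\,dx$ reduces to bounding the first-order integral $\int(\eta^2\mu^2+|\X\eta|^2 u^2)\deltaX^{(p+2)/2}\,dx$ (handled by Lemma \ref{G:N} together with the $L^\infty$ bound from Theorem \ref{CDG}) and the Hessian integral $\|u\|_{L^\infty}^2\int\eta^2\sum_s\deltaXi^{p/2}|X_sX_su|^2\,dx$. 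Since $p\geq 2$ we have the pointwise bound $\deltaXi^{p/2}|X_sX_s u|^2\leq\weight\,\deltaXi\,|\X X_s u|^2$, which identifies this Hessian integral with exactly the quantity bounded by the left-hand side of \rif{XX sigma estpre}.

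Next, I apply Lemma \ref{XX sigma lem} with $\sigma=2$. The first of its hypotheses, $\Xu\in L^{p+2}_{\loc}$, is given by Lemma \ref{G:N}; the second, $|\Xu|^p|Tu|^2\in L^1_{\loc}$, is not available a priori from the current integrability of $\Xu$ and $Tu$ alone. I will circumvent this by a truncation argument: one reruns the proof of Lemma \ref{XX sigma lem} retaining the smooth cutoff weights $g_{1,k}$, $W_{1,k}$ from Section 5.1 throughout, so that all quantities stay bounded at the level $k<\infty$, and passes to the monotone limit $k\to\infty$ only at the very end. The resulting estimate controls the Hessian integral by $\int(|\X\eta|^2+\eta|T\eta|)\deltaX^{(p+2)/2}\,dx$ (again finite by Lemma \ref{G:N}) plus the cross term $c\int\eta^2\deltaX^{p/2}|Tu|^2\,dx$.

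The cross term is split via Young's inequality with conjugate exponents $(p+4)/p$ and $(p+4)/4$, yielding the pointwise bound $\deltaX^{p/2}|Tu|^2\leq\varepsilon\,\deltaX^{(p+4)/2}+c(\varepsilon)\,|Tu|^{(p+4)/2}$; for $\varepsilon$ small enough relative to $\|u\|_{L^\infty}^2$ the first piece is reabsorbed into the left-hand side coming from \rif{HGN}. The main obstacle becomes the control of the residual $\int\eta^2|Tu|^{(p+4)/2}\,dx$ by $\int(|\Xu|^p+|Tu|^p+\mu^p)\,dx$ with a $\mu$-independent constant. This requires combining the vertical Caccioppoli of Lemma \ref{XT sigma lem} taken with $\sigma=0$ (valid since $\Xu\in L^{p+2}_{\loc}$ and $Tu\in L^{(p+2)/2}_{\loc}\subset L^p_{\loc}$), the Heisenberg Sobolev embedding of Theorem \ref{subsobt}, and an interpolation that crucially uses the restriction $p<4$ to bridge between the known $L^p$ integrability of $Tu$ from Theorem \ref{domokos} and the required $L^{(p+4)/2}_{\loc}$ bound. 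The estimate \rif{estint1base} then follows after a standard covering argument, with Jerison's Poincar\'e inequality invoked to absorb the $u^2$ factor produced by Lemma \ref{G:Nin}.
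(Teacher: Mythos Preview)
Your overall architecture is right and matches the paper: couple Lemma~\ref{G:Nin} with $\sigma=2$ to Lemma~\ref{XX sigma lem} with $\sigma=2$, using Lemma~\ref{G:N} to make the $L^{p+2}$ terms finite. You also correctly identify the obstruction, namely that the hypothesis $|\Xu|^{p}|Tu|^{2}\in L^{1}_{\loc}$ of Lemma~\ref{XX sigma lem} (i.e.\ the finiteness of the cross term $\int\eta^{2}\deltaX^{p/2}|Tu|^{2}\,dx$) is not available from $\Xu\in L^{p+2}_{\loc}$ and $Tu\in L^{p}_{\loc}$ alone.

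The gap is in how you propose to close this. Your Young splitting $\deltaX^{p/2}|Tu|^{2}\le \varepsilon\,\deltaX^{(p+4)/2}+c(\varepsilon)|Tu|^{(p+4)/2}$ shifts the burden to controlling $\int |Tu|^{(p+4)/2}\,dx$ by $\int(|\Xu|^{p}+|Tu|^{p}+\mu^{p})\,dx$ with a constant independent of $\mu$. This is \emph{not} available from the tools you cite. From Lemma~\ref{XT sigma lem} with $\sigma=0$ you only get $\int\eta^{2}\weight|\X Tu|^{2}\,dx$ bounded; the weight $\weight$ cannot be removed without a $\mu^{2-p}$ loss, so a direct Sobolev embedding on $Tu$ produces a $\mu$-dependent constant. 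The integration-by-parts route (as in Lemma~\ref{HiT}) with $\sigma=0$ yields only $Tu\in L^{(p+3)/2}_{\loc}$, and iterating to reach $Tu\in L^{(p+4)/2}_{\loc}$ via $\sigma=1$ would already require $\Xu\in L^{p+3}_{\loc}$, which is precisely what you are trying to prove. Since $(p+4)/2>p$ and $(p+4)/2>(p+3)/2$ throughout the range $2\le p<4$, no interpolation between the known integrabilities closes the gap; your truncation argument merely defers the same issue to the limit $k\to\infty$.

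The paper handles exactly this point with the dedicated Lemma~\ref{lem:righthand}: testing \rif{wf0} with the unorthodox choice $\varphi=(\T_{k}(|Tu|))^{2}\eta^{2}u$ yields directly
\[
\int_{\Omega}\eta^{2}\deltaX^{p/2}|Tu|^{2}\,dx \;\le\; c\bigl(1+\|u\|_{L^{\infty}}^{2}\bigr)\int_{\Omega}(\eta^{2}+|\X\eta|^{2})\,\deltaX^{(p-2)/2}|Tu|^{2}\,dx,
\]
and the right-hand side is bounded by $\int(|\Xu|^{p}+|Tu|^{p}+\mu^{p})\,dx$ via Young. This both supplies the missing hypothesis for Lemma~\ref{XX sigma lem} with $\sigma=2$ and gives the $\mu$-independent control of the cross term, after which the chain Lemma~\ref{lem:righthand} $\to$ Lemma~\ref{XX sigma lem} $\to$ Lemma~\ref{G:Nin} yields \rif{hhcc00} and \rif{estint1base}. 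Replacing your final paragraph with this ingredient fixes the proof.
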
 The key to the
previous lemma is in fact the following one, whose proof features a
rather unorthodox choice of the test function $\varphi$ in \rif{wf0}
- see \rif{ro} below.
\begin{lemma}\label{lem:righthand}
Let $u \in HW^{1,p}(\Omega)$ be a weak solution to the equation
\trif{due}
 under the assumptions \trif{growth}-\trif{nondeg}, with $
2\leq  p < 4.$ Then
$$
\deltaX^{\frac{p}{2}} \vert Tu\vert^2\in L^1_{\loc}(\Omega) .
$$
Moreover, for all $\eta\in C^\infty_c(\Omega)$, we have
\begin{eqnarray}
\nonumber &&\int_\Omega \eta^2 \deltaX^{\frac{p}{2}} \vert Tu\vert^2 \dx \\
&&\qquad \qquad \le c (1+\vert\vert u\vert\vert_{L^\infty(\supp
\eta)}^2)\int_{\Omega}(\eta^2+\vert\X\eta\vert^2)
\deltaX^{\frac{p-2}{2}} \vert Tu\vert^2\dx,
\end{eqnarray}\label{HiX init3}
where $c\equiv c(n,p)>0$.
\end{lemma}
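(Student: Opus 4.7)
The plan is to test the equation \rif{wf0} with the unorthodox choice
\[
\varphi = \eta^{2}u\,\psi_{k}, \qquad \psi_{k} := \min\!\big(|Tu|^{2},k\big)\quad (k \in \mathbb N),
\]
(or a smooth analog built from the truncation machinery of \rif{TT}-\rif{gF}). The point of multiplying by $u$ is that $u$ is $L^{\infty}_{\loc}$ by Theorem \ref{CDG}, so an $\|u\|_{L^{\infty}}$ factor appears naturally; and $\psi_{k}$ is bounded, which makes $\varphi \in HW^{1,p}_{0}(\Omega)$ admissible in \rif{wf0} (the horizontal derivatives of $\psi_{k}$ exist in $L^{2}_{\loc}$ thanks to $\X Tu\in L^{2}_{\loc}$ which follows from \rif{dese} since $\mu>0$; admissibility is checked as in the proof of Lemma \ref{XX sigma lem}, via Lemma \ref{eles}). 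The truncation at level $k$ is introduced precisely to finesse the a priori finiteness issue that one needs to justify differentiating $|Tu|^{2}$ and to allow a monotone passage to the limit at the end.

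After substituting and using $X_{i}\varphi = 2\eta X_{i}\eta\, u\,\psi_{k} + \eta^{2}X_{i}u\,\psi_{k} + \eta^{2}u\,\chi_{\{|Tu|^{2}<k\}}\cdot 2Tu\,X_{i}Tu$, I rearrange so that the term $\int\eta^{2}\psi_{k}\sum_{i}a_{i}(\Xu)X_{i}u\,dx$ sits on the left. For this term the ellipticity estimate \rif{veramon} gives
\[
\int\eta^{2}\psi_{k}\langle a(\Xu),\Xu\rangle\,dx \;\geq\; c^{-1}\!\int\eta^{2}\psi_{k}\deltaX^{p/2}\,dx \;-\; c\!\int\eta^{2}\psi_{k}\mu^{p}\,dx,
\]
and the $\mu^{p}$ part is absorbed on the right using $\mu^{p}\leq \mu^{2}\deltaX^{(p-2)/2}\leq \deltaX^{(p-2)/2}$. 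The two remaining pieces are estimated by \rif{growth} ($|a(\Xu)|\leq L\deltaX^{(p-1)/2}$) together with the Young splittings
\[
\|u\|_{L^\infty}\eta|\X\eta|\psi_{k}\deltaX^{\frac{p-1}{2}} \leq \epsilon\,\eta^{2}\psi_{k}\deltaX^{p/2} + c(\epsilon)\|u\|_{L^\infty}^{2}|\X\eta|^{2}\psi_{k}\deltaX^{\frac{p-2}{2}},
\]
\[
\|u\|_{L^\infty}\eta^{2}\psi_{k}^{1/2}|\X Tu|\deltaX^{\frac{p-1}{2}} \leq \epsilon\,\eta^{2}\psi_{k}\deltaX^{p/2} + c(\epsilon)\|u\|_{L^\infty}^{2}\eta^{2}|\X Tu|^{2}\deltaX^{\frac{p-2}{2}}.
\]
The first split contributes exactly the $|\X\eta|^{2}$-term on the right-hand side of the target inequality.

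The anticipated obstacle is the residual $\|u\|_{L^\infty}^{2}\!\int\eta^{2}|\X Tu|^{2}\deltaX^{(p-2)/2}$ produced by the second split: it contains horizontal derivatives of $Tu$, which do not appear on the target right-hand side. This is resolved by invoking Lemma \ref{XT sigma lem} with $\sigma=0$, whose hypotheses \rif{intass1} are satisfied ($\Xu\in L^{p+2}_{\loc}$ by Lemma \ref{G:N}, and $Tu\in L^{(p+2)/2}_{\loc}\subset L^{p}_{\loc}$ by Theorem \ref{domokos} since $p\geq 2$); this yields, with the same cut-off $\eta$,
\[
\int\eta^{2}\deltaX^{\frac{p-2}{2}}|\X Tu|^{2}\,dx \;\leq\; c\!\int \deltaX^{\frac{p-2}{2}}|Tu|^{2}|\X\eta|^{2}\,dx,
\]
which is precisely of the form needed on the right. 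Choosing $\epsilon$ small so as to absorb the two $\epsilon\,\eta^{2}\psi_{k}\deltaX^{p/2}$ pieces into the left-hand side, and observing that the right-hand side (bounded by $c(1+\|u\|_{L^\infty}^{2})\int(\eta^{2}+|\X\eta|^{2})|Tu|^{2}\deltaX^{(p-2)/2}$) is finite by H\"older (using $\Xu\in L^{p}$ and $Tu\in L^{p}_{\loc}$), one gets the stated inequality with $\psi_{k}$ in place of $|Tu|^{2}$. Finally I let $k\nearrow\infty$ and invoke the monotone convergence theorem: this simultaneously produces the integrability $\deltaX^{p/2}|Tu|^{2}\in L^{1}_{\loc}(\Omega)$ and the announced estimate.
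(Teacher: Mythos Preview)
Your proposal is correct and follows essentially the same route as the paper's proof: the same test function $\eta^{2}u$ times a truncation of $|Tu|^{2}$, the same Young splittings, and the same appeal to Lemma \ref{XT sigma lem} with $\sigma=0$ to dispose of the $|\X Tu|^{2}$ term before letting $k\nearrow\infty$. One small correction on admissibility: $\varphi$ lies only in $HW^{1,2}_{0}$, not $HW^{1,p}_{0}$, since $\X\psi_{k}\in L^{2}_{\loc}$ but not necessarily $L^{p}_{\loc}$; the paper closes this gap by observing that $\Xu\in L^{p+2}_{\loc}$ (Lemma \ref{G:N}) permits testing \rif{wf0} with any $HW^{1,(p+2)/3}_{0}$ function, and $(p+2)/3<2$ precisely because $p<4$.
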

\begin{proof}
In the following we shall denote $\T_k(t):=\min\{t,k\}$, for $t \geq
0$ and $k \in \mathbb N$, slightly adjusting the definition already
given in \rif{TT}. Set \eqn{ro}
$$\varphi:=(\T_k(\vert Tu\vert))^2\eta^2u\,,$$
for $k>0$; we wish to take $\varphi$ as a test function in
\rif{wf0}. We first observe that the function $t \mapsto (\T_k(\vert
t\vert))^2$ is Lipschitz continuous and therefore, since $Tu \in HW^{1,2}(\Omega)$ then by the chain rule in the Heisenberg group -
see \cite{CDG} - it also follows that $(\T_k(\vert Tu\vert))^2\in
HW^{1,2}(\Omega)$. Then, since $u\in HW^{1,p}(\Omega)\cap
L^\infty_{\loc}(\Omega)$ a standard difference quotients argument, as for instance the one in Lemma \ref{eles},
finally gives that $\varphi\in HW^{1,2}_0(\Omega)$. Now recall
that in Lemma \ref{G:N}, we already showed
 that  $\Xu\in L_{\loc}^{p+2}(\Omega,\er^{2n})$. So by a standard approximation argument, we can easily show that
any function from $HW^{1,(p+2)/3}_0(\Omega)$ is a admissible in
\rif{wf0}. Thus  $\varphi$ as defined in \rif{ro} is an
admissible test function, since $(p+2)/3<2$. Recall here that we are
assuming $p < 4$. Therefore, using $\varphi$ in \rif{wf0}, we obtain
$$
\begin{aligned}
\int_\Omega\eta^2(\T_k(\vert Tu\vert))^2\langle a(\Xu),\XXX
u\rangle\dx &
 =-2\int_\Omega \eta u(\T_k(\vert Tu\vert))^2\langle a(\Xu),\X \eta\rangle\dx \\
&\qquad  -\int_\Omega \eta^2 u\langle a(\Xu),\X (\T_k(\vert
Tu\vert))^2\rangle\dx\;.
\end{aligned}
$$
In turn, using \rif{growth} and \rif{veramon} the previous equality
yields
\begin{equation}\label{HiX inti4}
\begin{aligned}
\int_\Omega\eta^2\deltaX^{\frac{p-2}{2}}&\vert\Xu\vert^2(\T_k(\vert Tu\vert))^2\dx\\
\leq &c\int_\Omega \eta\vert\X \eta\vert\vert u\vert\deltaX^{\frac{p-1}{2}}(\T_k(\vert Tu\vert))^2\dx\\
&+
 c\int_\Omega\eta^2 |u| \deltaX^{\frac{p-1}{2}}|\X (\T_k(\vert Tu\vert))^2|\dx\\
&+
 c\int_\Omega\eta^2 \mu^p (\T_k(\vert Tu\vert))^2\dx=:D+E+F\,,
\end{aligned}
\end{equation}with $c \equiv c(n,p,\ratio)$.
We use Young's inequality to estimate $D$ as follows:
\begin{equation*}
\begin{aligned}
 D &\le \frac{1}{4}\int_\Omega\eta^2\deltaX^{\frac{p-2}{2}}\vert\Xu\vert^2(\T_k(\vert Tu\vert))^2\dx\\
&\qquad +c \vert\vert u\vert\vert_{L^\infty(\supp
\eta)}^2\int_\Omega\vert\X\eta\vert^2\deltaX^{\frac{p-2}{2}}\vert
Tu\vert^2\dx\\ & \qquad + \int_\Omega\eta^2\mu^2
\deltaX^{\frac{p-2}{2}}(\T_k(\vert Tu\vert))^2\dx .
\end{aligned}
\end{equation*}
We estimate $E$ by Young's inequality and Lemma \ref{XT sigma lem}
with $\sigma=0$, that is
\begin{equation*}
\begin{aligned}
 E&\le \frac{1}{4} \int_\Omega\eta^2\deltaX^{\frac{p-2}{2}}\vert\Xu\vert^2(\T_k(\vert Tu\vert))^2\dx\\
& \qquad +c \vert\vert u\vert\vert_{L^\infty(\supp
\eta)}^2\int_\Omega \eta^2\deltaX^{\frac{p-2}{2}}\vert\X
Tu\vert^2\dx\\ & \qquad + \int_\Omega\eta^2\mu^2
\deltaX^{\frac{p-2}{2}}(\T_k(\vert Tu\vert))^2\dx \\
&\stackrel{\rif{XT sigma}}{\le}  \frac{1}{4} \int_\Omega\eta^2\deltaX^{\frac{p-2}{2}}\vert\Xu\vert^2(\T_k(\vert Tu\vert))^2\dx\\
&\qquad+c \vert\vert u\vert\vert_{L^\infty(\supp
\eta)}^2\int_\Omega\vert\X\eta\vert^2\deltaX^{\frac{p-2}{2}}\vert
Tu\vert^2\dx\\ & \qquad + \int_\Omega\eta^2\mu^2
\deltaX^{\frac{p-2}{2}}(\T_k(\vert Tu\vert))^2\dx .
\end{aligned}
\end{equation*}
Finally, since $\mu \leq 1$ we have
$$F \leq c\int_\Omega
 \eta^2\deltaX^{\frac{p-2}{2}} \vert Tu\vert^2\dx\;.$$
Plugging the above estimates for $D,E$ and $F$ into \eqref{HiX
inti4}, and eventually letting $k \nearrow \infty$, we obtain
\eqref{HiX init3}, using that $\mu \leq 1$. This completes the proof
of the lemma.
\end{proof}
\begin{proof}
The proof of \rif{hhcc00} follows combining Lemma
\ref{lem:righthand}, Lemma \ref{XX sigma lem} with $\sigma =2$,
Lemma \ref{G:N}, and finally Lemma \ref{G:Nin} again with
$\sigma=2$. Accordingly, the proof of \rif{estint1base} follows
combining all the a priori estimates of the used lemmata, taking
into account the fact that everywhere $\Omega', \Omega''$ and $\eta$
can be chosen arbitrarily. Moreover, the right hand side of \rif{HiX
init3} has to be estimated by means of Young's inequality, as
follows:
$$
\int_{\supp \eta} \deltaX^{\frac{p-2}{2}} \vert Tu\vert^2\dx\leq
 c \int_{\supp \eta} \big(|\Xu|^{p}+|Tu|^{p}+\mu^p\big)\dx\;.$$
\end{proof}
\section{Iteration and higher integrability}
The main result of this section is the following:
\begin{prop} \label{horiz}
 Let $u \in HW^{1,p}(\Omega)$ be a weak solution to the equation \trif{due}
 under the assumptions \trif{growth}-\trif{nondeg}, with $
2\leq  p < 4.$ Then it holds that \eqn{betterint22}
 $$ \mathfrak{X}u \in
L^{q}_{\textnormal{loc}}(\Omega,\er^{2n})\qquad \mbox{and }\qquad Tu
\in L^{q}_{\textnormal{loc}}(\Omega) \qquad \mbox{for every} \ \
q<\infty\;.$$ Moreover, for every $q<\infty$ there exists a constant
$c$, depending on $n,p,\ratio$, and $q$, but otherwise independent
of $\mu$, of the solution $u$, and of the vector field $a(\cdot)$,
such that the following reverse-H\"older type inequalities hold for
any CC-ball $B_R \subset \Omega$: \eqn{apap}
$$
\left(\intav_{B_{R/2}}|\XXX u|^{q}\, dx\right)^{1/q}\leq c
\left(\intav_{B_{R}}(\mu+|\XXX u|)^{p}\, dx\right)^{1/p}\;,
$$
and \eqn{apat}
$$
\left(\intav_{B_{R/2}}|T u|^{q}\, dx\right)^{1/q}\leq \frac{c}{R}
\left(\intav_{B_{R}}(\mu+|\XXX u|)^{p}\, dx\right)^{1/p}\;.
$$
\end{prop}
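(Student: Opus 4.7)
The strategy is the double-bootstrap \rif{mixed} sketched in Section 1.3. The initial data are $Tu \in L^{p}_{\loc}(\Omega)$ (Theorem \ref{domokos}) and $\Xu \in L^{p+4}_{\loc}(\Omega,\er^{2n})$ (Lemma \ref{HiX00}). Each iteration consists of a \emph{vertical} half-step (V) that uses the current integrability of $\Xu$ to raise that of $Tu$, and a \emph{horizontal} half-step (H) that feeds the improved integrability of $Tu$ back to raise that of $\Xu$.

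For step (V) I plan to apply Lemma \ref{XT sigma lem} with parameter $\sigma_k\geq 0$, testing against the function $v:=\eta\,\deltaX^{p/4}|Tu|^{(\sigma_k+4)/4}$. Expanding $|\X v|^{2}$ via the product rule produces the integrand on the left-hand side of \rif{XT sigma} (controlled by its right-hand side), plus a cross-term of the form $\eta^{2}\deltaX^{(p-4)/2}|\Xu|^{2}|\X\Xu|^{2}|Tu|^{(\sigma_k+4)/2}$, which is absorbed by invoking Lemma \ref{XX sigma lem} with a smaller parameter. The Heisenberg--Sobolev embedding Theorem \ref{subsobt} applied to $v$ then improves the joint integrability of $\deltaX^{p/2}|Tu|^{(\sigma_k+4)/2}$ by the factor $\chi:=Q/(Q-2)>1$. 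For step (H) I combine Lemma \ref{XX sigma lem} with $\sigma=\sigma_k\geq 2$, which controls $\int\eta^{2}\weight\sum_{s}\deltaXi^{\sigma/2}|\X X_su|^{2}\dx$ by integrals in which $Tu$ enters only to the second power, with the interpolation Lemma \ref{G:Nin} (same $\sigma$); indeed, \rif{HGN} bounds $\int\eta^{2}\deltaX^{(p+2+\sigma)/2}\dx$ by $\|u\|_{L^\infty(\supp\eta)}^{2}$ times that second-derivative integral plus a lower-order term, yielding $\Xu \in L^{p+2+\sigma_k}_{\loc}$.

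Alternating (V) and (H) produces sequences $p_k,q_k\nearrow\infty$ with $\Xu \in L^{p_k}_{\loc}$ and $Tu \in L^{q_k}_{\loc}$, proving \rif{betterint22}. The reverse-H\"older estimates \rif{apap}--\rif{apat} follow by running the iteration on a finite chain of nested CC-balls $B_{R/2}=B^{(0)}\subset\cdots\subset B^{(N)}=B_R$, distributing the cut-off losses at each stage. Since the constants in Lemmata \ref{XX sigma lem}, \ref{XT sigma lem}, \ref{G:Nin} and \ref{HiX00} depend only on $n,p,\ratio$ and on $\|u\|_{L^\infty}$, I replace $u$ by $u-(u)_{B_R}$ (still a weak solution of \rif{due}) and invoke Theorem \ref{CDG} together with Jerison's Poincar\'e inequality to bound $\|u-(u)_{B_R}\|_{L^\infty(B^{(N-1)})}$ by $R\,(\intav_{B_R}(\mu+|\Xu|)^{p}\dx)^{1/p}$, keeping the final constants $\mu$-independent. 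The $L^{q}$-estimate for $Tu$ in \rif{apat} is then deduced from the $L^{q}$-estimate for $\Xu$ via Theorem \ref{domokos} applied on an intermediate ball.

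The main obstacle is the bookkeeping. One must select the sequence $\sigma_k$ so that the hypothesis \rif{startint} of Lemma \ref{XX sigma lem} (which involves $|\Xu|^{p-2+\sigma_k}|Tu|^{2}$) is met at each stage by the output of the previous (V)-step, and so that the Sobolev gain $\chi>1$ is enough to raise the $|Tu|$-exponent by at least $2$ per full iteration, matching the gain in the $|\Xu|$-exponent produced by (H). Equally delicate is the absorption of the cross-term $\eta^{2}\deltaX^{(p-4)/2}|\Xu|^{2}|\X\Xu|^{2}|Tu|^{(\sigma_k+4)/2}$ arising in step (V), whose control uses the bound $p<4$ in an essential way (so that the exponent $(p-4)/2$ of $\deltaX$ is negative but strictly greater than $-1$), and whose absorption into the second-derivative integral from \rif{XX sigma estpre} is the technical heart of the iteration.
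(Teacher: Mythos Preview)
Your horizontal half-step (H) is right and is exactly what the paper does (this is Lemma \ref{HiX}, which combines \rif{XX sigma est} and \rif{HGN}). The problems are in your vertical half-step (V) and in the final deduction of \rif{apat}.

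In (V) you propose to apply the Sobolev inequality to $v=\eta\,\deltaX^{p/4}|Tu|^{(\sigma_k+4)/4}$ and to control $\int|\X v|^2$ using \rif{XT sigma}. But the $|Tu|$-derivative term in $|\X v|^2$ is $\eta^2\deltaX^{p/2}|Tu|^{\sigma_k/2}|\X Tu|^2$, which carries the weight $\deltaX^{p/2}$, whereas the left-hand side of \rif{XT sigma} only carries $\deltaX^{(p-2)/2}$; the extra factor $\deltaX$ is not available unless $\Xu$ is already bounded. More seriously, the cross-term you single out, $\eta^2\deltaX^{(p-4)/2}|\Xu|^2|\X\Xu|^2|Tu|^{(\sigma_k+4)/2}$, cannot be absorbed by Lemma \ref{XX sigma lem} ``with a smaller parameter'': the left-hand side of \rif{XX sigma est} has no $|Tu|$-weight at all, so you would need an estimate of the type $\int\eta^2\weight|\X\Xu|^2|Tu|^{\beta}\dx\leq\ldots$ with $\beta>0$, and none of the Caccioppoli inequalities in Section~5 provide this. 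Splitting by H\"older is circular, since it requires precisely the higher integrability you are trying to prove.

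The paper does (V) by a completely different device: in Lemma \ref{HiT} one writes $|Tu|^2=(X_1Y_1-Y_1X_1)u\cdot Tu$ and integrates by parts, so that a single horizontal derivative falls on $Tu\,\T_k(|Tu|^{(p-1+\sigma)/2})$; this produces $|\Xu||\X Tu||Tu|^{(p-1+\sigma)/2}$, which \emph{is} controlled by \rif{XT sigma} after Young's inequality (and here the assumption $p<4$ enters, since the leftover exponent on $\deltaX$ is $(4-p)/2>0$). This gives the clean implication $Tu\in L^{(p+2+\sigma)/2}_{\loc}\Rightarrow Tu\in L^{(p+3+\sigma)/2}_{\loc}$, without any second horizontal derivatives of $\Xu$ mixed with powers of $Tu$.

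Finally, your plan to deduce \rif{apat} from \rif{apap} via Theorem \ref{domokos} does not work: that theorem only yields $Tu\in L^p$, not $L^q$. In the paper the $L^q$-estimate for $Tu$ comes out of the iteration itself (the inequalities $(B)_k$ track both $\Xu$ and $Tu$ simultaneously), and the precise form \rif{apat} then follows from a blow-up/scaling argument on $B_1$ combined with Theorem \ref{CDG} and Poincar\'e, essentially as you sketch.
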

In order to prove the previous result we need a few preliminary
lemmata. Their iterated use will finally lead to the proof of
Proposition \ref{horiz}.

\begin{lemma} \label{HiX}
 Let $u \in HW^{1,p}(\Omega)$ be a weak solution to the equation \trif{due}
 under the assumptions \trif{growth}-\trif{nondeg}, with $
2\leq  p < 4.$ Assume that \eqn{asshix}
$$\Xu \in L_{\loc}^{p+\sigma}(\Omega,\er^{2n}), \qquad \vert\Xu\vert^{p-2+\sigma}\vert Tu\vert^2\in
L^1_{\loc}(\Omega),\quad \mbox{and} \qquad Tu \in
L_{\loc}^\frac{p+2+\sigma}{2}(\Omega),$$ for some $\sigma \geq 2$.
Then \eqn{hhcc}
$$
\Xu \in L_{\loc}^{p+2+\sigma}(\Omega, \er^{2n}).
$$
Moreover, for every couple of open subsets $\Omega' \Subset \Omega''
\Subset \Omega$ there exists a constant $c$ depending only on
$n,p,\ratio,\sigma$, $\dist(\Omega',
\partial \Omega'')$, and $\|u\|_{L^\infty(\Omega'')}$, but independent on
$\mu$, such that \eqn{estint1}
$$
\int_{\Omega'} |\Xu|^{p+2+\sigma}\dx \leq c \int_{\Omega''}
\big(|\Xu|^{p+\sigma}+|Tu|^{\frac{p+2+\sigma}{2}}+\mu^p\big)\dx\,.
$$
\end{lemma}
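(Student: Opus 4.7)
The strategy is a direct iteration of the argument used to prove Lemma \ref{HiX00} at the base level $\sigma=0$: combine the Gagliardo--Nirenberg type bound of Lemma \ref{G:Nin} with the horizontal Caccioppoli inequality of Lemma \ref{XX sigma lem}, both taken at level $\sigma$, and then process the resulting coupled integral $\int \eta^2 \deltaX^{(p-2+\sigma)/2}|Tu|^2\dx$ via Young's inequality with absorption. The only conceptual difference is that the crucial mixed integrability of $Tu$ needed by Lemma \ref{XX sigma lem} is now part of the hypothesis \rif{asshix}, whereas in the base case it had first to be established via Lemma \ref{lem:righthand}.

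Concretely, fix $\Omega' \Subset \Omega'' \Subset \Omega$ and a cutoff $\eta \in C^\infty_c(\Omega'')$ with $\eta \equiv 1$ on $\Omega'$, $|\X \eta|\leq c/d$ and $|T\eta| \leq c/d^2$, where $d = \dist(\Omega', \partial \Omega'')$. Applying Lemma \ref{G:Nin} with the given $\sigma\geq 2$ bounds $\int \eta^2 \deltaX^{(p+2+\sigma)/2}\dx$ by a harmless $\deltaX^{(p+\sigma)/2}$ term plus the principal second--derivative contribution $c\,\|u\|^2_{L^\infty(\Omega'')}\int \eta^2 \sum_{s=1}^{2n}\deltaXi^{(p-2+\sigma)/2}|X_sX_su|^2\dx$. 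The pointwise inequality
\[
\deltaXi^{(p-2+\sigma)/2}|X_sX_su|^2 \;\leq\; \weight\,\deltaXi^{\sigma/2}|\X X_s u|^2,
\]
valid since $p \geq 2$ and $|X_s u| \leq |\Xu|$, shows that this principal term is controlled by the left--hand side of Lemma \ref{XX sigma lem}. The hypotheses \rif{startint} of the latter lemma are precisely the first two assumptions in \rif{asshix}, and so one obtains
\[
\int \eta^2 \deltaX^{(p+2+\sigma)/2}\dx \leq c\!\int (|\X\eta|^2 + \eta|T\eta| + \eta^2\mu^2)\deltaX^{(p+\sigma)/2}\dx + c\!\int \eta^2 \deltaX^{(p-2+\sigma)/2}|Tu|^2\dx,
\]
with $c$ depending on $n,p,\ratio, \sigma, \|u\|_{L^\infty(\Omega'')}$. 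At this stage every term on the right is a priori finite by \rif{asshix}, so the left--hand side is finite as well; this observation will be crucial for the absorption carried out in the next step.

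To conclude, Young's inequality with conjugate exponents $\tfrac{p+2+\sigma}{p-2+\sigma}$ and $\tfrac{p+2+\sigma}{4}$ yields
\[
\deltaX^{(p-2+\sigma)/2}|Tu|^2 \;\leq\; \varepsilon\,\deltaX^{(p+2+\sigma)/2} + c(\varepsilon)\,|Tu|^{(p+2+\sigma)/2},
\]
and the $\varepsilon$--term is now legitimately reabsorbed on the left once $\varepsilon$ is chosen small enough. The $\mu$--contributions are collected using $\mu \leq 1$ to bound $\mu^{p+\sigma+2}$ by $\mu^p$, and the supports of the cutoff together with a standard covering yield integrals over $\Omega'$ and $\Omega''$. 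This produces \rif{estint1}, and in particular \rif{hhcc}. The only mildly delicate point is the justification of the absorption step; this is handled for free, since Lemma \ref{G:Nin} combined with Lemma \ref{XX sigma lem} already delivers a finite bound on $\int \eta^2 \deltaX^{(p+2+\sigma)/2}\dx$ without any use of Young's inequality, thanks precisely to the assumption $|\Xu|^{p-2+\sigma}|Tu|^2 \in L^1_{\loc}(\Omega)$ in \rif{asshix}.
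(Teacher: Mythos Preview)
Your proposal is correct and follows essentially the same route as the paper: combine Lemma \ref{G:Nin} and Lemma \ref{XX sigma lem} at level $\sigma$ (using the pointwise inequality $\deltaXi^{(p-2+\sigma)/2}|X_sX_su|^2 \leq \weight\,\deltaXi^{\sigma/2}|\X X_su|^2$), then split the mixed term $\deltaX^{(p-2+\sigma)/2}|Tu|^2$ via Young's inequality and reabsorb, with the finiteness needed for absorption coming precisely from the hypothesis $|\Xu|^{p-2+\sigma}|Tu|^2\in L^1_{\loc}$. The paper's proof is organized identically, first noting that the combined inequality already yields \rif{hhcc} qualitatively and then using this finiteness to justify the absorption step, exactly as you do.
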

\begin{proof} By \rif{asshix} we can use Lemma \ref{XX sigma lem}; therefore combining
\rif{XX sigma est} with \rif{HGN}, by means of a standard covering
argument we deduce the validity of \rif{hhcc}. Once \rif{hhcc} holds
we use Young's inequality to estimate the last integral in the right
hand side of \rif{XX sigma est} as follows:
\begin{eqnarray}
\nonumber \int_\Om  \eta^2 \deltaX^{\frac{p-2+\sigma}{2}} |Tu|^2 \dx
& \leq &\epsilon \int_\Om  \eta^2
\deltaX^{\frac{p+2+\sigma}{2}}\dx  \\
&& \qquad \qquad +c(\epsilon) \int_\Om  \eta^2
|Tu|^{\frac{p+2+\sigma}{2}} \dx\;, \label{ytrivial}
\end{eqnarray}
where $\varepsilon \in (0,1)$; note that the intermediate integral
in \rif{ytrivial} is now finite. Connecting the previous inequality
to \rif{XX sigma est} and eventually to \rif{HGN}, and choosing $\epsilon$
small enough, but depending only on $n,p,\ratio,\sigma$ and
$\|u\|_{L^\infty(\supp \eta)}$, in order to re-absorb the
intermediate integral appearing in \rif{ytrivial} in the left-hand
side of \rif{HGN}, we gain, after a few elementary manipulations
\begin{eqnarray*}
&&\int_\Om \eta^2 \deltaX^\frac{p+2+\sigma}{2} \dx  \leq
c\int_{\supp \eta}
\deltaX^\frac{p+\sigma}{2} \dx \nonumber\\
&& \hspace{6cm} +c \int_{\supp \eta} |Tu|^{\frac{p+2+\sigma}{2}}
\dx\,.
\end{eqnarray*}
The constant $c$ in the last inequality depends only on the data
$n,p,\ratio,\sigma,$ and on the norms $\|\XXX \eta\|_{L^\infty},\|T
\eta\|_{L^\infty}, \|u\|_{L^\infty(\supp \eta)}$, but is otherwise
independent of the solution $u$, of the vector field $a(\cdot)$, and
of $\mu$. Note that we have used that $\mu \leq 1$. At this stage
the inequality in \rif{estint1} follows by the previous inequality
via a standard covering argument involving a suitable choice of the
cut-off function $\eta$; again we are using that $\mu \leq 1$.
\end{proof}
\begin{lemma} \label{HiT}
 Let $u \in HW^{1,p}(\Omega)$ be a weak solution to the equation \trif{due}
 under the assumptions \trif{growth}-\trif{nondeg}, with $
2\leq  p < 4.$ Assume that
$$\Xu \in L_{\loc}^{p+2+\sigma}(\Om, \R^{2n}) \qquad \mbox{and }\qquad Tu \in
L_{\loc}^\frac{p+2+\sigma}{2}(\Om)\;,$$ for some $\sigma \geq 0$,
then \eqn{hiTu}
$$
Tu \in L_{\loc}^\frac{p+3+\sigma}{2}(\Om).
$$
Moreover, for every couple of open subsets $\Omega' \Subset \Omega''
\Subset \Omega$ there exists a constant $c$ depending only on
$n,p,\ratio,\sigma$, $\dist(\Omega',
\partial \Omega'')$, and $\|u\|_{L^\infty(\Omega'')}$, but independent on
$\mu$, of the solution $u$, and of the vector field $a(\cdot)$, such
that \eqn{estint1t}
$$
\int_{\Omega'} |Tu|^{\frac{p+3+\sigma}{2}}\dx \leq c \int_{\Omega''}
\big(|\Xu|^{p+2+\sigma}+|Tu|^{\frac{p+2+\sigma}{2}}+\mu^p\big)\dx
$$
holds.
\end{lemma}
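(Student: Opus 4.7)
The plan is to represent the integral $\int\eta^{2}|Tu|^{\alpha}\,dx$, with $\alpha:=(p+3+\sigma)/2$, by means of integration by parts based on the commutator identity $T=[X_{1},X_{n+1}]$ from \rif{comm}, and then to bound the result by combining the Caccioppoli inequality \rif{XT sigma} with H\"older's inequality. Writing $|Tu|^{\alpha}=(X_{1}X_{n+1}u-X_{n+1}X_{1}u)\,Tu\,|Tu|^{\alpha-2}$ and integrating by parts once for each of the two horizontal derivatives, using \rif{adad}, one formally arrives at
\begin{align*}
\int_{\Omega}\eta^{2}|Tu|^{\alpha}\,dx
 = \,& -2\int_{\Omega}\eta\,Tu\,|Tu|^{\alpha-2}\bigl[X_{1}\eta\,X_{n+1}u - X_{n+1}\eta\,X_{1}u\bigr]\,dx \\
     & -(\alpha-1)\int_{\Omega}\eta^{2}|Tu|^{\alpha-2}\bigl[X_{1}Tu\,X_{n+1}u - X_{n+1}Tu\,X_{1}u\bigr]\,dx.
\end{align*}

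The first integral is controlled by Young's inequality applied to $|Tu|^{\alpha-1}|\Xu|$ with conjugate exponents $(p+2+\sigma)/(p+1+\sigma)$ and $p+2+\sigma$, yielding a bound by $|Tu|^{(p+2+\sigma)/2}+|\Xu|^{p+2+\sigma}$, both locally integrable by assumption. For the second integral I use the weighted Cauchy--Schwarz decomposition
\[
|Tu|^{\alpha-2}|\X Tu||\Xu|\leq \tfrac{1}{2}\,\weight|Tu|^{\sigma/2}|\X Tu|^{2}+\tfrac{1}{2}\,\weight^{-1}|\Xu|^{2}\,|Tu|^{p-1+\sigma/2}.
\]
The first summand, integrated against $\eta^{2}$, is bounded via Caccioppoli \rif{XT sigma}, whose right-hand side is in turn finite by H\"older, since $\weight\leq c(1+|\Xu|^{p-2})$ (using $\mu\leq 1$ and $p\geq 2$) and the exponents $(p+2+\sigma)/(p-2)$, $(p+2+\sigma)/(\sigma+4)$ match the assumed integrability of $\Xu$ and $Tu$. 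The crucial $\mu$-independent observation for the second summand is
\[
\weight^{-1}|\Xu|^{2}=\frac{|\Xu|^{2}}{(\mu^{2}+|\Xu|^{2})^{(p-2)/2}}\leq 1+|\Xu|^{4-p}\qquad\text{for}\ \mu\leq 1,\ 2\leq p<4,
\]
easily verified by splitting according to whether $|\Xu|\geq\mu$ or not; this reduces the second summand to $\int\eta^{2}(1+|\Xu|^{4-p})|Tu|^{p-1+\sigma/2}\,dx$, finite by H\"older with conjugates $(p+2+\sigma)/(4-p)$ and $(p+2+\sigma)/(2p-2+\sigma)$. A standard cut-off argument (with $\eta\equiv 1$ on $\Omega'$, $\supp\eta\subset\Omega''$, $|\X\eta|\leq c/\dist(\Omega',\partial\Omega'')$) then yields both \rif{hiTu} and the explicit estimate \rif{estint1t}.

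The main obstacle is the rigorous justification of the integration by parts performed above, which has to be carried out via an approximation by difference quotients $D^{X_{i}}_{h}$, exploiting the second-order horizontal regularity $\X\Xu\in L^{2}_{\loc}$ from Lemma \ref{diffquo}, in the spirit of Lemmas \ref{lem:righthand} and \ref{prop B}. The hypotheses $\Xu\in L^{p+2+\sigma}_{\loc}$ and $Tu\in L^{(p+2+\sigma)/2}_{\loc}$, together with $p<4$, ensure that every term appearing in the formal calculation is in $L^{1}_{\loc}$, so that the passage to the limit can be safely achieved; crucially, every constant in the above manipulations depends only on $n,p,\ratio,\sigma$, on $\|\X\eta\|_{L^{\infty}}$, and on the $L^{\infty}$-norm of $u$ on $\Omega''$ (the latter entering through the companion estimate when one needs to replace $u$ by $u-(u)_{B_R}$ in a Jerison-type reduction to the zero-average case), yielding the full content of \rif{estint1t}.
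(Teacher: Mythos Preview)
Your formal computation is essentially the paper's: both write $T=[X_1,X_{n+1}]$, integrate by parts once, split the resulting $|\Xu|\,|Tu|^{(p-1+\sigma)/2}|\X Tu|$ term via Young's inequality into a piece controlled by the vertical Caccioppoli inequality \rif{XT sigma} and a piece of the form $(\mu^2+|\Xu|^2)^{(4-p)/2}|Tu|^{(2p-2+\sigma)/2}$, and then close with H\"older/Young. Your bound $\weight^{-1}|\Xu|^2\leq 1+|\Xu|^{4-p}$ is just a minor variant of the paper's direct use of $(\mu^2+|\Xu|^2)^{(4-p)/2}$.

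The one point that needs correcting is your justification paragraph. The obstacle is not differentiability (that is already supplied by $\X Tu\in L^2_{\loc}$ from Lemma \ref{diffquo}) but the a priori integrability of the test function $Tu\,|Tu|^{\alpha-2}$ and of the resulting integrands. The paper handles this not by difference quotients $D_h^{X_i}$ but by a simple truncation: it replaces $|Tu|^{(p-1+\sigma)/2}$ by $\T_k(|Tu|^{(p-1+\sigma)/2})=\min\{|Tu|^{(p-1+\sigma)/2},k\}$, so that $Tu\,\T_k(\cdot)\in HW^{1,2}_{\loc}$ by the chain rule, performs the integration by parts for each $k$, derives a $k$-independent bound for the right-hand side, and lets $k\nearrow\infty$ via monotone convergence. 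Your appeal to difference quotients ``in the spirit of Lemmas \ref{lem:righthand} and \ref{prop B}'' is the wrong tool here, and the remark about a Jerison-type zero-average reduction is irrelevant to this lemma (the constant's dependence on $\|u\|_{L^\infty(\Omega'')}$ in the statement is not actually used in the proof).
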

\begin{proof}
In the following we shall again denote $\T_k(t):=\min\{t,k\}$ for $t
\geq 0$ and $k \in \mathbb N$. Let $\eta\in C^\infty_c(\Omega)$ be
as usual a cut-off function such that $0\le \eta\le 1$. Using that
$T =  [X_i,Y_i] = X_iY_i-Y_iX_i,$ we start by integrating by parts
as follows:
\begin{align} \label{pre HiT}
\int_{\Omega} & \eta^2 |Tu|^2 \, \T_k(|Tu|^\frac{p-1+\sigma}{2}) \dx
=
\int_{\Omega} \eta^2 (X_1 Y_1 - Y_1 X_1)u \, Tu \,\T_k(|Tu|^\frac{p-1+\sigma}{2}) \dx \nonumber \\
&\leq 4 \int_{\Omega} \eta |\X \eta| |\Xu| |Tu|
\,\T_k(|Tu|^\frac{p-1+\sigma}{2}) \dx +
c \int_{\Omega} \eta^2 |\Xu| |\X Tu| \,\T_k(|Tu|^\frac{p-1+\sigma}{2}) \dx \nonumber\\
&=:P_4+P_5,
\end{align}
where $c=c(p,\sigma)>0$. Note that the previous integration by parts
is legal since \eqn{byparts}
$$Tu
\,\T_k(|Tu|^\frac{p-1+\sigma}{2}) \in HW^{1,2}_{\loc}(\Omega)\;.$$
This fact follows by chain rule in the Heisenberg group - see
\cite{CDG} - since by the very definition of $\T_k$ it follows that
the function $ t \mapsto
 t \T_k(|t|^\frac{p-1+\sigma}{2})$
 is globally Lipschitz continuous on $\er$, together with the fact that $Tu \in
 HW^{1,2}_{\loc}(\Omega)$ - see \rif{dese}.

Now, by Young's inequality, we have for the integral $P_4$
\begin{equation*}
P_4 \leq 4\int_{\Omega} |\XXX \eta||\Xu|^{p+2+\sigma} \dx+
4\int_{\Omega} |\XXX \eta||Tu|^\frac{p+2+\sigma}{2}\dx\,.
\end{equation*}
We now come to $P_5$; using repeatedly Young's inequality and
once inequality \rif{XT sigma} from Lemma \ref{XT sigma lem} we
have
\begin{eqnarray*}
P_5 &\leq & c \int_{\Omega}\eta^2
\deltaX^\frac{1}{2} |T u|^\frac{p-1+\sigma}{2} |\X Tu| \dx \\
&\leq  &c \int_{\Omega}\eta^2 \weight |T u|^\frac{\sigma}{2} |\X
Tu|^2 \dx\\ &&\qquad +c\int_{\Omega} \eta^2
\deltaX^\frac{4-p}{2} |T u|^\frac{2p-2+\sigma}{2} \dx \\
&\stackrel{\rif{XT sigma}}{\leq} & c\int_{\Omega}\vert \X\eta\vert^2
\weight |T u|^{\frac{\sigma+4}{2}} \dx\\ && \qquad + c\int_\Omega
\eta^2
\deltaX^\frac{4-p}{2} |T u|^\frac{2p-2+\sigma}{2} \dx\\
&\leq & c\int_{\Omega} (\eta^2+|\XXX
\eta|^2)\big(\mu^{p+2+\sigma}+|\Xu|^{p+2+\sigma} +
|Tu|^\frac{p+2+\sigma}{2}\big)\dx.
\end{eqnarray*}
Note how the crucial assumption $p<4$ hereby comes into the play
once again. Using the estimates found for $P_4,P_5$, inequality
\eqref{pre HiT} becomes \begin{eqnarray*} && \int_{\Omega}\eta^2
|Tu|^2 \, \T_k(|Tu|^\frac{p-1+\sigma}{2}) \dx \\
&& \qquad \qquad \leq c\int_{\Omega} (\eta^2+|\XXX \eta|+|\XXX
\eta|^2)\big(\mu^{p+2+\sigma}+|\Xu|^{p+2+\sigma} +
|Tu|^\frac{p+2+\sigma}{2}\big)\dx\,.
\end{eqnarray*} The constant $c$ in the last inequality depends only on $n,p,\sigma$.
Letting $k \nearrow\infty$ and using the fact that $\mu \leq 1$, we
have
\begin{equation*}
\int_{\Omega} \eta^2|Tu|^{\frac{p+3+\sigma}{2}}\dx \leq
c\int_{\Omega} (\eta^2+|\XXX \eta|+|\XXX
\eta|^2)\big(\mu^{p}+|\Xu|^{p+2+\sigma} +
|Tu|^\frac{p+2+\sigma}{2}\big)\dx\;.
\end{equation*}
Then \rif{hiTu} follows by a standard covering argument since the
choice of $\eta$ is arbitrary in the previous inequality. In the
same way, \rif{estint1t} follows via a standard covering argument
involving a suitable choice of $\eta$.
\end{proof}
\begin{proof}[Proof of Proposition \ref{horiz}]
The proof is divided in two steps: first we prove the qualitative
result in \rif{betterint22} with a first form of the main priori
estimates, that is \rif{apriori} below. Then, in a second step, we
show how to get the explicit form of the a priori estimates in
\rif{apap}-\rif{apat} from \rif{apriori} by means of a ``blow-up"
argument.

{\em Step 1: Iteration and higher integrability.} Here we prove
\rif{betterint22} and that, for every couple of open subsets $\Omega'
\Subset \Omega'' \Subset \Omega$, and $q<\infty$, there exists a
constant $c$ depending only on $n,p,\ratio,q$, $\dist(\Omega',
\partial \Omega'')$, and $\|u\|_{L^\infty(\Omega'')}$, but independent
of $\mu$, of the solution $u$, and of the vector field $a(\cdot)$,
such that \eqn{apriori}
$$
\int_{\Omega'} (|\XXX u|^q+|T u|^q)\, dx \leq c \int_{\Omega''}
\left(|\XXX u|^p +1\right)\, dx\;.
$$
For this, let us define the sequence \eqn{newsigma}
$$
\left\{
\begin{array}{ccc}
 \sigma_{k+1} & := &
\displaystyle \sigma_k + \frac{4}{p+3+\sigma_k}\\
\\ \sigma_0 & := &2.
\end{array}
\right.
$$
It is easy to see that $\{\sigma_k\}$ is a strictly increasing
sequence such that $\sigma_k \nearrow \infty$. We shall prove by
induction that
$$\Xu \in L_{\loc}^{p+2+\sigma_k}(\Om, \R^{2n}) \qquad \mbox{and }\qquad Tu \in
L_{\loc}^\frac{p+2+\sigma_k}{2}(\Om) \qquad {\bf (A)_k}\;, $$
holds every $k \in \mathbb N$, and moreover that, for every couple
of open subset $\Omega' \Subset \Omega'' \Subset \Omega$ and $k \in
\mathbb N$ there exists a constant $c$ depending only on
$n,p,\ratio,k$, $\dist(\Omega',
\partial \Omega'')$, and $\|u\|_{L^\infty(\Omega'')}$, but independent
of $\mu$, of the solution $u$, and of the vector field $a (\cdot)$,
such that
$$
\int_{\Omega'} (|\XXX u|^{p+2+\sigma_k}+|T
u|^{\frac{p+2+\sigma_k}{2}})\, dx \leq c \int_{\Omega''} \left(|\XXX
u|^p +|Tu|^p+1\right)\, dx \quad {\bf (B)_k}\;.
$$
We shall eventually show that this will suffice to prove
\rif{betterint22} and \rif{apriori}. Before going on let us point
out that when proving estimates like ${\bf (B)_k}$ we shall deal
with similar estimates where $\Omega',\Omega''$ vary in an arbitrary
way. Each time we shall implicitly pass to different open subsets,
since every time the open subsets involved in the inequalities will
be arbitrary.

Let us first prove the validity of ${\bf (A)_0}$ and ${\bf (B)_0}$.
The parts of the statements concerning $\Xu$ directly come from
Lemma \ref{HiX00}, therefore we concentrate on $Tu$. To this aim we
apply Lemma \ref{HiT} twice. First we choose $\sigma=0$, recalling
that $(p+2)/2\leq p$ in turn implies $Tu \in
L^{(p+2)/2}_{\loc}(\Omega)$; at this point we get that $Tu \in
L^{(p+3)/2}_{\loc}(\Omega)$ with a first corresponding estimate,
that is
$$
\int_{\Omega'} |Tu|^{\frac{p+3}{2}}\dx \leq c \int_{\Om''}
\left(|\Xu|^{p+2} +|Tu|^{\frac{p+2}{2}}+1\right)\dx\;.
$$
Then we are able to apply again Lemma \ref{HiT}, this time with
$\sigma =1$, getting that $Tu \in L^{(p+4)/2}_{\loc}(\Omega)$ and,
in view of \rif{estint1t}, also that
$$
\int_{\Omega'} |Tu|^{\frac{p+4}{2}}\dx \leq c \int_{\Om''}
\left(|\Xu|^{p+3} +|Tu|^{\frac{p+3}{2}}+1\right)\dx\;.
$$
Joining the last two estimates to \rif{estint1base}, passing
each time to different open subsets, which are not renamed, we easily get the also the part of ${\bf (B)_0}$
concerned with $Tu$.

Let us now assume the validity of ${\bf (A)_k}$ and ${\bf (B)_k}$
for some $k \geq 0$, and let us prove that of ${\bf (A)_{k+1}}$ and
${\bf (B)_{k+1}}$. By ${\bf (A)_k}$ we may apply Lemma \ref{HiT}
with the choice $\sigma \equiv \sigma_k$ in order to get that
\eqn{hitk}
$$Tu
\in L_{\loc}^\frac{p+3+\sigma_k}{2}(\Om)\;.$$ Observe that by the
very definition of $\sigma_k$ we have that \eqn{trisig}
$$
 \sigma_{k+1}<\sigma_k+ 1\;,$$ and therefore from \rif{hitk} we immediately get that
\eqn{hitk2}
 $$Tu \in
L_{\loc}^\frac{p+2+\sigma_{k+1}}{2}(\Om)\;.$$ We also observe that
using ${\bf (B)_{k}}$ and the estimate \rif{estint1t} for $\sigma
\equiv \sigma_k$, since in every occurrence the open subsets
$\Omega' \CC \Omega''$ are arbitrary, we easily gain \eqn{esttk1}
$$
\int_{\Omega'} |Tu|^{\frac{p+2+\sigma_{k+1}}{2}}\dx\leq
\int_{\Omega'} (|Tu|^{\frac{p+3+\sigma_k}{2}}+1)\dx \leq c
\int_{\Omega''} \big(|\Xu|^{p}+|Tu|^{p}+1\big)\dx\,,
$$ that in turn holds
for every couple of $\Omega' \CC \Omega''$ where $c$ depends as in
${\bf (B)_{k+1}}$. Here we used again \rif{trisig} and an elementary
estimation. We have indeed proved one part of ${\bf (B)_{k+1}}$ too.
Therefore it only remains to prove that $\Xu \in
L_{\loc}^{p+2+\sigma_{k+1}}(\Om, \R^{2n})$, that will complete the
proof of ${\bf (A)_{k+1}}$, and the corresponding remaining part of
${\bf (B)_{k+1}}$ with the estimation of $\Xu$. For this we wish to use Lemma \ref{HiX} with the choice $\sigma
\equiv \sigma_{k+1} $, therefore let us check its applicability;
estimate \rif{trisig}, assumption ${\bf (A)_k}$ and \rif{hitk2}
imply that we actually just have to check the second inclusion in
\rif{asshix}. To do this we apply Young's inequality as follows:
$$
|\Xu|^{p-2+\sigma_{k+1}}|Tu|^2 \leq
|\Xu|^{\frac{(p-2+\sigma_{k+1})(p+3+\sigma_k)}{p-1+\sigma_k}}+
|Tu|^{\frac{p+3+\sigma_k}{2}}\,.$$ By the definition in
\rif{newsigma} we have that
$$
\frac{(p-2+\sigma_{k+1})(p+3+\sigma_k)}{p-1+\sigma_k}=
p+2+\sigma_k\;,
$$
and hence the second inclusion in \rif{asshix} follows with $\sigma
\equiv \sigma_{k+1}$ by the first inclusion in ${\bf (A)_k}$ and
\rif{hitk}. Therefore Lemma \ref{HiX} and \rif{hhcc} with $\sigma\
\equiv \sigma_{k+1}$ finally imply that $ \Xu \in
L_{\loc}^{p+2+\sigma_{k+1}}(\Omega,\er^{2n}). $ Concerning the
remaining part of the proof of ${\bf (B)_{k+1}}$ observe that
\rif{trisig} allows for applying the elementary inequality $ |\XXX
u|^{p+\sigma_{k+1}}\leq|\XXX u|^{p+2+\sigma_{k}}+1 $; this, together
with \rif{esttk1} and \rif{estint1}, since the open subsets involved
everywhere are arbitrary, allows in turn to conclude that
$$
\int_{\Omega'} |\Xu|^{p+2+\sigma_{k+1}}\dx \leq c \int_{\Omega''}
\big(|\Xu|^{p+2+\sigma_k}+|Tu|^{\frac{p+2+\sigma_k}{2}}+1\big)\dx\,.
$$
At this point the full inequality in ${\bf (B)_{k+1}}$ follows by
the previous one together with \rif{esttk1} and ${\bf (B)_{k}}$,
after changing, accordingly, the open subsets
$\Omega', \Omega''$ involved.

In this way both ${\bf (A)_{k}}$ and ${\bf (B)_{k}}$ hold for every
$k \in \mathbb N$.

Now we prove the validity of \rif{betterint22} and
\rif{apriori}. The assertions in \rif{horiz} are immediate, while to
prove \rif{apriori} with a fixed $q$, take $k$ large enough such
that $(p+2+\sigma_k)/2 \geq q$, in order to estimate $ |\XXX u|^q+|T
u|^q \leq  |\XXX u|^{p+2+\sigma_k}+|T u|^{\frac{p+2+\sigma_k}{2}}+2
$, and then apply ${\bf (B)_{k}}$  in order to get
$$
\int_{\Omega'} (|\XXX u|^{q}+|T u|^{q})\, dx \leq c \int_{\Omega''}
\left(|\XXX u|^p +|Tu|^p+1\right)\, dx\;.
$$
Finally, changing again the subsets, the final form of \rif{apriori}
follows by Theorem \ref{domokos}.

{\em Step 2: Blow-up and local estimates.} Now, by means of scaling
arguments, we shall see how to get the precise form of the a priori
estimates in \rif{apap}-\rif{apat} from the rough one in
\rif{apriori}; of course we shall assume that $q>p$. First, let us
consider the case of a solution $v \in HW^{1,p}(B(0,1))$ to
\rif{due}, that is, when $\Omega \equiv B(0,1)\equiv B_1$. In the
following $\gamma$ will denote a number such that $\gamma \in
(0,1)$, and the constants in the subsequent estimates will
deteriorate when $\gamma \nearrow 1$. Applying Theorem \ref{CDG} we
find \eqn{inest2}
 $$
\|v\|_{L^{\infty}(B_{\gamma})} \leq c_1
\left(\|v\|_{L^{p}(B_1)}+\mu\right)\;,
 $$
where $c_1 \equiv c_1(n,p,\ratio,\gamma)$. Now let us define, for
every $z \in \er^{2n}$ \eqn{scaling}
$$
w:=\frac{v}{A}\qquad \mbox{and
}\qquad\tilde{a}(z):=\frac{a(Az)}{A^{p-1}}\,,
$$
where \eqn{hereAis}
$$
A:=c_1 \left(\|v\|_{L^{p}(B_1)}+\mu\right)\;.
$$
Obviously $A>0$ and moreover \eqn{muA}
$$
\mu/A \leq 1\;.
$$ The new scaled function $w$ weakly solves the
equation \eqn{scalaa}
$$ \divo \tilde{a}\!\left(\mathfrak{X}w\right) =0\,,
$$
and, as a consequence of \rif{inest2}, it is such that \eqn{ttyy}
$$\|w\|_{L^{\infty}(B_{\gamma})}\leq 1 \;.$$
Moreover an easy computation reveals that the new vector field
$\tilde{a}(z)$ defined in \rif{scaling} satisfies assumptions
\rif{growth}-\rif{ell} with $\mu$ replaced by $\mu/A$. Therefore,
keeping again \rif{muA} in mind, applying estimate \rif{apriori} to
$w$ with the choice $\Omega' =B_{\gamma^2}$ and $\Omega''
=B_{\gamma}$, yields \eqn{stimaw}
$$
\int_{B_{\gamma^2}} (|\XXX w|^q+|Tw|^q)\, dx \leq c_2 \int_{B_1}
(|\XXX w|^p +1)\, dx\;,
$$and the constant $c_2$
depends now only on $n,p,\ratio,q,\gamma$ by the inequality in
\rif{ttyy}. Scaling back to $v$, that is taking \rif{scaling} into
account, \rif{stimaw} gives
\begin{eqnarray}
&& \nonumber \int_{B_{\gamma^2}} (|\XXX v|^q+|Tv|^q)\, dx  \leq
c_2\left[c_1 \left(\|v\|_{L^{p}(B_1)}+\mu\right)\right]^{q-p}
\int_{B_1} |\XXX v|^p\, dx\\ && \hspace{5cm}+
|B_1|c_2\left[c_1\left(\|v\|_{L^{p}(B_1)}+\mu\right)\right]^{q}\,.\label{scalb}
\end{eqnarray}
Applying Young's inequality with conjugate exponents $q/p$ and
$q/(q-p)$ to estimate the first quantity in the right hand side of
\rif{scalb} easily gives \eqn{stimav2}
$$ \|\XXX v\|_{L^{q}(B_{\gamma^2})} +\|T v\|_{L^{q}(B_{\gamma^2})}  \leq c \left(\|\XXX
v\|_{L^{p}(B_1)}+\|v\|_{L^{p}(B_1)}+\mu\right) \;,
$$
where $c\equiv c(n,p,\ratio,q,\gamma)$. Now we observe that if
 $v$ solves \rif{due} then $v-\xi$ also solves \rif{due} whenever $\xi \in \er$.
Therefore we apply estimate \rif{stimav2} to $v-(v)_{B_1}$, and
using it together with Jerison's Poincar\'e's inequality - see
\cite{jerison, Lup} - that is $ \|v-(v)_{B_1}\|_{L^{p}(B_1)}\leq
c(n,p)\|\XXX v\|_{L^{p}(B_1)}$, we finally get \eqn{stimav3}
$$ \|\XXX v\|_{L^{q}(B_{\gamma^2})} +\|T v\|_{L^{q}(B_{\gamma^2})}  \leq c \||\XXX
v|+\mu\|_{L^{p}(B_1)}\;,
$$
where $c\equiv c(n,p,\ratio,q,\gamma)$; observe that the constant
$c$ blows-up whenever: $\gamma \nearrow 1$,  $q \nearrow \infty$,
$p\nearrow 4$. Choosing $\gamma=1/\sqrt{2}$ in \rif{stimav3}, we
immediately get that \eqn{stimav4}
$$
\left(\intav_{B_{1/2}}(|\XXX v|^q+|T v|^{q})\, dx\right)^{1/q}\leq
c\left(\intav_{B_{1}}(\mu+|\XXX v|)^{p}\, dx\right)^{1/p}\;,
$$
with $c \equiv c(n,p,\ratio,q)$, and this means that we have proved
\rif{apap}-\rif{apat} in the case $R=1$.

Now we can go back to the original solution $u$, taking a CC-ball
$B_R \equiv B(x_0,R) \subset \Omega$, and defining \eqn{riscala}
$$
v(x):=\frac{u(x_0\cdot \delta_{R}(x))}{R}\,,\qquad \mbox{for every}\
x \in B(0,1)\;,
$$
where the dilation operator $\delta_R$ has been defined in
\rif{dilation}. Now observe that for every $i=1,\ldots,2n$
\eqn{relazione}
$$
X_i v(x) = X_i u(x_0\cdot \delta_{R}(x))\qquad \mbox{and} \qquad  T
v(x) = RT u(x_0\cdot \delta_{R}(x))\;.$$ Using this fact, and again
the left invariance of the vector fields $\{X_i\}$, it is easy to
see that the function $v$ defined in \rif{relazione} solves the
equation \rif{due} in $B(0,1)$, and therefore \rif{stimav4} is
applicable. In fact, using \rif{stimav4} for $v$, re-scaling back to
$u$ in $B(x_0,R)$, and using \rif{relazione} we get
\rif{apap}-\rif{apat}. Observe that in such a re-scaling procedure
the appearance of the integral averages in \rif{apap}-\rif{apat} is
essentially due to the change-of-variable formula together with the
fact that $ \mbox{det}\ (x \mapsto x_0\delta_{R}(x))\approx R^Q
\approx |B(x_0,R)|.$ This is basically a consequence of
\rif{basicrel}.
\end{proof}
\section{Non-degenerate equations}
\begin{prop} \label{horiznon}
 Let $u \in HW^{1,p}(\Omega)$ be a weak solution to the equation \trif{due}
 under the assumptions \trif{growth}-\trif{nondeg}, with $
2\leq  p < 4.$ Then it holds that \eqn{betterintnon}
 $$ \mathfrak{X}u \in
L^{\infty}_{\textnormal{loc}}(\Omega,\er^{2n}) \qquad  \mbox{and}
\qquad Tu \in L^{\infty}_{\textnormal{loc}}(\Omega) \;.$$ Moreover there
exists a constant $c$, depending on $n,p$ and $\ratio$, but
otherwise independent of $\mu$, of the solution $u$, and of the
vector field $a(\cdot)$, such that \trif{apapinf}-\trif{apatinf}
hold for any CC-ball $B_R \subset \Omega$.
\end{prop}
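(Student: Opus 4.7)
The plan is to combine the arbitrary higher integrability already proved in Proposition \ref{horiz} with two distinct $L^{\infty}$-type mechanisms: a Moser iteration driven by the horizontal Caccioppoli inequality of Lemma \ref{XX sigma lem} for the bound \rif{apapinf} on $\Xu$, and the sup-bound of Theorem \ref{verticaluno} for the bound \rif{apatinf} on $Tu$. The starting point is that Proposition \ref{horiz} delivers $\Xu,Tu \in L^{q}_{\loc}(\Omega)$ for every $q<\infty$, together with the reverse-H\"older controls \rif{apap}-\rif{apat}; this arbitrary integrability is the crucial input, since without it condition \rif{chichi} could not be achieved and the vertical terms in the Moser iteration could not be handled.

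For \rif{apapinf} I would run a Moser iteration based on Lemma \ref{XX sigma lem}. Taking standard cut-offs $\eta_{k}$ supported in shrinking concentric balls $B_{R_{k}}$ with $R_{k}=R/2+R/2^{k+1}$, and a sequence of exponents $\sigma_{k}\nearrow\infty$, the estimate \rif{XX sigma estpre} combined with the Heisenberg Sobolev embedding of Theorem \ref{subsobt} applied to $\eta_{k}(\mu^{2}+|X_{s}u|^{2})^{(p+\sigma_{k})/4}$ produces the gain factor $\chi=Q/(Q-2)$ in integrability at each step. The vertical term $\int \eta^{2}\deltaX^{(p-2+\sigma)/2}|Tu|^{2}\,dx$ on the right of \rif{XX sigma estpre} is handled by H\"older's inequality using the uniform $L^{q}$ control on $Tu$ coming from \rif{apat}, so that it enters as a lower-order contribution; the polynomial blow-up $(\sigma+1)^{3}$ is absorbed into a convergent geometric series in the standard Moser way. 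This yields an $L^{\infty}$-bound on $\Xu$ in terms of an $L^{q_{0}}$-norm for some finite $q_{0}$, and a rescaling/blow-up argument analogous to Step~2 in the proof of Proposition \ref{horiz} then upgrades that into the scale-invariant form \rif{apapinf}.

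For \rif{apatinf} I would apply Theorem \ref{verticaluno}. Since $\Xu \in L^{q}_{\loc}$ for every $q$, condition \rif{chichi} is satisfied for all $q$ sufficiently large and \rif{apriorivert00} is at our disposal; bounding $\|\mu+|\Xu|\|_{L^{q}(B_{r})}$ by means of \rif{apap} and $\|Tu\|_{L^{2q/(q-p+2)}(B_{r})}$ by means of \rif{apat}, both in terms of the $L^{p}$-average on $B_{R}$, and letting $q\to\infty$ so that $\chi\to Q/(Q-2)$ and $(p-2)\chi/[2(\chi-1)]\to Q(p-2)/4$, produces exactly the exponent $Q(2-p)/4$ on $\mu$ and $1/p+Q(p-2)/(4p)$ on the $L^{p}$-average appearing in \rif{apatinf}.

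The main obstacle is the Moser step: the simultaneous presence of the polynomial constant $(\sigma+1)^{3}$ and of the vertical term $|Tu|^{2}$ in \rif{XX sigma estpre} forces a careful H\"older-splitting of $|Tu|^{2}$, so that the high-exponent factor is reabsorbed into the iterated quantity $\|\Xu\|_{L^{p+\sigma_{k}}}$ while the leftover $L^{q}$-norm of $Tu$ is controlled uniformly in $k$ through \rif{apat}. Once this split is arranged so that the iteration closes with summable constants, the blow-up rescaling argument that converts the resulting unscaled bound into \rif{apapinf} is essentially identical to the one already carried out in Step~2 of the proof of Proposition \ref{horiz}.
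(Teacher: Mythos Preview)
Your proposal is correct and follows essentially the same route as the paper: a Moser iteration built on Lemma \ref{XX sigma lem}, with the vertical term in \rif{XX sigma estpre} absorbed via H\"older's inequality using the high integrability of $Tu$ from Proposition \ref{horiz}, followed by Theorem \ref{verticaluno} with $q\to\infty$ for the bound on $Tu$. The paper differs only in organization: it first normalizes to $\Omega=B_1$ with $\|\Xu\|_{L^p(B_1)}\leq 1$ (so the $L^{2Q}$-norm of $Tu$ becomes an absolute constant before iterating) and obtains an effective gain $\tilde\chi=(Q-1)/(Q-2)$ rather than $Q/(Q-2)$ because the H\"older split against $|Tu|^{2Q}$ costs a factor $(Q-1)/Q$ on the right-hand integral; and for the $Tu$ bound it feeds the already-proved $L^\infty$ estimate on $\Xu$ into \rif{apriorivert00} rather than \rif{apap}. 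These are cosmetic differences.
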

\begin{proof}
The proof is again divided in two steps. First we treat a special
case; then we reduce to such a special case by a blow-up argument.

{\em Step 1: Universal estimates.} Here we assume that
 \eqn{uni1}
$$ \Omega \equiv B_1 \qquad  \qquad \mbox{and} \qquad \qquad \|\Xu \|_{L^p(B_1,
\er^{2n})}\leq 1\;,
$$
and we shall prove that there exist absolute constants $c_3,c_4
\equiv c_3,c_4(n,p,\ratio)$  such that \eqn{firstversion}
$$
\sup_{B_{1/2}} |\Xu|\leq c_3, \qquad \mbox{and}\qquad \sup_{B_{1/2}}
|Tu|\leq c_4\mu^{\frac{Q(2-p)}{4}}\;.
$$
With $\gamma =99/100$, a simple covering argument and
\rif{apap}-\rif{apat}, gives that \eqn{uni2}
$$
\int_{B_{\gamma}} \left(|\Xu|^{\frac{Q(p+2)}{Q-1}} +
|Tu|^{2Q}+|Tu|^{2}\right)\dx \leq c\;,
$$
where $c$ is a constant depending only on the quantities
$n,p,\ratio$. Note that we have used \rif{uni1} to get rid of
the dependence on the norms of $\Xu, Tu$ in the constant $c$. Now we
start from \rif{XX sigma estpre}, which we shall employ to implement
a suitable variant of Moser's iteration scheme. With $\eta\in
C^\infty_0(B_\gamma)$ being non-negative and such that $\eta \leq 1$
we immediately have that for any $\sigma \geq 2$ it does hold that
\begin{align}\label{XXuni0}
\int_{\Omega} \eta^2 &\weight
\sum_{s=1}^{2n} \deltaXi^\frac{\sigma}{2} |\X X_s u|^2 \dx \nonumber\\
&\leq c (\sigma +1) C_\eta\int_{\supp \eta} \sum_{s=1}^{2n}
(\mu^2+|X_su |^2)^{\frac{p+\sigma}{2}}\dx
\nonumber\\
&\qquad \qquad+ c (\sigma +1)^3 \int_\Om  \eta^2 |Tu|^2
\sum_{s=1}^{2n} (\mu^2+|X_su |^2)^{\frac{p-2+\sigma}{2}} \dx\;,
\end{align}
where we have set \eqn{ceta}
$$
C_\eta:=\|\X \eta\|^2_{L^\infty}+ \|T\eta\|_{L^\infty}+1\;.
$$
To estimate the last term appearing in \rif{XXuni0} we use
H\"older's inequality and then \rif{uni2}, thereby gaining
\begin{eqnarray*} && \int_{\Om} \eta^2 |Tu|^2 \sum_{s=1}^{2n}
(\mu^2+|X_su
|^2)^{\frac{p-2+\sigma}{2}} \dx \\
&&\leq c(n)\left(\int_{B_\gamma}  |Tu|^{2Q}dx \right)^{\frac{1}{Q}}
\left(\int_{\supp \eta} \sum_{s=1}^{2n} (\mu^2+|X_su
|^2)^{\frac{Q(p-2+\sigma)}{2(Q-1)}}\dx \right)^{\frac{Q-1}{Q}}\\
&& \leq c\left(\int_{\supp \eta} 1+ \sum_{s=1}^{2n} (\mu^2+|X_su
|^2)^{\frac{Q(p+\sigma)}{2(Q-1)}}\dx \right)^{\frac{Q-1}{Q}} \;,
\end{eqnarray*}
where, as we used \rif{uni2}, the constant $c$ in the last line
depends on $n,p,\ratio$. Moreover, again by H\"older's
inequality, it trivially follows that
\begin{eqnarray}
\nonumber && \int_{\supp \eta} \sum_{s=1}^{2n} (\mu^2+|X_su
|^2)^\frac{p+\sigma}{2}\dx  \leq  c(n)\left(\int_{\supp \eta}
\sum_{s=1}^{2n} (\mu^2+|X_su
|^2)^\frac{Q(p+\sigma)}{2(Q-1)}\dx\right)^{\frac{Q-1}{Q}}\\
&  & \hspace{3cm}\leq c(n)\left(\int_{\supp \eta} 1+\sum_{s=1}^{2n}
(\mu^2+|X_su
|^2)^\frac{Q(p+\sigma)}{2(Q-1)}\dx\right)^{\frac{Q-1}{Q}}
\;.\label{uni20}
\end{eqnarray}
The last two estimates together with \rif{XXuni0}, and again
H\"older's inequality, give
\begin{align} \label{XXuni1}
\int_{\Omega} \eta^2 &
\sum_{s=1}^{2n} (\mu^2+|X_su |^2)^\frac{p-2+\sigma}{2} |\X X_s u|^2 \dx \nonumber\\
&\qquad \leq c (\sigma +1)^3C_\eta \left(\int_{\supp \eta} 1+
\sum_{s=1}^{2n} (\mu^2+|X_su |^2)^{\frac{Q(p+\sigma)}{2(Q-1)}}\dx
\right)^{\frac{Q-1}{Q}} \;,
\end{align}
where $c \equiv c(n,p,\ratio)$ and $C_{\eta}$ is defined in
\rif{ceta}. Now we observe that
\begin{eqnarray*}
&&| \XXX\sum_{s=1}^{2n}  \eta (\mu^2+|X_su |^2)^\frac{p+\sigma}{4}
|^2\leq  c(n)C_\eta\sum_{s=1}^{2n} (\mu^2+|X_su
|^2)^\frac{p+\sigma}{2}\\ && \qquad \qquad \qquad +
c(n)(p+\sigma)^2\eta^2 \sum_{s=1}^{2n} (\mu^2+|X_su
|^2)^\frac{p-2+\sigma}{2} |\X X_s u|^2\,.
\end{eqnarray*}
Therefore, using again \rif{uni20}, the last estimate and
\rif{XXuni1} give
\begin{align}
\int_{B_\gamma} & | \XXX\sum_{s=1}^{2n} \eta (\mu^2+|X_su
|^2)^\frac{p+\sigma}{4} |^2 \dx\nonumber\\
&\qquad \leq c (p+\sigma)^5C_\eta \left(\int_{\supp \eta} 1+
\sum_{s=1}^{2n} (\mu^2+|X_su |^2)^{\frac{Q(p+\sigma)}{2(Q-1)}}\dx
\right)^{\frac{Q-1}{Q}} \;.
\end{align}
Applying Sobolev embedding theorem in the Heisenberg group, that is
Theorem \ref{subsobt} with $q=2$, in turn yields
\begin{align}\label{XXuni2}
&\left(\int_{B_\gamma}\eta^{\frac{2Q}{Q-2}} \sum_{s=1}^{2n}
(\mu^2+|X_su
|^2)^\frac{Q(p+\sigma)}{2(Q-2)}\dx\right)^{\frac{Q-2}{Q}} \nonumber\\
&\qquad \leq c (p+\sigma)^5C_\eta \left(\int_{\supp \eta} 1+
\sum_{s=1}^{2n} (\mu^2+|X_su |^2)^{\frac{Q(p+\sigma)}{2(Q-1)}}\dx
\right)^{\frac{Q-1}{Q}} \;,
\end{align}
where the constant $c$ depends only on $n,p,\ratio$. Observe that here we are using that $\supp \eta
\subset B_\gamma$. Now we choose the cut-off functions in the
framework of Moser's iteration technique. We take a family of
concentric interpolating balls $B_{3/4}\subset B_{\varrho_{k+1}}
\subset B_{\varrho_{k}}$ such that $B_{\varrho_0} = B_{7/8}\subset
B_{\gamma}$, $\varrho_{k+1}-\varrho_k \approx 2^{-k}$ and $\varrho_k
\searrow 3/4$. Accordingly we select $\eta_k \in
C^{\infty}_c(B_{\varrho_k})$ such that $\eta_k \equiv 1$ on
$B_{\varrho_{k+1}}$, and $C_\eta\leq c^k$; the existence of such
cut-off functions can be inferred as in \cite[Lemma 3.2]{CDG}.
Setting \eqn{tildechi}
$$\tilde{\chi} :=
\frac{Q-1}{Q-2}>1\,,$$ we recursively define the sequence
$\{\sigma_k\}$ as follows:
$$
\left\{
\begin{array}{ccc}
 \sigma_{k+1} & :=& \tilde{\chi} \sigma_k+ \frac{p}{Q-2}
\displaystyle \\
\\ \sigma_0 & := &2\,,
\end{array}
\right.
$$
so that \eqn{easy1}
$$
\frac{(p+\sigma_{k+1})Q}{Q-1} = \frac{(p+\sigma_{k})Q}{Q-2}
$$
holds for every $k\geq 0$. Observe that \eqn{trisigma}
$$
p+\sigma_k \approx \tilde{\chi}^{k}, \qquad \mbox{and}\qquad
|B_{\varrho_k}|\approx c(n)>0\,.
$$
Taking $\sigma\equiv \sigma_k$ and $\eta \equiv \eta_k$ in
\rif{XXuni2}, and observing that $\eta_k \equiv 1$ on
$B_{\varrho_{k+1}}$ and $\supp \eta_k \subset B_{\varrho_k}$, easily
gives
\begin{align}\label{XXuni3}
&\left(\int_{B_{\varrho_{k+1}}} 1+ \sum_{s=1}^{2n} (\mu^2+|X_su
|^2)^\frac{Q(p+\sigma_k)}{2(Q-2)}\dx\right)^{\frac{Q-2}{Q}} \nonumber\\
&\qquad \leq c^{k+1} \left(\int_{B_{\varrho_k}} 1+ \sum_{s=1}^{2n}
(\mu^2+|X_su |^2)^{\frac{Q(p+\sigma_k)}{2(Q-1)}}\dx
\right)^{\frac{Q-1}{Q}} \;,
\end{align}
where $c \equiv c(n,p,\ratio)\geq 1$ is a constant independent of
$k$, and we used \rif{trisigma}. Now, setting for every $k\geq 0$
$$
A_k:=\left(\int_{B_{\varrho_k}} 1+ \sum_{s=1}^{2n} (\mu^2+|X_su
|^2)^{\frac{Q(p+\sigma_k)}{2(Q-1)}}\dx
\right)^{\frac{Q-1}{Q(p+\sigma_k)}}\,,
$$
using \rif{easy1}-\rif{XXuni3}, an elementary manipulation gives
that
$$
A_{k+1}\leq c_0^{(k+1)\tilde{\chi}^{-k}}A_k\;,
$$
for a new constant $c_0$ depending only on $n,p,\ratio$. Keeping
\rif{tildechi} in mind, iterating the previous inequality easily
gives
$$
A_{k+1}\leq \exp\left[(\log c_0)\sum_{i=0}^{\infty}\frac{i+1}{\tilde{\chi}^{i}}\right] A_0\;.
$$
Letting $k\nearrow \infty$ in the previous inequality - note that the series in the last line converges by \rif{tildechi} - now gives
\eqn{prexx}
$$
\sup_{B_{3/4}} |\Xu|\leq c(n,p,\ratio) A_0\,,
$$
while taking \rif{uni2} and the fact that $\mu \leq 1$ into account
we obtain the first inequality appearing in \rif{firstversion}. As
for the second inequality in \rif{firstversion}, we observe that since
$\Xu$ is bounded we may apply Theorem \ref{verticaluno} with any $q$ satisfying \rif{chichi}. Noting that this implies
$2q/(q-p+2)\leq 2Q$, we may use \rif{uni2}; therefore taking $R=3/4$ and $\varrho =
1/2$ in \rif{apriorivert00} yields
\begin{equation} \|Tu\|_{L^{\infty}(B_{1/2})} \le
\tilde{c}c^{\frac{\chi}{\chi-1}} \mu^{\frac{(2-p)\chi}{2(\chi-1)}}
\label{apriorivert002},
\end{equation}
where we also used \rif{prexx}, $\tilde{c}\equiv
\tilde{c}(n,p,\ratio)$, and where $\chi$ appears in \rif{chichi0}.
All the constants in the above inequality only depend on
$n,p,\ratio$ and are actually independent of $q$. Therefore letting
$q\nearrow \infty$ in \rif{apriorivert002}, and keeping
\rif{chichi0} in mind, we obtain the
second inequality in \rif{firstversion} with the specified
dependence of the constant $c_4$.

 {\em Step 2: The general case.} First we observe
that we may reduce to the case $B_R \equiv B_1$ by performing the
blow-up scaling \rif{riscala}. Indeed once estimates
\rif{apapinf}-\rif{apatinf} hold for $v$ on $B_R \equiv B_1$, then
scaling back, and using \rif{relazione}, they also hold on general
balls $B_R$ as required in the statement. Therefore we just need to
prove the result for a solution $v$ in the ball $B_1$. In order to
reduce to the assumptions in \rif{uni1} we pass to the function $w$
defined in \rif{scaling} where this time we choose $A:= \left(\|\XXX
v\|_{L^{p}(B_1)}+\mu\right),$ so that both $\|\XXX
w\|_{L^p(B_1,\er^{2n})}\leq 1$ and \rif{muA} hold. As noted in the
proof of Proposition \ref{horiz}, Step 2, the function $w$ is a
solution of the equation \rif{scalaa}, while the new vector field
$\tilde{a}(z)$ defined in \rif{scaling} satisfies assumptions
\rif{growth}-\rif{ell} with $\mu$ replaced by $\mu/A\leq 1$.
Therefore, thanks to \rif{muA} we may apply the result of Step 1 to
$w$, thereby obtaining \eqn{firstversion2}
$$
\sup_{B_{1/2}} |\XXX w|\leq c_3, \qquad \mbox{and}\qquad
\sup_{B_{1/2}} |Tw|\leq
c_4\mu^{\frac{Q(2-p)}{4}}A^{\frac{Q(p-2)}{4}}\;.
$$
Going back to $v = w/A$, and keeping in mind the current definition
of $A$, we obtain the validity of \rif{apapinf}-\rif{apatinf} for
$v$ on $B_1$, and the proof is finally complete by the argument
outlined at the beginning of Step 2.\end{proof}
\begin{proof}[Proof of Theorems \ref{main}-\ref{main2}] The proof
of the a priori estimates of Theorem \ref{main2} is a direct
consequence of Proposition \ref{horiznon}. As far as the H\"older
continuity of the gradient is concerned, the focal point of the
regularity theory for quasilinear elliptic equations with $p$-growth
is the local Lipschitz regularity of solutions, as already explained
in \cite{Ca1, Ca2, MM}. From this point on the proof of the local
H\"older continuity of $D u$ proceeds as in \cite{MM}; see also
\cite{Ca0, Ca2} for detailed explanations.
\end{proof}
\section{The degenerate case}

\begin{proof}[Proof of Theorem \ref{main3}] Of course in the following
we shall restrict to the case $p>2$; indeed, as the reader will soon
recognize, in the case $p=2$ the role of $\mu$ is immaterial in
\rif{growth}-\rif{ell}, and the results of Theorems \ref{main} and
\ref{main2} still hold when $\mu=0$. When $p>2$ the case $\mu=0$ is
now a consequence of Proposition \ref{horiz} when combined with a
suitable approximation argument we are going to report in some
detail. Let us consider the regularized vector fields
$$ a_{k}(z):= a(z)+ \varepsilon_k^{p-2}z\,,\qquad \mbox{for every}\ \  z \in \er^{2n}\ \ \mbox{and}\ \  k \in \mathbb N\,,$$
where $\{\varepsilon_k\}_k$ is a sequence of positive numbers such
that $\varepsilon_k \searrow 0$ and $\varepsilon_k \leq 1$. By using
\rif{growth}-\rif{ell} it is easy to see that each vector field
$a_k(z)$ satisfies the following growth and ellipticity conditions:
\begin{equation}\label{growthk}
|Da_k(z)|(\varepsilon_k^2+|z|^2)^{\frac{1}{2}} + |a_k(z)| \leq
c(\varepsilon_k^2+|z|^2)^{\frac{p-1}{2}},
\end{equation}
and
\begin{equation}\label{ellk}
c^{-1} (\varepsilon_k^2+|z|^2)^{\frac{p-2}{2}}|\lambda|^2 \leq
\sum_{i,j=1}^{2n}D_{z_j}(a_k)_i(z)\lambda_i\lambda_j  ,
\end{equation}
for a constant $c>0$ depending only on $n,p,\ratio$ but independent
of $k \in \mathbb N$. Moreover, since $p\geq 2$, assumption
\rif{ellk} also implies, for a possibly different constant $c$ still
depending on $n,p,\ratio$, but otherwise independent of $k \in
\mathbb N$, that whenever $z,z_1,z_2 \in \er^{2n}$ the following
inequalities hold: \eqn{mon2d}
$$   c^{-1}
|z_2-z_1|^p \leq  \langle a_k(z_2)-a_k(z_1),z_2-z_1\rangle\;, \qquad
c^{-1}|z|^p-c\varepsilon_k^p \leq \langle a_k(x,z),z\rangle\;.$$
Compare with \rif{mon2} and \rif{veramon}. Now, let us consider a
CC-ball $B_R \subset \Omega$ and let us define $u_k \in
u+HW^{1,p}_0(B_R)$ as the unique solution to the Dirichlet problem
\rif{Dir0} with $a_k(\cdot)\equiv a(\cdot)$; therefore, for
the present application we have $v\equiv u_k $ in \rif{Dir0}.
Accordingly, by virtue of \rif{mon2d} we may apply Lemma
\ref{cotril} so that \rif{cotri} used for $v \equiv u_k$ gives
\eqn{bobo}
$$
\int_{B_{R}}|\XXX u_k|^p\, dx \leq c\int_{B_{R}}(\varepsilon_k+|\XXX
u|)^p\, dx\;,
$$
where $c\equiv c(n,p,\ratio)$ is independent of $k$. Next, using
\rif{mon2d}, the fact that both $u$ and $u_k$ are solutions, and
then applying the definition of $a_k(\cdot)$ together with Young's
and H\"older's inequalities, we have
\begin{eqnarray*}
 \int_{B_{R}} |\XXX u_k - \XXX u|^p\,
dx & \leq & c\int_{B_{R}} \langle a(\XXX u_k)-a(\XXX u), \XXX u_k -
\XXX u\rangle \, dx\\& =& c\int_{B_{R}} \langle a(\XXX u_k)-a_k(\XXX
u_k), \XXX u_k - \XXX u\rangle \, dx\\&\leq  &
 c
\int_{B_{R}}\varepsilon_k^{p-2}|\Xu_k||\XXX u_k -
\XXX u|\, dx \\
&\leq& \frac{1}{2} \int_{B_{R}} |\XXX u_k - \XXX u|^p\,
dx+c\varepsilon_k^{\frac{p(p-2)}{p-1}} \int_{B_{R}}|\XXX
u_k|^{\frac{p}{p-1}} \, dx\\&\leq& \frac{1}{2} \int_{B_{R}} |\XXX
u_k - \XXX u|^p\, dx+c\varepsilon_k^{\frac{p(p-2)}{p-1}}
\left(\int_{B_{R}}|\XXX u_k|^p\,
dx\right)^{\frac{1}{p-1}}\;.\end{eqnarray*} Re-absorbing in the
l.h.s. the first integral in the last line, eventually letting $k
\nearrow \infty$, and keeping \rif{bobo} in mind, we get
\eqn{stconv}
$$
\XXX u_k \to \XXX u \qquad \mbox{ strongly in} \qquad
L^p(B_R,\er^{2n})\;.$$
 Now, using estimates \rif{apapinf}
and \rif{apatinf} for $u_k$, and therefore considering the case $\mu
\equiv \epsilon_k
>0$, we get \eqn{apapk}
$$
\sup_{B_{R/2}} |\Xu_k|\leq c^*
\left(\intav_{B_{R}}(\varepsilon_k+|\XXX u_k|)^{p}\,
dx\right)^{1/p}\;,
$$
and \eqn{apatk}
$$ \left(\intav_{B_{R/2}}|T u_k|^{q}\,
dx\right)^{1/q}\leq \frac{c_*}{R}
\left(\intav_{B_{R}}(\epsilon_k+|\XXX u_k|)^{p}\, dx\right)^{1/p}\;,
$$ which hold uniformly with respect to $k$; in fact
the constants $c^*, c_*$ ultimately depend on $n,p,\ratio$, and also
$q$ as far as the latter is concerned, but are otherwise independent
of $k$. This follows directly from the statement of Proposition
\ref{horiz}. Letting $k \nearrow \infty$ in \rif{apapk}-\rif{apatk},
standard lower semicontinuity arguments to deal with the left hand
sides of \rif{apapk}-\rif{apatk}, and \rif{stconv} to deal with
right hand ones, finally give \rif{apapinfdeg}-\rif{apat00}. Since
the ball considered $B_R\subset \Omega $ is arbitrary, this finally
implies \rif{betterint22} via a standard covering argument and the
proof of Theorem \ref{main3} is complete.
\end{proof}
\begin{proof}[Proof of Corollaries \ref{cor1}-\ref{cor2}] Corollary
\ref{cor2} is immediate since from Theorem \ref{main3} we obtain
higher integrability for the Euclidean gradient: $Du  \in
L^q_{\loc}(\Omega,\er^{2n+1})$ for every $q < \infty$. As for
Corollary \ref{cor1}, it suffices to prove estimate \rif{Lipest}.
With $B_R \subset \Omega$ as in the statement, by
\rif{apapinf}-\rif{apapinfdeg} it immediately follows that
$$
M_{R/4}(|\Xu|)(x)\leq \sup_{B(x,R/4)} |\Xu| \leq
c\left(\intav_{B_{R}}(\mu+|\XXX u|)^{p}\, dz\right)^{1/p}\,,
$$
whenever $x \in B_{R/2}$, where $c$ depends only on $n,p,\ratio$.
The operator $M_{R/4}$ is the one defined in \rif{rem}. Therefore,
using Proposition \ref{HSp} we obtain \eqn{Lipestpre}
$$
|u(x)-u(y) |\leq  c\left(\intav_{B_{R}}(\mu+|\XXX u|)^{p}\,
dz\right)^{1/p}d_{cc}(x,y)
$$
as soon as $x,y \in B_{R/2}$ are such that $d_{cc}(x,y)\leq R/8$. At
this stage estimate \rif{Lipest} follows from the last one, applied to
suitable smaller balls, just
magnifying the constant in \rif{Lipestpre} of a finite factor, say
$16$.
\end{proof}
\section{Horizontal Calder\'on-Zygmund estimates}
In this section we are going to prove Theorem \ref{phcz}; the use of various types of restricted maximal operator will be essential here.
In the
following, when dealing with \rif{duecz} we shall always assume that
$ F \in L^{q}_{\loc}(\Omega, \er^{2n})$, for some $q
>p$. Now, let us fix an arbitrarily fixed open subset $\Omega' \Subset
\Omega$ ; for the rest of the section all balls the considered $B$
will be such that $B \Subset \Omega'$ unless otherwise specified,
and in the following all the regularity results we are going to
prove are in $\Omega'$. Since the choice of $\Omega'$ is arbitrary
the corresponding local regularity of $\XXX u$ in
$\Omega$ will also follow. With $\tilde{q} \equiv
\tilde{q}(n,p,\ratio) >p$ being the higher integrability exponent
identified in Theorem \ref{gehx}, let us define \eqn{deq0}
$$
q_0:=\frac{p+\tilde{q}}{2}
$$
which is such that $q_0 \in (p,\tilde{q})$ and can be therefore used
in \rif{gerh}. Moreover, for later use we observe that \eqn{lu}
$$
q > \tilde{q} \Longrightarrow \frac{1}{q-q_0}<
\frac{2}{\tilde{q}-p}\equiv c(n,p,\ratio)
$$
and the last dependence on the parameters follows from the one
specified in Theorem \ref{gehring}. Accordingly, with $R_0>0$ being
fixed, and eventually specified later, and with $\Omega' \Subset
\Omega$ chosen as described above, we let \eqn{ugly}
$$
[b]_{R_0}^* \equiv [b]_{R_0,\Omega'}^*:=\sup_{B_R \subseteq \Omega',
R \leq R_0}\left(\intav_{B_R}
|b(x)-(b)_{B_R}|^{\left(\frac{p}{p-1}\right)\left(\frac{q_0}{q_0-p}\right)}\,
dx\right)^{\frac{q_0-p}{q_0}}\,,
$$
where $(b)_{B_R}$ is the average in \rif{mediavmo}. Let us observe
that \eqn{vmocond2}
$$
\lim_{R\searrow 0} [b]_{R,\Omega'}^*=0\;,
$$ for every choice of the open subset $\Omega' \Subset \Omega,$
and this strengthens \rif{vmocond}. Indeed since $|b(x)|\leq L$ by
\rif{vmo} we have
$$
[b]_{R,\Omega'}^*\leq (2L)^{\frac{p}{p-1}-\frac{q_0-p}{q_0}}
\left([b]_{R,\Omega'}\right)^{\frac{q_0-p}{q_0}}
$$ and \rif{vmocond2} immediately follows by \rif{vmocond}.
\begin{lemma}\label{compvmo}
 Let $u \in HW^{1,p}(B_R)$ be a weak solution to \trif{duecz},
 and let $v \in u+HW^{1,p}_0(B_R)$ be a weak solution to the
 Dirichlet problem \trif{Dir0} under the assumptions
 \trif{growth}-\trif{ell} for $p\geq 2$,
where $B_{2R}\Subset \Omega'$ and $R \leq R_0$, for a certain
$R_0>0$.

\textnormal{(1)} For any $ p \geq 2$ it holds that \begin{eqnarray}
\nonumber \intav_{B_{R}} |\XXX u - \XXX v|^p\, dx
&\leq & c_5 [b]_{R_0}^*\intav_{B_{2R}}(\mu + |\XXX u|)^p\, dx \\
& &\qquad \qquad + c_5(1+[b]_{R_0}^*)\left(
\intav_{B_{2R}}|F|^{q_0}\, dx\right)^{p/q_0}\;,\label{fre1}
\end{eqnarray}
where the constant $c_5$ depends only on $n,p,\ratio$.

\textnormal{(2)} Assuming $p \in [2,4)$ we have that for any $p \leq s<
\infty$ there exists a constant $c_6\equiv c_6(n,p,\ratio)$ such
that \eqn{fre2}
$$
\left( \intav_{B_{R/2}} (\mu+| \XXX v|)^{s}\,
dx \right)^{\frac{1}{s}}\leq c_6 \left( \intav_{B_R} (\mu+| \XXX u|)^{p}\, dx
\right)^{\frac{1}{p}}\;,
$$
and $c_6 \nearrow \infty$ when $p\nearrow 4$.
\end{lemma}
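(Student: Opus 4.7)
The plan for part (1) is a standard energy comparison combined with a careful VMO-sensitive Hölder bookkeeping. I would test the equation \rif{wf} for $u$ and the weak form of \rif{Dir0} for $v$ with the admissible function $\varphi:=u-v\in HW^{1,p}_0(B_R)$, then multiply the equation for $v$ by the constant $(b)_{B_R}$ and subtract to produce the identity
\[
(b)_{B_R}\!\!\int_{B_R}\!\!\langle a(\Xu)-a(\X v),\X(u-v)\rangle\,dx = -\!\!\int_{B_R}\!\!(b(x)-(b)_{B_R})\langle a(\Xu),\X(u-v)\rangle\,dx + \!\!\int_{B_R}\!\!\langle |F|^{p-2}F,\X(u-v)\rangle\,dx.
\]
The monotonicity relation \rif{mon2} and the normalization $(b)_{B_R}\ge\nu=1$ estimate the left-hand side from below by $c^{-1}\int_{B_R}|\X u-\X v|^p\,dx$. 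The $F$-term is disposed of by a standard Young's inequality yielding an $\varepsilon$-multiple of $\int|\X u-\X v|^p$ absorbable on the left, plus a term $c(\varepsilon)\int|F|^p$ that Hölder's inequality converts into $(\intav_{B_R}|F|^{q_0})^{p/q_0}$.

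The genuinely delicate term is the VMO one. Here I would use \rif{growth}, i.e.\ $|a(\Xu)|\le L(\mu+|\Xu|)^{p-1}$, and then apply Hölder's inequality with three conjugate exponents $\alpha=\tfrac{p}{p-1}\cdot\tfrac{q_0}{q_0-p}$, $\tfrac{q_0}{p-1}$, and $p$, whose reciprocals sum to $1$. A direct computation shows that the $b$-factor then produces exactly $\bigl([b]_{R_0}^{*}\bigr)^{(p-1)/p}$, which is the whole reason for the peculiar exponent appearing in the definition \rif{ugly}. A further Young's inequality with exponents $p,p/(p-1)$ absorbs $\int|\X u-\X v|^p$ on the left and leaves a residual $c[b]_{R_0}^{*}\bigl(\intav_{B_R}(\mu+|\Xu|)^{q_0}\bigr)^{p/q_0}$. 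At this point I would invoke Theorem \ref{gehring}, applicable since $q_0\in(p,\tilde q]$ with the uniform control \rif{lu}, to replace the $L^{q_0}$-average on $B_R$ by an $L^p$-average on $B_{2R}$ up to an $F$-term; this yields exactly \rif{fre1} with $c_5\equiv c_5(n,p,L/\nu)$.

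For part (2), since $v$ solves the homogeneous problem \rif{Dir0} with $\mu\in[0,1]$ and $2\le p<4$, the a priori sup estimate \rif{apapinfdeg} of Theorem \ref{main3} (which subsumes the non-degenerate \rif{apapinf} of Theorem \ref{main2}) applies to $v$ on $B_R$ and gives $\sup_{B_{R/2}}|\X v|\le c\bigl(\intav_{B_R}(\mu+|\X v|)^p\,dx\bigr)^{1/p}$. Raising both sides to any power $s\ge p$ yields
\[
\left(\intav_{B_{R/2}}(\mu+|\X v|)^{s}\,dx\right)^{1/s}\le c\left(\intav_{B_R}(\mu+|\X v|)^p\,dx\right)^{1/p},
\]
and a final application of Lemma \ref{cotril} in the form \rif{cotri} replaces $\intav_{B_R}(\mu+|\X v|)^p$ by $\intav_{B_R}(\mu+|\Xu|)^p$, completing \rif{fre2}; the blow-up as $p\nearrow 4$ is inherited from the constant in \rif{apapinfdeg}.

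The main obstacle is precisely the exponent calibration in the VMO estimate: the definition \rif{ugly} of $[b]_{R_0}^{*}$ is engineered so that, after Hölder's inequality with the three exponents above and the subsequent Young step, the VMO seminorm appears with power exactly $1$ rather than $(p-1)/p$, which is what makes \rif{fre1} linear in $[b]_{R_0}^{*}$ and ready for the forthcoming small-BMO argument. Once this algebraic bookkeeping is in place, the remaining ingredients -- monotonicity, Young, Gehring, and the a priori $L^{\infty}$-bound for $\X v$ -- are all already available in the preceding sections of the paper.
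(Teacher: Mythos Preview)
Your proposal is correct and follows essentially the same route as the paper. The only cosmetic difference is that in the VMO term you apply a three-exponent H\"older inequality first and Young's inequality second, whereas the paper does Young with exponents $p,p/(p-1)$ first (producing $|b(x)-(b)_{B_R}|^{p/(p-1)}(\mu+|\Xu|)^p$) and then a two-exponent H\"older with $q_0/(q_0-p)$ and $q_0/p$; both orderings land on the same quantity $[b]_{R_0}^{*}\bigl(\intav_{B_R}(\mu+|\Xu|)^{q_0}\bigr)^{p/q_0}$ before invoking Theorem~\ref{gehring}, and part (2) is identical to the paper's argument.
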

\begin{proof} (1). Using that $u$ and $v$ are solutions to \rif{duecz} and \rif{Dir0} respectively,
testing \rif{duecz} and \rif{Dir0} by $u-v \in HW^{1,p}_0(B_R)$ and
summing up, with $ (b)_{B_{R}}$ as in \trif{mediavmo} we have
\begin{eqnarray} \nonumber \intav_{B_R} |\XXX u - \XXX v|^p\, dx
&\stackrel{\rif{mon2}}{\leq} & c \intav_{B_R} (b)_{B_R} \langle
a(\XXX u)- a(\XXX v), \XXX
u - \XXX v\rangle \, dx\\
\nonumber  &=& c\intav_{B_R}  \langle (b)_{B_R} a(\XXX u)-
b(x)a(\XXX u), \XXX u - \XXX v\rangle \, dx\\&& \qquad \qquad
+c\intav_{B_R}\langle |F|^{p-2}F, \XXX u - \XXX v\rangle\, dx =:
I+II\;,\label{si00}
\end{eqnarray}
where $c \equiv c(n,p,\ratio)$. In a standard way, via Young's
inequality we have in turn \eqn{iiii}
$$
II \leq \frac{1}{4}\intav_{B_R} |\XXX u - \XXX v|^p\, dx +
c\intav_{B_R}|F|^p\, dx\;,
$$
while, taking \rif{growth} into account and using H\"older's
inequality we have
\begin{eqnarray*}
I &\leq & \frac{1}{4}\intav_{B_R} |\XXX u - \XXX v|^p\, dx +
c\intav_{B_R} |b(x)-(b)_{B_R} |^{\frac{p}{p-1}}(\mu+|\XXX u|)^p\,
dx\\
&\leq & \frac{1}{4}\intav_{B_R} |\XXX u - \XXX v|^p\, dx
+c[b]_{R_0}^*\left(\intav_{B_R} (\mu+|\XXX u|)^{q_0}\,
dx\right)^{p/q_0}\\
&\stackrel{\rif{gerh}}{\leq} & \frac{1}{4}\intav_{B_R} |\XXX u - \XXX v|^p\, dx\\
&& \qquad +c[b]_{R_0}^*\intav_{B_{2R}} (\mu+|\XXX u|)^p\,
dx+c[b]_{R_0}^* \left(\intav_{B_{2R}}|F|^{q_0}\, dx\right)^{p/q_0}
\;.
\end{eqnarray*}
Estimate \rif{fre1} now follows combining the estimates found for
$I$ and $II$ to \rif{si00}.

(2). When $p
 \in [2,4)$ estimate \rif{fre2} just follows applying \rif{apapinf}-\rif{apapinfdeg} to the
function $v$, and then applying \rif{cotri}.
\end{proof}
In the following we shall concentrate on a ball $B_{R_0}$, such that
$B_{100R_0} \subset \Omega'$. The symbol $\M$ will denote the
restricted maximal operator relative to the ball $B_{100R_0} $ in
the sense of \rif{ma1}: $\M\equiv \M_{B_{100R_0}}$; accordingly we shall
denote by $\Mq$ the restricted maximal operator in the sense of
\rif{ma2}, again relative to $B_{100R_0} $, that is, $\Mq\equiv
\M_{q_0/p,B_{100R_0}}$. We recall that $q_0>p$ has been defined in
\rif{deq0}.
\begin{lemma}\label{intecz} Let $u \in HW^{1,p}(\Omega)$ be a weak solution to
the equation \trif{duecz} under assumptions \trif{growth}-\trif{ell}
with $2\leq p < 4$, and let $K\geq 1$ and $s >p$. There exist
numbers $\varepsilon \equiv \varepsilon (n,p,\ratio,K,s) \in (0,1)$
and $A \equiv A (n,p,\ratio)\geq 1$ such that if $[b]_{100R_0}^*\leq
\varepsilon $ then the following holds:

If $B$ is a CC-ball centered in $B_{R_0}$ and with radius less than
$2R_0$ satisfying \eqn{cont1}
$$
|E \cap 5B| > K^{-s/p} |B \cap B_{R_0}|
$$
then \eqn{cont2}
$$
5B \cap B_{R_0} \subset G\;,
$$
where
$$
E:=\{x \in B_{R_0} \ : \ \M(\mu^p+|\XXX u|^p)(x)> AK \lambda, \
\mbox{and}\ \Mq(|F|^p)(x)\leq  \varepsilon \lambda\}\;,
$$
and
$$
G:=\{x \in B_{R_0} \ : \ \M(\mu^p+|\XXX u|^p)(x)>  \lambda\}\;,
$$
while $\lambda >0$.
\end{lemma}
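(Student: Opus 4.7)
My plan is to argue by contraposition using the Caffarelli--Peral comparison scheme. First, I would assume $5B\cap B_{R_0}\not\subseteq G$ and pick $x_0\in 5B\cap B_{R_0}$ with $\M(\mu^p+|\XXX u|^p)(x_0)\le\lambda$; testing against any fixed ball gives also $\mu^p\le\lambda$. If $E\cap 5B=\emptyset$ the conclusion is vacuous, so I also fix $x_1\in E\cap 5B$ with $\Mq(|F|^p)(x_1)\le\varepsilon\lambda$. Then I would introduce the enlarged ball $\tilde B:=c_0 B$ with $c_0$ a geometric constant (say $c_0=20$) chosen so that $5B\subseteq\tilde B/2$ and both $\tilde B,2\tilde B\subseteq B_{100R_0}$. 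Since $x_0,x_1\in\tilde B$, this yields
\[ \intav_{2\tilde B}(\mu^p+|\XXX u|^p)\,dx\le c\lambda,\qquad \Big(\intav_{2\tilde B}|F|^{q_0}\,dx\Big)^{p/q_0}\le c\varepsilon\lambda. \]

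Next I would solve the Dirichlet problem (\ref{Dir0}) on $\tilde B$ with datum $u$ to obtain $v$. Lemma \ref{compvmo}(1), combined with the standing assumption $[b]^*_{100R_0}\le\varepsilon\le 1$, gives $\intav_{\tilde B}|\XXX u-\XXX v|^p\,dx\le c\,\varepsilon\,\lambda$. On the other hand, (\ref{apapinf})--(\ref{apapinfdeg}) applied to the homogeneous solution $v$ on $\tilde B$, combined with (\ref{cotri}), yields $\sup_{\tilde B/2}(\mu^p+|\XXX v|^p)\le c\lambda$, whence $\sup_{5B}(\mu^p+|\XXX v|^p)\le c_7\lambda$.

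The crux of the argument will be a pointwise maximal-function bound: I aim to show that for every $x\in 5B$,
\[ \M(\mu^p+|\XXX u|^p)(x)\le c\,\M\bigl(|\XXX u-\XXX v|^p\chi_{\tilde B}\bigr)(x)+c_8\lambda, \]
where $c_8$ depends only on $n,p,\ratio$. To establish this I split the balls $B''$ with $x\in B''\subseteq B_{100R_0}$ into three regimes according to their radius $r''$: (a) small radii ($r''\lesssim R$), where $B''\subseteq\tilde B/2$ and the sup bound for $v$ replaces $\mu^p+|\XXX u|^p$ by $|\XXX u-\XXX v|^p+c\lambda$ on $B''$; (b) intermediate radii ($R\lesssim r''\lesssim R_0$), where a bounded dilation $kB''\ni x_0$ still fits inside $B_{100R_0}$, so averages over $B''$ are controlled by $\M(\mu^p+|\XXX u|^p)(x_0)\le\lambda$ up to a doubling constant; (c) very large radii ($r''\gtrsim R_0$), where I would use $\intav_{B''}\le(|B_{100R_0}|/|B''|)\intav_{B_{100R_0}}$, the ratio being bounded in this regime, and the fact that $x_0\in B_{100R_0}$ yields the same upper bound. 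Setting $A:=2c_8/c$, this forces $\M(|\XXX u-\XXX v|^p\chi_{\tilde B})(x)>cK\lambda$ for every $x\in E\cap 5B$.

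Finally, the weak-type $(1,1)$ estimate (\ref{weakes}) for $\M$ combined with the above comparison bound gives $|E\cap 5B|\le c\varepsilon K^{-1}|\tilde B|$. Using that $B$ is centered in $B_{R_0}$ so that $|\tilde B|\le c|B\cap B_{R_0}|$ by the doubling property, I obtain $|E\cap 5B|\le c_{10}\varepsilon K^{-1}|B\cap B_{R_0}|$. The choice $\varepsilon:=\min\{1,\,K^{1-s/p}/c_{10}\}$, strictly positive because $s>p$ and depending only on $n,p,\ratio,K,s$, then produces $|E\cap 5B|\le K^{-s/p}|B\cap B_{R_0}|$, contradicting (\ref{cont1}). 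The main obstacle I anticipate is the three-regime splitting in the pointwise maximal estimate: keeping track of which dilations $kB''$ remain inside $B_{100R_0}$ is the place where the factor $100$ in the definition of $\M$ is used essentially, and assembling the three regimes into one clean bound demands careful geometric bookkeeping.
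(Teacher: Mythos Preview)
Your proposal is correct and follows essentially the same Caffarelli--Peral contraposition scheme as the paper: pick points in $5B$ where the maximal functions are small, compare with the homogeneous solution $v$ on a dilated ball, and use the weak-type inequality for $\M$ to bound $|E\cap 5B|$. The only tactical difference is that you invoke the $L^\infty$ bound \rif{apapinf}--\rif{apapinfdeg} for $\XXX v$ directly, absorbing the $v$-contribution into the additive constant $c_8\lambda$, whereas the paper routes through the finite-$s$ version \rif{fre2} and applies the weak-type estimate \rif{weakes} at level $\gamma=s/p$ to the $v$-part separately; your variant is slightly more streamlined here, while the paper's formulation makes the role of the exponent $s$ in \rif{fre2} more visible. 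Your three-regime analysis for the pointwise maximal bound is a detailed unpacking of what the paper calls ``a standard geometric argument'' leading to \rif{duemax}, and the bookkeeping you flag is indeed where the factor $100$ earns its keep.
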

\begin{proof} We proceed by contradiction, therefore assuming that
\rif{cont2} fails, and showing that, choosing $\varepsilon$ and $A$
appropriately, but with the dependence on the constants as in the
statement of the lemma, also \rif{cont1} fails. Indeed, assume that
\rif{cont2} fails but \rif{cont1} does not; then there exists $z_1
\in 5B\cap B_{R_0}$ such that $\M(\mu^p+|\XXX u|^p)(z_1)\leq  \lambda$;
moreover $E \cap 5B$ is non-empty and therefore there exists $z_2
\in 5B\cap B_{R_0}$ such that $\Mq(|F|^p)(z_2)< \varepsilon\lambda$.
All in all we have that \eqn{sti1}
$$
\intav_{40B} (\mu^p+|\XXX u|^p)\, dx \leq \lambda, \qquad
\mbox{and}\qquad  \intav_{40B} |F|^{q_0}\, dx \leq
(\varepsilon\lambda)^{q_0/p}\;.
$$
Now define $v \in u + HW^{1,p}_0(20B)$ as the unique solution to the
Dirichlet problem \rif{Dir0} with $B_R \equiv 20B$. Therefore
applying \rif{fre1} in this context, and using \rif{sti1} with
$[b]_{100R_0}^*\leq \varepsilon $ too, an elementary manipulation
gives \eqn{sti2}
$$  \intav_{20B} |\XXX u - \XXX v|^p\, dx \leq c(n,p,\ratio)
\varepsilon \lambda\;. $$ Moreover estimates \rif{fre2} and
\rif{sti1} also give \eqn{sti3}
$$
\intav_{10B} (\mu^s+|\XXX v|^s) \, dx \leq [c(n,p,\ratio)]^{s/p}
\lambda^{s/p}\;.
$$
We now start giving a few estimates for the restricted maximal
operator relative to the ball $10B$, that in the following will be
denoted by $M^{**}$, therefore $M^{**}\equiv \M_{10B}$. First, let us
observe that a standard geometric argument using that $\M(\mu^p+|\XXX u|^p)(z_1)< \lambda$, exactly the same as the one
working in the Euclidean case, allows us to get the existence of an
absolute constant $c_*$, depending on the doubling constant $C_d$ in
\rif{doubling} and therefore ultimately on $n$, such that
\eqn{duemax}
$$
\M(\mu^p+|\XXX u|^p)(x)\leq \max\{M^{**}(\mu^p+|\XXX u|^p)(x),
c_*\lambda\}\;,
$$
whenever $x \in 5B \cap B_{R_0}$. Now, using \rif{weakes} with
$\gamma = s/p$, we have
\begin{eqnarray}
&&\hskip-15mm |\{ x \in 5B\ : \ M^{**}(\mu^p+|\XXX
u|^p)(x)>AK\lambda
\}| \nonumber\\
& \leq& |\{ x \in 5B\ : \
M^{**}(\mu^p+|\XXX v|^p)(x)>2^{-p}AK\lambda \}| \nonumber\\
&& \qquad \qquad +|\{ x \in 5B\ : \
M^{**}(|\XXX u - \XXX v|^{p})(x)>2^{-p}AK\lambda \}| \nonumber\\
&\stackrel{\rif{weakes}}{\leq} &
\frac{2^{s/p+p}c(n,p)}{(AK\lambda)^{s/p}} \int_{10B}(\mu^s+|\XXX
v|^{s}) \,dx + \frac{c(n,p)}{AK\lambda}
\int_{10B}|\XXX u-\XXX v|^{p} \,dx \nonumber \\
&\stackrel{\rif{sti2}-\rif{sti3}}{\leq} &
\frac{2^{s/p}[c(n,p,\ratio)]^{s/p}|B|}{(AK)^{s/p}} +
\frac{c(n,p,\ratio)\varepsilon |B|}{AK} \nonumber \\
&\leq & \frac{2^{s/p}[c_7(n,p,\ratio,s)]^{s/p}|B\cap B_{R_0}|}{(AK)^{s/p}} +
\frac{c_8(n,p,\ratio)\varepsilon |B\cap B_{R_0}|}{AK} \;.\label{rim}
\end{eqnarray}
In the last inequality we used the fact that $B$ is a ball centered
in $B_{R_0}$ whose radius does not exceed $2R_0$, and the doubling
condition \rif{doubling}. Now we fix $A \equiv A(n,p,\ratio)>1+ c_*$
large enough in order to have $(2c_7/A)^{s/p}\leq 2c_7/A\leq
1/4$; here $c_*\equiv c_*(n)$ is the constant appearing in
\rif{duemax}. Then we take $\varepsilon \equiv \varepsilon
(n,p,\ratio,K)$ in order to have $c_8\varepsilon K^{s/p-1}< 1/4$.
Such choices fix the quantities $A$ and $\varepsilon$ with the
dependence on the constants described in the statement of the lemma,
and together with \rif{rim} they give
$$
|\{ x \in 5B\cap B_{R_0}\ : \ M^{**}(\mu^p+|\XXX u|^p)(x)>AK\lambda
\}| < K^{-s/p}|B|\;.
$$
Now, since $K \geq 1$ and $A> c_*$, by \rif{duemax} we also obtain
$$
|\{ x \in 5B\cap B_{R_0}\ : \ M^{*}(\mu^p+|\XXX u|^p)(x)>AK\lambda
\}| < K^{-s/p}|B|\;,
$$
that finally contradicts \rif{cont1}, and the proof is complete.
\end{proof}
\begin{proof}[Proof of Theorem \ref{phcz}]  The proof
is actually split in two cases. The first is when $q\leq \tilde{q}$,
and $\tilde{q} \equiv \tilde{q}(n,p,\ratio) >p$ is the higher
integrability exponent identified in Theorem \ref{gehx}. In this
case the assertion follows directly from such a theorem. The other
case is when $q > \tilde{q}$, to which we specialize henceforth.
Therefore, with $\tilde{q}< q < \infty$ as in the statement, we fix
a number $s$ such that $s>q$. Note that as a consequence of the
choice of $s \equiv s(q)$, from now on all the constants depending
on $s$ will be actually depending on $q$, and as such they will be
denoted, and in particular we determine the constant $A$ when
eventually using Lemma \ref{intecz}. Then we take $K>1$ large enough
in order to have \eqn{Kchoice}
$$
2K^{\frac{q-s}{p}}= A^{-\frac{q}{p}}\;.
$$
Such a choice fixes $K \equiv K(n,p,\ratio,q)$ and this is the
number we are going to take when using Lemma \ref{intecz}. Therefore
this determies the choice of $\varepsilon \equiv \varepsilon (n,p,\ratio,q)>0$
for the use in Lemma \ref{intecz}. Finally we determine the
radius $R_0 \equiv (n,p,\ratio,s ,b(\cdot))>0$ in such a way that
$[b]_{100R_0}^*\leq \varepsilon $. This is possible by
\rif{vmocond2}. Now, let us set \eqn{mu1}
$$
\mu_1(t):=|\{x \in B_{R_0}\ : \ \M(\mu^p+|\Xu|^p)(x)
> t\}|\;,
$$
\eqn{mu2}
$$
\mu_2(t):=|\{x \in B_{R_0}\ : \ \Mq(|F|^p)(x)
> t\}|\;,
$$
and keep in mind that the  maximal operators $\M_{q_0/p}$ are
restricted to the ball $B_{100R_0}$. The proof will proceed by
iterating the function $\mu_1(\cdot)$ using information on
$\mu_2(\cdot)$, that is getting information on the measure of the
level sets of $|\XXX u|$, in terms of those of $|F|$. We choose the
``starting level" $\lambda_0$ as follows: \eqn{la0}
$$
\lambda_0 := 10 C_d^{10}c_WK^{s/p}\intav_{B_{100R_0}}
(\mu^p+|\Xu|^p)\,dx\;,
$$
where $C_d$ is the doubling constant appearing in \rif{doubling},
and $c_W\equiv c_{W}(n)$ is the constant appearing in \rif{weakes2}
for $\gamma=1$. Therefore using \rif{weakes2}, and that $AK>1$ we
find, for any $m \in \mathbb N$ \eqn{pre00}
$$\mu_1((AK)^m\lambda_0) \leq \mu_1(\lambda_0)\leq \frac{1}{2K^{s/p}}|B_{R_0}|\;.$$
Now we want to combine Lemma \ref{intecz} and Lemma \ref{metrico}.
More precisely, for every $m=0,1,2,\ldots$ we want to apply Lemma
\ref{metrico} with the choice $\delta = K^{-s/p}$ and
$$
E:=\{z \in B_{R_0} \ : \ \M(\mu^p+|\XXX u|^p)> (AK)^{m+1} \lambda_0,
\ \mbox{and}\ \Mq(|F|^p)< \varepsilon (AK)^m\lambda_0\}\;,
$$
$$
G:=\{z \in B_{R_0} \ : \ \M(\mu^p+|\XXX u|^p)>
(AK)^{m}\lambda_0\}\;.
$$
In fact using Lemma \ref{intecz} for $\lambda \equiv
(AK)^m\lambda_0$ in the context of Lemma \ref{metrico}, keeping
\rif{pre00} in mind, and recalling that $|G|= \mu_1(
(AK)^m\lambda_0)$ and that $|E|\geq  \mu_1((AK)^{m+1}\lambda_0)-\mu_2(
(AK)^m\varepsilon\lambda_0)$ we have
$$
\mu_1((AK)^{m+1}\lambda_0) \leq K^{-s/p}
\mu_1((AK)^{m}\lambda_0)+\mu_2((AK)^m\varepsilon \lambda_0)\;,
$$
for any $m=0,1,2,\ldots$. Induction on the previous inequality
easily gives
$$
\mu_1((AK)^{m+1}\lambda_0) \leq K^{-s(m+1)/p}
\mu_1(\lambda_0)+\sum_{i=0}^{m}K^{-s(m-i)/p}\mu_2(
(AK)^i\varepsilon\lambda_0)\;,
$$
and therefore, multiplying the previous inequalities by
$(AK)^{q(m+1)/p}$ and summing up on $m=0,1,\ldots,M \in \mathbb N$,
we have
\begin{eqnarray}
&& \sum_{m=0}^M(AK)^{q(m+1)/p}\mu_1((AK)^{m+1}\lambda_0)  \leq
\left(\sum_{m=0}^M [K^{-s/p}(AK)^{q/p}]^{m+1}\right)
\mu_1(\lambda_0)\nonumber \\ &&\hspace{4cm}
+\sum_{m=0}^M\sum_{i=0}^{m}(AK)^{q(m+1)/p}K^{-s(m-i)/p}\mu_2(
(AK)^i\varepsilon\lambda_0)\;.\label{ecco}
\end{eqnarray}
First, we notice that \rif{Kchoice} implies $$\sum_{m=0}^{\infty}
[K^{-s/p}(AK)^{q/p}]^{m+1}= 1 \;.$$ On the other hand, using
Fubini's theorem for series it easily follows that
\begin{eqnarray*}
&&\sum_{m=0}^{M}\sum_{i=0}^{m}
(AK)^{q(m+1)/p}K^{-s(m-i)/p}\mu_2((AK)^i\varepsilon\lambda_0)\\
& &\hspace{2cm} \leq 2(AK)^{q/p}\sum_{m=0}^{M}
(AK)^{qm/p}\mu_2((AK)^m\varepsilon\lambda_0)\;.
\end{eqnarray*}
Combining the last two inequalities with \rif{ecco}, and eventually
letting $M\nearrow \infty$, we obtain
\begin{eqnarray}
\nonumber && \sum_{m=1}^{\infty}(AK)^{qm/p}\mu_1((AK)^{m}\lambda_0)
\leq \mu_1(\lambda_0) +2(AK)^{q/p}\mu_2(\varepsilon \lambda_0)  \\
&& \hspace{5cm} +2(AK)^{q/p}\sum_{m=1}^{\infty}
(AK)^{qm/p}\mu_2((AK)^m\varepsilon\lambda_0)\;.\label{ecco2}
\end{eqnarray}
From now on keep in mind that $AK$ is a constant depending on
$n,p,\ratio,q$; without loss of generality we assume $AK \geq 2$.
Now, making a few elementary manipulations on \rif{ecco2} such as
$\mu_1(\cdot),\mu_2(\cdot)\leq |B_{R_0}|$, and using Fubini's
theorem, we estimate
\begin{eqnarray}
\nonumber \int_{B_{R_0}} (\mu+|\XXX u|)^q\, dx &\leq &c
\int_{B_{R_0}} [\M(\mu^p+|\XXX u|^p)]^{q/p}\, dx\\ \nonumber  &  = &
c\int_0^{\infty}
\lambda^{q/p-1} \mu_1(\lambda)d \lambda\\
\nonumber  &= & c\int_0^{\lambda_0} [\ldots] \, d \lambda +
c\int_{\lambda_0}^{\infty} [\ldots] \, d \lambda\\
\nonumber &\leq & c\lambda_0^{q/p}|B_{R_0}|+
c\sum_{m=0}^{\infty}\int_{(AK)^m\lambda_0}^{(AK)^{m+1}\lambda_0} [\ldots] \, d \lambda\\
\nonumber &\leq & c\lambda_0^{q/p}|B_{R_0}|+c
\lambda_0^{q/p}\sum_{m=0}^{\infty}(AK)^{qm/p}\mu_1((AK)^{m}\lambda_0)\\
&\stackrel{\rif{pre00}-\rif{ecco2}}{\leq } &
c\lambda_0^{q/p}|B_{R_0}|+
c\lambda_0^{q/p}\sum_{m=1}^{\infty}(AK)^{qm/p}\mu_2((AK)^{m}\varepsilon\lambda_0)\;,\label{ecco5}
\end{eqnarray}
with $c \equiv c(n,p,\ratio,q)$; moreover, \rif{la0}
yields\eqn{ecco3}
$$
\lambda_0^{q/p}|B_{R_0}|\leq c\left(\intav_{B_{100R_0}}
(\mu^p+|\Xu|^p)\,dx\right)^{q/p}|B_{R_0}|\;.
$$
In turn, again by means of Fubini's theorem and elementary
manipulations, we have
\begin{eqnarray}
\nonumber && \lambda_0^{q/p}\sum_{m=1}^{\infty}
(AK)^{qm/p}\mu_2((AK)^m\varepsilon\lambda_0)\leq
\frac{AK}{\varepsilon ^{q/p}(AK-1)}
\int_{0}^{\infty} \lambda^{q/p-1}\mu_2(\lambda)\, d \lambda\\
& &  \qquad \qquad\leq c \int_{B_{R_0}} [\Mq(|F|^p)]^{q/p}\,
dx\stackrel{\rif{weakes3}-\rif{lu}}{\leq} c\int_{B_{100R_0}} |F|^q\,
dx\;,\label{ecco4}
\end{eqnarray}
where, taking into account the peculiar dependence of $\varepsilon,
AK$, and also \rif{lu}, it turns out that the constant $c$ in the
last line depends only on $n,p,\ratio,q$. Connecting
\rif{ecco4}-\rif{ecco3} to \rif{ecco5}, we finally gain, after
further elementary manipulations \eqn{ecco8}
$$
\left(\intav_{B_{R_0}} |\XXX u|^q\, dx\right)^{1/q} \leq c
\left(\intav_{B_{100R_0}} (\mu^p+|\XXX u|^p)\, dx\right)^{1/p}
+c\left(\intav_{B_{100R_0}} |F|^q\, dx\right)^{1/q}\;.
$$
We have used again, and repeatedly, the doubling condition
\rif{doubling}; the constant $c$ depends on $n,p,\ratio,q$, but not
yet on $b(\cdot)$; the dependence on $q$ is such that $c$ blows-up
only when $q \nearrow  \infty$. Now notice that the only point to
use a ball with small radius $R_0$ in the above argumentation was to
fulfill the requirement $[b]_{100R_0}^*\leq \varepsilon $; therefore
estimate \rif{ecco8} continues to hold with $R_0$ replaced by any
other smaller radius, and therefore \eqn{ecco9}
$$
\left(\intav_{B_{R_1}} |\XXX u|^q\, dx\right)^{1/q} \leq c
\left(\intav_{B_{100R_1}} (\mu^p+|\XXX u|^p)\, dx\right)^{1/p}
+c\left(\intav_{B_{100R_1}} |F|^q\, dx\right)^{1/q}
$$
holds whenever $R_1\leq  R_0$ and $B_{100R_1} \Subset \Omega$.
 Summarizing, we have obtained
a first form of estimate \rif{apapd}, that is \rif{ecco9}, which is
valid for suitably small radii; moreover when estimating the left
hand side with the right-hand one we pass to an integral supported
on a ball with radius magnified of a factor $100$. In order to
derive the precise form \rif{apapd} we can proceed using a standard
covering argument at the end of which we shall get the desired
estimate, where the constant $c$ will be the one from \rif{ecco9},
magnified of a factor equal to $c(n,p,q)(R/R_0)^{Q(q-p)/p}$. Since
the radius $R_0$ has been chosen in order to verify
$[b]_{100R_0}^*\leq \varepsilon $ the final dependence of $c$ on
$b(\cdot )$ will follow. We hereby sketch the covering argument; we
first treat the most relevant case $R \geq R_0$. Consider a CC-ball
$B_{R} \Subset \Omega'$ with $R \geq R_0$, and cover $B_{R/2}$ with
a finite family of CC-balls $\{B_i\}$ with radius equal to
$R_0/1000$, centered in $B_{R/2}$, and such that the enlarged balls
have locally finite intersection in the following sense: every ball
$100B_i$ touches at most $c(n)$ of the other ones $100B_j$,
$i\not=j$. It clearly follows that $100B_i \Subset B_R$. The
existence of such a family follows considering the structure of the
CC-balls; see Section \ref{CCb}. We then apply \rif{ecco9} on every ball $B_i$ - this means we
are taking $R_1 = R_0/1000$ in \rif{ecco9} - and manipulate as
follows:
\begin{eqnarray}
&& \intav_{B_{R/2}} |\XXX u|^q\, dx \leq
c\left(\frac{R_0}{R}\right)^Q
\sum_i \intav_{B_{i}} |\XXX u|^q\, dx \nonumber\\
&& \qquad  \leq c \left(\frac{R_0}{R}\right)^Q \sum_i
\left(\intav_{100B_{i}} (\mu^p+|\XXX u|^p)\, dx\right)^{q/p}
\nonumber \\ && \hspace{4cm}+c\left(\frac{R_0}{R}\right)^Q
\sum_i\intav_{100B_{i}} |F|^q\, dx\nonumber\\
&& \qquad  \leq c \left(\frac{R_0}{R}\right)^QR_0^{-Qq/p}
\left(\int_{B_R} (\mu^p+|\XXX u|^p)\, dx\right)^{(q-p)/p}\sum_i
\int_{100B_{i}} (\mu^p+|\XXX u|^p)\, dx\nonumber \\ &&
\hspace{7cm}+c
\intav_{B_{R}} |F|^q\, dx\nonumber\\
&& \qquad  \leq c
\left(\frac{R}{R_0}\right)^{Q(q-p)/p}\left(\intav_{B_R} (\mu^p+|\XXX
u|^p)\, dx\right)^{q/p}+c \intav_{B_{R}} |F|^q\, dx\;, \label{ecco99}
\end{eqnarray}
where $c \equiv c(n,p,\ratio,q)$. Therefore estimate \rif{apapd}
follows in the case $R_0 \leq R$. The case $R<R_0$ can be treated in
a similar way, and it is actually almost contained in \rif{ecco9},
where $R_1\leq R_0$: we only need to pass from a ball $B_{R/2}$ to
$B_R$ instead of passing from $B_{R/100}$ to $B_R$ as in
\rif{ecco9}. This fact can be done via the same covering argument
used for the case $R_0\leq R$, by covering $B_{R/2}$ by small balls
with radius $R/1000$ and then perform the same computation as in
\rif{ecco99}; this  time since the radius of the balls $B_i$ is
comparable to that of $B_R$, when passing from estimate \rif{ecco9}
to \rif{apapd} the constant will magnify of a factor that depends
only on $n,p,\ratio,q$  but independent of $R_0$.
\end{proof}
\begin{remark}\label{alt} The argument at the end of the last proof leads to a statement which is
dual to the one in Theorem \ref{phcz}. Indeed it follows that for
every $q < \infty $ there exists a constant $c\equiv
c(n,p,\ratio,q)$ and a positive radius $R_0 \equiv
R_0(n,p,\ratio,q,b(\cdot))$ such that \rif{apapd} holds provided $R
\leq R_0$; this is actually the content of \rif{ecco9}. In this way
the constant $c$ is independent of $b(\cdot)$, while the dependence
on $b(\cdot)$ in the final estimate is shifted in $R_0$, that is
``the radius after which estimate \rif{apapd} starts to hold".
\end{remark}
\begin{remark}\label{alt2} The constant appearing in the estimate
\rif{apapd} blow-up when $p \nearrow 4$. As  far as the dependence
on $q$ is concerned, from the proof given we see that $c$ blows-up
when $q\nearrow \infty$, as it must be, while it remains stable when
$q\searrow p$. This last fact is basically a consequence of the use
of Theorem \ref{gehring} to prove \rif{apapd} when $q$ is ``close"
to $p$ - see the beginning of the section - and of inequality
\rif{lu} applied in \rif{ecco4}, when $q$ is ``larger" than $p$.
\end{remark}
\section{More equations} This section should be considered as an appendix to the previous one
in that we are describing here a few generalizations of the results
contained there. To begin with we observe that the result of
Theorem \ref{phcz} extends to the case of solutions to more general
equations of the type
\begin{equation}\label{duecz00} \divo
a\!\left(x,\mathfrak{X}u\right) =\divo (|F|^{p-2}F)\,,
\end{equation}
with the vector field $a : \Omega \times \er^{2n} \to \er^{2n}$ such
that \eqn{ass1}
$$
z \mapsto a(x,z) \qquad \textnormal{satisfies
\rif{growth}-\rif{ell},   for every }x \in \Omega\;,$$ and with
continuous dependence on the $x$-variable, that is \eqn{modulus}
$$
|a(x,z)-a(y,z)|\leq L\omega(d_{cc}(x,y))(\mu+|z|)^{p-1}\,,$$ is
satisfied for every $z \in \er^{2n}$ and $x,y \in \Omega$, where
$\omega: [0,\infty)\to [0,\infty)$ is a continuous, non-decreasing
function such that $\omega(0)=0$. The function $\omega(\cdot)$ is
usually called ``modulus of continuity". The proof of such an extension
is very close to the ones already given in the previous section and
we shall therefore confine ourselves to explaininig the main
differences, which occur in the following points.

When using Lemma \ref{compvmo} we shall consider as a comparison
function $v$ the unique solution of the Dirichlet problem
\eqn{Dir0dopo}
$$
\left\{
    \begin{array}{cc}
    \textnormal{div} \ a(x_0,\XXX v)=0& \qquad \mbox{in }B_R\\
        v= u&\qquad \mbox{on }\partial B_R\,,
\end{array}\right.
$$
where $x_0$ is the center of $B_R$. At this point the statement and
the proof of Lemma \ref{compvmo} are even simpler, as for instance
they do not need the use of Theorem \ref{gehring}; for the ease of
exposition we shall nevertheless refer to the already given proof
although it may be shortened at some points. Anyway we remark that
Theorem \ref{gehring} continues to hold for solutions to
\rif{duecz00} under the considered assumptions. Estimate \rif{fre1}
continues to hold in a different form, that is \rif{fre1dopo} below;
this is due to the fact that the comparison estimate \rif{si00} in
Lemma \ref{compvmo} has to be replaced by
\begin{eqnarray*} \nonumber
\intav_{B_R} |\XXX u - \XXX v|^p\, dx &\leq & c \intav_{B_R} \langle
a(x_0,\XXX u)- a(x_0,\XXX v), \XXX
u - \XXX v\rangle \, dx\\
\nonumber  &=& c\intav_{B_R}  \langle  a(x_0,\XXX u)- a(x,\XXX u),
\XXX u - \XXX v\rangle \, dx\\&& \qquad \qquad +c\intav_{B_R}\langle
|F|^{p-2}F, \XXX u - \XXX v\rangle\, dx =: I+II\;,
\end{eqnarray*}
which holds in view of \rif{Dir0dopo}. The estimation of $I$ will be
done this time using \rif{modulus}, the one for $II$ being exactly
as in \rif{iiii}. This finally yields the estimate
\begin{eqnarray}
\nonumber \intav_{B_{R}} |\XXX u - \XXX v|^p\, dx
&\leq & c_5 \omega^*(2R_0)\intav_{B_{2R}}(\mu + |\XXX u|)^p\, dx \\
& &\qquad \qquad + c_5[1+\omega^*(2R_0)]\left(
\intav_{B_{2R}}|F|^{q_0}\, dx\right)^{p/q_0}\;,\label{fre1dopo}
\end{eqnarray}
where $ \omega^*(\cdot):=[\omega(\cdot)]^{p/(p-1)}.$ Once the
comparison estimate is gained we may proceed as in the proof of
Lemma \ref{intecz} but using the assumption that $\omega^*(200R_0) <
\varepsilon$ instead of $[b]_{100R_0}^*\leq \varepsilon $. Then,
when using the comparison function $v$, it will be defined as the
unique solution to \rif{Dir0dopo} with $20B \equiv B_R$ and $x_0$ is
the center of $20B$, while the use of \rif{fre1dopo} will replace
the use of \rif{fre1}. This will give the proof of the new version
of Lemma \ref{intecz}.

Then, proceeding exactly as in the proof of Theorem \ref{phcz} we
arrive at the following:
\begin{theorem}\label{phczdopo} Let $u \in HW^{1,p}(\Omega)$ be a weak solution to the equation \trif{duecz00}
under the assumptions \trif{ass1}-\trif{modulus} with $2\leq p <4$.
Assume that $F \in L^{q}_{\loc}(\Omega,\er^{2n})$ for some $q>p$;
then $\XXX u\in L^{q}_{\loc}(\Omega,\er^{2n})$. Moreover there
 exists a constant
$c$, depending only on $n,p,\ratio, q$ and the function
$\omega(\cdot)$, such that the inequality \trif{apapd} holds for any
CC-ball $B_R \Subset \Omega$.
\end{theorem}
Again, the dependence on $\omega(\cdot)$ in the a priori estimates
of Theorem \ref{phczdopo} can be replaced as described in Remark
\ref{alt}.
\begin{remark}\label{bmo} Theorem \ref{phcz} admits
an obvious reformulation in the case the coefficient function
$b(\cdot)$ in \rif{duecz} is assumed to have a properly small BMO
norm instead of being locally in VMO. Referring to \rif{defivmo},
the function $b(\cdot)$ is said to have bounded mean oscillations
provided $[b]_{R,\Omega} < \infty$ for some $R>0$. Now it is easy
too see that in Theorem \ref{phcz} assumption \rif{vmo} can be
replaced in order to have the following statement: For every $q<
\infty$ there exists $\varepsilon
>0$ depending only on $n, p , \ratio$ and $q$ such that
$[b]_{R,\Omega} < \varepsilon$ for some $R>0$ implies $\XXX u \in
L^{q}_{\loc}(\Omega,\er^{2n})$. This comes directly from Lemma
\ref{intecz}, where $[b]_{100R_0}^*\leq \varepsilon $, which is
later implied by the VMO condition in the proof of Theorem
\ref{phcz}, is now immediately implied by the global smallness
assumption $[b]_{R,\Omega} < \varepsilon$.

\end{remark}

\end{document}
\ \mbox{and}\ \Mq(|F|^p)< \varepsilon (AK)^m\lambda_0\}\;,
$$
$$
G:=\{z \in B_{R_0} \ : \ \M(\mu^p+|\XXX u|^p)>
(AK)^{m}\lambda_0\}\;.
$$
In fact using Lemma \ref{intecz} for $\lambda \equiv
(AK)^m\lambda_0$ in the context of Lemma \ref{metrico}, keeping
\rif{pre00} in mind, and recalling that $|G|= \mu_1(
(AK)^m\lambda_0)$ and that $|E|= \mu_1((AK)^{m+1}\lambda_0)-\mu_2(
(AK)^m\varepsilon\lambda_0)$ we have
$$
\mu_1((AK)^{m+1}\lambda_0) \leq K^{-s/p}
\mu_1((AK)^{m}\lambda_0)+\mu_2((AK)^m\varepsilon \lambda_0)\;,
$$
for any $m=0,1,2,\ldots$. Induction on the previous inequality
easily gives
$$
\mu_1((AK)^{m+1}\lambda_0) \leq K^{-s(m+1)/p}
\mu_1(\lambda_0)+\sum_{i=0}^{m}K^{-s(m-i)/p}\mu_2(
(AK)^i\varepsilon\lambda_0)\;,
$$
and therefore, multiplying the previous inequalities by
$(AK)^{q(m+1)/p}$ and summing up on $m=0,1,\ldots,M \in \mathbb N$,
we have
\begin{eqnarray}
&& \sum_{m=0}^M(AK)^{q(m+1)/p}\mu_1((AK)^{m+1}\lambda_0)  \leq
\left(\sum_{m=0}^M [K^{-s/p}(AK)^{q/p}]^{m+1}\right)
\mu_1(\lambda_0)\nonumber \\ &&\hspace{4cm}
+\sum_{m=0}^M\sum_{i=0}^{m}(AK)^{q(m+1)/p}K^{-s(m-i)/p}\mu_2(
(AK)^i\varepsilon\lambda_0)\;.\label{ecco}
\end{eqnarray}
First, we notice that \rif{Kchoice} implies $$\sum_{m=0}^{\infty}
[K^{-s/p}(AK)^{q/p}]^{m+1}\leq 1 \;.$$ On the other hand, using
Fubini's theorem for series it easily follows that
\begin{eqnarray*}
&&\sum_{m=0}^{M}\sum_{i=0}^{m}
(AK)^{q(m+1)/p}K^{-s(m-i)/p}\mu_2((AK)^i\varepsilon\lambda_0)\\
& &\hspace{2cm} \leq 2(AK)^{q/p}\sum_{m=0}^{M}
(AK)^{qm/p}\mu_2((AK)^m\varepsilon\lambda_0)\;.
\end{eqnarray*}
Combining the last two inequalities with \rif{ecco}, and eventually
letting $M\nearrow \infty$, we obtain
\begin{eqnarray}
\nonumber && \sum_{m=1}^{\infty}(AK)^{qm/p}\mu_1((AK)^{m}\lambda_0)
\leq \mu_1(\lambda_0) +2(AK)^{q/p}\mu_2(\varepsilon \lambda_0)  \\
&& \hspace{5cm} +2(AK)^{q/p}\sum_{m=1}^{\infty}
(AK)^{qm/p}\mu_2((AK)^m\varepsilon\lambda_0)\;.\label{ecco2}
\end{eqnarray}
From now on keep in mind that $AK$ is a constant depending on
$n,p,\ratio,q$; without loss of generality we assume $AK \geq 2$.
Now, making a few elementary manipulations on \rif{ecco2} such as
$\mu_1(\cdot),\mu_2(\cdot)\leq |B_{R_0}|$, and using Fubini's
theorem, we estimate
\begin{eqnarray}
\nonumber \int_{B_{R_0}} (\mu+|\XXX u|)^q\, dx &\leq &c
\int_{B_{R_0}} [\M(\mu^p+|\XXX u|^p)]^{q/p}\, dx\\ \nonumber  &  = &
c\int_0^{\infty}
\lambda^{q/p-1} \mu_1(\lambda)d \lambda\\
\nonumber  &= & c\int_0^{\lambda_0} [\ldots] \, d \lambda +
c\int_{\lambda_0}^{\infty} [\ldots] \, d \lambda\\
\nonumber &\leq & c\lambda_0^{q/p}|B_{R_0}|+
c\sum_{m=0}^{\infty}\int_{(AK)^m\lambda_0}^{(AK)^{m+1}\lambda_0} [\ldots] \, d \lambda\\
\nonumber &\leq & c\lambda_0^{q/p}|B_{R_0}|+c
\lambda_0^{q/p}\sum_{m=0}^{\infty}(AK)^{qm/p}\mu_1((AK)^{m}\lambda_0)\\
&\stackrel{\rif{pre00}-\rif{ecco2}}{\leq } &
c\lambda_0^{q/p}|B_{R_0}|+
c\lambda_0^{q/p}\sum_{m=1}^{\infty}(AK)^{qm/p}\mu_2((AK)^{m}\varepsilon\lambda_0)\;,\label{ecco5}
\end{eqnarray}
with $c \equiv c(n,p,\ratio,q)$; moreover,  \rif{la0}
yields\eqn{ecco3}
$$
\lambda_0^{q/p}|B_{R_0}|\leq c\left(\intav_{B_{100R_0}}
(\mu^p+|\Xu|^p)\,dx\right)^{q/p}|B_{R_0}|\;.
$$
In turn, again by mean of Fubini's theorem and elementary
manipulations, we have
\begin{eqnarray}
\nonumber && \lambda_0^{q/p}\sum_{m=1}^{\infty}
(AK)^{qm/p}\mu_2((AK)^m\varepsilon\lambda_0)\leq
\frac{AK}{\varepsilon ^{q/p}(AK-1)}
\int_{0}^{\infty} \lambda^{q/p-1}\mu_2(\lambda)\, d \lambda\\
& &  \qquad \qquad\leq c \int_{B_{R_0}} [\Mq(|F|^p)]^{q/p}\,
dx\stackrel{\rif{weakes3}-\rif{lu}}{\leq} c\int_{B_{100R_0}} |F|^q\,
dx\;,\label{ecco4}
\end{eqnarray}
where, taking into account the peculiar dependence of $\varepsilon,
AK$, and also \rif{lu}, it turns out that the constant $c$ in the
last line depends only on $n,p,\ratio,q$. Connecting
\rif{ecco4}-\rif{ecco3} to \rif{ecco5}, we finally gain, after
further elementary manipulations \eqn{ecco8}
$$
\left(\intav_{B_{R_0}} |\XXX u|^q\, dx\right)^{1/q} \leq c
\left(\intav_{B_{100R_0}} (\mu^p+|\XXX u|^p)\, dx\right)^{1/p}
+c\left(\intav_{B_{100R_0}} |F|^q\, dx\right)^{1/q}\;.
$$
We have used again, and repeatedly, the doubling condition
\rif{doubling}; the constant $c$ depends on $n,p,\ratio,q$, but not
yet on $b(\cdot)$; the dependence on $q$ is such that $c$ blows-up
only when $q \nearrow  \infty$. Now notice that the only point to
use a ball with small radius $R_0$ in the above argumentation was to
fulfill the requirement $[b]_{100R_0}^*\leq \varepsilon $; therefore
estimate \rif{ecco8} continues to hold with $R_0$ replaced by any
other smaller radius, and therefore \eqn{ecco9}
$$
\left(\intav_{B_{R_1}} |\XXX u|^q\, dx\right)^{1/q} \leq c
\left(\intav_{B_{100R_1}} (\mu^p+|\XXX u|^p)\, dx\right)^{1/p}
+c\left(\intav_{B_{100R_1}} |F|^q\, dx\right)^{1/q}
$$
holds whenever $R_1\leq  R_0$ and $B_{100R_1} \Subset \Omega$.
 Summarizing, we have obtained
a first form of estimate \rif{apapd}, that is \rif{ecco9}, which is
valid for suitably small radii; moreover when estimating the left
hand side with the right-hand one we pass to an integral supported
on a ball with radius magnified of a factor $100$. In order to
derive the precise form \rif{apapd} we can proceed using a standard
covering argument at the end of which we shall get the desired
estimate, where the constant $c$ will be the one from \rif{ecco9},
magnified of a factor equal to $c(n,p,q)(R/R_0)^{Q(q-p)/p}$. Since
the radius $R_0$ has been chosen in order to verify
$[b]_{100R_0}^*\leq \varepsilon $ the final dependence of $c$ on
$b(\cdot )$ will follow. We hereby sketch the covering argument; we
first treat the most relevant case $R \geq R_0$. Consider a CC-ball
$B_{R} \Subset \Omega'$ with $R \geq R_0$, and cover $B_{R/2}$ with
a finite family of CC-balls $\{B_i\}$ with radius equal to
$R_0/1000$, centered in $B_{R/2}$, and such that the enlarged balls
have locally finite intersection in the following sense: every ball
$100B_i$ touches at most $c(n)$ of the other ones $100B_j$,
$i\not=j$. It clearly follows that $100B_i \Subset B_R$. The
existence of such a family follows considering the structure of the
CC-balls; see Section \ref{CCb}. Observe that since every ball $B_i$
is centered in $B_{R/2}$ we obviously have that $100B_i \subset
B_R$. We then apply \rif{ecco9} on every ball $B_i$ - this means we
are taking $R_1 = R_0/1000$ in \rif{ecco9} - and manipulate as
follows:
\begin{eqnarray}
&& \intav_{B_{R/2}} |\XXX u|^q\, dx \leq
c\left(\frac{R_0}{R}\right)^Q
\sum_i \intav_{B_{i}} |\XXX u|^q\, dx \nonumber\\
&& \qquad  \leq c \left(\frac{R_0}{R}\right)^Q \sum_i
\left(\intav_{100B_{i}} (\mu^p+|\XXX u|^p)\, dx\right)^{q/p}
\nonumber \\ && \hspace{4cm}+c\left(\frac{R_0}{R}\right)^Q
\sum_i\intav_{100B_{i}} |F|^q\, dx\nonumber\\
&& \qquad  \leq c \left(\frac{R_0}{R}\right)^QR_0^{-Qq/p}
\left(\int_{B_R} (\mu^p+|\XXX u|^p)\, dx\right)^{(q-p)/p}\sum_i
\int_{100B_{i}} (\mu^p+|\XXX u|^p)\, dx\nonumber \\ &&
\hspace{7cm}+c
\intav_{B_{R}} |F|^q\, dx\nonumber\\
&& \qquad  \leq c
\left(\frac{R}{R_0}\right)^{Q(q-p)/p}\left(\intav_{B_R} (\mu^p+|\XXX
u|^p)\, dx\right)^{q/p}+c \intav_{B_{R}} |F|^q\, dx \label{ecco99}
\end{eqnarray}
where $c \equiv c(n,p,\ratio,q)$. Therefore estimate \rif{apapd}
follows in the case $R_0 \leq R$. The case $R<R_0$ can be treated in
a similar way, and it is actually almost contained in \rif{ecco9},
where $R_1\leq R_0$: we only need to pass from a ball $B_{R/2}$ to
$B_R$ instead of passing from $B_{R/100}$ to $B_R$ as in
\rif{ecco9}. This fact can be done via the same covering argument
used for the case $R_0\leq R$, by covering $B_{R/2}$ by small balls
with radius $R/1000$ and then perform the same computation as in
\rif{ecco99}; this  time since the radius of the balls $B_i$ is
comparable to that of $B_R$, when passing from estimate \rif{ecco9}
to \rif{apapd} the constant will magnify of a factor that depends
only on $n,p,\ratio,q$  but independent of $R_0$.
\end{proof}
\begin{remark}\label{alt} The argument at the end of the last proof leads to a statement which is
dual to the one in Theorem \ref{phcz}. Indeed it follows that for
every $q < \infty $ there exists a constant $c\equiv
c(n,p,\ratio,q)$ and a positive radius $R_0 \equiv
R_0(n,p,\ratio,q,b(\cdot))$ such that \rif{apapd} holds provided $R
\leq R_0$; this is actually the content of \rif{ecco9}. In this way
the constant $c$ is independent of $b(\cdot)$, while the dependence
on $b(\cdot)$ in the final estimate is shifted in $R_0$, that is
``the radius after which estimate \rif{apapd} starts to hold".
\end{remark}
\begin{remark}\label{alt2} The constant appearing in the estimate
\rif{apapd} blows-up when $p \nearrow 4$. As  far as the dependence
on $q$ is concerned, from the proof given we see that $c$ blows-up
when $q\nearrow \infty$, as it must be, while it remains stable when
$q\searrow p$. This last fact is basically a consequence of the use
of Theorem \ref{gehring} to prove \rif{apapd} when $q$ is ``close"
to $p$ - see the beginning of the section - and of inequality
\rif{lu} applied in \rif{ecco4}, when $q$ is ``larger" than $p$.
\end{remark}
\section{More equations} This section should be considered as an appendix to the previous one
in that we are describing here a few generalizations of the results
thereby contained. To begin with we observe that the result of
Theorem \ref{phcz} extends to the case of solutions to more general
equations of the type
\begin{equation}\label{duecz00} \divo
a\!\left(x,\mathfrak{X}u\right) =\divo (|F|^{p-2}F)\,,
\end{equation}
with the vector field $a : \Omega \times \er^{2n} \to \er^{2n}$ such
that \eqn{ass1}
$$
z \mapsto a(x,z) \qquad \textnormal{satisfies
\rif{growth}-\rif{ell},   for every }x \in \Omega\;,$$ and with
continuous dependence on the $x$-variable, that is \eqn{modulus}
$$
|a(x,z)-a(y,z)|\leq L\omega(d_{cc}(x,y))(\mu+|z|)^{p-1}\,,$$ is
satisfied for every $z \in \er^{2n}$ and $x,y \in \Omega$, where
$\omega: [0,\infty)\to [0,\infty)$ is a continuous, non-decreasing
function such that $\omega(0)=0$. The function $\omega(\cdot)$ is
usually called ``modulus of continuity". The proof of such extension
is very close to the ones already given in the previous section and
we shall therefore confine ourselves to explain the main
differences, which occur in the following points.

When using Lemma \ref{compvmo} we shall consider as a comparison
function $v$ the unique solution of the Dirichlet problem
\eqn{Dir0dopo}
$$
\left\{
    \begin{array}{cc}
    \textnormal{div} \ a(x_0,\XXX v)=0& \qquad \mbox{in }B_R\\
        v= u&\qquad \mbox{on }\partial B_R\,,
\end{array}\right.
$$
where $x_0$ is the center of $B_R$. At this point the statement and
the proof of Lemma \ref{compvmo} are even simpler, as for instance
they do not need the use of Theorem \ref{gehring}; for the ease of
exposition we shall nevertheless refer to the already given proof
although it may be shorten at some points. Anyway we remark that
Theorem \ref{gehring} continuous to hold for solutions to
\rif{duecz00} under the considered assumptions. Estimate \rif{fre1}
continues to hold in a different form, that is \rif{fre1dopo} below;
this is due to the fact that the comparison estimate \rif{si00} in
Lemma \ref{compvmo} has to be replaced by
\begin{eqnarray*} \nonumber
\intav_{B_R} |\XXX u - \XXX v|^p\, dx &\leq & c \intav_{B_R} \langle
a(x_0,\XXX u)- a(x_0,\XXX v), \XXX
u - \XXX v\rangle \, dx\\
\nonumber  &=& c\intav_{B_R}  \langle  a(x_0,\XXX u)- a(x,\XXX u),
\XXX u - \XXX v\rangle \, dx\\&& \qquad \qquad +c\intav_{B_R}\langle
|F|^{p-2}F, \XXX u - \XXX v\rangle\, dx =: I+II\;,
\end{eqnarray*}
which holds in view of \rif{Dir0dopo}. The estimate of $I$ will be
done this time using \rif{modulus}, the one for $II$ being exactly
as in \rif{iiii}. This finally yields the estimate
\begin{eqnarray}
\nonumber \intav_{B_{R}} |\XXX u - \XXX v|^p\, dx
&\leq & c_5 \omega^*(2R_0)\intav_{B_{2R}}(\mu + |\XXX u|)^p\, dx \\
& &\qquad \qquad + c_5[1+\omega^*(2R_0)]\left(
\intav_{B_{2R}}|F|^{q_0}\, dx\right)^{p/q_0}\;,\label{fre1dopo}
\end{eqnarray}
where $ \omega^*(\cdot):=[\omega(\cdot)]^{p/(p-1)}.$ Once the
comparison estimate is gained we may proceed as in the proof of
Lemma \ref{intecz} but using the assumption that $\omega^*(200R_0) <
\varepsilon$ instead of $[b]_{100R_0}^*\leq \varepsilon $. Then,
when using the comparison function $v$, it will be defined as the
unique solution to \rif{Dir0dopo} with $20B \equiv B_R$ and $x_0$ is
the center of $20B$, while the use of \rif{fre1dopo} will replace
the use of \rif{fre1}. This will give the proof of the new version
of Lemma \ref{intecz}.

Then, proceeding exactly as in the proof of Theorem \ref{phcz} we
arrive at the following:
\begin{theorem}\label{phczdopo} Let $u \in HW^{1,p}(\Omega)$ be a weak solution to the equation \trif{duecz00}
under the assumptions \trif{ass1}-\trif{modulus} with $2\leq p <4$.
Assume that $F \in L^{q}_{\loc}(\Omega,\er^{2n})$ for some $q>p$;
then $\XXX u\in L^{q}_{\loc}(\Omega,\er^{2n})$. Moreover there
 exists a constant
$c$, depending only on $n,p,\ratio, q$ and the function
$\omega(\cdot)$, such that the inequality \trif{apapd} holds for any
CC-ball $B_R \Subset \Omega$.
\end{theorem}
Again, the dependence on $\omega(\cdot)$ in the a priori estimates
of Theorem \ref{phczdopo} can be replaced as described in Remark
\ref{alt}.
\begin{remark}\label{bmo} Theorem \ref{phcz} admits
an obvious reformulation in the case the coefficient function
$b(\cdot)$ in \rif{duecz} is assumed to have a properly small BMO
norm instead of being locally in VMO. Referring to \rif{defivmo},
the function $b(\cdot)$ is said to have bounded mean oscillations
provided $[b]_{R,\Omega} < \infty$ for some $R>0$. Now it is easy
too see that in Theorem \ref{phcz} assumption \rif{vmo} can be
replaced in order to have the following statement: For every $q<
\infty$ there exists $\varepsilon
>0$ depending only on $n, p , \ratio$ and $q$ such that
$[b]_{R,\Omega} < \varepsilon$ for some $R>0$ implies $\XXX u \in
L^{q}_{\loc}(\Omega,\er^{2n})$. This comes directly from Lemma
\ref{intecz}, where $[b]_{100R_0}^*\leq \varepsilon $, which is
later implied by the VMO condition in the proof of Theorem
\ref{phcz}, is now immediately implied by the global smallness
assumption $[b]_{R,\Omega} < \varepsilon$.

\end{remark}

\end{document}